\newcommand{\ab}{\allowbreak}
\newcommand{\cP}{\mathcal{P}}
\newcommand{\cPS}{\mathcal{P\!S}}
\newcommand{\cU}{\mathcal{U}}
\newcommand{\cV}{\mathcal{V}}
\newcommand{\cW}{\mathcal{W}}
\newcommand{\C}{k}
\newcommand{\E}{\mathrm{E}}
\newcommand{\cov}{\mathrm{cov}}
\newcommand{\NC}{\mathit{NC}}
\newcommand{\Tr}{\mathrm{Tr}}
\newcommand{\tr}{\mathrm{tr}}
\newcommand{\mult}{\mathrm{mult}}
\newcommand{\Mob}{\textrm{M\"ob}}              
\newcommand{\bZ}{\mathbb{Z}}
\newcommand{\diag}{\mathrm{diag}}
\newcommand{\rdg}{\setbox0\hbox{\ \,%
\smash{\raise-0.755em\hbox{$\rightharpoonup$}}}%
\setbox1=\hbox{\rm{deg}}%
\leavevmode\vbox{\hsize\wd1\box0\box1}}
\newcommand{\ldg}{\setbox0\hbox{\ \,%
\smash{\raise-0.755em\hbox{$\leftharpoonup$}}}%
\setbox1=\hbox{\rm{deg}}%
\leavevmode\vbox{\hsize\wd1\box0\box1}}
\newcommand{\bs}{\kern-0.5em}
\newcommand{\ds}{\displaystyle}
\newcommand{\thebottomline}{\renewcommand{\thefootnote}{}
  \renewcommand{\footnoterule}{}
  \phantom{M}\footnotetext{\tiny{}\hfill
    \textit{\noindent\romannumeral\day.%
\romannumeral\month.\romannumeral\year}}}
\newtheorem{theorem}{Theorem}[section]
\newtheorem{proposition}[theorem]{Proposition}
\newtheorem{corollary}[theorem]{Corollary}
\newtheorem{lemma}[theorem]{Lemma}
\theoremstyle{definition}
\newtheorem{definition}[theorem]{Definition}
\newtheorem{notation}[theorem]{Notation}
\newtheorem{example}[theorem]{Example}
\newtheorem{remark}[theorem]{Remark}
\newcounter{jmpnumber}\setcounter{jmpnumber}{1}
\newcounter{dmpnumber}\setcounter{dmpnumber}{1}
\let\phi=\varphi
\newcommand{\ols}[1]{\overline{#1}}
\newcommand{\psN}{{\scriptscriptstyle (N)}}
\numberwithin{equation}{section}
\newcommand{\K}{\kappa}
\newcommand{\VG}{limit graph}
\newcommand{\VP}{limit partition}
\newcommand{\VPSet}{\mathcal{LP}}
\newcommand{\VGSet}{\mathcal{LG}}
\newcommand{\cycle}{\text{circuit}}
\newcommand{\cycles}{\text{circuit }}
\newcommand{\rp}{\textsc{r}}
\newcommand{\PSG}{\mathcal{PS}_{NC}^{(1,1,1)}(m_1,m_2,m_3)}
\newcommand{\PSSG}{\mathcal{PS}_{NC}^{(2,1,1)}(m_1,m_2,m_3)}
\newcommand{\PSSSG}{\mathcal{PS}_{NC}^{(1,1)}(m_1,m_2,m_3)}
\newcommand{\PS}{\mathcal{PS}_{NC_2}^{(1,1,1)}(m_1,m_2,m_3)}
\newcommand{\PSS}{\mathcal{PS}_{NC_2}^{(2,1,1)}(m_1,m_2,m_3)}
\newcommand{\PSSS}{\mathcal{PS}_{NC_{2,1,1}}^{(1,1,1)}(m_1,m_2,m_3)}
\newcommand{\PSSSS}{\mathcal{PS}_{NC_2}^{(1,1)(t)}(m_1,m_2,m_3)}
\newcommand{\PSSSSS}{\mathcal{PS}_{NC_2}^{(1,1)}(m_1,m_2,m_3)}
\newcommand{\PSSSSSS}{\mathcal{PS}_{NC_2^{(2)}}^{(1,1)}(m_1,m_2,m_3)}
\newcommand{\PSDUL}{\mathcal{PS}_{NC_2}^{(1,1)(t)}(m_1,m_2,m_3)}
\newcommand{\PSDUC}{\mathcal{PS}_{NC_2}^{(1,1)}(m_1,m_2,m_3)}
\newcommand{\PSDUCTwoThroughStrings}{\mathcal{PS}_{NC_2^{(2)}}^{(1,1)}(m_1,m_2,m_3)}
\newcommand{\TSTTs}{\text{$2$-$6$ tree type }}
\newcommand{\TSTTSet}{\mathcal{T}_{2,6}}
\newcommand{\TFFTTs}{\text{$2$-$6$ tree type }}
\newcommand{\TFFTTSet}{\mathcal{T}_{2,4,4}}
\newcommand{\TFUL}{\text{$2$-$4$ uniloop type}}
\newcommand{\TFULs}{\text{$2$-$4$ uniloop type }}
\newcommand{\TFULSet}{\mathcal{UL}_{2,4}}
\newcommand{\TFUC}{\text{$2$-$4$ unicircuit type}}
\newcommand{\TFUCs}{\text{$2$-$4$ unicircuit type }}
\newcommand{\TFUCSet}{\mathcal{UC}_{2,4}}
\newcommand{\DB}{double bicircuit type}
\newcommand{\DBs}{\text{double bicircuit type }}
\newcommand{\DBSet}{\mathcal{DB}}
\newcommand{\DUs}{\text{double unicircuit }}
\newcommand{\FirstCycleNotation}{[\![m_1 ]\!]}
\newcommand{\SecondCycleNotation}{[\![m_2 ]\!]}
\newcommand{\ThirdCycleNotation}{[\![m_3 ]\!]}
\newcommand{\CycleithNotation}{[\![m_i ]\!]}
\newcommand{\CyclejthNotation}{[\![m_j ]\!]}
\title[{Third order moments of the Wigner ensemble}] {Third order moments of\\ complex Wigner matrices$^{(*)}$}
\author[mingo]{James A. Mingo} \address{Department
  of Mathematics and Statistics, Queen's University, Jeffery
  Hall, Kingston, Ontario, K7L 3N6, Canada}
\email{mingo@mast.queensu.ca, 18dmg1@queensu.ca}
\author[munoz]{Daniel Munoz George}
\thanks{$^{(*)}$ Research supported by a Discovery Grant from
  the Natural Sciences and Engineering Research Council of
  Canada}
\thanks{AMS Subject Classification: 60B20, 46L54, 15B52}
\begin{document}\thispagestyle{empty}

\begin{abstract}
We compute the third order moments of a complex Wigner matrix. We provide a formula for the third order moments $\alpha_{m_1,m_2,m_3}$ in terms of quotient graphs $T_{m_1,m_2,m_3}^{\pi}$ where $\pi$ is the Kreweras complement of a non-crossing pairing on the annulus. We prove that these graphs can be counted using the set of partitioned permutations, this permits us to write the third order moments in terms of the first, second and third order free cumulants which have a simple expression.

\end{abstract}

\maketitle

\section{Introduction}

In a series papers concluding with \cite{W2}, E.~Wigner showed that the
limit eigenvalue distribution of various ensembles of
self-adjoint random matrices was given by the semi-circle
law, now known as \textit{Wigner's semi-circle law}. The
matrices he considered are frequently referred to as Wigner
matrices. Much later, the global fluctuation moments of a
Wigner matrix were found by Khorunzhy, Khoruzhenko, and
Pastur in \cite{KKP}.  In \cite{MMPS} these global
fluctuation moments and their interactions with
deterministic matrices were described by the non-crossing
annular permutations introduced in \cite{MN}. In this paper
we shall show that the third order moments of a Wigner
matrix can be simply described by higher order free
cumulants using planar objects.

Let us describe the moments we consider. Let $\{ X_N \}_N$
be an ensemble of $N \times N$ random matrices. As in
Wigner's original work, we are interested in the large $N$
limit of the eigenvalue distribution, as measured by
$\{\Tr(X_N^n)\}_n$ where $\Tr(X_N)$ denotes the
un-normalized trace of $X_N$. Wigner's result was that as $N
\rightarrow \infty$, the moments $\E(\tr(X_N^n))$ converged
to the moments of the semi-circle law where $\tr =
N^{-1}\Tr$ is the normalized trace. In \cite{KKP} the second
cumulant or covariances $\{\cov\big(\Tr(X_N^m),
\Tr(X_N^n)\big)\}_{m,n}$ were studied and the two variable
Cauchy transform was computed, thus giving the moment
generating function for the covariances or fluctuation
moments.

For unitarily invariant ensembles the analysis of higher
order moments was achieved by the introduction higher order
free cumulants in \cite{CMSS}. This analysis did not directly apply
to Wigner matrices, however in, \cite[Ch.~5]{AGZ} and
\cite[\S4.4]{MS}, it was shown that the mixed moments of
Wigner matrices and constant matrices could be described by
free probability, using the language of non-crossing
partitions.  Recently, in \cite{MMPS}, the global
fluctuations of Wigner and constant matrices was analysed
and described by annular non-crossing pairings.  In this
paper we adapt the techniques of \cite{MMPS} to analyse the
third order moments using non-crossing annular pairings and
show that they have a simple description in terms of
free cumulants of order one, two, and three.

Indeed, recall that the semi-circular law can be described by
requiring that all free cumulants $\kappa_n = 0$ for $n >
2$, this is the free analogue of the description of the
Gauss law that is given by requiring that all classical
cumulants, $k_n$, vanish for $n > 2$. In the case of the
fluctuation moments, it was shown in \cite{MMPS} that the
fluctuation moments can be described by requiring that
$\kappa_2 = k_2$ and $\kappa_n = 0$ for $n > 2$ where $k_2$ is the second classical cumulant of an off-diagonal entry and
$\kappa_{p,q} = 0$ for $(p, q) \not = (2, 2)$, where
$\{\kappa_{p,q}\}_{p,q}$ are the second order free cumulants and
$\kappa_{2,2} = 2 k_4$ where $k_4$ is the fourth classical
cumulant of an off-diagonal entry.

In this paper we shall describe the third order moments
\[
\alpha_{m_1, m_2, m_3}
= \lim_N N \C_3(\Tr(X_N^{m_1}), \Tr(X_N^{m_2}), \Tr(X_N^{m_3}))
\]
where for random variables $A_1$, $A_2$, and $A_3$ we have $\C_3(A_1, A_2, A_3) = \E(A_1A_2A_3) - \{
\E(A_1A_2)\E(A_3) + \E(A_1A_3)\E(A_2) + \E(A_1)\E(A_2A_3) \}
+ 2 \E(A_1)\E(A_2)\E(A_3) $ is the third classical cumulant
of the random variables $A_1$, $A_2$, and $A_3$.  Our main
theorem is that the third order moment sequence $\{ \alpha_{m_1, m_2, m_2} \}_{m_1, m_2, m_3}$ can be expressed very simply in terms of third order cumulants as in the case of the moments of first and second order. 

\begin{theorem}[Main theorem]\label{thm:main}
The free cumulants of a Wigner matrix, up to order three, are given by $\K_2=1, \K_{2,2}=2\C_4, \K_{2,2,2}=4\C_6,
\K_{2,1,1}\ab = \mathring{\C}_4-2\C_4$ and $0$ otherwise. Here by $\C_6$ we mean
$\C_6(x_{1,2},x_{1,2}, x_{1,2}, x_{2,1}, \ab x_{2,1},x_{2,1})$, by
$\C_4$ we mean $\C_4(x_{1,2},x_{1,2},x_{2,1},x_{2,1})$, and by
$\mathring{\C}_4$ we mean $\C_4(x_{1,1},x_{1,1},x_{1,1},\ab x_{1,1})$.
\end{theorem}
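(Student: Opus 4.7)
My plan is to compute $\alpha_{m_1,m_2,m_3}$ in two complementary ways and match the resulting expressions. On the one hand, expanding $\C_3(\Tr(X_N^{m_1}), \Tr(X_N^{m_2}), \Tr(X_N^{m_3}))$ via the definition of the third classical cumulant and applying Wick's formula to each resulting expectation produces a sum over pair partitions of the $m_1+m_2+m_3$ index positions. Following the second-order analysis of \cite{MMPS}, only the pairings whose associated quotient graph $T^\pi_{m_1,m_2,m_3}$ attains the maximal number of vertices survive in the large-$N$ limit; by the formula promised in the abstract, these correspond, via Kreweras complement, to non-crossing pairings on the annulus. On the other hand, the third-order moment-cumulant relation writes $\alpha_{m_1,m_2,m_3}$ as a sum $\sum \K_\pi$ over the appropriate partitioned permutations. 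Matching the two expressions term by term would identify each free cumulant.

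To carry this out, I would first substitute the previously established first- and second-order values $\K_n = \delta_{n,2}$ and $\K_{p,q} = 2\C_4\,\delta_{(p,q),(2,2)}$, so that almost every summand in the moment-cumulant sum collapses; what survives are the partitioned permutations whose blocks are either $(2)$-pair-blocks or $(2,2)$-pair-blocks, together with at most one genuinely third-order block. I would then classify the contributing planar quotient graphs by how many pair-blocks connect different cycles. The ``pair-of-pants'' configurations with three $2$-blocks, each joining a distinct pair of cycles, correspond to $\K_{2,2,2}$: because each such block pairs one $x_{1,2}$ with one $x_{2,1}$, and the three blocks collectively produce the alternating sequence of three $x_{1,2}$'s and three $x_{2,1}$'s along the boundary cycles, the coefficient is forced to be $4\C_6$. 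The configurations where a single pair-block connects one cycle to the other two via a shared index position feed $\K_{2,1,1}$; all other arrangements contribute zero because the higher individual-entry cumulants of the Wigner matrix beyond $\C_6$ vanish.

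The principal obstacle is pinning down the coefficient $\mathring{\C}_4 - 2\C_4$ of $\K_{2,1,1}$. Diagonal entries $x_{i,i}$ of a complex Wigner matrix have a cumulant structure genuinely different from that of off-diagonal entries, and $\mathring{\C}_4 - 2\C_4$ encodes exactly the diagonal-entry excess remaining once the off-diagonal pairing contribution (worth $2\C_4$, already accounted for by the $(2,2)$ pair-blocks) has been subtracted. I would enumerate carefully which partitioned permutations force a block to involve a diagonal entry, verify topologically that this happens precisely in the $(2,1,1)$-shape class, and check that the orientation counting around the three boundary cycles produces coefficient $+1$ on the diagonal term and $-2$ on the off-diagonal term. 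Inverting the moment-cumulant relation then yields the values stated in the theorem and simultaneously forces the vanishing of all other third-order free cumulants.
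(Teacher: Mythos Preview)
Your proposal has several genuine gaps that prevent it from going through.

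First, invoking ``Wick's formula'' is incorrect here: the entries of a complex Wigner matrix are not assumed Gaussian, so expectations of products of entries are \emph{not} sums over pair partitions. The paper's analysis instead uses the classical formula for cumulants with products as entries, writing $\C_3(T_{m_1,m_2,m_3},i)$ as a sum $\sum_{\sigma}\C_\sigma(x_{i_1,i_{\gamma(1)}},\dots,x_{i_m,i_{\gamma(m)}})$ over partitions $\sigma$ with $\sigma\vee\gamma=1_m$, where $\C_\sigma$ is a product of classical cumulants of the entries (not just second-order ones). Consequently the surviving terms in the large-$N$ limit are indexed by partitions $\pi$ of the vertex set, not by pairings, and the relevant graph-theoretic invariant is $q(\pi)=\#(\pi)-m/2$ rather than ``maximal number of vertices.'' The contributing $\pi$ fall into five distinct graph types ($2$-$6$ tree, $2$-$4$-$4$ tree, $2$-$4$ uniloop, $2$-$4$ unicircuit, and double bicircuit), and classifying them requires substantial combinatorial work that your sketch does not address.

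Second, your account of how $\C_6$ and $\mathring{\C}_4-2\C_4$ arise is off. The coefficient $\C_6$ does not come from ``three $2$-blocks each joining a distinct pair of cycles''; it arises from a single block of $\overline\pi$ of size $6$ (an edge of multiplicity $6$ in the quotient graph) which contributes a genuine sixth classical cumulant of the entries. Likewise, the term $\mathring{\C}_4$ comes from a loop of multiplicity $4$ in the quotient graph (the $2$-$4$ uniloop type), which forces the corresponding entries to be diagonal; the subtraction $-2\C_4$ emerges only after a delicate cancellation among the counts of the various graph types (the quantity $R$ in the paper's Theorem \ref{Corollary:ThirdExpressionOfAlpha} is shown to vanish). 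Your claim that ``higher individual-entry cumulants beyond $\C_6$ vanish'' is also not an assumption of the model and is false in general; rather, the graph-theoretic bound $q(\pi)\leq -1$ is what limits which entry cumulants can appear in the leading order. Without these pieces, the matching you propose cannot be carried out.
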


The higher order free cumulants are symmetric so $\kappa_{m_1, m_2} = \kappa_{m_2, m_1}$ and $\kappa_{m_1, m_2, m_3} = \kappa_{m_{\sigma(1)}, m_{\sigma(2)}, m_{\sigma(3)}}$, and $\sigma$ is any permutation on $\{1, 2, 3\}$. Note that in a significant departure from the first and second order case, the distribution of the diagonal entries of the matrix $X_N$ appears in the limit distribution. 

Our result depends on the moment-cumulant relation in \cite[\S7.2]{CMSS}
\begin{equation}\label{ThirdOrdercaseCumulants}
\alpha_{m_1,m_2,m_3}=\sum_{(\cU,\pi)\in \mathcal{PS}_{NC}(m_1,m_2,m_3)}\K_{(\cU,\pi)}.
\end{equation}
In \S 2 and \S3 we shall describe the set $\cPS_{\NC}(m_1, m_2, m_3)$ of third order non-crossing partitioned permutations and the higher order cumulants $\K_{(\cU,\pi)}$.

Recently it has been shown that higher order free cumulants have an interpretation in terms of a phenomenon, known as topological recurrence, which is a universal recurrence on the genus of a map on a surface and the number of its boundaries. In \cite{bcgf}, Borot, Charbonnier, Garcia-Failde, Leid, and Shadrin showed that this relation between higher order moments and higher order cumulants can be expressed in the form of power series connecting fluctuation moments to higher order free cumulants. This was already done in \cite{CMSS} in the second order case. In this paper we compute, for the first time in a non-trivial concrete example, the third order cumulants.

The basic method is to write a fluctuation moment as a sum indexed by some graphs. Then we determine which graphs survive in the large $N$ limit. Then we count these graphs and determine their corresponding weight in terms of the cumulants of the entries of the Wigner matrix. Finally we show in Theorem \ref{thm:main}, that this sum has a very simple expression in  terms of higher order free cumulants.

After this introduction, the organization of the paper is as  follows.
In Section \ref{sec:non-crossin-permutations} we review some basic material on non-crossing permutations on a multi-annulus. In Section \ref{sec:partitioned-permutations} we describe the exact situation in the case of three circles. In Section \ref{sec:wigner-matrices} we specify the random matrix model we will be working with. In Section \ref{Section:GraphTheory} we prove some basic properties of our graphs that we will need to establish our main result. In Section \ref{sec:asymptotic-expressions} we show that limiting third order moments exit and in  Section \ref{sec:leading-order} we identify the graphs that correspond to these moments. In Section \ref{sec:counting-the-graphs} we count the limit graphs and in Section \ref{sec:free-cumulants} we express this in terms of partitioned permutations and higher order free cumulants, which will conclude the proof. 

\section{Non-crossing permutations on a multi-annulus}
\label{sec:non-crossin-permutations}
Higher order free cumulants are described in terms of
partitioned permutations, so we shall start with these.  We
begin by recalling from \cite{NS} some basic facts about
non-crossing partitions and free cumulants. We let $[n] =
\{1, 2, \dots, n\}$ and $\cP(n)$ be the set of partitions of
$[n]$.  Our usual notation will be to denote a
\textit{partition} by $\pi=\{V_1, \dots, V_k\}$ with $V_1$,
\dots, $V_k$, the \textit{blocks} of $\pi$. By this we mean
that $V_1 \cup \cdots \cup V_k = [n]$ and $V_i \cap V_j =
\emptyset$ for $i \not= j$. $0_n$ denotes the partition of
$[n]$ with all blocks singletons and $1_n$ denotes the
partition of $[n]$ with only one block. We say $\pi$ is
\textit{non-crossing} if we cannot find $i < j < k <l \in
       [n]$ such that $i$ and $k$ are in one block of $\pi$
       and $j$ and $l$ are in a different block of $\pi$. We
       denote by $\NC(n)$ the subset of $\cP(n)$ consisting
       of non-crossing partitions.
       
If $\pi \in \cP(n)$ and $A \subseteq [n]$ we can form $\pi|_A$, the \textit{restriction} of $\pi$ to $A$ as follows. If the blocks of $\pi$ are $V_1, \dots, V_k$, then the blocks of $\pi|_A$ are the non-empty elements of $V_1 \cap A, \dots, V_k \cap A$.        

As one can see from the definition, the order in $[n]$ plays
an important role for elements of $\NC(n)$, even though the
blocks of a non-crossing partition are an ordered set.  In
this paper it will be very important to switch our focus to
non-crossing permutations. We let $S_n$ be the group of
permutations of $[n]$. We say that $\pi \in S_n$ is
\textit{non-crossing}, if the partition obtained from its
cycle decomposition is non-crossing.  Using a theorem of
Biane we can describe easily the permutations that give
non-crossing partitions.

For a partition or a permutation, $\pi$, we let $\#(\pi)$
denote the number of blocks of $\pi$ if it is a partition
and the number of cycles of $\pi$ if it is a permutation. We
let $\gamma_n = (1, 2, \dots, n) \in S_n$ be the permutation
with one cycle and the elements in increasing order. We
compose our permutations from right to left: $\pi\gamma_n$
means perform $\gamma_n$ first then $\pi$. When there is no
risk of confusion we shall often write $\gamma$ for
$\gamma_n$ to give us a more compact notation.  Biane's rule
is that for $\pi \in S_n$ we have $\#(\pi) +
\#(\pi^{-1}\gamma_n) \leq n + 1$ with equality only if $\pi$
is non-crossing. See \cite[Ch.~5]{MS} for more details and
references. Another way to write Biane's rule is to use a
\textit{length} function on $S_n$.

\begin{figure}
  \begin{center}\includegraphics{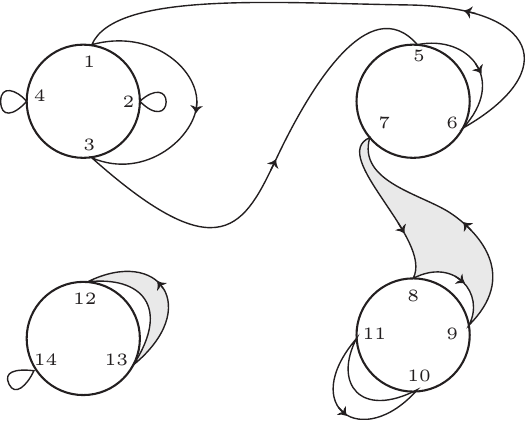}\hfill\includegraphics{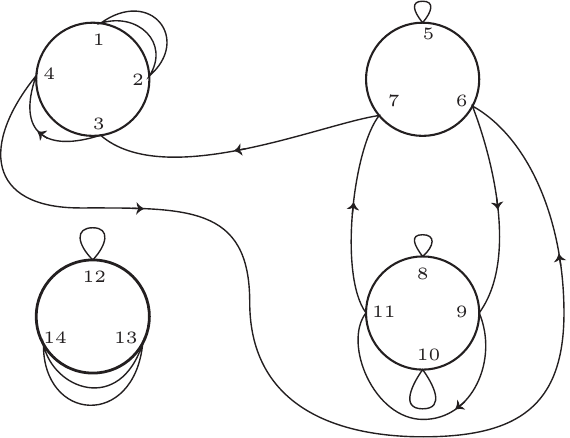}\end{center}
  
\caption{\label{fig:exact_factorization}\small In this
  example \begin{center}$\gamma$ = (1, 2, 3, 4) (5, 6, 7)(8,
    9, 10, 11)(12, 13, 14),
    and\end{center} \begin{center}$\pi$ = (1,3,5,6)
      (2)(4)(7,8,9)(10,11)(12,13)(14)\end{center} is shown
    on the left and
    \begin{center}
      $\pi^{-1}\gamma$ =
      (1,2)(3,4,6,9,11,7)(5)(8)(10)(12)(13,14)
    \end{center} is shown on the
  right. We have drawn the 4 cycles of $\gamma$ as circles
  and the 7 cycles of $\pi$ in a non-crossing way on these 4
  circles. Note that $|\pi| = 7$, $|\pi^{-1}\gamma| = 7$,
  $|\gamma| = 10$, and $|\pi \vee \gamma | = 12$. Thus
  $|\pi| + |\pi^{-1}\gamma| + |\gamma| = 2 |\pi \vee
  \gamma|$, as claimed in $(i)$ of Proposition
  \ref{prop:equality_in_triangle}. Note also that $\pi$
  connects the first three circles of $\gamma$, so we let
  $\cV= \{ (1,3,5,6),(2),(4),(7,8,9,
  12,\ab13),(10,11),(14)\}$. Each block of $\cV$ is a single
  cycle of $\pi$, except the block $(7,8,\ab9, 12,13)$,
  which is the union of the 2 cycles of $\pi$ shown in grey.
  $\pi \vee \gamma = (1,2,3,4,5,6,7,8,9,10,11),(12,13,14)$
  and $\cV \vee \gamma = 1_{14}$. So $|\cV \vee \gamma| =
  13$, $|\pi \vee \gamma | = 12$, and $|\cV \vee \gamma| -
  |\pi \vee \gamma | = 1$ . On the other hand $|\cV| = 8$,
  $|\pi| = 7$, so $|\cV| - |\pi|= 1$. Thus $|\cV \vee \gamma
  | - |\pi \vee \gamma| = |\cV| - |\pi|$ as claimed in
  $(ii)$ of Proposition \ref{prop:equality_in_triangle}. We
  let $\cW = 0_{\pi^{-1}\gamma} =
  \{(1,2),(3,4,6,9,11,7),(5),(8),(10),(12)(13, 14)\}$. Then
  $\cW \vee \gamma = \pi \vee \gamma$, so $|\cW \vee \gamma|
  - |\pi \vee \gamma| = 0 = |\cW| - |\pi^{-1}\gamma|$, as
  claimed in $(iii)$ of Proposition
  \ref{prop:equality_in_triangle}. Finally $\cW \vee \gamma
  = \cV \vee \gamma$ and $\cV \vee \cW = 1_{14}$. Thus $|\cV
  \vee \cW| - |\pi \vee \gamma| = 1$. On the other hand
  $|\cV| - |\pi| + |\cW| - |\pi^{-1}\gamma| = 1 = |\cV \vee
  \cW| - |\pi \vee \gamma|$, as claimed in $(iv)$ of
  Proposition \ref{prop:equality_in_triangle}.}
\end{figure}

For a permutation $\pi \in S_n$ we let $|\pi|$ be the
minimal number of transpositions required to write $\pi$ as
a product of transpositions. One can then check that
$\#(\pi) + |\pi| = n$. One immediately has the triangle
inequality $|\pi\sigma| \leq |\pi| + |\sigma|$ for any $\pi,
\sigma \in S_n$. Then Biane's inequality becomes $|\gamma_n|
\leq |\pi| + |\pi^{-1} \gamma_n|$ with equality only if
$\pi$ is non-crossing. Because of this we shall say that
$\pi$ is non-crossing \textit{relative} to $\gamma_n$.

In this paper we will be concerned with permutations on a
\textit{multi-annulus}. In terms of permutations this means
we consider a permutation $\gamma$ with one or more cycles. If $m_1,
m_2, \dots, m_r$ are positive integers, $n = m_1 + m_2 +
m_3$, we let $\gamma_{m_1,m_2, \dots, m_r}$ be the
permutation in $S_n$ with $r$ cycles, the $i^{th}$ cycle
being $[\![m_i ]\!] := (m_1\ab + \cdots + m_{i-1} +1, \dots,
m_1 + \dots + m_i)$. When $r = 1$ we are in the case of
$\NC(n)$. When $r > 1$ we are in the case of a
\textit{r-annulus}. In \cite[Def.~3.5]{MN}, conditions were
given, similar to the ones for non-crossing partitions
above, that ensured that when $r = 2$ we can draw two
circles representing the cycles of $\gamma_{m_1, m_2}$ and
then arrange the cycles of $\pi$ so that the cycles do not
cross, assuming that there is at least one cycle of $\pi$
that meets both cycles of $\gamma_{m_1, m_2}$, i.e $\pi \vee
\gamma_{m_1, m_2} = 1_{m_1 + m_2}$. For an illustration of
what we mean by non-crossing when $r= 4$, see Figure
\ref{fig:exact_factorization}.  In \cite{MN}, it was shown
that the non-crossing condition was equivalent to the metric
condition: $|\pi| + |\pi^{-1}\gamma_{m_1, m_2}| =
|\gamma_{m_1, m_2}| + 2$.

\begin{notation}
We let for $r \geq 1$ and $n = m_1 + \cdots + m_r$ and
$\gamma = \gamma_{m_1, \dots, m_r}$
\begin{multline*}
S_{\NC}(m_1, m_2, \dots, m_r)\\ = \{ \pi \in S_n \mid |\pi|
+ |\pi^{-1} \gamma| = |\gamma| + 2(r -1) \textrm{ and } \pi
\vee \gamma = 1_n\};
\end{multline*}
\begin{multline*}
\NC_2(m_1, \dots, m_r) \\ = \{ \pi \in S_\NC(m_1, \dots,
m_r) \mid \textrm{\ all cycles of\ } \pi \textrm{\ have 2
  elements\ }\},
\end{multline*}
A cycle of $\pi\in \NC_2(m_1,\dots,m_r)$ consisting of two elements
from distinct cycles of $\gamma$ is called a \textit{through string}.
The set of all permutations $\pi\in \NC_2(m_1,\dots,m_r)$ with exactly $k$
through strings is denoted by $\NC_2^{(k)}(m_1, \dots, m_r)$.
In addition we set
\[
\NC(m_1) \times \cdots \times \NC(m_2) = \{ \pi \in S_n \mid
|\pi| + |\pi^{-1} \gamma| = |\gamma| \textrm{\ and\ } \pi
\leq \gamma\}.
\]
In the last definition we mean by $\pi \leq \gamma$ that all
cycles of $\pi$ are contained in some cycles of
$\gamma$. Finally there are the non-crossing permutations
that lie in between these two. For $(i_1, i_2, i_3)$ some
permutation of $(1, 2, 3)$. Let
\begin{multline*}
S_\NC(m_{i_1}, m_{i_2}) \times \NC(m_{i_3}) = \{\pi \in S_n
\mid \#(\pi) + \#(\pi^{-1}\gamma) = n+1\\ \textrm{\ and\ }
\pi \vee \gamma = \{[\![ m_{i_1} ]\!] \cup [\![ m_{i_2}
  ]\!], [\![ m_{i_3} ]\!] \}.
\end{multline*}
These are non-crossing, but $\pi$ only connects the cycle
$[\![ m_{i_1} ]\!]$ to $[\![ m_{i_2} ]\!]$.
\end{notation}

\section{Partitioned permutations on 3 cycles}
\label{sec:partitioned-permutations}
\begin{notation}
As above, let $\cP(n)$ be the partitions of $[n] = \{1 ,2 ,
\dots,\ab n\}$ and $S_n$ be the permutations of $[n]$.  For
$\cU \in \cP(n)$ we let $\#(\cU)$ denote the number of
blocks of $\cU$ and $|\cU| = n - \#(\cU)$, the
\textit{length} of $\cU$. $|\cU|$ is the number of joins one
has to perform on $0_n$ to get $\cU$; here performing a
\textit{join} on a partition means joining two blocks. We
have $|0_n| = 0$ and $|1_n| = n-1$.

For $\pi \in S_n$ recall that $|\pi| = n - \#(\pi)$ and $|\pi|$ is
the minimal number of transpositions needed to write $\pi$
as a product of transpositions. Since multiplying a
permutation by the transposition, $(i, j)$, reduces the
number of cycles by 1, if $i$ and $j$ were in different
cycles of the permutation, and increases it by $1$ if $i$
and $j$ are in the same cycle of the permutation, we see
that the length of a permutation and the length of the
partition obtained from it cycles are the same.

When appropriate, we shall, for a permutation $\pi$, use the
same symbol to denote the partition $\pi$ whose blocks are
the cycles of the permutation $\pi$.

If $\pi \in S_n$ and $\cU \in \cP(n)$ we write $\pi \leq
\cU$ to mean that every cycle of $\pi$ is contained in some
block of $\cU$.  We call the pair $(\cU, \pi)$ a
\textit{partitioned permutation}. We let $|(\cU, \pi)| =
2|\cU| - |\pi|$ and call this the \textit{length} of $(\cU,
\pi)$.

If we let $p = |\cU| - |\pi| = \#(\pi) - \#(\cU)$, then $p$
is the number of cycles of $\pi$ joined by $\cU$. For
example, when $p = 1$, this means that one block of $\cU$
contains two cycles of $\pi$ and all other blocks of $\cU$
contain only one cycle of $\pi$.  If $p = 2$ this means that
either one block of $\cU$ contains three cycles of $\pi$ or
two blocks of $\cU$ each contain 2 cycles of $\pi$.

\end{notation}

Two crucial properties of this length function are
\cite[Lemma 2.2]{MSS} and \cite[Prop. 5.6]{CMSS}.

\begin{proposition}[Triangle Inequality]
Given partitioned permutations $(\cV, \pi)$ and $(\cU,
\sigma)$ we have
\[
|(\cV \vee \cU, \pi\sigma)| \leq |(\cV, \pi)| + |(\cU, \sigma)|.
\]

\end{proposition}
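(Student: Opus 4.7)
The plan is to unfold the definition $|(\cW, \tau)| = 2|\cW| - |\tau|$ and reduce the triangle inequality to two separate sub-inequalities, one about joins of partitions and one about products of permutations. After substitution the claim becomes
\[
|\pi| + |\sigma| - |\pi\sigma| \;\leq\; 2\bigl(|\cV| + |\cU| - |\cV \vee \cU|\bigr),
\]
a bound on the ``defect from additivity'' on the permutation side by twice the corresponding defect on the partition side. Both sides are non-negative by the standard triangle inequalities $|\pi\sigma|\leq|\pi|+|\sigma|$ in $S_n$ and $|\cV\vee\cU|\leq|\cV|+|\cU|$ in $\cP(n)$.

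The first step I would prove is a monotonicity of the join with respect to $\leq$: if $\pi \leq \cV$ and $\sigma \leq \cU$, then
\[
|\pi|+|\sigma|-|\pi\vee\sigma| \;\leq\; |\cV|+|\cU|-|\cV\vee\cU|.
\]
To see this, realize $\cV$ as the outcome of $|\cV|-|\pi|$ elementary merges applied to the partition $\pi$, and $\cU$ analogously from $\sigma$ via $|\cU|-|\sigma|$ merges. Taking joins of these chains first with $\sigma$ and then with $\cV$ produces a sequence of at most $(|\cV|-|\pi|)+(|\cU|-|\sigma|)$ merges leading from $\pi\vee\sigma$ to $\cV\vee\cU$, which after rearrangement yields the inequality.

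The second and decisive step is the Hurwitz / genus inequality $|\pi|+|\sigma|+|\pi\sigma| \geq 2|\pi\vee\sigma|$, equivalently $|\pi|+|\sigma|-|\pi\sigma| \leq 2(|\pi|+|\sigma|-|\pi\vee\sigma|)$. Since each block of $\pi\vee\sigma$ is invariant under both $\pi$ and $\sigma$, it suffices to prove this block-by-block; on a block $B$ the restrictions $\pi|_B,\sigma|_B,(\pi\sigma|_B)^{-1}$ act transitively on $B$ and compose to the identity, so the Riemann--Hurwitz formula for a ramified cover of the sphere gives $|\pi|_B|+|\sigma|_B|+|\pi\sigma|_B|=2(|B|-1)+2g_B \geq 2|1_B|$ for a non-negative integer genus $g_B$. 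Summing over blocks and combining with the monotonicity step chains the two estimates and yields the triangle inequality. The main obstacle is this genus bound, the only topological ingredient; everything else is bookkeeping with the length function and join monotonicity. A purely combinatorial derivation of the genus inequality is also available via induction on $|\pi|+|\sigma|$, tracking how right-multiplication by a transposition alters each of the three lengths.
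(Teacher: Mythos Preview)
Your argument is correct. The paper does not supply its own proof of this proposition; it simply cites \cite[Lemma~2.2]{MSS} for the result. Your decomposition into the genus inequality $|\pi|+|\sigma|+|\pi\sigma|\geq 2|\pi\vee\sigma|$ (which the paper itself quotes later, attributing it to \cite[2.10]{MN}) together with the monotonicity step for the join defect is the standard route and constitutes a complete proof.
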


\begin{proposition}[Equality in the triangle inequality] 
  \label{prop:equality_in_triangle}
Let $(\cV, \pi)$ and $(\cW, \ab\pi^{-1} \gamma)$ be
partitioned permutations of $[n]$ such that we have equality
in the triangle inequality:
\[
|(\cV, \pi)| + |(\cW, \pi^{-1}\gamma)| = |(\cV \vee \cW,
\gamma)|.
\]
Then 
\begin{enumerate}

\item
$|\pi| + |\pi^{-1}\gamma| + |\gamma| = 2 |\pi \vee \gamma|$

\item
$|\cV \vee \gamma| - |\pi \vee \gamma | = |\cV| - |\pi|$

\item
$|\cW \vee \gamma| - |\pi \vee \gamma| = |\cW| - |\pi^{-1}\gamma|$

\item
$|\cV| - |\pi| + |\cW| - |\pi^{-1}\gamma| =
|\cV \vee \cW| - |\pi \vee \gamma|$

\end{enumerate}

\end{proposition}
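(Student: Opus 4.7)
The plan is to introduce the three non-negative excess quantities
\[
\Delta_{\cV} := |\cV| - |\pi|, \quad \Delta_{\cW} := |\cW| - |\pi^{-1}\gamma|, \quad \Delta := |\cV\vee\cW| - |\pi\vee\gamma|,
\]
and rewrite the hypothesis in terms of them. Expanding $|(\cU,\sigma)| = 2|\cU|-|\sigma|$ in the equality $|(\cV,\pi)| + |(\cW,\pi^{-1}\gamma)| = |(\cV\vee\cW,\gamma)|$ and collecting terms yields the master identity
\[
|\pi| + |\pi^{-1}\gamma| + |\gamma| - 2|\pi\vee\gamma| \;=\; 2\bigl(\Delta - \Delta_{\cV} - \Delta_{\cW}\bigr).
\]
The left-hand side is twice the genus of the pair $(\pi,\gamma)$ and is non-negative by the classical Euler/genus inequality for a pair of permutations; the vanishing of this left-hand side is exactly assertion $(i)$.

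To finish, I would establish the reverse inequality $\Delta \leq \Delta_{\cV} + \Delta_{\cW}$ via two applications of the submodular inequality $|\alpha\vee\beta|+|\alpha\wedge\beta| \leq |\alpha|+|\beta|$ for the partition lattice. Applying it with $\alpha=\cV$ and $\beta=\pi\vee\gamma$, together with $\cV\vee(\pi\vee\gamma) = \cV\vee\gamma$ and $\cV\wedge(\pi\vee\gamma)\geq \pi$, gives
\[
|\cV\vee\gamma| - |\pi\vee\gamma| \;\leq\; \Delta_{\cV},
\]
and symmetrically for $\cW$. Applying it again with $\alpha = \cV\vee\gamma$ and $\beta=\cW\vee\gamma$, and using $\gamma \leq \cV\vee\cW$ (which holds because $\gamma = \pi\cdot\pi^{-1}\gamma$ with the two factors below $\cV$ and $\cW$, respectively) together with $\pi\vee\gamma = \pi^{-1}\gamma\vee\gamma \leq (\cV\vee\gamma)\wedge(\cW\vee\gamma)$, gives
\[
|\cV\vee\cW| + |\pi\vee\gamma| \;\leq\; |\cV\vee\gamma| + |\cW\vee\gamma|.
\]
Chaining these three inequalities produces the desired bound $\Delta \leq \Delta_{\cV} + \Delta_{\cW}$.

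Substituted into the master identity, this bound forces the genus term to vanish, which simultaneously proves $(i)$ and gives $\Delta = \Delta_{\cV} + \Delta_{\cW}$, i.e.\ assertion $(iv)$. Assertions $(ii)$ and $(iii)$ then follow because the chain
\[
\Delta_{\cV} + \Delta_{\cW} \;=\; \Delta \;\leq\; (|\cV\vee\gamma|-|\pi\vee\gamma|) + (|\cW\vee\gamma|-|\pi\vee\gamma|) \;\leq\; \Delta_{\cV} + \Delta_{\cW}
\]
has collapsed to a string of equalities, and since the two summands in the middle term are individually dominated by $\Delta_{\cV}$ and $\Delta_{\cW}$, each bound must be saturated separately. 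The only substantive external input is the Euler/genus inequality for a pair of permutations; everything else is repeated application of submodularity, and that is where I expect the routine but careful lattice bookkeeping to sit in a fully written-out argument.
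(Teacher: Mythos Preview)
Your argument is correct. The master identity, the genus inequality $|\pi|+|\pi^{-1}\gamma|+|\gamma|\geq 2|\pi\vee\gamma|$, and the three applications of semimodularity all check out (including the two points that need a moment's thought: $\pi^{-1}\gamma\vee\gamma=\pi\vee\gamma$ because $\langle\pi,\gamma\rangle=\langle\pi^{-1}\gamma,\gamma\rangle$, and $\gamma\leq\cV\vee\cW$ because each step of $\gamma=\pi\cdot(\pi^{-1}\gamma)$ moves first within a block of $\cW$ and then within a block of $\cV$). The final collapse of the chain of inequalities to extract $(ii)$ and $(iii)$ individually is also fine.

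As for comparison: the paper does not actually prove this proposition. It is quoted verbatim as \cite[Prop.~5.6]{CMSS} and used as a black box. So your write-up is in fact \emph{more} than what the paper offers here --- you have supplied a clean, self-contained proof using only semimodularity of the partition lattice and the Euler/genus inequality for a pair of permutations (which the paper does cite elsewhere as \cite[2.10]{MN}). If you want to match the paper's level of detail, a one-line citation suffices; if you want to be self-contained, what you have written is essentially complete and only needs the lattice bookkeeping spelled out.
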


\begin{figure}
\begin{center}\includegraphics{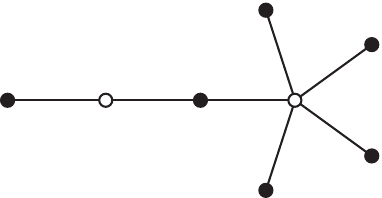}\end{center}
\caption{\small\label{fig:partition_graph} We have shown the
  graph described in Remark \ref{rem:partition_graph} for
  the partitioned permutation $(\cV, \pi)$ illustrated in
  Figure \ref{fig:exact_factorization}. The 7 edges
  correspond to the 7 cycles of $\pi$. The 2 white vertices
  correspond to the 2 blocks of $\pi \vee \gamma$. The graph
  does not take into the account of the planarity of the
  pair $\{\pi, \gamma\}$, but rather the minimality of the
  blocks of $\cV$. The fact that we get a tree corresponds
  to the equality in $(ii)$ of Proposition
  \ref{prop:equality_in_triangle}, See Lemma
  \ref{lemma:tree_lemma}.}
\end{figure}

\begin{remark}
Let us recall from \cite[Notation 5.13]{CMSS} what each of
these four conditions means.  First, $(i)$ means that if we
make the cycles of $\gamma$ into circles (see Figure
\ref{fig:exact_factorization}) then we can draw the cycles
of $\pi$ in a planar way.  We call this the \textit{planar} condition. Condition $(ii)$ describes the
way that $\cV$ can be a union of cycles of $\pi$.

Since $\pi \leq \cV$, we must have that each block of $\cV$
is a union of cycles of $\pi$; however $(ii)$ says that if
$c_1$ and $c_2$ are cycles of $\pi$ and are in the same block of
$\cV$ then these cycles may not meet the same cycle of
$\gamma$, i.e. they lie in different blocks of $\pi \vee
\gamma$. For example in Figure
\ref{fig:exact_factorization}, the cycles $(7,8,9)$ and
$(12, 13)$ are in the same block of $\cV$, but there is no
cycle of $\gamma$ touched by both of these cycles of
$\pi$. Condition $(iii)$ says that the same holds for the
pair $(\cW, \pi^{-1}\gamma)$. Condition $(iv)$ says that the
sum of the number of joins of the cycles of $\pi$ made by
$\cV$ and the number of joins of the cycles of
$\pi^{-1}\gamma$ made by $\cW$ equals the number of joins of
the blocks of $\pi \vee \gamma$ made by $\cV \vee \cW$.
\end{remark}

\begin{remark}
Now let us consider what happens in Proposition
\ref{prop:equality_in_triangle} when $\cW =
0_{\pi^{-1}\gamma}$ and $\cV \vee \gamma = 1_n$. First $(i)$
does not involve the partitions $\cV$ and $\cW$, so nothing changes
here. Next, $(ii)$ becomes $|1_n| - |\pi \vee \gamma| =
|\cV| - |\pi|$. Finally, $(iii)$ and $(iv)$ are tautologies
when $\cW = 0_{\pi^{-1}\gamma}$.
\end{remark}

\begin{remark}\label{rem:partition_graph}
Suppose $(\cV, \pi)$ is a partitioned permutation.  Let us
create an unoriented bipartite graph, $\Gamma(\cV, \pi,
\gamma)$. We let the edge set $E = \{ e_c \mid c$ is a cycle
of $\pi\}$ be the cycles of $\pi$ and the vertex set $V$ be
the union of the blocks of $\cV$ (the \textit{black}
vertices) and the blocks of $\pi \vee \gamma$ (the
\textit{white} vertices). Each edge $e_c$ connects the block
of $\cV$ containing $c$ to the block of $\pi \vee \gamma$
containing $c$. See Figure \ref{fig:partition_graph}. If
$\cV \vee \gamma = 1_n$ then item $(ii)$ of Proposition
\ref{prop:equality_in_triangle} is satisfied if and only if
$(V, E)$ is a tree. If $\cV \vee \gamma < 1_n$ then item
$(ii)$ is satisfied if and only if $(V, E)$ is disjoint
union of trees.
\end{remark}

In this paper we will deal with several types of graphs. For
the sake of clarity we will be specific about our
terminology.

By a \textit{graph} we mean a pair of sets $(V, E)$. $V$
denotes the set of \textit{vertices} of our graphs and $E$
denotes the set of edges. For us, an \textit{edge} $e$ is an
unordered set of two vertices $\{u, v\}$. So with this
convention $\{u, v\} = \{v, u\}$. If $u = v$, we will
say that $\{u, v\}$ is a \textit{loop}. Since we have not
oriented the edges we will have an unoriented graph.

\begin{lemma}\label{lemma:tree_lemma}
The graph $\Gamma(\cV, \pi, \gamma)$ is a union of trees if
and only if $(ii)$ of Proposition
\ref{prop:equality_in_triangle} holds. If condition $(ii)$
holds, the number of trees is $\#(\pi \vee \gamma)$.
\end{lemma}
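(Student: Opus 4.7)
The plan is to prove this by a direct Euler-characteristic computation on the bipartite graph $\Gamma=\Gamma(\cV,\pi,\gamma)$, translating condition $(ii)$ into the statement ``vertices minus edges equals number of components.''

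First I would enumerate the ingredients of $\Gamma$. By construction the vertex set decomposes as black $\sqcup$ white with $\#(\text{black})=\#(\cV)$ and $\#(\text{white})=\#(\pi\vee\gamma)$, so
\[
|V(\Gamma)|=\#(\cV)+\#(\pi\vee\gamma),\qquad |E(\Gamma)|=\#(\pi).
\]
Next I would identify the connected components. Each edge $e_c$ joins the block of $\cV$ containing the cycle $c$ of $\pi$ to the block of $\pi\vee\gamma$ containing $c$, so on the underlying set $[n]$ the component partition of $\Gamma$ is exactly $\cV\vee(\pi\vee\gamma)$. Since $\pi\le\cV$ gives $\cV\vee\pi=\cV$, this collapses to $\cV\vee\gamma$. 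Hence the number of connected components of $\Gamma$ equals $\#(\cV\vee\gamma)$.

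Now I would invoke the standard characterization: a (multi-)graph is a disjoint union of trees if and only if $|V|-|E|$ equals the number of connected components (both $V-E\le c$ and, separately, $V-E\ge c$ holding simultaneously force each component to be acyclic). Applying this,
\[
\Gamma \text{ is a forest}\iff \#(\cV)+\#(\pi\vee\gamma)-\#(\pi)=\#(\cV\vee\gamma).
\]
Finally I would rewrite condition $(ii)$, $|\cV\vee\gamma|-|\pi\vee\gamma|=|\cV|-|\pi|$, using $|\cU|=n-\#(\cU)$. The $n$'s cancel and it becomes
\[
\#(\pi\vee\gamma)-\#(\cV\vee\gamma)=\#(\pi)-\#(\cV),
\]
i.e.\ precisely the forest equation above. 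So $(ii)$ is equivalent to $\Gamma$ being a forest, and in that case the number of trees equals the number of connected components, namely $\#(\cV\vee\gamma)$, in agreement with the unicomponent observation of Remark~\ref{rem:partition_graph} when $\cV\vee\gamma=1_n$.

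The proof is almost entirely bookkeeping; the only step that requires a moment's care is the identification of the component partition of the bipartite graph with $\cV\vee\gamma$ (rather than, say, with $\pi\vee\gamma$), which hinges on the hypothesis $\pi\le\cV$ implicit in the definition of a partitioned permutation. Everything else is the Euler-characteristic dictionary between length functions on partitions and the combinatorics of $|V|$, $|E|$, and $c$.
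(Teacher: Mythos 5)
Your proof is correct and takes essentially the same Euler-characteristic route as the paper: compute $|V|-|E|$ for the bipartite graph, identify the component count, and observe that condition $(ii)$ is exactly the rearrangement of the forest criterion $|V|-|E|=c$. The paper states the same identity (with $|V|-|E|=\#(\cV\vee\gamma)+(|\cV\vee\gamma|-|\pi\vee\gamma|)-(|\cV|-|\pi|)$) and reduces to blocks of $\cV\vee\gamma$; yours makes the component identification explicit and is arguably cleaner.

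One thing worth flagging explicitly: your derivation gives the number of trees as $\#(\cV\vee\gamma)$, whereas the lemma as stated says $\#(\pi\vee\gamma)$. Your count is the right one. It matches the paper's displayed identity (which yields $|V|-|E|=\#(\cV\vee\gamma)$ under $(ii)$), it matches Remark~\ref{rem:partition_graph} (where $\cV\vee\gamma=1_n$ versus $\cV\vee\gamma<1_n$ governs tree versus forest), and one can check it directly: take $\gamma=(1,2)(3,4)$, $\pi=0_4$, $\cV=\{\{1,3\},\{2\},\{4\}\}$; then $\#(\cV\vee\gamma)=1$ while $\#(\pi\vee\gamma)=2$, condition $(ii)$ holds, and $\Gamma$ is a single path (one tree), not two. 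So ``$\#(\pi\vee\gamma)$'' in the lemma statement appears to be a typo for ``$\#(\cV\vee\gamma)$''; your proof should be read as establishing the corrected statement.
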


\begin{proof}
We just have to check that
\[
|V| - |E| = \#(\cV \vee \gamma) + (|\cV \vee \gamma| - |\pi
\vee \gamma|) - (|\cV| - |\pi|)
\]
and this follows easily from the definitions. When $\#(\cV
\vee \gamma) = 1$, we have that $\Gamma(\cV, \pi, \gamma)$
is a tree. If $\#(\pi \vee \gamma) > 1$, we apply the
previous result to the restriction of $(\cV, \pi)$ to each
block of $\cV \vee \gamma$, thus obtaining that the
restriction is a tree.
\end{proof}

\setbox1=\hbox{\includegraphics{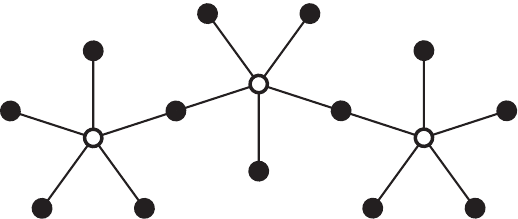}}
\setbox2=\hbox{\includegraphics{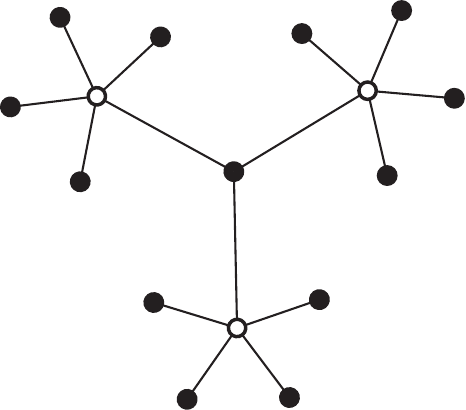}}

\begin{figure}
  \begin{center}
    \hfill\includegraphics{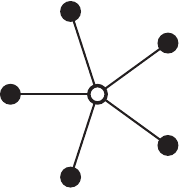}\hfill\includegraphics{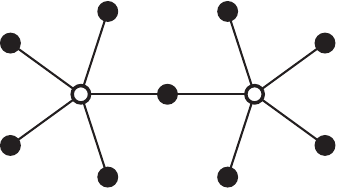}\hfill\hbox{}
  \end{center}

\medskip

\begin{center}\hfill
  $\vcenter{\hsize=\wd1\box1}$\hfill$\vcenter{\hsize=\wd2\box2}$\hfill\hbox{}
\end{center}

\caption{\small\label{fig:three_circles} When $\gamma
  =\gamma_{m_1, m_2, m_3}$ there are 4 possibilities for
  $\Gamma(\cV, \pi, \gamma)$. Let $n = m_1 + m_2 + m_3$. In
  Lemma \ref{lemma:factorization_lemma} we have presented
  the 4 cases where each of these graphs arises. When $\pi
  \vee \gamma = 1_n$ we have $\cV = 0_\pi$; there is only 1
  white vertex and the number of black vertices is
  $\#(\pi)$. This is illustrated in the upper left, $(i)$ in
  Lemma \ref{lemma:factorization_lemma}. When $\#(\pi \vee
  \gamma) = 2$ we have 2 white vertices, one black vertex of
  degree 2 and all other black vertices of degree 1. In this
  case one block of $\cV$ contains 2 cycles of $\pi$ and all
  other contain 1 cycle of $\pi$.  This is illustrated on
  the upper right, $(ii)$ in Lemma
  \ref{lemma:factorization_lemma}. In the bottom row we have
  the 2 cases when $\#(\pi \vee \gamma) = 3$. On the left,
  $(iii)$ in Lemma \ref{lemma:factorization_lemma}, we have
  two blocks of $\cV$ which each contain 2 cycles of $\pi$
  and on the right, $(iv)$ in Lemma
  \ref{lemma:factorization_lemma}, one block of $\cV$
  contains 3 cycles of $\pi$. }\end{figure}

\begin{lemma}\label{lemma:factorization_lemma}
Let $n = m_1 + m_2 + m_3$, $\gamma = \gamma_{m_1, m_2, m_3}$
and $(\cV, \pi)$ be a partitioned permutation such that $\cV
\vee \gamma = 1_n$ and
\[
|(\cV, \pi)| + |(0_{\pi^{-1}\gamma}, \pi^{-1} \gamma)| =
|(1_n, \gamma)|.
\]
Then, either
\begin{enumerate}

\item
$\cV = 0_\pi$, and $\pi \vee \gamma = 1_n$ and $\pi \in
  S_{NC}(m_1, m_2, m_3)$;

\item
$\pi =\pi_1 \times \pi_2 \in S_{NC}(m_{i_1}, m_{i_2}) \times
  \NC(m_{i_3})$ for some permutation $i_1, i_2, i_3$ of $1,
  2, 3$, $\#(\cV) = \#(\pi) - 1$, and $\cV$ joins a cycle of
  $\pi_1$ with a cycle of $\pi_2$;

\item
$\pi = \pi_1 \times \pi_2 \times \pi_3 \in NC(m_1) \times
  NC(m_2) \times NC(m_3)$, $\#(\cV) = \#(\pi) - 2$ and,
  $\cV$ joins a cycle of $\pi_{i_1}$ with a cycle of
  $\pi_{i_2}$ in one block and joins a cycle $\pi_{i_2}$
  with a cycle of $\pi_{i_3}$ into another block of $\cV$,
  with $i_1, i_2, i_3$ some permutation of $1, 2, 3$;

\item
$\pi = \pi_1 \times \pi_2 \times \pi_3 \in NC(m_1) \times
  NC(m_2) \times NC(m_3)$, $\#(\cV) = \#(\pi) - 2$ and,
  $\cV$ joins a cycle of $\pi_{1}$, a cycle of $\pi_{2}$ and
  a cycle of $\pi_3$ into a single block of $\cV$.
\end{enumerate}
\end{lemma}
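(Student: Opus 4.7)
The approach is to apply Proposition~\ref{prop:equality_in_triangle} with $\cW = 0_{\pi^{-1}\gamma}$ and then case-split on $\#(\pi \vee \gamma)$. The hypothesis $|(\cV,\pi)| + |(0_{\pi^{-1}\gamma},\pi^{-1}\gamma)| = |(1_n,\gamma)|$ records equality in the triangle inequality because $\cV \vee 0_{\pi^{-1}\gamma} = 1_n$, which follows from $\cV \vee \gamma = 1_n$, $\pi \leq \cV$, and the identity $\pi \vee \pi^{-1}\gamma = \pi \vee \gamma$. By the remark preceding Lemma~\ref{lemma:tree_lemma}, conclusions $(iii)$ and $(iv)$ of Proposition~\ref{prop:equality_in_triangle} are tautologies in our setting, while $(i)$ and $(ii)$ become the two constraints
\begin{align*}
|\pi| + |\pi^{-1}\gamma| + |\gamma| &= 2\,|\pi \vee \gamma|, \\
|\cV| - |\pi| &= (n-1) - |\pi \vee \gamma|.
\end{align*}

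The second of these relations pins down the number of cycle-joins of $\pi$ performed by $\cV$. Since $\#(\pi\vee\gamma) \leq \#(\gamma) = 3$, this leaves three cases, and in each one the first relation combines with Biane's inequality, applied to the restriction of $(\pi,\gamma)$ to each block of $\pi \vee \gamma$, to force equalities and hence pin down the factorization and non-crossing structure of $\pi$.

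If $\#(\pi\vee\gamma)=1$, the second relation forces $\cV = 0_\pi$, while the first reads $|\pi|+|\pi^{-1}\gamma| = |\gamma| + 2(3-1)$, which is precisely the metric definition of $\pi \in S_\NC(m_1,m_2,m_3)$; this is case $(i)$. If $\#(\pi\vee\gamma)=2$, the two blocks of $\pi\vee\gamma$ must coarsen $\gamma$ into the pattern $[\![m_{i_1}]\!]\cup[\![m_{i_2}]\!]$ and $[\![m_{i_3}]\!]$ for some permutation $(i_1,i_2,i_3)$ of $(1,2,3)$, so that one additional join of $\cV$ can lift the partition to $1_n$; correspondingly $\pi = \pi_1 \times \pi_2$, and the annular Biane bound for $\pi_1$ together with the disk Biane bound for $\pi_2$ must each be saturated, yielding $\pi_1 \in S_\NC(m_{i_1},m_{i_2})$ and $\pi_2 \in \NC(m_{i_3})$, which is case $(ii)$. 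If $\#(\pi\vee\gamma)=3$, then $\pi \leq \gamma$ and $\pi = \pi_1 \times \pi_2 \times \pi_3$; three disk Biane bounds must all be saturated, forcing each $\pi_i \in \NC(m_i)$, and the second metric relation gives $|\cV|-|\pi|=2$, so $\cV$ performs exactly two cycle-joins.

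The genuine combinatorial step, which I expect to be the main obstacle, is to classify how these two joins can be arranged subject to $\cV \vee \gamma = 1_n$ and the constraint from condition $(ii)$ of Proposition~\ref{prop:equality_in_triangle} that cycles of $\pi$ sharing a block of $\cV$ lie in different cycles of $\gamma$. I would use the tree picture of Lemma~\ref{lemma:tree_lemma}: $\Gamma(\cV,\pi,\gamma)$ is a tree with three white vertices (the cycles of $\gamma$), and the two joins must be organised to connect all three white vertices without creating a cycle. The only two legal shapes are a path through a common middle white vertex $[\![m_{i_2}]\!]$ (two degree-$2$ black vertices, each carrying one join), giving case $(iii)$, and a star centred at a single degree-$3$ black vertex incident to all three white vertices, giving case $(iv)$.
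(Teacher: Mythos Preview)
Your proposal is correct and follows essentially the same route as the paper: invoke Proposition~\ref{prop:equality_in_triangle} with $\cW = 0_{\pi^{-1}\gamma}$, then case-split on $\#(\pi\vee\gamma)\in\{1,2,3\}$, using condition~$(i)$ to force saturation of the appropriate Biane inequalities on each block of $\pi\vee\gamma$ and condition~$(ii)$ to count the cycle-joins of $\cV$. The only cosmetic difference is that for $\#(\pi\vee\gamma)=3$ you phrase the dichotomy between cases~$(iii)$ and~$(iv)$ via the tree $\Gamma(\cV,\pi,\gamma)$ of Lemma~\ref{lemma:tree_lemma}, whereas the paper splits directly on whether some block of $\cV$ contains more than two cycles of $\pi$; these are the same distinction.
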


\begin{remark}\label{remark:four_cases}
Before presenting the proof, note that the four cases in
Lemma \ref{lemma:factorization_lemma} are illustrated in
Figure \ref{fig:three_circles}, in the order presented in
Lemma \ref{lemma:factorization_lemma}, starting at the upper
hand corner.
\end{remark}
\begin{proof}
Let us begin by labelling the three cycles of $\gamma =
\gamma_{m_1, m_2, m_3}$
\[
=(1, 2, \dots, m_1)(m_1 + 1, \dots, m_1 + m_2)
 (m_1 + m_2 + 1,\dots, m_1+m_2+m_3)
\]
by $\gamma_{m_1}$, $\gamma_{m_2}$, and $\gamma_{m_3}$
respectively.  Note that $1 \leq \#(\pi \vee \gamma) \leq
\#(\gamma) = 3$, so there are three possibilities for
$\#(\pi \vee \gamma)$.

\smallskip\noindent\textit{Case}$(a)$: $\#(\pi \vee \gamma)
= 1$. In this case property $(i)$ of Proposition
\ref{prop:equality_in_triangle} means that $\pi$ is
non-crossing with respect to $\gamma$ and the cycles of
$\pi$ connect the cycles of $\gamma$. Thus $\pi \in
S_{NC}(m_1, m_2, m_3)$. Property $(ii)$ of Proposition
\ref{prop:equality_in_triangle} means that $|\cV| - |\pi| =
1_n - |\pi \vee \gamma| = 0$. So $\cV = 0_\pi$ and we are in
case $(i)$ of the Lemma.

\medskip\noindent\textit{Case}$(b)$: $\#(\pi \vee \gamma) =
2$. Then $\#(\pi \vee \gamma) = \#(\gamma) -1$ so the cycles
of $\pi$ connect two cycles of $\gamma$, say
$\gamma_{m_{i_1}}$ and $\gamma_{m_{i_2}}$. We consider the
cycles of $\pi$ that are contained in their union,
$\gamma_{m_{i_1},m_{i_2}}$, and call this permutation
$\pi_1$. We let $\pi_2$ be the cycles of $\pi$ that are in
contained in the remaining cycle $\gamma_{m_{i_3}}$. Then
property $(i)$ of Proposition
\ref{prop:equality_in_triangle} means that $\#(\pi_1) +
\#(\pi_1^{-1} \gamma_{m_{i_1},m_{i_2}}) = m_{i_1} + m_{i_2}$
and $\#(\pi_2) + \#(\pi_2^{-1}\gamma_{m_{i_3}}) = m_{i_3} +
1$. So $\pi_1 \in S_{NC}(m_1, m_2)$ and $\pi_2 \in
NC(m_{i_3})$. Thus $\pi \in S_{NC}(m_{i_1}, m_{i_2}) \times
NC(m_{i_3})$. Next, condition $(ii)$ of Proposition
\ref{prop:equality_in_triangle} means that $\#(\cV) =
\#(\pi) - 1$; so one block of $\cV$ contains two cycles of
$\pi$ and all other blocks contain one cycle of $\pi$. Since
$\cV \vee \gamma = 1_n$, we have that $\cV$ joins a cycle of
$\pi$ with a cycle of $\pi_2$.

\medskip\noindent\textit{Case}$(c)$: $\#(\pi \vee \gamma) =
3$ and no block of $\cV$ contains more that two cycles of
$\pi$. Then $|\cV| - |\pi| = 2$ and no block of $\cV$
contains more than two cycles of $\pi$. Hence two blocks of
$\cV$ each contain 2 cycles of $\pi$. Since $\#(\pi \vee
\gamma) = \#(\gamma)$ we have that each cycle of $\pi$ is
contained in some cycle of $\gamma$, so let us label the
restrictions of $\pi$ to the cycles $\gamma_{ m_1 }$,
$\gamma_{ m_2 }$, $\gamma_{ m_ 3}$ as $\pi_1$, $\pi_2$, and
$\pi_3$ respectively. By condition $(i)$ of Proposition
\ref{prop:equality_in_triangle} we have
\begin{multline*}
\#(\pi_1) + \#(\pi_1^{-1}\gamma_{m_1})
+
\#(\pi_2) + \#(\pi_2^{-1}\gamma_{m_2})\\
+
\#(\pi_3) + \#(\pi_3^{-1}\gamma_{m_3}) 
=
\#(\pi) + \#(\pi^{-1}\gamma) = n + 3.
\end{multline*}
Since $\#(\pi_i) + \#(\pi_i^{-1} \gamma_{m_i}) \leq m_i +
1$, we must have equality for $i = 1, 2, 3$. Thus $\pi_i \in
NC(m_i)$, for $i = 1, 2, 3$.

\medskip\noindent\textit{Case}$(d)$: $\#(\pi \vee \gamma) =
3$ and there is a block of $\cV$ which contains more that
two cycles of $\pi$. As above we have each cycle of $\pi$ is
contained in a cycle of $\gamma$ and thus $\pi \in NC(m_1)
\times NC(m_2) \times NC(m_3)$. Since $\cV \vee \gamma =
1_n$ we must have that the block of $\cV$ that contains 3
cycles of $\pi$ must contain one cycle contained in each
cycle of $\gamma$.
\end{proof}

\begin{notation}
We let denote by $\cPS_n$ the set of partitioned
permutations $(\cV, \pi)$ with $\pi \leq \cV$. We make
$\cPS_n$ a poset as follows. Given $(\cV, \pi)$ and $(\cU,
\gamma)$ in $\cPS_n$ we say $(\cV, \pi) \leq (\cU, \gamma)$
if
\begin{enumerate}
\setcounter{enumi}{3}

\item
$\cV \leq \cU$,

\item
$\Gamma(\cV, \pi, \gamma)$ is a union of trees, and 

\item
$|\pi| + |\pi^{-1}\gamma| +|\gamma| = 2 |\pi \vee \gamma|$.
\end{enumerate}
We let $\cPS_\NC(\cU, \gamma) = \{ (\cV, \pi) \in \cPS_n
\mid (\cV, \pi) \leq (\cU, \gamma)$ and $\cV \vee \gamma =
\cU\}$.

If $\gamma = \gamma_n$ and $\cU = 1_n$ then $\cPS_\NC(\cU,
\gamma) = \{ (0_\pi, \pi) \mid \pi \in\NC(n)\}$ because
$\Gamma(\cV, \pi, \gamma)$ will have 1 white vertex, one
edge for each cycle of $\pi$ and one black vertex for each
block of $\cV$. If a block of $\cV$ were to contain more
than one cycle of $\pi$ then $\Gamma(\cV, \pi, \gamma)$
would have a circuit and thus fail to be a tree; hence $\cV
= 0_\pi$. Since $\gamma$ has only one cycle, condition
$(vi)$ above forces $|\pi| + |\pi^{-1}\gamma| = |\gamma|$,
which is the non-crossing condition; thus $\pi \in
\NC(n)$. For convenience we write $\cPS_\NC(n) = \NC(n)
=\cPS_\NC(1_n, \gamma_n)$.

If $\gamma = \gamma_{m_1,m_2}$, $n = m_1 + m_2$, and $\cU =
1_n$ then $\cPS_\NC(\cU, \gamma) = \{ (0_\pi, \pi) \mid \pi
\in S_\NC(m_1, m_2)\} \cup \{(\cV, \pi) \mid \pi \in
\NC(m_1) \times \NC(m_2), \cV \vee \gamma = 1_n$ and $|\cV|
= |\pi| + 1 \}$. In the first part we have $\pi \vee \gamma
= 1_n$ and in the second part $\pi \leq\gamma$. In this case
we shall write $\cPS_\NC(m_1, m_2) = \cPS_\NC(1_n,
\gamma_{m_1, m_2})$.

Finally suppose $n = m_1 + m_2 + m_3$, $\gamma =
\gamma_{m_1, m_2, m_3}$, and $\cU = 1_n$. Then
$\cPS_\NC(\cU, \gamma)$, which we shall write 
as $\cPS_\NC(m_1, m_2,m_3)$, is the union of 4 subsets
corresponding to the four graphs in Figure
\ref{fig:three_circles}. The first is $S_\NC(m_1, m_2,
m_3)$. This is on the upper left in Figure
\ref{fig:three_circles}. Next, there is the set
\begin{multline*}
\{ (\cV, \pi) \mid \pi \in S_\NC(m_{i_1}, m_{i_2}) \times
\NC(m_{i_3}), |\cV| = |\pi| + 1\\ \textrm{\ and\ } \cV \vee
\{ [\![ m_{i_1}]\!] \cup [\![ m_{i_2}]\!], [\![
    m_{i_3}]\!]\} = 1_n \\
    \textrm{with } (i_1,i_2,i_3) \textrm{ a permutation of } (1,2,3)\},
\end{multline*}
which we denote by $\PSSSG$. These partitioned permutations are all represented by the graph on the upper
right in Figure \ref{fig:three_circles}. Next, we have
\begin{multline*}
\{ (\cV, \pi) \mid \pi \in \NC(m_1) \times \NC(m_2) \times
\NC(m_3), |\cV| = |\pi| + 2, \cV \vee \gamma = 1_n
\\ \textrm{\ and 2 blocks of\ }\cV \textrm{\ each contain
  two cycles of\ }\pi\},
\end{multline*}
which we denote by $\PSSG$. These partitioned permutations all have the graph on the
lower left of Figure \ref{fig:three_circles}. Finally we
have
\begin{multline*}
\{ (\cV, \pi) \mid \pi \in \NC(m_1) \times \NC(m_2) \times
\NC(m_3), |\cV| = |\pi| + 2, \cV \vee \gamma = 1_n
\\ \textrm{\ and 1 block of\ }\cV \textrm{\ contain three
  cycles of\ }\pi\},
\end{multline*}
which we denote by $\PSG$. These partitioned permutations all have the graph on the
lower right of Figure \ref{fig:three_circles}.
\end{notation}

\begin{remark}
The set $\cPS_\NC(m_1, m_2,m_3)$ is the disjoint union of $4$ sets, namely,
\begin{multline*}
\cPS_\NC(m_1, m_2,m_3)= S_\NC(m_1, m_2,
m_3) \cup \PSG \\
 \cup \PSSG \cup \PSSSG.
\end{multline*}
An example of each set can be see in Figure \ref{Figure:Partitioned permutations 3-annulus}.
\end{remark}

\begin{figure}
\begin{center}
\includegraphics[width=150pt]{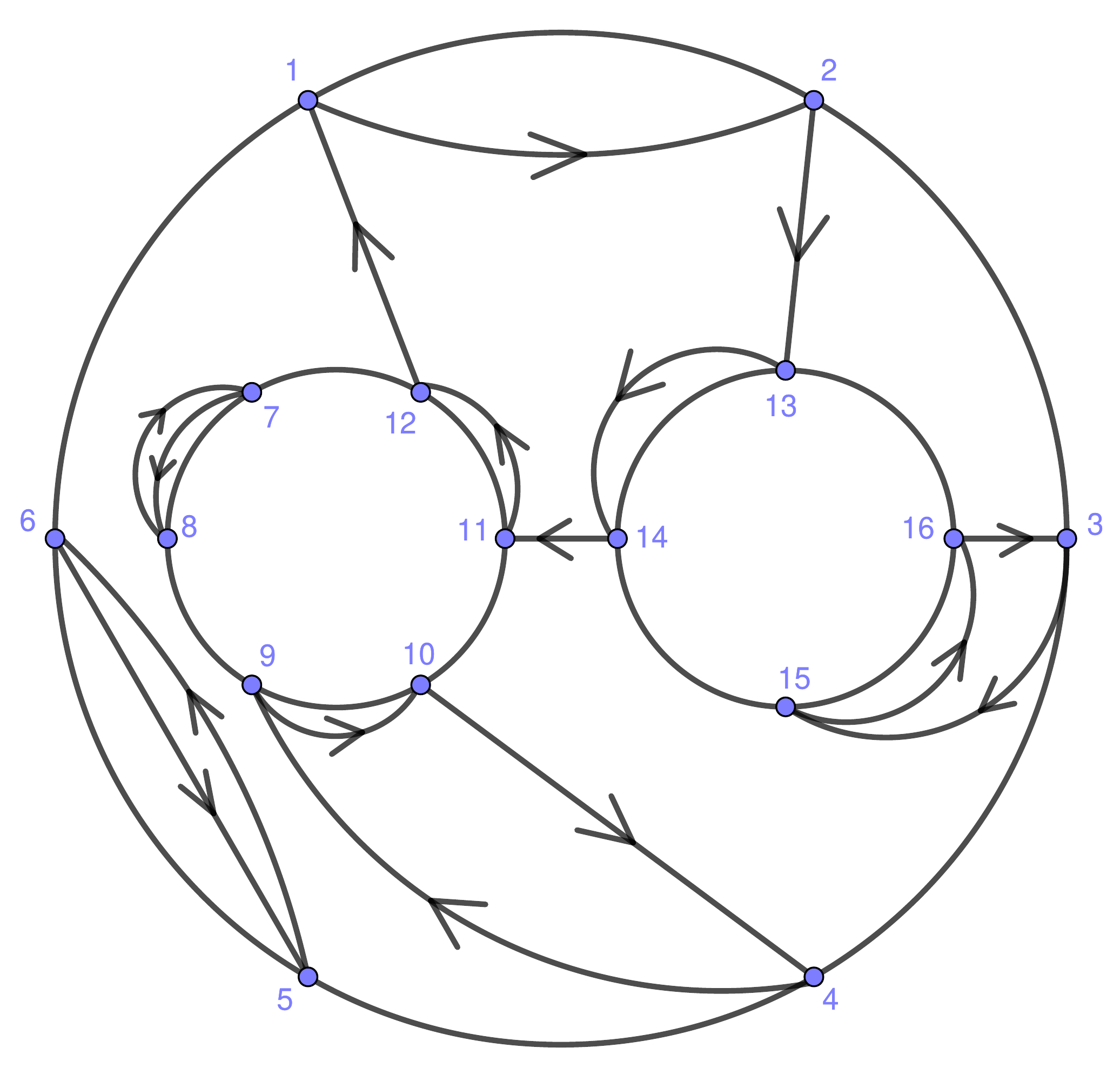} 
\includegraphics[width=150pt]{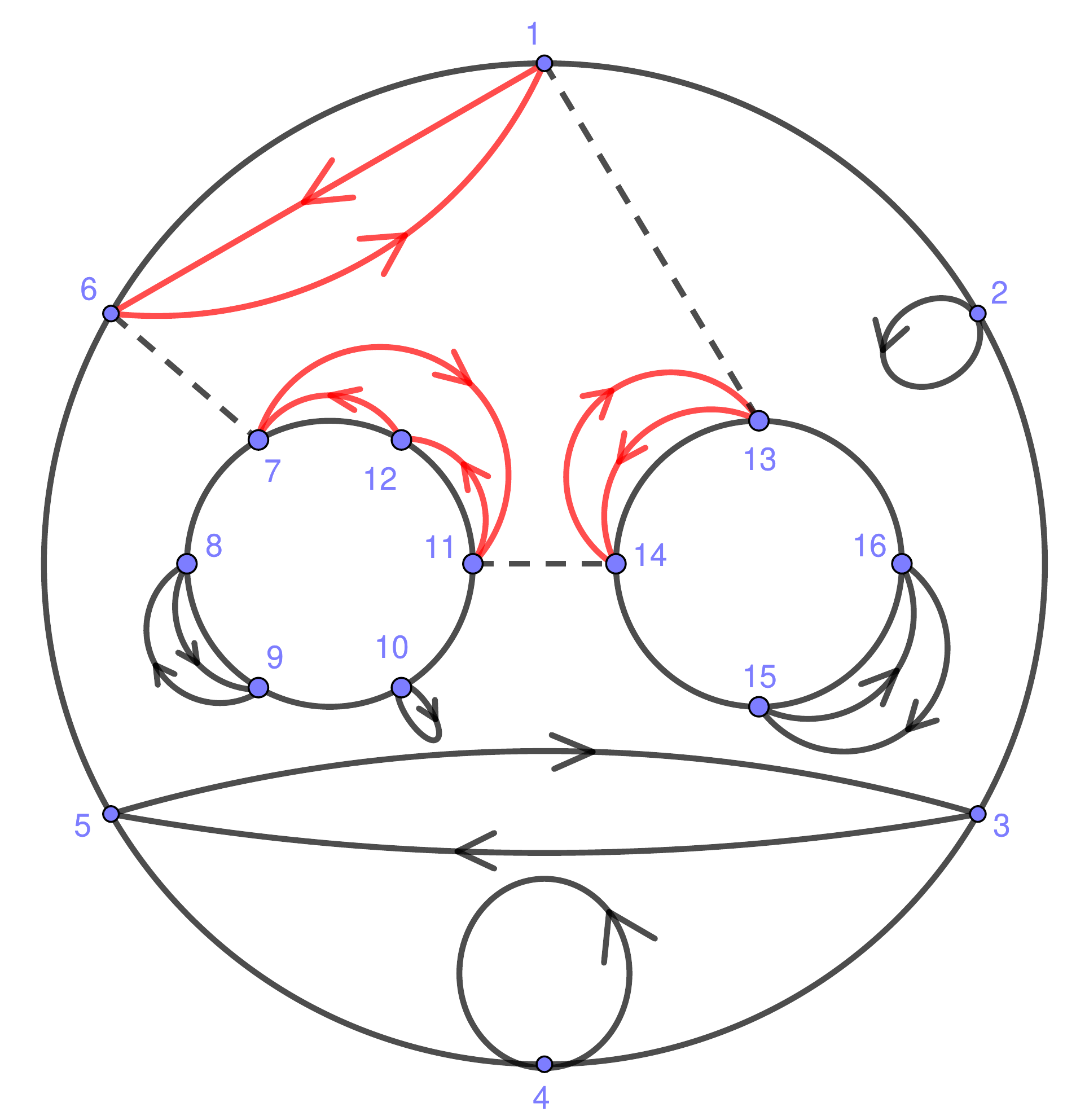}
\end{center}
\begin{center}
\includegraphics[width=150pt]{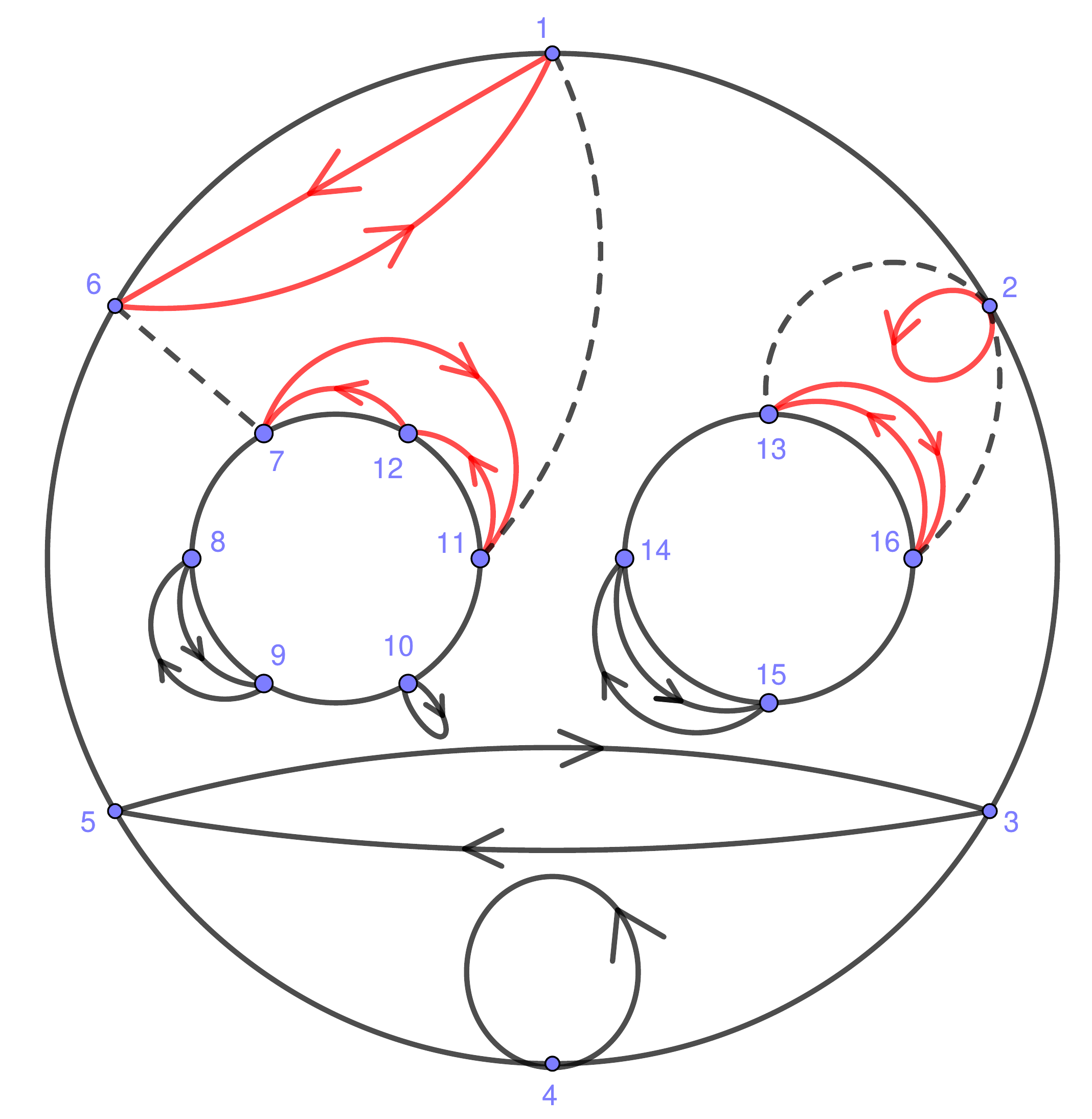} 
\includegraphics[width=150pt]{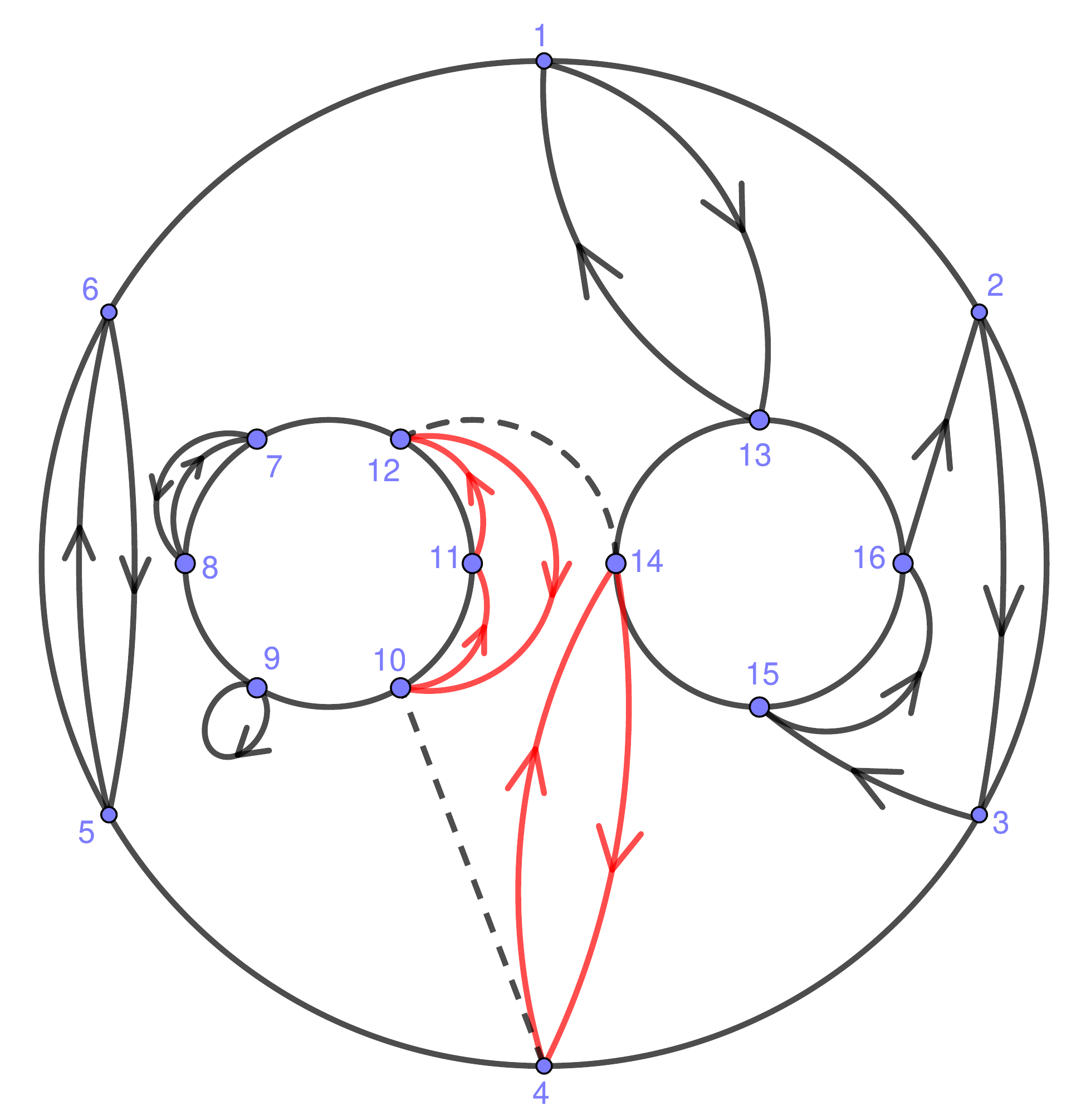}
\end{center}
\caption{\tiny\label{Figure:Partitioned permutations 3-annulus} We give examples of each of the four types making up the set $\cPS_\NC(m_1, m_2,m_3)$ with $m_1=m_2=6$ and $m_3=4$. Upper left: A non-crossing permutation $\pi\in S_\NC(m_1, m_2,
m_3)$ corresponding to \hfill\break
\centerline{$\pi=(1,2,13,14,11,12)(3,15,16)(4,9,10)(5,6)(7,8),$}
in this case $\pi$ connects all three circles. Upper right: A partitioned permutation $(\cV,\pi)\in \PSG$ corresponding to \begin{center}$\pi=(1,6)(2)(3,5)(4)(7,11,12)(8,9)(10)(13,14)(15,16),$\end{center} note that $\pi$ connects no circles but $\cV$ does by letting $\cV=\{\{1,6,7,11,\ab 12,13,14\},\{2\},\{3,5\},\{4\},\{8,9\},\{10\},\{15,16\}\}$. Each block of $\cV$ is a cycle of $\pi$ except the block $\{1,6,7,11,12,13,14\}$ which is the union of three cycles of $\pi$ shown in red. Bottom left: A partitioned permutation $(\cV,\pi)\in \PSSG$ corresponding to \hfill\break
\centerline{$\pi=(1,6)(2)(3,5)(4)(7,11,12)(10)(8,9)(13,16)(14,15),$}
$\pi$ connects no circles but $\cV$ does by letting $\cV=\{\{1,6,7,11,12\},\{2,13,16\},\ab\{3,5\},\{4\},\{10\},\{8,9\},\{14,15\},\}$. Each block of $\cV$ is a cycle of $\pi$ except the blocks $\{1,6,7,11,12\}$ and $\{2,13,16\}$ each one is the union of two cycles of $\pi$ shown in red. Bottom right: A partitioned permutation $(\cV,\pi)\in \PSSSG$ corresponding to \hfill\break
\centerline{$\pi=(1,13)(2,3,15,16)(5,6)(7,8)(9)(10,11,12).$}
Here $\pi$ connects only two circles, so we let $\cV=\{\{1,13\},\{2,3,15,16\},\{4,14\},\{5,6\},\ab\{7,8\},\ab\{9\},\{10,11,12,4,14\}\}$ so that $\cV$ connects all circles. Each block of $\cV$ is a cycle of $\pi$ except the block $\{10,11,12,4,14\}$ which is the union of two cycles of $\pi$ shown in red.}
\end{figure}

\section{Wigner ensemble and higher order\\ moments and free cumulants}
\label{sec:wigner-matrices}
A complex Wigner matrix is a self-adjoint random matrix with
independent identically distributed entries on and above the
main diagonal. In order to keep the formulas simple we shall
make some simplifying assumptions.

\begin{definition}\label{def:wigner_matrix}
By a complex Wigner matrix $X_N$ we mean a self-adjoint
$N\times N$ random matrix of the form
$X_N=\frac{1}{\sqrt{N}}(x_{i,j})$ such that,

\begin{itemize}

\item[$\diamond$]
the entries are complex random variables;

\item[$\diamond$]
 the matrix is self-adjoint: $x_{i,j}=\overline{x_{j,i}}$;
 
\item[$\diamond$] 
all entries on and above the diagonal are independent: 
$\{x_{i,j}\}_{i<j}\ab \cup \{x_{i,i}\}_i$ are independent;

\item[$\diamond$]
the entries above the diagonal, $\{x_{i,j}\}_{i<j}$, are
identically distributed;

\item[$\diamond$] 
the diagonal entries, $\{x_{i,i}\}_{i}$, are identically
distributed;

\item[$\diamond$] 
$\E(x_{i,j})=0$ for all $i,j$,

\item[$\diamond$] 
$\E(x_{i,j}^2)=0$ for all $i\neq j$,

\item[$\diamond$] 
$\E(|x_{i,j}|^2)=1$ for all $i,j$,

\item[$\diamond$]
$\C_3(x_{i,i},x_{i,i},x_{i,i})=0$ for all $i$,

\item[$\diamond$] 
$\E(|X_{i,j}|^k)<\infty$ for all $i,j,k.$    

\end{itemize}

A collection $X=(X_N)_N$ where $X_N$ is a Wigner matrix for
each $N$ is called a Wigner ensemble.
\end{definition}

Note that elements below the diagonal are all described
because the matrix is self-adjoint. Next is defining the
higher order moments and free cumulants associated to this Wigner
matrix.

\begin{definition}
We want to consider for each $r\geq 1$ and integers
$m_1,\dots, m_r\geq 1$ the quantities,
\[\E(\Tr(X_X^{m_1}) \cdots \Tr(X_N^{m_r}))\] where
$X=(X_N)_N$ is a Wigner ensemble and $Tr(\cdot)$ is the
trace. We don't expect these to have a large $N$ limit so we
consider the moment of order $r$ defined as,
\[
\alpha_{m_1,\dots,m_r}=\lim_{N\rightarrow\infty}\alpha_{m_1,\dots,m_r}^\psN=\lim_{N\rightarrow\infty}N^{r-2}\C_r(\Tr(X_X^{m_1}),\dots
,\Tr(X_N^{m_r}))
\]
provided the limit exists, where $\C_r$ is the $r^{th}$
classical cumulant as defined in \cite[Lect. 11,
  Appendix]{NS}.
\end{definition}

\begin{remark}
If we ask for the stronger conditions,
\begin{enumerate}
\item $\C_{2n+1}(x_{i,j}^{(\epsilon_1)},\dots, x_{i,j}^{(\epsilon_1)})=0$ for any $n$, $\epsilon_i\in \{-1,1\}$ and $i\neq j$
\item $\C_n(x_{i,i},\dots, x_{i,i})=0$ for all $n\neq 2$,
\end{enumerate}
where $x_{i,j}^{(1)}=x_{i,j}$ and $x_{i,j}^{(-1)}=x_{j,i}$, then $X_N$ turns out to be invariant under signed permutations, that is $Y_N=PX_NP^{-1}$ and $X_N$ have the same distribution where $P=UD$ is a signed permutation, i.e $D=\diag(\pm 1,\dots, \pm 1)$ and $U$ is a matrix permutation. The latest means that the condition over our Wigner matrix is a weaker condition, and it suffices to guarantee the existence of all moments up to order $3$. Our guess is that the moments of any order will exist only when $X_N$ is invariant under signed permutations.
\end{remark}

The case $r=1,2$ has been studied in \cite{MMPS}, namely,
\begin{equation}\label{FirstOrdercase}
\alpha_{m_1}=|NC_2(m_1)|
\end{equation}
\begin{equation}\label{SecondOrdercase}
\alpha_{m_1,m_2}=|NC_2(m_1,m_2)|+k_4|NC_2^{(2)}(m_1,m_2)|,
\end{equation}
where $k_4=k_4(x_{1,2},x_{1,2},x_{2,1},x_{2,1})$ is the fourth
classical cumulant of an off-diagonal element.
Equations (\ref{FirstOrdercase}) and (\ref{SecondOrdercase})
provides an expression for the first and second order
moments, however they also determine the first and second
order cumulants, in order to make sense of this we define
the first, second and third order cumulants below.

\begin{definition}
Given
\[
\{\alpha_m\}_{m=1}^{\infty}, \{ \alpha_{m_1,m_2} \}_{m_1,m_2 = 1}^{\infty},
\mbox{\  and\ } \{\alpha_{m_1,m_2,m_3}\}_{m_1,m_2,m_3 = 1}^\infty
\]
a sequence of first, second and third order moments,
we define the first $\{\K_{m}\}_m$, second
$\{\K_{m_1,m_2}\}_{m_1,m_2}$ and third
$(\K_{m_1,m_2,m_3})_{m_1,m_2,m_3}$ order cumulants as the
sequences given by the recursive formulas,
\begin{equation}\label{mc1}
\alpha_{m}=\sum_{\pi\in \NC(m)}\K_{\pi}
\end{equation}
\begin{equation}\label{mc2}
\alpha_{m_1,m_2}=\sum_{(\cU,\pi)\in \cPS_{\NC}(m_1,m_2)}\K_{(\cU,\pi)}
\end{equation}
\begin{equation}\label{mc3}
\alpha_{m_1,m_2,m_3}=\sum_{(\cU, \pi)\in \cPS_{\NC}(m_1,m_2,m_3)}\K_{(\cU,\pi)}
\end{equation}
with $\K_{\pi}$ and $\K_{(\cU,\pi)}$ defined as follows,
$$\K_{\pi}=\prod_{B\text{ cycles of }\pi}\K_{|B|}$$
$$\K_{(\cU,\pi)}=\prod_{\substack{D\text{ blocks of }\cU \\ B_1,\dots,B_l\text{ cycles of }\pi \\ B_i\subset D}}\K_{|B_1|,\dots,|B_l|}$$
\end{definition}

In general, it is possible to define the cumulants of any
higher order via the set of non-crossing partitioned
permutations on the $(m_1,m_2, \dots, \ab m_r)$-annulus, this
is done in \cite[Def. 7.4]{CMSS}, however in this work we will
not go further than $r = 3$. Equations (\ref{mc1}), (\ref{mc2}) and (\ref{mc3})
define the cumulant sequences uniquely, let us do some examples of this.

\begin{example}
$NC(1)$ has a unique element $\pi=(1)$, thus,
$$\alpha_1=\K_1$$
$NC(2)$ has two elements, namely, $\pi_1=(1,2)$ and $\pi_2=(1)(2)$, thus, $
\alpha_2=\K_2+\K_1\K_1=\K_2+\alpha_1^2$,
therefore,
$$\K_2=\alpha_2-\alpha_1^2.$$
$NC(3)$ has 5 elements, namely, $\pi_1=(1,2,3)$, $\pi_2=(1)(2,3)$, $\pi_3=(2)(1,3)$, $\pi_4=(3)(1,2)$ and $\pi_5=(1)(2)(3)$, thus, $\alpha_{3}=\K_1^3+3\K_2\K_1+\K_3$, therefore,
$$\K_3=\alpha_3-3\alpha_1(\alpha_2-\alpha_1^2)-\alpha_1^3=\alpha_3-3\alpha_1\alpha_2+2\alpha_1^3.$$
Let us do some examples of second order, the simplest case is $m_1=m_2=1$, in this case $\mathcal{PS}_{NC}(1,1)$ has the elements, $(\cV_1,\pi_1)=(\{1,2\},\ab(1,2))$ and $(\cV_2,\pi_2)=(\{1,2\},(1)(2))$, thus, $\alpha_{1,1}=\K_{2}+\K_{1,1}$, therefore,
$$\K_{1,1}=\alpha_{1,1}-\K_2=\alpha_{1,1}-\alpha_2+\alpha_1^2.$$
The next case is $m_1=1$ and $m_2=2$. In this case $\mathcal{PS}_{NC}(1,2)$ has the elements,
\begin{eqnarray*}
(\cV_1,\pi_1) &=& (\{1,2,3\},(1,2,3)) \\
(\cV_2,\pi_2) &=&(\{1,2,3\},(1,3,2)) \\
(\cV_3,\pi_3) &=& (\{\{1,2\},\{3\}\},(1,2)(3)) \\
(\cV_4,\pi_4) &=& (\{\{1,3\},\{2\}\},(1,3)(2)) \\
(\cV_5,\pi_5) &=& (\{1,2,3\},(1)(2,3)) \\
(\cV_6,\pi_6) &=& (\{\{1,2\},\{3\}\},(1)(2)(3)) \\
(\cV_7,\pi_7) &=& (\{\{1,3\},\{2\}\},(1)(2)(3))
\end{eqnarray*}
thus,
$$\alpha_{1,2}=2\K_3+2\K_2\K_1+\K_{1,2}+2\K_1\K_{1,1},$$
therefore,
\begin{eqnarray*}
\K_{1,2}&=&\alpha_{1,2}-2(\alpha_3-3\alpha_1\alpha_2+2\alpha_1^3)-2\alpha_1(\alpha_2-\alpha_1^2)  \\
&& \mbox{} -
2\alpha_1(\alpha_{1,1}-\alpha_2+\alpha_1^2) \\
&=& \alpha_{1,2}-2\alpha_1\alpha_{1,1}-2\alpha_3+6\alpha_1\alpha_2-4\alpha_1^3.
\end{eqnarray*}
Finally let us do some examples of third order case, the easiest case is $m_1=m_2=m_3=1$, in this case $\mathcal{PS}_{NC}(1,1,1)$ has the six elements,
\begin{eqnarray*}
(\cV_1,\pi_1) &=& (\{1,2,3\},(1,2,3)) \\
(\cV_2,\pi_2) &=& (\{1,2,3\},(1,3,2)) \\
(\cV_3,\pi_3) &=& (\{1,2,3\},(1,2)(3)) \\
(\cV_4,\pi_4) &=& (\{1,2,3\},(1,3)(2)) \\
(\cV_5,\pi_5) &=& (\{1,2,3\},(2,3)(1)) \\
(\cV_6,\pi_6) &=& (\{1,2,3\},(1)(2)(3))
\end{eqnarray*}
thus,
$$\alpha_{1,1,1}=2\K_3+3\K_{1,2}+\K_{1,1,1}.$$
Therefore,
\begin{eqnarray*}
\K_{1,1,1}=\alpha_{1,1,1}-3\alpha_{1,2}+6\alpha_1\alpha_{1,1}+4\alpha_3-12\alpha_1\alpha_2+8\alpha_1^3.
\end{eqnarray*}
Similarly one can check,
\begin{eqnarray*}
\alpha_{1,1,2} &=& 2 \K_2^2 + 4 \K_1 \K_3 + 6 \K_4 + 
 4 \K_2 \K_{1, 1} + 2 \K_{1, 1}^2 + 6 \K_1 \K_{1, 2} + 
 4 \K_{1, 3} + \K_{2, 2} \\
 &+& 2 \K_1 \K_{1, 1, 1} + 
 \K_{1, 1, 2}
\end{eqnarray*}
\begin{eqnarray*}
\alpha_{1,2,2} &=& 8 \K_1 \K_2^2 + 8 \K_1^2 \K_3 + 
 16 \K_2 \K_3 + 24 \K_1 \K_4 + 16 \K_5 + 
 14 \K_1 \K_2 \K_{1, 1}\\
 &  + & 8 \K_3 \K_{1, 1} 
 + 6 \K_1 \K_{1, 1}^2 + 12 \K_1^2 \K_{1, 2} + 
 8 \K_2 \K_{1, 2} + 4 \K_{1, 1} \K_{1, 2} + 
 16 \K_1 \K_{1, 3} \\
 &+ & 4 \K_{1, 4} 
 + 4 \K_1 \K_{2, 2} + 
 4 \K_{2, 3} + 4 \K_1^2 \K_{1, 1, 1} + 
 4 \K_1 \K_{1, 1, 2} + \K_{1, 2, 2}
\end{eqnarray*}
\end{example}

The results studied
in \cite{MMPS} can be written in terms of cumulants, namely,
\begin{equation}\label{FirstOrdercaseCumulants}
\alpha_{m}=\sum_{\pi\in NC(m)}\K_{\pi} = |NC_2(n)|
\end{equation}
because $\K_2=1$ and $\K_n=0$ for $n\neq 2$.
\begin{align}\label{SecondOrdercaseCumulants}
\lefteqn{
\alpha_{m_1,m_2}  = \kern-1em
\sum_{(\cU,\pi)\in \cPS_{\NC}(m_1,m_2)} \kern-1em
\K_{(\cU,\pi)} }\\
&= 
|NC_2(m_1, m_2)| + \K_{2,2} \frac{m_1 m_2}{4} 
|NC_2(m_1)| |NC_2(m_2)|\notag
\end{align}
because $\K_2=1$, $\K_n=0$ for $n\neq 2$, $\K_{2,2}=2\C_4(x_{1,2},x_{1,2},x_{2,1},x_{2,1})$
and $\K_{p,q}=0$ for $(p,q)\neq (2,2)$. This determines
the first and second order cumulants. The main result
of the present work is the equivalent formula for the third
order case.

\section{Graph theory}
\label{Section:GraphTheory}


Inspired by the traffic techniques introduced by Male
\cite{C}, in this section we will define some specific graphs
familiar from traffic theory which will be used to
achieve our objective. In traffic theory one relates random matrices to graphs
so that the entries of the matrices can
be indexed by the edges of the graphs; this notion permits
us to translate properties of the cumulants of the entries to
topological properties of the graphs.

In the first part of this section we review some general
graph theory. We begin by recovering some basic facts from
\cite{NC}. By a \textit{graph} we mean an unoriented graph 
where loops and multiple edges are permitted. In detail,
by a graph, $G$, we  mean a pair $(V,E)$, where $V$ is the set of vertices and $E$ is the set of edges. For an unoriented graph an edge is a subset $\{a, b\}$ of $V$, where we allow $a = b$. For $\{a,b\}\in E$, we say that
$a$ and $b$ are \textit{adjacent} vertices of the graph. We shall say that
$G$ is \textit{planar} if it can be drawn on the $2$-sphere without the edges crossing. If $S$ denotes the
$2$-sphere then $S\setminus G$ is a disjoint union of connected
components called the \textit{faces} of the graph. We said
that a graph is \textit{connected} if for any two vertices there is a
path on the edges travelling from one to the other.

\begin{definition}\label{Definition:Circui_Cycle_OfAGraph}
A \textit{\cycle} in a graph $G=(V,E)$ is a connected sub-graph $(V^{\prime},E^{\prime})$ with each vertex of degree 2. $|V^\prime|$ is called the \textit{length} of the \cycle{}. Here by degree of a vertex we mean the number of adjacent edges to the vertex where by convention a loop on a vertex contributes two to its degree. In Figure \ref{Chapter1_Figure_Unoriented graph with 1 circuit} we can see an example of a graph with a \cycle.
\end{definition}

\begin{figure}[h]
\begin{center}
\includegraphics[width=170pt,height=100pt]{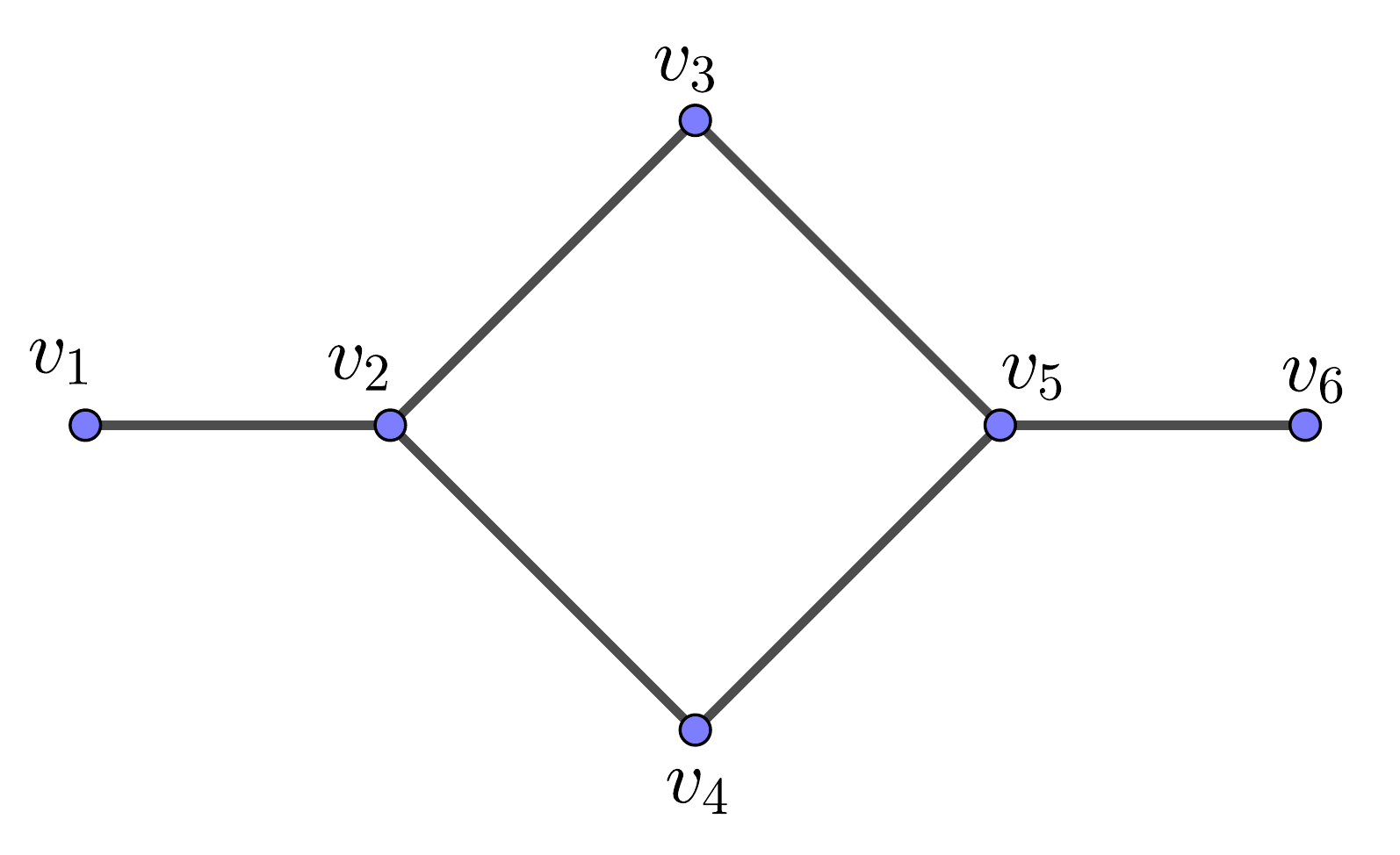}
\end{center}
\caption{\small\label{Chapter1_Figure_Unoriented graph with 1 circuit} A graph $(V,E)$ with vertex set $V
= \{v_1, v_2,\dots,\ab v_6\}$ and edge set
  $E=\{(v_1,v_2),(v_2,v_3),(v_3,v_5),(v_2,v_4),(v_4,\ab v_5),(v_5,v_6)\}$. This graph has a unique \cycles of length $4$: $(V^\prime,E^\prime)$ with $V^\prime=\{v_2,v_3,v_5,v_4\}$ and $E^\prime=\{(v_3,v_5),(v_5,v_4),(v_4,v_2),(v_2,v_3)\}$.}
\end{figure}

\begin{remark}\label{Theorem:UnicyclicCaracterization}
Suppose $G = (V_G, E_G)$ is a connected graph and $T =(V_T, E_T)$ is a subgraph which is a spanning tree, i.e. $T$ is a tree and  $V_G = V_T$. Let $m = |E_G| - |E_T|$. Let $H_1(G, \bZ)$ be the first homology group of $G$, then the rank of $H_1(G, \bZ)$ is $m$ and each edge in $E_G \setminus E_T$ is an edge of a circuit; together these circuits linearly generate $H_1(G, \bZ)$, see e.g. \cite{massey}. Moreover 
$
|V_G| - |E_G| = |V_T| - |E_T| - m = 1 - m. 
$

When $m=1$ (equivalently $|V_G|-|E_G|=0$) then we say $G$ is a \textit{unicircuit graph}. In this case $G$ has a unique \cycle.
\end{remark}

An example of a uni\cycles graph can be found in Figure \ref{Chapter1_Figure_Unoriented graph with 1 circuit}.

\begin{definition}
By an \textit{oriented graph} we mean a pair $(V,E)$ where
$V$ is the set of vertices and $E$ is a set of ordered pairs
of vertices, $E \subset \{(v_1,v_2):v_1,v_2\in V\}$. All our
graphs will be finite. Given $e=(v_1,v_2)\in E$ we let
$s(e)=v_1$ and $t(e)=v_2$ be the source and target of $e$
respectively and we say that $e$ is an outgoing edge of
$v_1$ and an incoming edge of $v_2$. In an oriented graph loops and multiple edges will be permitted.
\end{definition}

\begin{notation}\label{def:edge_equivalence}
Let $T = (V, E)$ be an oriented graph and let $e_1, e_2 \in E$ be two edges,
\begin{enumerate}
\item
We say $e_1 \sim e_2$ if $e_1$ and $e_2$ connect the same
pair of vertices, ignoring orientation.

\item
For $e_1 \sim e_2$ we have either $s(e_1)=s(e_2)$ and
$t(e_1)=t(e_2)$, or $s(e_1)=t(e_2)$ and $t(e_1)=s(e_2)$, in
the first case we say that they have the same orientation
and in the second case we say that they have the opposite
orientation.
\end{enumerate}
For each edge $e$ we let $\overline{e}$ be its equivalence
class and $\overline{E}$ the set of equivalence
classes. Observe that $\overline{T} = (V, \overline{E})$ is
itself a graph, called the \textit{elementarization} of
$T$. The elementarization of an oriented graph is a
unoriented graph with all edges simple, but with
the possibility of loops. Given an edge $\overline{e} \in
\overline{E}$ we say that the multiplicity of $e$ is $k$ if
the cardinality of the equivalence class $\overline{e}$ is
$k$ and we denote it as $\mult(\bar{e})$. If removing an
edge $\overline{e}$ from the graph $\overline{T}$ increases the number
of connected components of the graph we call $\overline{e}$ a \textit{cutting edge}.
\end{notation}

\begin{remark}\label{Remark:Loops have same and opposite orientation}
Let $T = (V, E)$ be an oriented graph and let $e_1, e_2 \in E$ be two edges such that both are loops and $e_1 \sim e_2$. Then by definition, $e_1$ and $e_2$ have both, the same and the opposite orientation. Loops are the only edges allowing this.
\end{remark}

\begin{definition}\label{def:quotient_graph}
Given $T=(V,E)$ an oriented graph and $\pi\in \cP(V)$, a
partition of the set of vertices, we define the \textit{quotient}
graph $T^{\pi}$ as the oriented graph obtained from $T$
after identifying vertices in the same block of $\pi$ and we write
$T^{\pi}=(V^{\pi},E^{\pi})$. In this convention the
blocks of $\pi$ are the vertices of $T^\pi$, and the edges of
$T^\pi$ are the edges of $T$. $T^{\pi}$ will be called the quotient of $T$ under $\pi$.
\end{definition}

\subsection{The graphs $T_{m_1,\dots,m_r}, T_{m_1,\dots,m_r}^{\pi}$ and $\overline{T^\pi_{m_1,\dots,m_r}}$}

This section is devoted to define a special graph which will
be the principal connection between graph theory and the
high order moments, for this section we set
$r\in\mathbb{N}$, $m_1,\dots,m_r\in\mathbb{N}$ and
$m=m_0+m_1+\cdots +m_r$ where by convention we let $m_0=0$. Most of the time we will work with
the case $r=3$, however some of the results are given for
the case $r=1,2$, this is the motivation of giving the more general
definition for any $r$.

\begin{definition}\label{def_T_k1k2kr}
Let 
\begin{multline*}
\gamma_{m_1,m_2,\dots,m_r}
=
(1,2,\ab\dots,m_1)
(m_1+1,m_1+2,\ab\dots,m_1+m_2) \\
\cdots (m_1+m_2+\cdots + m_{r-1}+1,\ab\dots,m)\in S_m.
\end{multline*}
\begin{enumerate}
\item 
For $1\leq j\leq r$ let $T_{m_j}=(V_i,E_j)$ be the oriented graph given by,
$$V_j=\{m_0 + m_1 + \cdots + m_{j-1} + 1,\ab \dots, m_0 + m_1 + \cdots + m_{j}\}$$
and edge set,
\begin{multline*}
E_j=\{(\gamma_{m_1,\dots,m_r}(i),i)
:\\ m_0 + m_1 + \cdots
+ m_{j-1} + 1\leq i \leq m_0 + m_1 + \cdots + m_{j}\}
\end{multline*}
We let $e_i \vcentcolon = (\gamma_{m_1,\dots,m_r}(i),i)$.
This graph is a regular $m_j$-gon, i.e. it has $m_j$ vertices
arranged in a circle and $m_j$ edges. See Figure
\ref{fig:t_6}. These oriented graphs are closed paths and we
will refer to them as \textit{basic cycles}.

\item 
For any $\{m_{j_1},\dots,m_{j_l}\}\subset\{m_1,\dots,m_r\}$
we let $T_{m_{j_1},\dots,m_{j_l}}$ be the graph
$\bigcup_{u=1}^l T_{m_{j_u}}$, in particular,
$T_{m_1,\dots,m_r}$ is the graph consisting of the $r$ basic
cycles, $T_{m_1}, \dots, T_{m_r}$. We write
$T_{m_{j_1},\dots,m_{j_l}}=(V_{j_1,\dots,j_l},E_{j_1,\dots,j_l})$
with
\[
V_{j_1,\dots,j_l}=\bigcup_{u=1}^ l V_{j_u} \mbox{\ and\ }
E_{j_1,\dots,j_l}\ab =\bigcup_{u=1}^ l E_{j_u}.
\]
When $\{m_{j_1},\dots,m_{j_l}\}=\{m_1,\dots,m_r\}$ we just
write $T_{m_1,\dots,m_r}=(V,E)$.  With the orientation from
$(1)$, we have that $T_{m_1, \dots, m_r}$ is an oriented
graph.

\end{enumerate}
\end{definition}

\begin{figure}\begin{center}
\quad\includegraphics[scale=0.75]{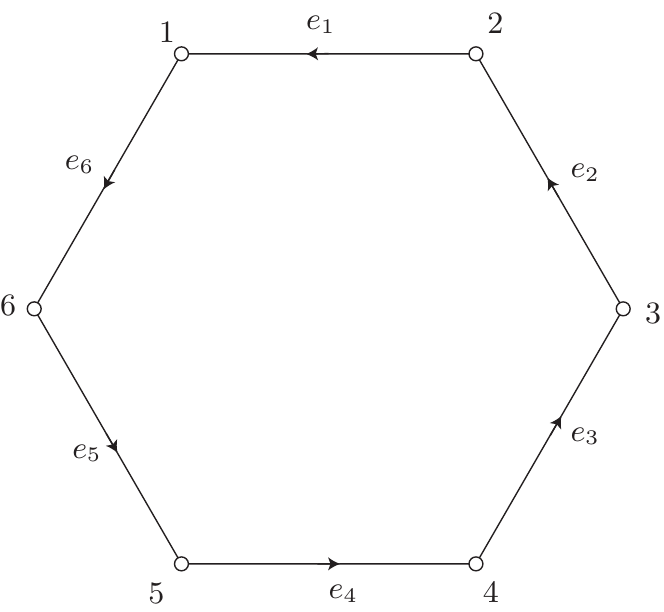}\hfill
\includegraphics[scale=0.75]{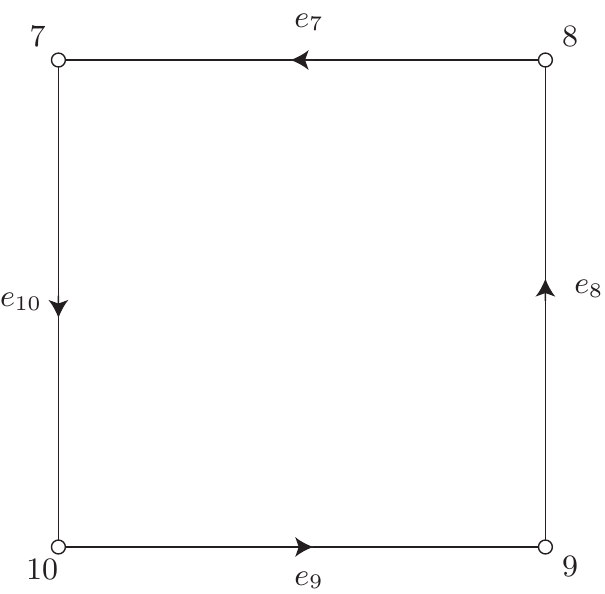}\quad\hbox{}
\end{center}
\caption{\small\label{fig:t_6} The graphs $T_{m_1}$, left, and
  $T_{m_2}$, right, with $m_1 = 6$ and $m_2 = 4$. Together
  they form the graph $T_{m_1,m_2}$.}
\end{figure}

\begin{definition}
Given $T_{m_1,\dots,m_r}=(V,E)$ as in Definition
\ref{def_T_k1k2kr} and $\pi \in \cP(m)$ we define the following graphs.
\begin{enumerate}

\item 
$T_{m_1,\dots,m_r}^{\pi}$ is the quotient graph of
$T_{m_1,\dots,m_r}$ under $\pi$. When convenient we shall write $(V^{\pi},E^\pi)$ for the vertices and edges of $T_{m_1,\dots,m_r}^{\pi}$. If $e_i \in E$ is
the edge $(\gamma_{m_1,\dots,m_r}(i), i)$, then we shall also write $e_i$ to denote the corresponding edge in
$E^\pi$, which it will be $([\gamma_{m_1,\dots,_r}(i)]_\pi, [i]_\pi)$ where $[i]_\pi$ denotes the block of $\pi$ containing $i$. For any
$\{m_{j_1},\dots,m_{j_l}\}\subset\{m_1,\dots,m_r\}$ we
denote  by $T_{m_{j_1},\dots,m_{j_l}}^{\pi}=(V_{j_1,\dots,j_l}^{\pi},E_{j_1,\dots,j_l}^{\pi})$, the graph of the restriction of $T_{m_1,\dots,m_r}^{\pi}$ to
$(V_{j_1,\dots,j_l},E_{j_1,\dots,j_l})$.

\item 
$\overline{T_{m_1,\dots,m_r}^{\pi}}$ is the elementarization
of the graph $T_{m_1,\dots,m_r}^{\pi}$. When convenient we shall write $(V^{\pi},\overline{E^\pi})$ for the vertices and edges of $\overline{T_{m_1,\dots,m_r}^{\pi}}$. For $e\in E^{\pi}$
we shall write $\overline{e}$ for the corresponding element of $\overline{E^\pi}$. For any
$\{m_{j_1},\dots,m_{j_l}\}\subset\{m_1,\dots,m_r\}$,
$\overline{T_{m_{j_1},\dots,m_{j_l}}^{\pi}}$ means the
elementarization of the graph
$T_{m_{j_1},\dots,m_{j_l}}^{\pi}$ and we write
$\overline{T_{m_{j_1},\dots,m_{j_l}}^{\pi}}=(V_{j_1,\dots,j_l}^{\pi},\overline{E_{j_1,\dots,j_l}^{\pi}})$.

\item 
We define the partition
$\overline{\pi}\in\cP(m)$ as the partition given by
$u\overset{\overline{\pi}}\sim v$ whenever
$\overline{e_u}=\overline{e_v}$.
\end{enumerate}
Some examples can be seen in Figure \ref{Example_TandT^pi}.
\end{definition}

\begin{remark}
For $\{m_{j_1},\dots,m_{j_l}\}\subset\{m_1,\dots,m_r\}$ the
graph $T_{\!m_{j_1},\dots,m_{j_l}}^{\pi}$ is itself a
quotient graph of $T_{m_{j_1},\dots,m_{j_l}}$
under $\pi^\prime$, with $\pi^\prime$ being the restriction
of $\pi$ to $V_{j_1,\dots,j_l}$.
\end{remark}

\begin{notation}
Let $\pi\in\cP(m)$. Let $(V,E),(V^{\pi},E^{\pi})$ and
$(V^{\pi},\overline{E^{\pi}})$ be the graphs $T_{m_1,\dots,m_r}$,
$T_{m_1,\dots,m_r}^{\pi}$ and
$\overline{T_{m_1,\dots,m_r}^{\pi}}$ respectively,
\begin{enumerate}

\item
For $\overline{e}\in \overline{E^{\pi}}$ and
$\{j_1,j_2,\dots,j_l\} \subset\{1,\dots,r\}$ we say that
$\overline{e}$ is a $(j_1,\dots,j_l)$-\textit{edge} if $e\in
E_{j_1,\dots,j_l}$ $\forall e\in\overline{e}$ and
$\overline{e}$ contains at least one edge from each
$E_{j_u}$ for $u=1,\dots,l$. When $l\geq2$ we say that
$\overline{e}$ is a \textit{connecting edge} and it connects
the basic cycles
$T_{m_{j_1}},T_{m_{j_2}},\dots,T_{m_{j_l}}$. When $l=1$ then
all edges in the equivalence class $\overline{e}$ belongs to
the same basic cycle, say $T_{m_{j}}$, in that case we say
that $\overline{e}$ is a \textit{non-connecting edge} and we
call $\overline{e}$ a $j$-\textit{edge}.

\item
Let $B=\{u_1,\dots,u_s\}$ be a block of $\overline{\pi}$ and
$\{j_1,j_2,\dots,j_l\} \subset\{1,\dots,r\}$. We say that
$B$ is a $(j_1,\dots,j_l)$-\textit{block} of $\overline{\pi}$ if
$\overline{e_{u_1}}$ is a $(j_1,\dots,j_l)$-edge. This is
well defined as $\overline{e_{u_i}}=\overline{e_{u_j}}$
$\forall 1\leq i,j\leq s$. When $l \geq 2$ we say that
$\overline{\pi}$ connects the basic cycles
$T_{m_{j_1}},\dots,T_{m_{j_l}}$. When $l=1$ then $B$
is a $j$-block for some $1\leq j\leq r$, which corresponds
to the case where $e_{u_1},\dots, e_{u_s}$ are all from the
same basic cycle $T_{m_j}$ , i.e $\overline{e_{u_1}}$ is a $j$-edge.

\end{enumerate}
\end{notation}

\begin{remark}\label{Remark:RelationPiBarAndEdgeSet}
Let $B= \{i_1,\dots,i_n\}$ be a subset of $[m]$. Observe that $B$ is a block of $\overline{\pi}$ if and only if $\{e_{i_i},\dots,e_{i_n}\}$ is an equivalence class of $\overline{E^{\pi}}$. In this sense, the edges of $\overline{T_{m_1,\dots,m_r}^{\pi}}$ can be regarded as blocks of $\overline{\pi}$. A $(j_1,\dots,j_l)$-block of $\overline{\pi}$ corresponds to a $(j_1,\dots,j_l)$-edge of $\overline{E^{\pi}}$. In the first case it means the block of $\overline{\pi}$ contains only elements from the cycles $[\![m_{j_1} ]\!],\dots, [\![m_{j_l} ]\!]$ of $\gamma_{m_1,\dots,m_r}$ while in the second case it means $\overline{e}$ contains edges only from $E_{j_1},\dots,E_{j_l}$.
\end{remark}

\begin{figure}[t]
\begin{minipage}[b]{\textwidth}\centering
\includegraphics[width=0.7\textwidth]{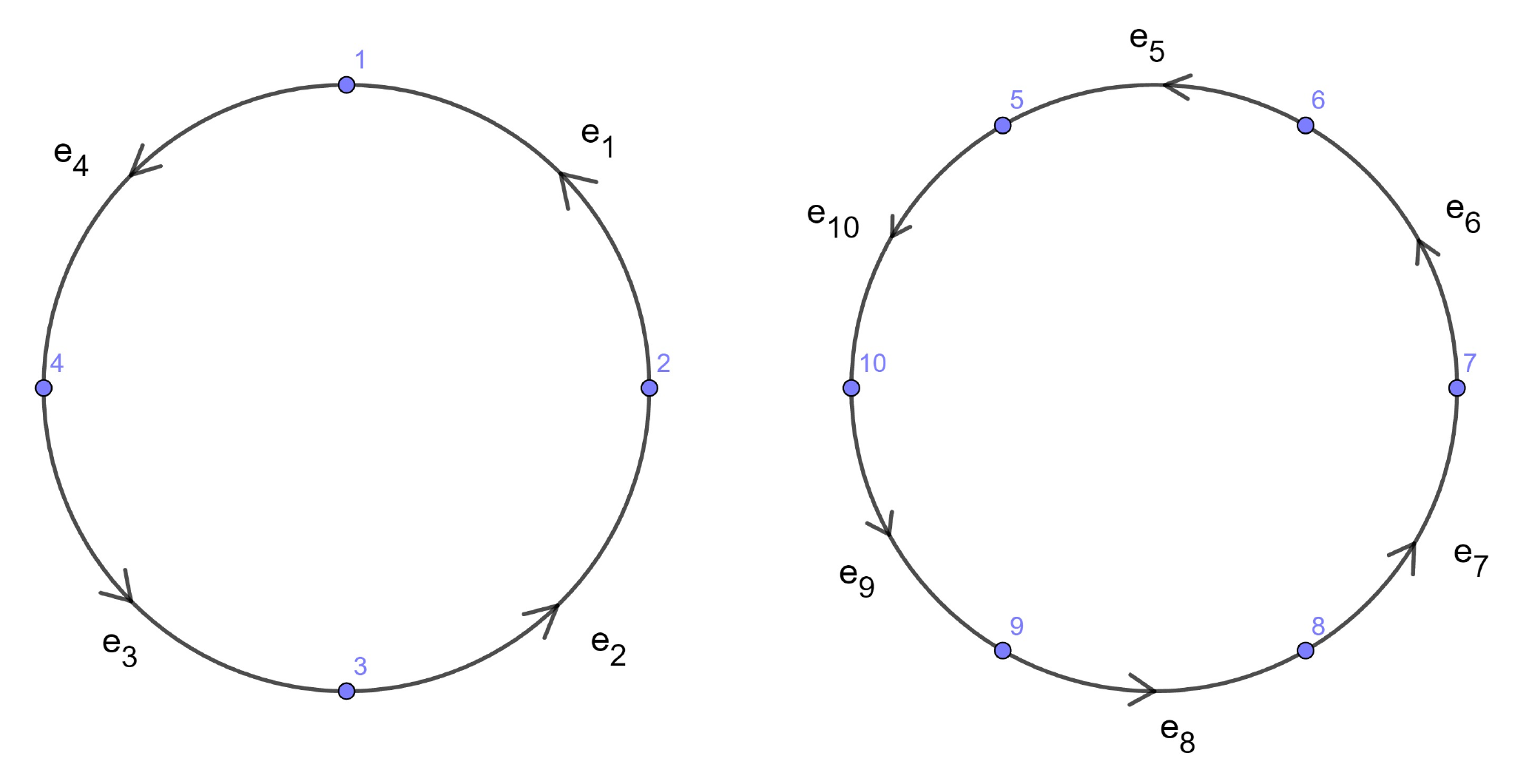}\\ \includegraphics[width=0.45\textwidth]{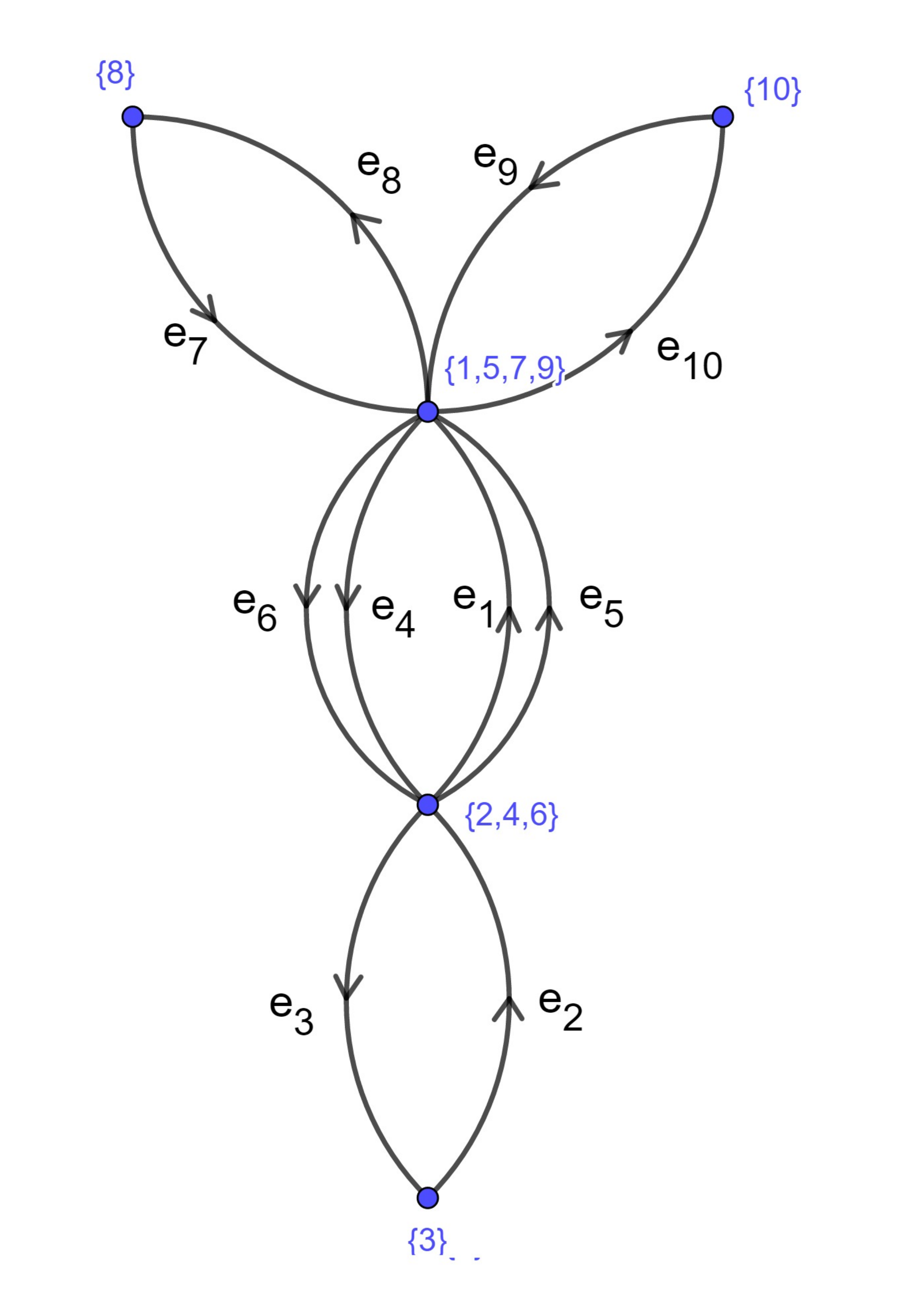}\hfill
\includegraphics[width=0.45\textwidth]{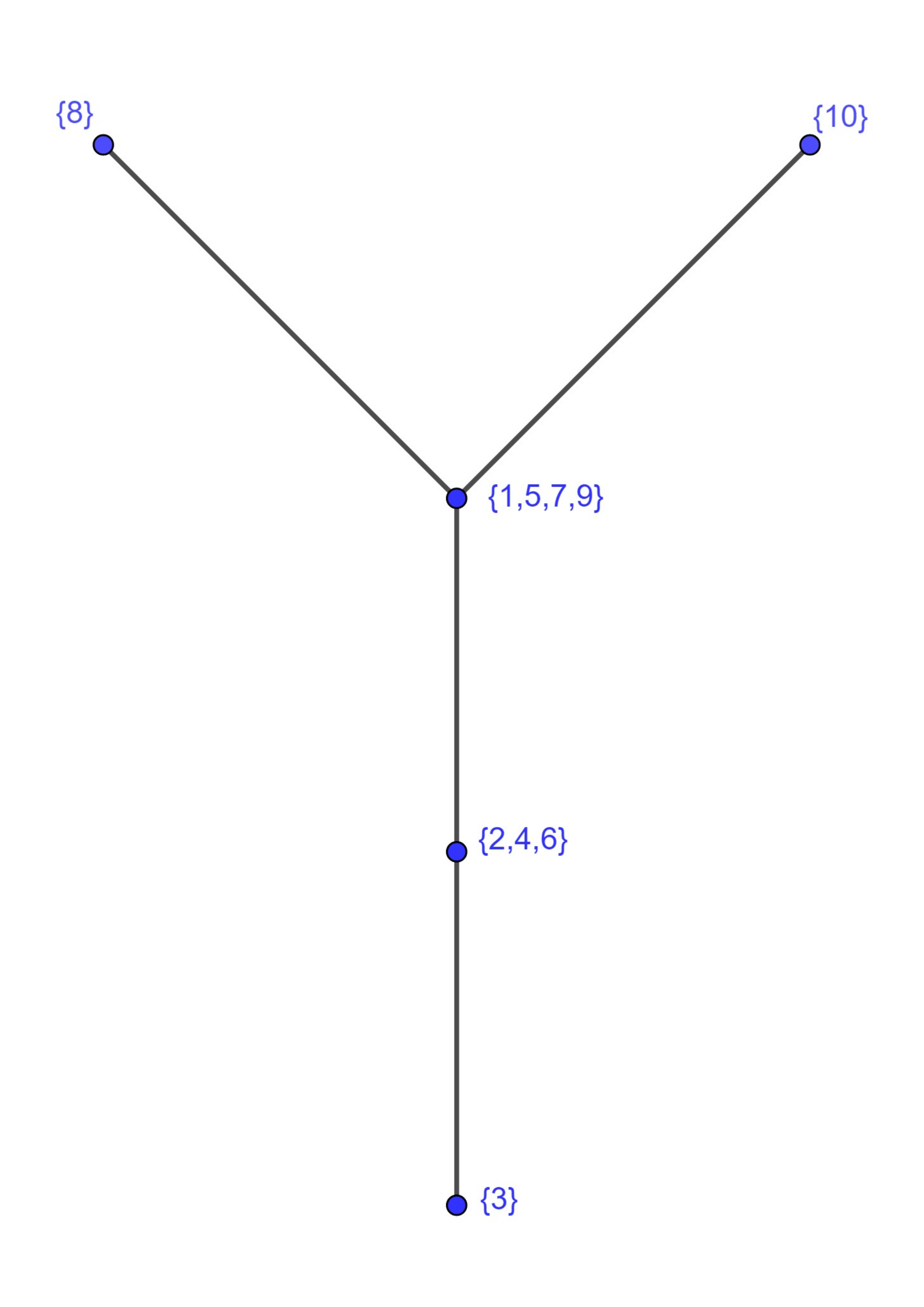}

\caption{\protect\raggedright\small%
  Above the graph $T_{4,6}$, below from left to
  right the graphs $T^{\pi}_{4,6}$ and
  $\overline{T^{\pi}_{4,6}}$ corresponding to the partitions
  $\pi=\{\{1,5,7,9\},\{2,4,6\},\{3\},\{8\},\{10\}\}$ and
  $\overline{\pi}=\{\{1,4,5,6\},\ab\{2,3\},\{7,8\},\{9,10\}\}$.}
\label{Example_TandT^pi}
\end{minipage}
\end{figure}

\begin{theorem}\label{Theorem:MultiplicityOfCuttingEdges}
Let $\pi\in\cP(m)$ and $(V,E),(V^{\pi},E^{\pi})$ and
$(V^{\pi},\overline{E^{\pi})}$ be the graphs $T_{m_1,\dots,m_r}$,
$T_{m_1,\dots,m_r}^{\pi}$ and $\overline{T_{m_1,\dots,m_r}^{\pi}}$
respectively, if $\overline{e}\in \overline{E^{\pi}}$ is a
cutting edge of $\overline{T_{m_1,\dots,m_r}^{\pi}}$ then
the number of edges in $\overline{e}$ coming from each basic
cycle is even, with an equal number of edges in each orientation. In particular
$\mult(\overline{e})$ is even.
\end{theorem}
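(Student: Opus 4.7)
The plan is a bridge/in-degree–out-degree balance argument, applied one basic cycle at a time. First I would use the hypothesis that $\overline{e}$ is a cutting edge to write $V^{\pi} = A \sqcup B$ with the two endpoints of $\overline{e}$ lying on opposite sides; every edge of $\overline{T_{m_1,\dots,m_r}^{\pi}}$ other than $\overline{e}$ is then internal to $A$ or internal to $B$, so every edge of $T_{m_1,\dots,m_r}^{\pi}$ whose equivalence class is $\overline{e}$ crosses between $A$ and $B$ in one of the two possible orientations. Note that a cutting edge is necessarily not a loop, so this splitting is well-defined.

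Next I would fix $j$ and exploit the fact that $T_{m_j}$ is a single directed cycle, so each of its vertices has in-degree and out-degree equal to $1$. Since the quotient only collapses vertices while preserving the multiset of oriented edges, every vertex $v$ of $T_{m_j}^{\pi}$ still satisfies in-degree $=$ out-degree (both equal to the number of preimages of $v$ in $V_j$). Consequently, summing (in-degree $-$ out-degree) over the vertices of $T_{m_j}^{\pi}$ that lie in $A$ must yield $0$. Internal edges of $T_{m_j}^{\pi}$ contribute $0$ to this sum, so the only contributions come from edges of $T_{m_j}^{\pi}$ whose class is $\overline{e}$, and the two possible orientations of such a crossing contribute with opposite signs. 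Hence the number of $T_{m_j}$-edges in $\overline{e}$ of each orientation must be equal, and the total number of $T_{m_j}$-edges in $\overline{e}$ is therefore even.

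Finally I would sum this equality over $j=1,\dots,r$ to conclude that $\mult(\overline{e})$ itself is even and that the equality between the two orientations persists cycle by cycle. The whole proof reduces to the observation that every vertex of $T_{m_j}^{\pi}$ is Eulerian together with the definition of a cutting edge; I do not anticipate any serious obstacle, the only things that need care are the orientation bookkeeping for the two types of crossings and the remark that a cutting edge cannot be a loop.
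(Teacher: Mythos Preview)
Your proposal is correct and is essentially the same argument as the paper's, phrased in the language of in-degree/out-degree balance rather than as a closed walk crossing a bridge. The paper fixes one basic cycle, removes $\overline{e}$ to split the graph into two pieces $S_1$ and $S_2$, and then observes that the closed directed path $e_m,\dots,e_1$ coming from that basic cycle starts and ends in $S_1$, hence must cross the bridge the same number of times in each direction; your summation of $(\text{in-degree}_j - \text{out-degree}_j)$ over $A$ is exactly the algebraic reformulation of this crossing count, and in fact the equality $\rdg(U)_j=\ldg(U)_j$ you invoke is stated separately in the paper as Theorem~\ref{Theorem:DegreeOfVertices}.
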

\begin{proof}
Since the claim is about each basic cycle, it
is enough to look at each subgraph $T_{m_i}^{\pi}$.  So let
us assume $r=1$. We may also assume $m := m_1>2$ as the case
$m=2$ is clear and when $m=1$ it is not possible to have a
cutting edge.

Let $\overline{e}=(U,V)$ with $U,V\in \pi$ blocks of $\pi$.
Without loss of generality we may assume that
$e_1 =([2]_\pi, [1]_\pi)\in
\overline{e}$, that is $\overline{e_1}=\overline{e}$, with
$U=[2]_{\pi}$ and $V=[1]_{\pi}$.

Since $\ols{e}$ is a cutting edge, removing $\ols {e}$
disconnects $\ols{T^\pi_m}$ into two subgraphs, $S_1$, the
part containing the vertex $[1]_\pi$, and $S_2$, the part
containing the vertex $[2]_\pi$. We consider $\ols e$ to be
the ``bridge'' connecting $S_1$ to $S_2$. Now consider the
path $e_m, \dots
,e_1$ in $T^\pi_m$ which starts and
ends at $[1]_\pi$, and where $e_i$
is the oriented edge in $T^\pi_m$ which starts at
$[\gamma(i)]_\pi$ and ends at $[i]_\pi$.

This path starts and ends in $S_1$, so it must cross the
``bridge'' an even number of times; in particular the number
of times it passes from $S_1$ to $S_2$ must equal the number
of times it passes from $S_2$ to $S_1$. This is what was
claimed.
\end{proof}

\begin{remark}\label{Remark:MultiplicityOfCuttingEdges}
A cutting edge $\overline{e}$ of multiplicity $2$ must consist of two edges in opposite orientations from the same basic cycle, that is: $\overline{e}=\{e_u,e_v\}$ with $e_u,e_v \in E_j$ for some $1\leq j\leq r$. This is the same as saying that $\overline{e}$ is non-connecting $j$-edge. In terms of $\overline{\pi}$, it means the block $\{u,v\}$ is a non-through string. More generally, if $\mult(\overline{e})=2n$ then $\overline{e}$ can connect at most $n$ basic cycles, in that case, $\overline{e}$ consist of two edges with the opposite orientation from each basic cycle it connects.
\end{remark}

\begin{notation}
Let $\pi\in\cP(m)$ and
$(V^{\pi},E^{\pi})$ be the graph $T_{m_1,\dots,m_r}^{\pi}$, let
$U\in V^{\pi}$. For $1\leq j\leq r$ we define, 
\begin{enumerate}
\item $\rdg(U)_j=\sum_{e\in E^{\pi}_{j}} \mathbbm{1}_{t(e)=U}$
\item $\ldg(U)_j=\sum_{e\in E^{\pi}_{j}} \mathbbm{1}_{s(e)=U}$
\item $\deg(U)_j=\rdg(U)_j+\ldg(U)_j$
\end{enumerate}
We call $\deg(U)_j$ the $j^{th}$-degree of $U$, that is, the
number of incoming and outgoing edges of $U$ coming from $T_{m_j}$
where by definition a loop is counted twice. We let $\deg(U)$
to be the sum of $\deg(U)_j$ for all $j$, and we call that the
degree of $U$.
\end{notation}

\begin{remark}
The degree of an oriented and an unoriented graph coincides and it represents the number of adjacent edges (either incoming or outgoing for oriented graphs) to the vertex. In both cases (oriented and unoriented) a loop contributes two to the degree of a vertex.
\end{remark}

\begin{theorem}\label{Theorem:DegreeOfVertices}
Let $\pi\in\cP(m)$ and
$(V^{\pi},E^{\pi})$ be the graph $T_{m_1,\dots,m_r}^{\pi}$. For
$1\leq j\leq n$ and $U\in V^{\pi}$, 
$\rdg(U)_j=\ldg(U)_j$, and $\deg(U)_j$ and $\deg(U)$ are both even.
\end{theorem}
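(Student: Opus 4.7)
My plan is to reduce the claim to a direct vertex-counting argument that exploits the structure of each basic cycle. The key observation is that each $T_{m_j}$ is by construction a regular oriented $m_j$-gon: it has one edge $e_i = (\gamma_{m_1,\dots,m_r}(i), i)$ for each $i$ in its vertex set $V_j$, so in $T_{m_j}$ every vertex $v \in V_j$ is the target of exactly one edge (namely $e_v$, which has $t(e_v) = v$) and the source of exactly one edge (namely $e_{\gamma^{-1}(v)}$, which has $s(e_{\gamma^{-1}(v)}) = v$). This is the degree bookkeeping that survives passage to the quotient, because Definition \ref{def:quotient_graph} keeps the edge set unchanged and merely relabels endpoints by their $\pi$-blocks.

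With this in place, I would compute $\rdg(U)_j$ and $\ldg(U)_j$ directly. For $U \in V^\pi$ (a block of $\pi$), an edge $e \in E_j^\pi$ has target $U$ precisely when its original target $i \in V_j$ lies in the block $U$, and there is exactly one such edge per element of $U \cap V_j$. Thus
\[
\rdg(U)_j = |U \cap V_j|,
\]
and by the symmetric argument using sources, $\ldg(U)_j = |U \cap V_j|$ as well. This immediately gives the first assertion $\rdg(U)_j = \ldg(U)_j$, and then
\[
\deg(U)_j \;=\; \rdg(U)_j + \ldg(U)_j \;=\; 2\,|U \cap V_j|,
\]
which is even. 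Summing over $j$ and using that the $V_j$ partition the vertex set of $T_{m_1,\dots,m_r}$,
\[
\deg(U) \;=\; \sum_{j=1}^{r} \deg(U)_j \;=\; 2 \sum_{j=1}^{r} |U \cap V_j| \;=\; 2\,|U|,
\]
so $\deg(U)$ is also even.

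The only mildly subtle point is making sure the convention that a loop contributes $2$ to the degree is consistent with the formulas above. A loop at $U$ in $T_{m_j}^\pi$ corresponds to an edge $e_i$ whose original source and target both lie in $U$; such an edge is counted once in $\rdg(U)_j$ (through its target) and once in $\ldg(U)_j$ (through its source), so it contributes $2$ to $\deg(U)_j$, in agreement with the convention. This is a sanity check rather than an obstacle: the entire argument is a two-line counting argument once the basic-cycle structure of each $T_{m_j}$ is unpacked, so I do not anticipate any real difficulty.
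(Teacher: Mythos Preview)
Your proof is correct and follows essentially the same approach as the paper's: both exploit that each vertex of the basic cycle $T_{m_j}$ has exactly one incoming and one outgoing edge, so after identifying vertices the block $U$ inherits $|U\cap V_j|$ incoming and $|U\cap V_j|$ outgoing $j$-edges. Your version simply makes the counting explicit (including the nice sanity check on loops), while the paper compresses the same idea into a single sentence.
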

\begin{proof}
Let $1\leq j\leq r$, by definition of a basic cycle each
vertex of $T_{m_j}$ has degree $2$ consisting of an incoming and an
outgoing edge. After identifying vertices, the number of
incoming and outgoing edges remains the same.
\end{proof}

\begin{theorem}\label{Chapter1_Theorem_Configuration of edges of double unicyclic}
Let $\pi\in\cP(m)$. Let $(V^{\pi},E^{\pi})$ and $(V^{\pi},\overline{E^{\pi}})$
be the graphs $T_{m_1,\dots,m_r}^{\pi}$ and $\overline{T_{m_1,\dots,m_r}^{\pi}}$
respectively. Assume than $\overline{T_{m_1,\dots,m_r}^{\pi}}$ is a connected
graph with a unique \cycles(equivalently $|V^{\pi}|-|\overline{E^{\pi}}|=0$). Let $\overline{e}$ be an edge of $\overline{T_{m_1,\dots,m_r}^{\pi}}$ in within the \cycle. Let $1\leq j\leq r$ be such that $\overline{e}$ have an odd number of edges from $E_j$, then:
\begin{enumerate}[(i)]
\item any adjacent edge $\overline{e^\prime}$ to $\overline{e}$ in within the \cycles have an odd number of edges from $E_j$, and
\item all edges in the \cycles have an odd number of edges from $E_j$.
\end{enumerate}
\end{theorem}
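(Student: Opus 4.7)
The plan is to prove $(i)$ by a local parity count at the common vertex of $\bar{e}$ and $\bar{e'}$, and then to deduce $(ii)$ by iterating $(i)$ around the circuit. Before the main argument I dispose of the degenerate case in which the circuit is a single loop: then there is no second circuit edge, so $(i)$ is vacuously true and $(ii)$ is immediate. From now on the circuit has length at least $2$. In that case $\ols{T^\pi_{m_1,\dots,m_r}}$ contains no loops at all, because a loop is itself a circuit (its unique vertex has degree $2$), which would contradict uniqueness of the circuit. In particular every edge incident to a circuit vertex has two distinct endpoints, and, since the circuit is unique, every edge outside the circuit is a cutting edge.

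For $(i)$, let $v$ be the endpoint shared by the adjacent circuit edges $\bar{e}$ and $\bar{e'}$. Every other edge incident to $v$ is a non-loop cutting edge, so by Theorem \ref{Theorem:MultiplicityOfCuttingEdges} its intersection with $E_j$ has even cardinality. Since no edge at $v$ is a loop, the $j$-th degree at $v$ decomposes as
\[
\deg(v)_j \;=\; |\bar{e}\cap E_j|\;+\;|\bar{e'}\cap E_j|\;+\sum_{\bar{f}\text{ cutting edge at }v}|\bar{f}\cap E_j|.
\]
By Theorem \ref{Theorem:DegreeOfVertices}, $\deg(v)_j$ is even, and the final sum is even by the cutting-edge observation, so $|\bar{e}\cap E_j|+|\bar{e'}\cap E_j|$ must be even. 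Since $|\bar{e}\cap E_j|$ is odd by hypothesis, $|\bar{e'}\cap E_j|$ is odd as well, which is $(i)$.

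Finally, $(ii)$ will follow by walking around the cycle: any edge $\bar{f}$ of the circuit is connected to $\bar{e}$ by a chain of consecutively adjacent circuit edges, and $(i)$ preserves the parity of $|\,\cdot\, \cap E_j|$ at each step, so every edge of the circuit has an odd number of edges from $E_j$. The only real subtlety is the bookkeeping involving loops and the degenerate one-edge circuit, both of which are handled cheaply at the outset; the substantive content of the proof is simply assembling Theorem \ref{Theorem:MultiplicityOfCuttingEdges} and Theorem \ref{Theorem:DegreeOfVertices} into the single-vertex parity count above.
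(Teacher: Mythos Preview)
Your proof is correct and follows essentially the same approach as the paper's: dispose of the loop case first, then at the common vertex of $\bar e$ and $\bar e'$ combine the parity information from Theorem~\ref{Theorem:MultiplicityOfCuttingEdges} (cutting edges contribute an even number from $E_j$) with Theorem~\ref{Theorem:DegreeOfVertices} ($\deg(v)_j$ is even) to force $|\bar e'\cap E_j|$ odd, and finally propagate around the circuit. Your extra remark that a unicircuit graph whose circuit has length $\geq 2$ can contain no loops is a nice piece of bookkeeping that makes the degree decomposition cleaner than in the paper, but it does not change the substance of the argument.
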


\begin{proof}
It is enough to prove $(i)$ as $(ii)$ follows from $(i)$. If $\overline{e}$ is a loop then this is the \cycles of the graph and therefore there are no more edges in within the \cycles so this case is completed. So we may assume $\overline{e}$ is not a loop, let $\overline{e^\prime}$ be a distinct edge to $\overline{e}$ in the \cycle, let $U$ be the vertex in the \cycles which is incident to $\overline{e}$ and $\overline{e^\prime}$. Observe that apart from $\overline{e}$ and $\overline{e^\prime}$, $U$ is adjacent to only edges not in the \cycles and therefore cutting edges. These cutting edges consist of an even number of edges from $E_j$. Since $\deg(U)_j$ is even then the number of edges in $\overline{e}$ from $E_j$ and the number of edges in $\overline{e^\prime}$ from $E_j$ must add up to an even number. Hence the number of edges in $\overline{e^\prime}$ from $E_j$ must be odd which proves $(i)$.
\end{proof}

\begin{corollary}\label{Chapter1_Corollary_Configuration of double edges of double unicyclic}
Let $\pi\in\cP(m)$. Let $(V^{\pi},E^{\pi})$ and $(V^{\pi},\overline{E^{\pi}})$
be the graphs $T_{m_1,\dots,m_r}^{\pi}$ and $\overline{T_{m_1,\dots,m_r}^{\pi}}$
respectively. Assume than $\overline{T_{m_1,\dots,m_r}^{\pi}}$ is a connected
graph with a unique \cycles and such that all the edges in the \cycles have multiplicity $2$. Let $\overline{e}$ be an edge of $\overline{T_{m_1,\dots,m_r}^{\pi}}$ in the \cycles such that $\overline{e}$ is an $(i,j)$-edge, this is: it contains one edge from $E_i$ and one edge from $E_j$ and suppose these edges have opposite orientation. Then all edges in the \cycles are $(i,j)$-edges whose edges are in opposite orientation.
\end{corollary}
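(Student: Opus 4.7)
The corollary has two assertions to establish for every edge $\overline{e'}$ in the circuit: first, that $\overline{e'}$ is an $(i,j)$-edge; and second, that its two constituent edges (one from $E_i$, one from $E_j$) carry opposite orientations. I will handle the type-assertion first and the orientation-assertion second, propagating both around the circuit using the balance identities already proved.

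\textbf{Step 1 (type).} Since $\overline{e}$ is a $(i,j)$-edge of multiplicity $2$, it contains exactly one edge of $E_i$ and one of $E_j$, so in particular $\overline{e}$ has an odd (= 1) number of edges from $E_i$ and an odd number from $E_j$. By Theorem \ref{Chapter1_Theorem_Configuration of edges of double unicyclic}, every edge $\overline{e'}$ in the circuit has an odd number of edges from $E_i$ and an odd number from $E_j$. Combined with the hypothesis that $\mult(\overline{e'})=2$, each $\overline{e'}$ must contain exactly one $E_i$-edge and exactly one $E_j$-edge, and no edges from any other $E_k$. Hence every circuit edge is an $(i,j)$-edge of the required form.

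\textbf{Step 2 (orientation, propagation around the circuit).} Fix a vertex $U$ lying in the circuit, and let $\overline{e_1},\overline{e_2}$ be the two circuit edges incident to $U$. Every other edge incident to $U$ is a cutting edge. By Theorem \ref{Theorem:MultiplicityOfCuttingEdges}, a cutting edge contains, for each index $k$, an equal number of $E_k$-edges in each orientation, so it contributes equally to $\rdg(U)_k$ and $\ldg(U)_k$. Consequently the cutting edges incident to $U$ contribute $0$ to the difference $\rdg(U)_k-\ldg(U)_k$ for every $k$. Since Theorem \ref{Theorem:DegreeOfVertices} gives $\rdg(U)_k=\ldg(U)_k$, the two circuit edges $\overline{e_1},\overline{e_2}$ must together contribute $0$ to this difference, both for $k=i$ and for $k=j$.

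Now suppose $\overline{e_1}$ already satisfies the claim, i.e.\ its $E_i$-edge and $E_j$-edge have opposite orientation. Say the $E_i$-edge of $\overline{e_1}$ is incoming at $U$; then by the opposite-orientation hypothesis the $E_j$-edge of $\overline{e_1}$ is outgoing at $U$. So $\overline{e_1}$ contributes $+1$ to $\rdg(U)_i-\ldg(U)_i$ and $-1$ to $\rdg(U)_j-\ldg(U)_j$. To cancel these, the $E_i$-edge of $\overline{e_2}$ must be outgoing at $U$ and the $E_j$-edge of $\overline{e_2}$ must be incoming at $U$, so the two components of $\overline{e_2}$ are in opposite orientation at $U$. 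Since $\overline{e_2}$ has multiplicity $2$, this local opposition at $U$ determines the orientations of both of its edges globally, giving the claim for $\overline{e_2}$. Starting from the hypothesis that $\overline{e}$ has its components in opposite orientation, and walking around the (unique) circuit one vertex at a time, this argument propagates the property to every edge of the circuit.

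\textbf{Main obstacle.} Step 1 is a direct appeal to Theorem \ref{Chapter1_Theorem_Configuration of edges of double unicyclic} plus the multiplicity-$2$ hypothesis and is routine. The delicate point is Step 2: one must separate the local contributions at a circuit vertex into the circuit-edge part and the cutting-edge part, and then invoke the orientation-balance of cutting edges (Theorem \ref{Theorem:MultiplicityOfCuttingEdges}) to isolate the two circuit edges. Once this accounting is in place, the induction around the circuit is immediate.
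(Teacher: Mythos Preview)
Your proof is correct and follows essentially the same approach as the paper: Step 1 is identical, and Step 2 uses the same local accounting at a circuit vertex (cutting edges are orientation-balanced, so the two circuit edges must balance each other). The only cosmetic difference is that you track the per-index balance $\rdg(U)_k=\ldg(U)_k$ separately for $k=i$ and $k=j$, whereas the paper just uses the total in/out balance at $U$; both arguments force $\overline{e'}$ to have one incoming and one outgoing constituent, and you make the propagation around the circuit a bit more explicit.
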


\begin{proof}
By Theorem \ref{Chapter1_Theorem_Configuration of edges of double unicyclic} all edges in the \cycles are $(i,j)$-edges, i.e. they contain one edge from $E_i$ and one edge from $E_j$ so it remains to verify that these edges have the opposite orientation. Let $U \in V^{\pi}$ be a vertex incident to $\overline{e}$. 

Let $A\subset \overline{E^{\pi}}$ be the set of edges incident to $U$. $A$ has exactly two edges which are in within the \cycle, one is $\overline{e}$ and let us denote to the other one by $\overline{e^\prime}$. All others edges in $A$ are cutting edges. We let $A^\prime= A\setminus \{\overline{e}, \overline{e}'\}$. Theorem \ref{Theorem:MultiplicityOfCuttingEdges} says that any edge in $A^\prime$ consists of the same number of edges in each orientation, so they contribute the same number of incoming and outgoing edges of $U$. By hypothesis $\overline{e}$ consist of one incoming and one outgoing edge of $U$. However, Theorem \ref{Theorem:DegreeOfVertices} says that the number of incoming and outgoing edges of $U$ must be the same, so this forces to $\overline{e^\prime}$ to consist of one incoming and one outgoing edge of $U$, thus the edges in the equivalence class $\overline{e^\prime}$ have opposite orientation.
\end{proof}

An example of a graph satisfying the hypotheses of Corollary \ref{Chapter1_Corollary_Configuration of double edges of double unicyclic} can be seen in Figure \ref{Chapter1_Figure_Double unicircuit graph}.

\begin{figure}[H]
\begin{center}
\includegraphics[width=200pt]{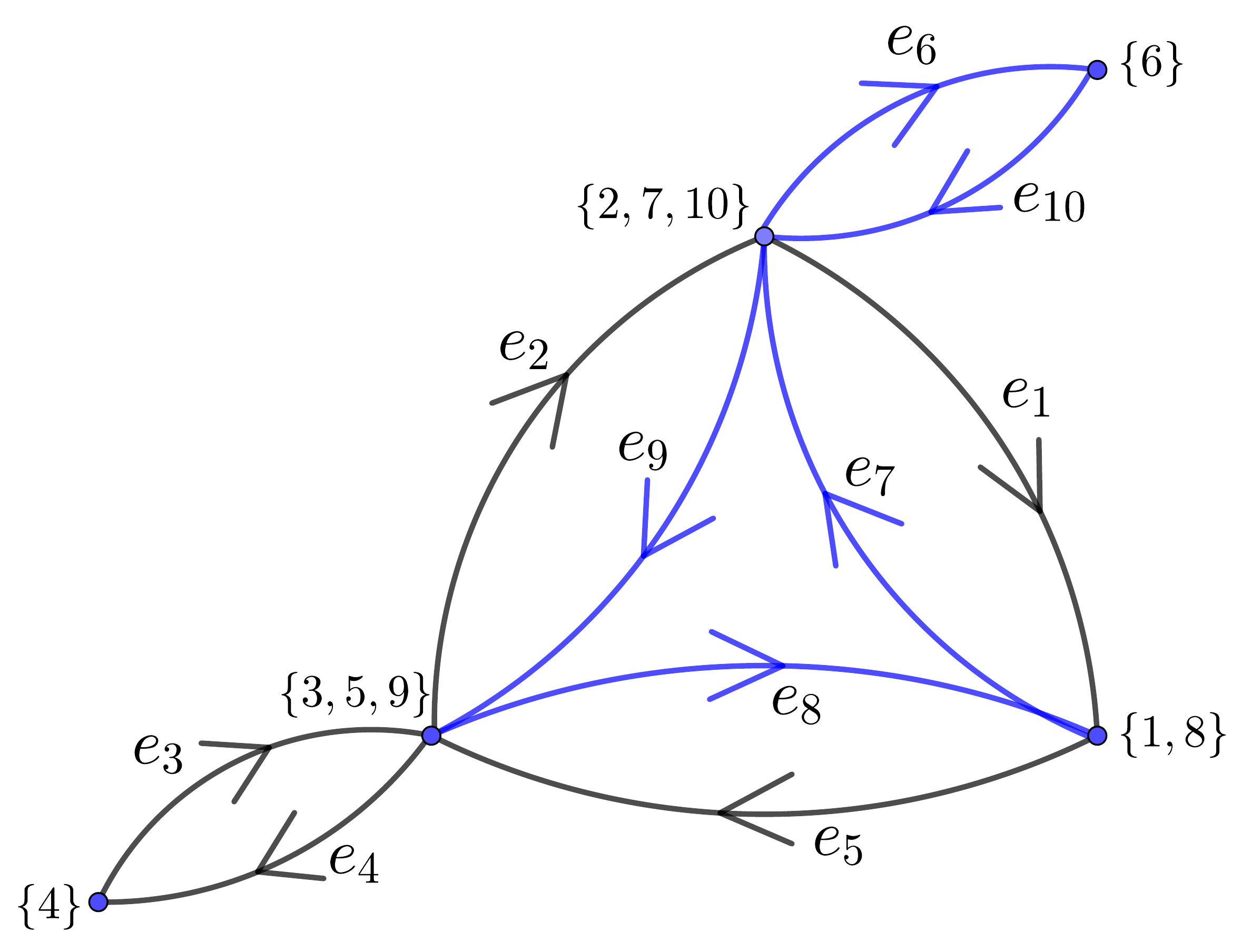}
\end{center}
\caption{\small\label{Chapter1_Figure_Double unicircuit graph} The oriented graph $T_{m_1,m_2}^{\pi}$ with $m_1=m_2=5$ corresponding to the partition $\pi=\{\{1,8\},\{2,7,10\},\{3,5,9\},\{4\},\{6\}\}.$ Any edge $\overline{e}$ in the unique \cycles of $\overline{T_{5,5}^{\pi}}$ is a $(1,2)$-edge and it consists of two edges in opposite orientation.}
\end{figure}

\section{Asymptotic expressions for third order moments}
\label{sec:asymptotic-expressions}
For this section we set $r=3$, $m_1,m_2,m_3\in\mathbb{N}$,
$m=m_1+m_2+m_3$ and $T_{m_1,m_2,m_3}=(V,E)$ and
$\gamma \vcentcolon = \gamma_{m_1,m_2,m_3}\in S_m$ be defined as in Section
\ref{Section:GraphTheory}. 

Note that for each $1\leq j\leq 3$,
$$\Tr(X_N^{m_j})=N^{-m_j/2}\sum_{i_1,\dots,i_{m_j}=1}^N x_{i_1,i_2}x_{i_2,i_3}\cdots x_{i_{m_j},i_1}$$
If we think of $i$ as a function from the vertices $V_j$ to $[N]$ then we can rewrite the equation above as follows:
\begin{eqnarray*}
\Tr(X_N^{m_j}) = N^{-m_j/2}\sum_{i^{(j)}:V_j\rightarrow [N]} \rp(i^{(j)})
\end{eqnarray*}
with,
\begin{eqnarray*}
\rp(i^{(j)}) & = & \prod_{v=m_0+\cdots + m_{j-1}+1}^{m_0+\cdots + m_j} x_{i^{(j)}(v),i^{(j)}(\gamma(v))} \nonumber \\
& = & x_{i^{(j)}(m_0+\dots +m_{j-1}+1),i^{(j)}(\gamma(m_0+\dots +m_{j-1}+1))}\cdots x_{i^{(j)}(m_0+\dots +m_j),i^{(j)}(\gamma(m_0+\dots +m_j))}
\end{eqnarray*}
where by convention we let $m_0=0$. The latest notation permit us to rewrite the term $\C_3(\Tr(X_N^{m_1}),\Tr(X_N^{m_2}),\Tr(X_N^{m_3}))$ in the following manner.

\begin{eqnarray}\label{Chapter2_Equation_First expression for almost alpha N}
&& \C_3(\Tr(X_N^{m_1}),\Tr(X_N^{m_2}),\Tr(X_N^{m_3})) \nonumber \\ & = & N^{-m/2}\C_3\left(\sum_{i^{(1)}:V_1\rightarrow [N]} \rp(i^{(1)}),\sum_{i^{(2)}:V_2\rightarrow [N]} \rp(i^{(2)}), \sum_{i^{(3)}:V_3\rightarrow [N]} \rp(i^{(3)})\right) \nonumber \\
& = & N^{-m/2}\sum_{i:V\rightarrow [N]}\C_3(\rp(i^{(1)}),\rp(i^{(2)}), \rp(i^{(3)})) \nonumber
\end{eqnarray}
where in the last equality we just used the linearity of the cumulants and $i:V\rightarrow [N]$ is a function from $V$ to $[N]$ obtained by considering all functions $i^{(j)}:V_j\rightarrow [N]$ together. In other words, $i^{(j)}$ is the function $i$ restricted to $V_j$. This permit us to rewrite the term $
\alpha_{m_1,m_2,m_3}^\psN$ as follows,

\begin{equation}\label{Equation_Number1}
\alpha_{m_1,m_2,m_3}^\psN  
= N^{-m/2+1}\sum_{i:V\rightarrow [N]}\C_3(\rp(i^{(1)}),\rp(i^{(2)}), \rp(i^{(3)}))
\end{equation}

Note that
$\C_3(\rp(i^{(1)}),\rp(i^{(2)}), \rp(i^{(3)}))$
depends only on $i$ and the graph $T_{m_1,m_2,m_3}$, so we
use the notation 
\[ \C_3(T_{m_1,m_2,m_3},i) \vcentcolon = \C_3(\rp(i^{(1)}),\rp(i^{(2)}), \rp(i^{(3)})).\]
We can write
(\ref{Equation_Number1}) as,
\begin{eqnarray}\label{Equation_Number2}
\alpha_{m_1,m_2,m_3}^\psN&=&\sum_{\pi\in \cP(m)}\sum_{\substack{i:V\rightarrow [N] \\ \ker(i)=\pi}}N^{-m/2+1}\C_3(T_{m_1,m_2,m_3},i)
\end{eqnarray}
where $\ker(i)$ is the partition of $\cP(m)$ given by $u\sim_{\ker(i)} v$ if and only if $i(u)=i(v)$.
\begin{theorem}\label{Theorem:OrderOfTerms}
Let $\pi\in\cP(m)$ then,
$$\sum_{\substack{i:V\rightarrow [N] \\ \ker(i)=\pi}}N^{-m/2+1}\C_3(T_{m_1,m_2,m_3},i)=O(N^{\#(\pi)-m/2+1})$$
\end{theorem}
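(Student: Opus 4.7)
The plan is to pull the $N^{-m/2+1}$ factor outside the sum (it is independent of $i$), so the claim reduces to
\[
\sum_{\substack{i:V\to[N] \\ \ker(i) = \pi}} |\C_3(T_{m_1,m_2,m_3}, i)| = O(N^{\#(\pi)}).
\]
I would establish this by showing (a) $|\C_3(T_{m_1,m_2,m_3}, i)|$ is uniformly bounded by a constant $C_\pi$ independent of $N$ whenever $\ker(i) = \pi$ is fixed, and (b) the number of $i:V\to[N]$ with $\ker(i) = \pi$ is at most $N^{\#(\pi)}$.

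The counting in (b) is immediate: such an $i$ is the same data as an injection from the blocks of $\pi$ into $[N]$, so there are $N(N-1)\cdots(N-\#(\pi)+1) \leq N^{\#(\pi)}$ of them. For (a), I would first argue that $\C_3(T_{m_1,m_2,m_3}, i)$ depends on $i$ only through $\ker(i)$. Two edges $e_u = (\gamma(u), u)$ and $e_v = (\gamma(v), v)$ index the same unordered matrix entry under $i$ precisely when $\{[u]_\pi, [\gamma(u)]_\pi\} = \{[v]_\pi, [\gamma(v)]_\pi\}$, and an edge $e_v$ indexes a diagonal entry precisely when $[v]_\pi = [\gamma(v)]_\pi$; both conditions depend on $i$ only through $\pi$. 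Combined with self-adjointness, identical distribution of on- and above-diagonal entries within their respective classes, and independence of entries from distinct unordered index pairs (all from Definition \ref{def:wigner_matrix}), this forces the joint law of $\{x_{i(v), i(\gamma(v))}\}_{v \in V}$, and hence the value of $\C_3(T_{m_1,m_2,m_3}, i)$, to be a function of $\pi$ alone.

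For the quantitative part of (a), I would use the explicit formula for $\C_3$ recalled in the introduction to write $\C_3(T_{m_1,m_2,m_3}, i)$ as a fixed linear combination of products of joint moments $\E\bigl(\prod_{j \in S} \rp(i^{(j)})\bigr)$ with $S \subseteq \{1,2,3\}$. Each such moment is the expectation of a product of at most $m = m_1 + m_2 + m_3$ entries of $X_N$, and H\"older's inequality bounds it by a product of scalar norms $\bigl(\E(|x_{a,b}|^m)\bigr)^{1/m}$, all of which are finite by the last hypothesis of Definition \ref{def:wigner_matrix}. Combining (a) and (b),
\[
N^{-m/2+1} \sum_{\ker(i) = \pi} |\C_3(T_{m_1,m_2,m_3}, i)| \leq N^{-m/2+1} \cdot C_\pi \cdot N^{\#(\pi)} = O(N^{\#(\pi) - m/2+1}),
\]
as required. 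No step poses a serious obstacle: the kernel-invariance is a routine consequence of the Wigner hypotheses, the counting is elementary, and the cumulant-of-products bound is a standard application of H\"older. The substantive work, handled in later sections, is not in this theorem but in identifying which $\pi$ actually saturate the exponent $\#(\pi) - m/2 + 1$ as $N \to \infty$.
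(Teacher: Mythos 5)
Your proof is correct and follows essentially the same approach as the paper: bound $|\C_3(T_{m_1,m_2,m_3},i)|$ by a constant depending only on $\pi$ via the moment--cumulant expansion of $\C_3$ into set-partitioned moments, then count that there are $N(N-1)\cdots(N-\#(\pi)+1)=O(N^{\#(\pi)})$ maps $i$ with $\ker(i)=\pi$. The only minor difference is that you bound the joint moments by H\"older's inequality and finiteness of $m$-th moments, while the paper factors each $\E_\sigma$ exactly into a product over edge equivalence classes of $\overline{T^\pi}$ using independence and then bounds each factor by $\beta_{\mult(\overline{e})}$; both give a constant independent of $N$, so the conclusion is the same.
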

\begin{proof}
For $1\leq j\leq 3$ and $i:V\rightarrow [N]$ such that $\ker(i)=\pi$ let us denote by $r_j$ to $\rp(i^{(j)})$, let $\sigma\in \cP(3)$ then,
\begin{eqnarray*}
|\E_\sigma(r_1,r_2,r_3)|&=&\prod_{\substack{B\in \sigma \\ B=\{j_1,\dots,j_s\}}}\Big|\E(\prod_{u=1}^s r_{j_u})\Big| \leq \prod_{\substack{B\in \sigma \\ B=\{j_1,\dots,j_s\}}}\E(|\prod_{u=1}^s r_{j_u}|) \\
&=& \prod_{\substack{B\in \sigma \\ B=\{j_1,\dots,j_s\}}} \prod_{\overline{e}\in \overline{E_{j_1,\dots, j_s}^{\pi}}} \E(\prod_{e\in\overline{e}} |x_{i(s(e)),i(t(e))}|)
\end{eqnarray*}
where in last equality we write the expectation of the product as the product of the expectations as for $\overline{e_1}\neq \overline{e_2}$ we have that $x_{i(s(e_1)),i(t(e_1))}$ and $x_{i(s(e_2)),i(t(e_2))}$ are independent. Moreover if $\overline{e_1}=\overline{e_2}$ then 
\[
|x_{i(s(e_1)),i(t(e_1))}|\ab = |x_{i(s(e_2)),i(t(e_2))}|
\] 
which means that $\E(\prod_{e\in\overline{e}} |x_{i(s(e)),i(t(e))}|)$ is either equal to $\E(|x_{1,2}|^{\mult(\overline{e})})$ or $\E(|x_{1,1}|^{\mult(\overline{e})})$ depending whether $i(s(e))=i(t(e))$ or not. In either case 
$\E(\prod_{e\in\overline{e}} |x_{i(s(e)),i(t(e))}|)\leq \beta_{\mult(\overline{e})}$ 
with $\beta_n=\max\{\ab \E(|x_{1,2}|^n), \ab \E(|x_{1,1}|^n)\}$. This means,
\begin{eqnarray*}
|\E_\sigma(r_1,r_2,r_3)| \leq \prod_{\substack{B\in \sigma \\ B=\{j_1,\dots,j_s\}}} \prod_{\overline{e}\in \overline{E_{j_1,\dots, j_s}^{\pi}}} \beta_{\mult(\overline{e})}
\end{eqnarray*}
The last upper bound depends only on $\sigma$ and $T_{m_1,m_2,m_3}^{\pi}$, so we may write, $|\E_\sigma(r_1,r_2,r_3)|\leq \beta(\sigma,T_{m_1,m_2,m_3}^{\pi})$. Next, by the moment-cumulant relation, \cite[Equation 1.4]{MS},
\begin{align*}\lefteqn{
|\C_3(T_{m_1,m_2,m_3},i)|}\\
&=
|\sum_{\sigma\in \cP(3)} \Mob(\sigma,1_3)\E_\sigma(r_1,r_2,r_3)| \leq
\sum_{\sigma\in \cP(3)} |\Mob(\sigma,1_3)|\beta(\sigma,T_{m_1,m_2,m_3}^{\pi})
\end{align*}
where the last expression depends only on $T_{m_1,m_2,m_3}^{\pi}$ so we have 
\[
|\C_3(T_{m_1,m_2,m_3},i)|\leq \beta(T_{m_1,m_2,m_3}^{\pi})\] 
and hence 
\begin{align*}\lefteqn{
\bigg|\sum_{\substack{i:V\rightarrow [N] \\ \ker(i)=\pi}}\C_3(T_{m_1,m_2,m_3},i)\bigg|
\leq 
\sum_{\substack{i:V\rightarrow [N] \\ \ker(i)=\pi}}
     \beta(T_{m_1,m_2,m_3}^{\pi}) }\\
&=
\beta(T_{m_1,m_2,m_3}^{\pi})N(N-1)\cdots (N-\#(\pi)+1)=O(N^{\#(\pi)}).
\end{align*}
\end{proof}

\begin{notation}
For a given $\pi\in\cP(m)$ note that 
\[
\sum_{\substack{i:V\rightarrow [N] \\ \ker(i)=\pi}}N^{-m/2+1}\C_3(T_{m_1,m_2,m_3},i)
\] 
depends only on $\pi$ and $m_1,m_2,m_3$, so, let us denote,
$$C_{\pi} \vcentcolon =\sum_{\substack{i:V\rightarrow [N] \\ \ker(i)=\pi}}N^{-m/2+1}\C_3(T_{m_1,m_2,m_3},i).$$
When necessary, we write $C_{\pi}(m_1,m_2,m_3)$ to specify the values of $m_i$.
\end{notation}

Theorem \ref{Theorem:OrderOfTerms} shows that  $C_{\pi} = O(N^{\#(\pi) -m/2+1})$; the next theorem gives an upper bound for that order when $C_{\pi}\neq 0$.

\begin{theorem}\label{Theorem:UpperBoundOfOrder}
Let $\pi\in \cP(m)$. If $C_{\pi}\neq 0$ then $\#(\pi)-m/2+1\leq 0$.
\end{theorem}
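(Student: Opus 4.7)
The plan is to establish the contrapositive: if $\#(\pi)\geq m/2$, then $C_\pi=0$. The engine will be the Leonov--Shiryaev formula applied to the classical cumulant $\C_3(\rp(i^{(1)}),\rp(i^{(2)}),\rp(i^{(3)}))$, which expresses it as a sum over partitions $\tau$ of the edge index set $[m]$ satisfying $\tau\vee\hat{E}=1_m$, where $\hat{E}:=\{E_1,E_2,E_3\}$ is the partition of edges into the three basic cycles; each summand is a product of joint classical cumulants of matrix entries indexed by the blocks of $\tau$. Distinct $\overline{\pi}$-equivalence classes correspond to independent entries, and the centering hypothesis kills every size-one cumulant, so only partitions $\tau$ with $\tau\leq\overline{\pi}$ and with every block of size at least $2$ can contribute. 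Consequently, $C_\pi\neq 0$ forces (A) every block of $\overline{\pi}$ has size at least $2$, hence $\#(\overline{\pi})\leq m/2$, and (B) $\overline{\pi}\vee\hat{E}=1_m$, i.e.\ the three basic cycles are linked by a chain of connecting equivalence classes.

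Condition (B) implies that $\overline{T^{\pi}_{m_1,m_2,m_3}}$ is a connected graph, so $\#(\pi)\leq\#(\overline{\pi})+1\leq m/2+1$. The remaining task is to rule out the two boundary values $\#(\pi)=m/2+1$ and $\#(\pi)=m/2$, by showing that in each of them the parity data from Section \ref{Section:GraphTheory} forces (B) to fail. If $\#(\pi)=m/2+1$, then $\#(\overline{\pi})=m/2$ and $\overline{T^{\pi}_{m_1,m_2,m_3}}$ is a spanning tree in which every $\overline{e}$ has multiplicity exactly $2$. Every edge of a tree is a cutting edge, so Theorem \ref{Theorem:MultiplicityOfCuttingEdges} forces each $\overline{e}$ to contain an even number of edges from each basic cycle; with total multiplicity $2$ both representatives must come from the same basic cycle, so every $\overline{e}$ is a non-connecting $j$-edge, whence $\overline{\pi}\vee\hat{E}$ cannot leave a single basic cycle, contradicting (B).

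When $\#(\pi)=m/2$ there are two sub-cases. If $\#(\overline{\pi})=m/2-1$ the graph is a spanning tree, and the same parity argument forces exactly one class $\overline{e}$ of multiplicity $4$ with the remaining classes of multiplicity $2$; the multiplicity-$4$ class, being even from each basic cycle, distributes its edges across the three basic cycles either as $4+0+0$ (non-connecting) or as $2+2+0$ (linking only two basic cycles), so $\overline{\pi}$ reaches at most two of the three basic cycles. If $\#(\overline{\pi})=m/2$ the graph is a unicircuit with every class of multiplicity $2$; non-cycle edges are cutting and hence non-connecting by the previous argument, while Theorem \ref{Chapter1_Theorem_Configuration of edges of double unicyclic} together with Corollary \ref{Chapter1_Corollary_Configuration of double edges of double unicyclic} forces all cycle edges to have the same parity of $E_j$-content for every $j$, so they are uniformly either non-connecting $j$-edges (possibly for varying $j$) or $(j_1,j_2)$-edges for a single fixed pair. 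In every configuration $\overline{\pi}\vee\hat{E}\neq 1_m$, contradicting (B); therefore $C_\pi=0$. The hardest step is this unicircuit sub-case, where parity propagation along the unique circuit combined with the multiplicity-$2$ constraint must be used carefully to exclude mixed $(j_1,j_2)/(j_1,j_3)$ configurations on the circuit.
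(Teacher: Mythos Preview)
Your argument follows the paper's approach, and the cases you do treat---$(q_1,q_2)=(1,0)$, $(1,-1)$, and $(0,0)$---are handled correctly. However, your case split tacitly assumes that $m=m_1+m_2+m_3$ is even: you only rule out the values $\#(\pi)=m/2+1$ and $\#(\pi)=m/2$. When $m$ is odd these are not integers, and the contrapositive actually requires ruling out $\#(\pi)=(m+1)/2$ and $\#(\pi)=(m-1)/2$, i.e.\ $q(\pi)=1/2$ and $q(\pi)=-1/2$. The first is cheap (a tree forces every multiplicity to be even, contradicting the presence of a block of size~$3$), but the second is not, and this is a genuine gap.

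In the sub-case $(q_1,q_2)=(0,-1/2)$ the graph $\overline{T_{m_1,m_2,m_3}^{\pi}}$ is a unicircuit with a single class of multiplicity~$3$ and all others of multiplicity~$2$. A degree-parity argument (Theorem~\ref{Theorem:DegreeOfVertices}) forces the multiplicity-$3$ class to be a loop, which is then the entire circuit; every remaining edge is cutting and hence non-connecting, so your condition~(B) forces that loop to contain exactly one edge from each basic cycle. This configuration \emph{does} satisfy $\overline{\pi}\vee\gamma=1_m$, so no purely topological or parity argument can eliminate it. The vanishing of $C_\pi$ here comes only from the model hypothesis $\C_3(x_{i,i},x_{i,i},x_{i,i})=0$ in Definition~\ref{def:wigner_matrix}: the loop makes the three indexed entries diagonal, and since the size-$3$ block of $\overline{\pi}$ cannot be split without creating a singleton, every contributing $\tau$ in the Leonov--Shiryaev sum carries the factor $\C_3(x_{1,1},x_{1,1},x_{1,1})=0$. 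You never invoke this assumption, so the proof is incomplete as written.
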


In order to proof Theorem \ref{Theorem:UpperBoundOfOrder} we introduce the following.
\begin{definition}
Let $\pi\in\cP(m)$ we denote by,
\begin{enumerate}
\item $q_1(\pi)=\#(\pi)-\#(\overline{\pi})$
\item $q_2(\pi)=\#(\overline{\pi})-m/2$
\item $q(\pi)=q_1(\pi)+q_2(\pi)$
\item $\overset{\rightarrow}q(\pi)=(q_1(\pi),q_2(\pi))$
\end{enumerate}
\end{definition}

\begin{theorem}\label{Theorem:Maybe}
Let $\pi\in \cP(m)$ and let $\gamma = \gamma_{m_1, m_2, m_3}$. If $C_{\pi}\neq 0$ then all the following are satisfied.
\begin{enumerate}
\item $\overline{\pi}\vee\gamma = 1_m.$
\item $\overline{T_{m_1,m_2,m_3}^{\pi}}$ is a connected graph.
\item No block of $\overline{\pi}$ is of size $1$.
\item Every block of $\overline{\pi}$ has at least $2$ elements, and  $q_2(\pi)\leq 0$ with equality if and only if all blocks of $\overline{\pi}$ have size $2$.
\item If $\{u,v\}$ is a block of $\overline{\pi}$ then $e_u$ and $e_v$ are edges of $T_{m_1,m_2,m_3}^{
\pi}$ connecting the same pair of vertices and with the opposite orientation,
\item For any $i:V\rightarrow [N]$ such that $\ker(i)=\pi$ the following formula holds, 
\begin{equation}\label{Equation_Number6}\\
\C_3(T_{m_1,m_2,m_3},i)=\sum_{\substack{\sigma\in\cP(m) \\ \sigma\vee\gamma=1_m \\ \sigma\leq \overline{\pi}}} \C_{\sigma}(x_{i_1,i_{\gamma(1)}},\dots, x_{i_m,i_{\gamma(m)}}).
\end{equation}
where $i_u$ denotes $i(u)$.
\end{enumerate}
\end{theorem}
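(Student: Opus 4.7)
My plan is to prove claim $(vi)$ first, as it gives the structural formula from which the remaining conditions will follow. The strategy for $(i)$--$(v)$ is uniform: if the stated condition fails, every summand $\C_\sigma$ on the right-hand side of the formula in $(vi)$ vanishes, whence $\C_3(T_{m_1,m_2,m_3},i)=0$ for every $i$ with $\ker(i)=\pi$, and therefore $C_\pi=0$, contradicting the hypothesis.

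For $(vi)$, I apply the Leonov--Shiryaev formula for the joint cumulant of products (see \cite[Prop.~11.4]{NS} or \cite[\S1.4]{MS}). Writing $r_j = \rp(i^{(j)}) = \prod_{v \in [\![m_j]\!]} x_{i(v),i(\gamma(v))}$, the formula gives
\[
\C_3(r_1,r_2,r_3) = \sum_{\substack{\sigma\in\cP(m) \\ \sigma \vee \gamma = 1_m}} \C_\sigma\bigl(x_{i(1),i(\gamma(1))},\dots, x_{i(m),i(\gamma(m))}\bigr),
\]
where the connectedness constraint $\sigma \vee \gamma = 1_m$ records that $\gamma$ groups the factors into the three $r_j$. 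By independence of the family $\{x_{i,j}\}_{i<j}\cup\{x_{i,i}\}_i$ together with self-adjointness $x_{j,i}=\overline{x_{i,j}}$, two entries $x_{i(s(e_u)),i(t(e_u))}$ and $x_{i(s(e_v)),i(t(e_v))}$ come from the same matrix position (up to conjugation) exactly when $\overline{e_u}=\overline{e_v}$, i.e.\ when $u\sim_{\overline{\pi}}v$. Hence any $\C_{|B|}$ indexed by a block $B$ of $\sigma$ that mixes indices from different blocks of $\overline{\pi}$ factors as a joint cumulant of independent, centred variables and vanishes; so only $\sigma\le\overline{\pi}$ survive, giving the expansion claimed in $(vi)$.

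Given $(vi)$, the other claims follow. For $(i)$: if $\overline{\pi}\vee\gamma\ne 1_m$, any $\sigma\le\overline{\pi}$ satisfies $\sigma\vee\gamma\le\overline{\pi}\vee\gamma<1_m$, so the sum is empty. For $(iii)$: a singleton block $\{u\}$ of $\overline{\pi}$ forces every admissible $\sigma$ to have $\{u\}$ as a singleton block too, producing the factor $\C_1(x_{\cdot,\cdot})=\E(x_{\cdot,\cdot})=0$. Claim $(iv)$ is then immediate by pigeonhole: each block of $\overline{\pi}$ has at least $2$ elements, so $2\#(\overline{\pi})\le m$, i.e.\ $q_2(\pi)\le 0$, with equality iff every block has size exactly $2$. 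For $(v)$: if $\{u,v\}$ is a $2$-element block of $\overline{\pi}$ whose edges $e_u,e_v$ connect the same non-loop vertex pair with the same orientation, any $\sigma$ contributing nontrivially must contain $\{u,v\}$ as a block, giving a factor $\C_2(x_{a,b},x_{a,b})=\E(x_{a,b}^2)=0$ (since $a\ne b$); for loops the orientation is unconstrained by Remark \ref{Remark:Loops have same and opposite orientation}, so the statement holds trivially. Finally $(ii)$ is a graph-theoretic consequence of $(i)$: each $\overline{T^\pi_{m_j}}$ is connected (being a quotient of the cycle $T_{m_j}$), and any cross-cycle pair $u\sim_{\overline{\pi}}v$ with $u\in[\![m_j]\!]$, $v\in[\![m_k]\!]$, $j\ne k$, forces $\{[u]_\pi,[\gamma(u)]_\pi\}=\{[v]_\pi,[\gamma(v)]_\pi\}$, producing a shared vertex between $\overline{T^\pi_{m_j}}$ and $\overline{T^\pi_{m_k}}$; iterating along a chain realizing $\overline{\pi}\vee\gamma=1_m$ glues all three sub-graphs together. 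The main obstacle is the careful bookkeeping in $(vi)$: exploiting the non-standard constraints $\E(x_{i,j})=0$, $\E(x_{i,j}^2)=0$ for $i\ne j$, together with the self-adjoint identification, to collapse the Leonov--Shiryaev sum to partitions $\sigma\le\overline{\pi}$; the loop-versus-non-loop dichotomy driving $(v)$ also requires care, since diagonal entries are not constrained by $\E(x_{i,i}^2)=0$.
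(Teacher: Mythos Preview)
Your proof is correct and follows essentially the same route as the paper's: you establish $(vi)$ first via the cumulants-with-products (Leonov--Shiryaev) formula together with independence to restrict to $\sigma\le\overline\pi$, then derive $(i)$--$(v)$ by showing that each failure kills every summand. The only minor differences are cosmetic---you spell out the connectivity argument for $(ii)$ and the loop case for $(v)$ a bit more explicitly than the paper does, and you cite the formula as \cite[Prop.~11.4]{NS} whereas the paper points to \cite[Thm.~11.30]{NS}.
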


\begin{proof}
First we show that for $\sigma \in \cP(m)$ we have that $\C_{\sigma}(x_{i_1,i_{\gamma(1)}},\dots,\ab  x_{i_m,i_{\gamma(m)}})\ab = 0$ unless $\sigma \leq \overline{\pi}$.  Let $\sigma\in\cP(m)$ and suppose that $u\sim_\sigma v$ for $u$ and $v \in[m]$. If neither $x_{u,\gamma(u)}=x_{v,\gamma(v)}$ nor $x_{u,\gamma(u)}=x_{\gamma(v),v}$ (equivalently $\overline{e_u}\neq \overline{e_v}$), then by the independence of the entries $\C_{\sigma}(x_{1,\gamma(1)},\dots,\ab x_{m,\gamma(m)})=0$. Therefore $\C_{\sigma}(x_{i_1,i_{\gamma(1)}},\dots,\ab  x_{i_m,i_{\gamma(m)}})\ab = 0$ unless $e_u=e_v$ for any $u$ and $v$ that are in the same block of $\sigma$, the latest is equivalent to $\sigma \leq \overline{\pi}$. We prove $(vi)$ first. By the formula of cumulants with product as entries, \cite[Theorem 11.30]{NS},
\begin{eqnarray*} 
\C_3(T_{m_1,m_2,m_3},i)=\sum_{\substack{\sigma\in\cP(m) \\ 
                               \sigma\vee\gamma = 1_m}} 
\C_{\sigma}(x_{i_1,i_{\gamma(1)}},\dots, x_{i_m,i_{\gamma(m)}}).
\end{eqnarray*}
Combining this with the requirement that $\sigma \leq \overline{\pi}$ proves $(vi)$. To prove $(i)$ note that if $\overline{\pi}\vee\gamma<1_m$ then there is no $\sigma$ satisfying both conditions $\sigma\leq \overline{\pi}$ and $\sigma\vee\gamma = 1_m$ because in that case we get $1_m= \sigma\vee\gamma\leq \overline{\pi}\vee\gamma<1_m$. Therefore, if $\overline{\pi}\vee\gamma<1_m$ by Equation (\ref{Equation_Number6}) we get $\C_3(T_{m_1,m_2,m_3},i)=0$, and then $C_{\pi}=0$. This proves $(i)$.

If $\overline{\pi}$ has a block of size $1$ then any $\sigma\leq \overline{\pi}$ would have a block of size $1$ which implies that $\C_{\sigma}(x_{i_1,i_{\gamma(1)}},\dots, x_{i_m,i_{\gamma(m)}})$ has a factor of the form $\C_1(x_{i_u,i_{\gamma(u)}})$ which equals $0$,  and so also $\C_3(T_{m_1,m_2,m_3},i)=0$. Thus $C_{\pi}=\ds\sum_{\substack{i:V\rightarrow [N] \\ \ker(i)=\pi}}N^{-m/2+1}\C_3(T_{m_1,m_2,m_3},i) = 0$, proving $(iii)$.

Note that $(iv)$ follows directly from $(iii)$. 

To prove $(v)$, let $\{u,v\}$ be a block of $\overline{\pi}$. Then by the definition of $\overline{\pi}$, $e_u$ and $e_v$ connect the same pair of vertices. If $e_u$ and $e_v$ are loops we are done, so we may assume they are not loops. Let $\sigma\leq \overline{\pi}$, then either $\{u,v\}$ is a block of $\sigma$ or $\sigma$ has blocks $\{u\}$ and $\{v\}$, in the second case $\C_{\sigma}(x_{i_1,i_{\gamma(1)}},\dots, x_{i_m,i_{\gamma(m)}})=0$ as the singleton $\{u\}$ produces a factor of the form $\C_1(x_{i_u,i_{\gamma(u)}})=0$, in the first case $\C_{\sigma}(x_{i_1,i_{\gamma(1)}},\dots, x_{i_m,i_{\gamma(m)}})$ has a factor of the form $\C_2(x_{i_u,i_{\gamma(u)}},x_{i_v,i_{\gamma(v)}})$. If the edges $e_u$ and $e_v$ have the same orientation then this factor is of the form $\C_2(x_{1,2},x_{1,2})=0$, so, if the edges have the same orientation Equation (\ref{Equation_Number6}) says that $\C_3(T_{m_1,m_2,m_3},i)=0$ and thus so is $C_{\pi}=0$. Thus the edges $e_u$ and $e_v$ must have opposite orientations.

Finally $(i)$ implies that the graph $T_{m_1,m_2,m_3}^{\pi}$ is connected as all basic cycles are connected by pairing some of their edges, therefore $\overline{T_{m_1,m_2,m_3}^{\pi}}$ must be connected proving $(ii)$. 
\end{proof}

Now we are able to provide a proof to Theorem \ref{Theorem:UpperBoundOfOrder}.

\begin{proof}[Proof of Theorem \ref{Theorem:UpperBoundOfOrder}]
Theorem \ref{Theorem:Maybe} $(ii)$ says that $\overline{T^{\pi}_{m_1,m_2,m_3}}=(V^{\pi},\ab \overline{E^{\pi}})$ is connected.   Then $q_1(\pi)=|V^{\pi}|-|\overline{E^{\pi}}| \leq 1$ with equality if and only if $\overline{T^{\pi}_{m_1,m_2,m_3}}$ is a tree. 

Theorem \ref{Theorem:Maybe} $(iv)$ says that $q_2(\pi)\leq 0$ with equality if and only if all blocks of $\overline{\pi}$ are of size 2. These inequalities force $q(\pi)\leq 1$ so it remains to prove that $q(\pi)=1,1/2,0,-1/2$ are not possible.

If $q(\pi)=1$, then $\overset{\rightarrow}q(\pi)=(1,0)$, which means $\overline{T^{\pi}_{m_1,m_2,m_3}}$ is a tree and all blocks of $\overline{\pi}$ are of size 2, equivalently all edges of $\overline{T^{\pi}_{m_1,m_2,m_3}}$ have multiplicity $2$. By Remark \ref{Remark:MultiplicityOfCuttingEdges}, any edge of $\overline{T^{\pi}_{m_1,m_2,m_3}}$ is non-connecting meaning that the condition $\overline{\pi}\vee \gamma_{m_1,m_2,m_3}=1_m$ cannot be satisfied.

If $q(\pi)=1/2$, then $\overset{\rightarrow}q(\pi)=(1,-1/2)$. This is not possible as $q_1(\pi)=1$ means $\overline{T^{\pi}_{m_1,m_2,m_3}}$ is a tree and hence all its edges are cutting which means they have even multiplicity (by Theorem \ref{Theorem:MultiplicityOfCuttingEdges}), however $q_2(\pi)=-1/2$ only happens if there is an edge of multiplicity 3.

If $q(\pi)=0$, then either $\overset{\rightarrow}q(\pi)=(1,-1)$ or $\overset{\rightarrow}q(\pi)=(0,0)$. In the first case, $\overline{T^{\pi}_{m_1,m_2,m_3}}$ is a tree, hence all its edges have even multiplicity, and  therefore $q_2(\pi)=-1$ implies that all edges of $\overline{T^{\pi}_{m_1,m_2,m_3}}$ have multiplicity $2$ except for one of multiplicity $4$. Moreover any edge of multiplicity $2$ is non-connecting; and the edge of multiplicity $4$ might be connecting, but it can connect at most two basic cycles (Remark \ref{Remark:MultiplicityOfCuttingEdges}), meaning $\overline{\pi}\vee \gamma_{m_1,m_2,m_3}=1_m$ again cannot be satisfied.

In the second case $q_1(\pi)=|V^{\pi}|-|\overline{E^{\pi}}|= 0$, so by Remark \ref{Theorem:UnicyclicCaracterization} $\overline{T^{\pi}_{m_1,m_2,m_3}}$ is a graph with a single \cycle, and as $q_2(\pi) = 0$, all edges are of multiplicity $2$, any edge not in the \cycles of the graph is cutting,  and hence non-connecting. So the only connecting edges can be the ones in the \cycle. Assume one of them is a $(1,2)$-edge, Corollary \ref{Chapter1_Corollary_Configuration of double edges of double unicyclic} says that all edges in the \cycles are are $(1,2)$-edges meaning $\overline{\pi}\vee \gamma_{m_1,m_2,m_3}=1_m$ again cannot be satisfied. 

The last case is $q(\pi)=-1/2$, this can happen only if $\overset{\rightarrow}q(\pi)=(0,-1/2)$ as the case $\overset{\rightarrow}q(\pi)=(1,-3/2)$ cannot happen by the same argument about the multiplicity of the edges. In this case $\overline{T_{m_1,m_2,m_3}^{\pi}}$ is  a uni\cycles graph with all edges of multiplicity $2$ except by one with multiplicity $3$. Let $\overline{e}=(U,V)$ be the edge of multiplicity $3$, If $\overline{e}$ is not a loop then $U$ is adjacent to only edges of multiplicity $2$ besides $\overline{e}$, thus $deg(U)$ is odd which is a contradiction to Theorem \ref{Theorem:DegreeOfVertices}. Hence, it must be $\overline{e}$ is a loop which correspond to the unique \cycle. Any other edge of $\overline{T_{m_1,m_2,m_3}^{\pi}}$ is cutting and hence the corresponding block of $\overline{\pi}$ is a non-through string. Since we require $\gamma \vee \overline{\pi} = 1_m$ then the edge of multiplicity $3$ must consist of one edge from each basic cycle, that is, the corresponding block of $\overline{\pi}$ contains one element from each cycle of $\gamma$. Let $i:V\rightarrow [N]$ be such that $\ker(i)=\pi$. Equation (\ref{Equation_Number6}) says:
\begin{equation*}
\C_3(T_{m_1,m_2,m_3},i) = \sum_{\substack{\tau \in \cP(m) \\ \tau \vee \gamma = 1_m  \\ \tau \leq \overline{\pi}}}\C_{\tau}(x_{i(1),i(\gamma(1))},\dots,x_{i(m),i(\gamma(m))}).
\end{equation*}
Let $\tau \leq \overline{\pi}$, if a block of $\tau$ has size $1$ then $\C_{\tau}(x_{i(1),i(\gamma(1))},\dots,x_{i(m),i(\gamma(m))})=0$, so the block with size $3$ of $\overline{\pi}$ must be a block of $\tau$ which corresponds to a factor $\C_3(x_{1,1},x_{1,1},x_{1,1})=0$ because this block correspond to the loop of the graph. Therefore $\C_3(T_{m_1,m_2,m_3},i)$ is zero and so is $C_{\pi}$ which is not possible. This concludes the proof.

\end{proof}

If we combine Equation (\ref{Equation_Number2}) with Theorems \ref{Theorem:OrderOfTerms} and \ref{Theorem:UpperBoundOfOrder} we obtain the following corollary.
 
\begin{corollary}\label{Corollary:Asymptotic_Expression}
\begin{eqnarray}
\label{Equation:Asymptotic_Expression}
\alpha_{m_1,m_2,m_3}^\psN&=&\sum_{\substack{\pi\in \cP(m) \\ q(\pi)=-1}}\sum_{\substack{i:V\rightarrow [N] \\ \ker(i)=\pi}}N^{-m/2+1}\C_3(T_{m_1,m_2,m_3},i)+o(1)
\end{eqnarray}
\end{corollary}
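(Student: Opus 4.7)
The plan is to deduce Corollary \ref{Corollary:Asymptotic_Expression} by assembling Equation \eqref{Equation_Number2} with the two estimates proved immediately above it. I would start from
\[
\alpha_{m_1,m_2,m_3}^\psN = \sum_{\pi \in \cP(m)} C_{\pi}
\]
and partition the finite index set $\cP(m)$ into three pieces according to the value of $q(\pi) = q_1(\pi)+q_2(\pi) = \#(\pi)-m/2$: those with $q(\pi) > -1$, those with $q(\pi) = -1$, and those with $q(\pi) < -1$.

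The first range contributes nothing. Theorem \ref{Theorem:UpperBoundOfOrder} asserts that $C_\pi \neq 0$ forces $\#(\pi)-m/2+1 \leq 0$, equivalently $q(\pi) \leq -1$, so every $\pi$ with $q(\pi) > -1$ has $C_\pi = 0$ and can be discarded outright. The third range is a vanishing remainder. Theorem \ref{Theorem:OrderOfTerms} gives the crude bound $C_\pi = O(N^{\#(\pi)-m/2+1}) = O(N^{q(\pi)+1})$, and when $q(\pi) < -1$ the exponent $q(\pi)+1$ is strictly negative, so each such $C_\pi$ is individually $o(1)$; because $\cP(m)$ is a finite set, the sum of these finitely many contributions is still $o(1)$.

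What remains is exactly the sum indexed by those $\pi$ with $q(\pi) = -1$, which after unfolding the definition of $C_\pi$ is the displayed expression in \eqref{Equation:Asymptotic_Expression}. The argument at this step is purely bookkeeping, so no genuine obstacle arises here; all the difficulty has already been absorbed into Theorem \ref{Theorem:UpperBoundOfOrder}, whose case-by-case elimination of the intermediate possibilities $q(\pi) \in \{1,\tfrac12,0,-\tfrac12\}$ is what makes the leading-order filter this sharp.
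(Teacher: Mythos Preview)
Your argument is correct and matches the paper's own reasoning exactly: the paper simply states that the corollary follows by combining Equation~\eqref{Equation_Number2} with Theorems~\ref{Theorem:OrderOfTerms} and~\ref{Theorem:UpperBoundOfOrder}, and your three-range decomposition is precisely how one unpacks that sentence. There is nothing to add.
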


\section{The leading order of $\alpha_{m_1,m_2,m_3}^\psN$}
\label{sec:leading-order}
Equation (\ref{Equation:Asymptotic_Expression}) says that partitions 
$\pi\in\cP(m)$ satisfying $q(\pi)=-1$ and for which $C_{\pi}\neq 0$, are the only ones which contribute in the large $N$ limit. The objective of this section will be identify
these limit partitions.
For this section we set $r=3$, $m_1,m_2,m_3\in\mathbb{N}$, $m=m_1+m_2+m_3$
and $T_{m_1,m_2,m_3}=(V,E)$ and $\gamma \vcentcolon = \gamma_{m_1,m_2,m_3}$ be defined as in Section \ref{Section:GraphTheory}.
Let us rewrite Equation (\ref{Equation:Asymptotic_Expression}) as,
\begin{eqnarray}\label{Equation:Asymptotic_Expression_2}
\alpha_{m_1,m_2,m_3}^\psN&=&\sum_{\substack{\pi\in \cP(m) \\ q(\pi)=-1 \\C_{\pi}\neq 0}}\sum_{\substack{i:V\rightarrow [N] \\ \ker(i)=\pi}}N^{-m/2+1}\C_3(T_{m_1,m_2,m_3},i)+o(1)
\end{eqnarray}
adding the extra condition $C_{\pi}\neq 0$ doesn't change anything as we are
only removing the partitions whose contribution on
the sum is zero.

\begin{definition}
Let $\pi\in \cP(m)$ we say that $\pi$ is a \textit{\VP} if $q(\pi)=-1$ and $C_{\pi}\neq 0$, in such a case we call the graph $T_{m_1,m_2,m_3}^{\pi}$ a \textit{\VG}. 
We denote by $\VPSet$ and $\VGSet$ to the set of \VP\text{s }and \VG\text{s }respectively. 
\end{definition}

In Lemmas  \ref{Lemma:ValidPartitions(1,-2)Case}  and \ref{Lemma:ValidPartitions(0,-1)Case} we identify the elements of $\VPSet$.

\begin{definition}
Let $\pi\in \cP(m)$,
\begin{enumerate}

\item 
For $1\leq j\leq 3$, we say that $T_{m_j}^{\pi}$ is a \textit{double tree} if $\overline{T_{m_j}^{\pi}}$ is a tree and every edge $\overline{e}\in \overline{E_j^{\pi}}$ has multiplicity $2$ and consists of two edges $e_{i},e_j\in E_1^{\pi}$ with opposite orientations.

\item 
For $\{j_1,j_2\}\subset \{1,2,3\}$, we say that $T_{m_{j_1},m_{j_2}}^{\pi}$ is a \textit{double unicircuit graph} if the following are satisfied:
\begin{itemize}
\item[$(a)$] 
$\overline{T_{m_{j_1}m_{j_2}}^{\pi}}$ is a connected graph with a single \cycle, i.e it has the same number of vertices and edges, 

\item[$(b)$] 
every edge $\overline{e}\in \overline{E_{j_1,j_2}^{\pi}}$ has multiplicity $2$, and consists of two edges $e_i,e_j\in E_{j_1,j_2}^{\pi}$ with opposite orientations, and 

\item[$(c)$]
 all edges $\overline{e}\in \overline{E_{j_1,j_2}^{\pi}}$ in the \cycles of the graph $\overline{T_{m_{j_1}m_{j_2}}^{\pi}}$ are $(j_1,j_2)$-edges.
\end{itemize}\end{enumerate}
Some examples can be seen in Figure \ref{Figure:Double unicyclic and double tree}.
\end{definition}

\begin{figure}
\begin{center}
\includegraphics[width=340pt,height=110pt]{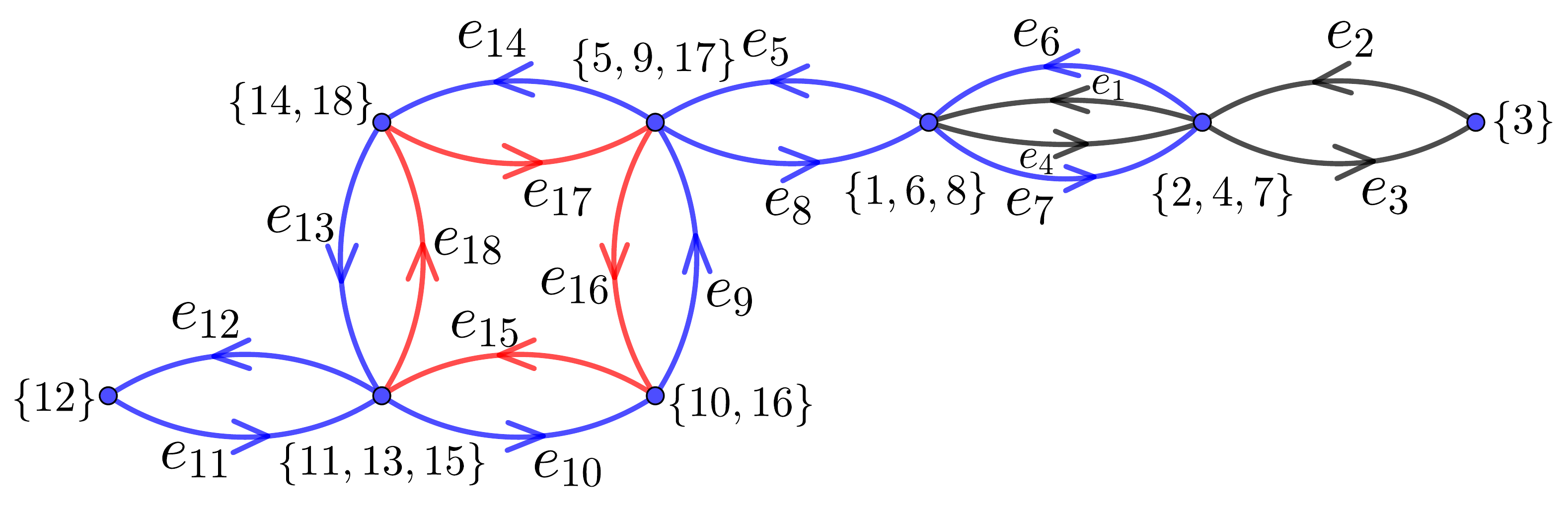}
\\
\includegraphics[width=170pt,height=60pt]{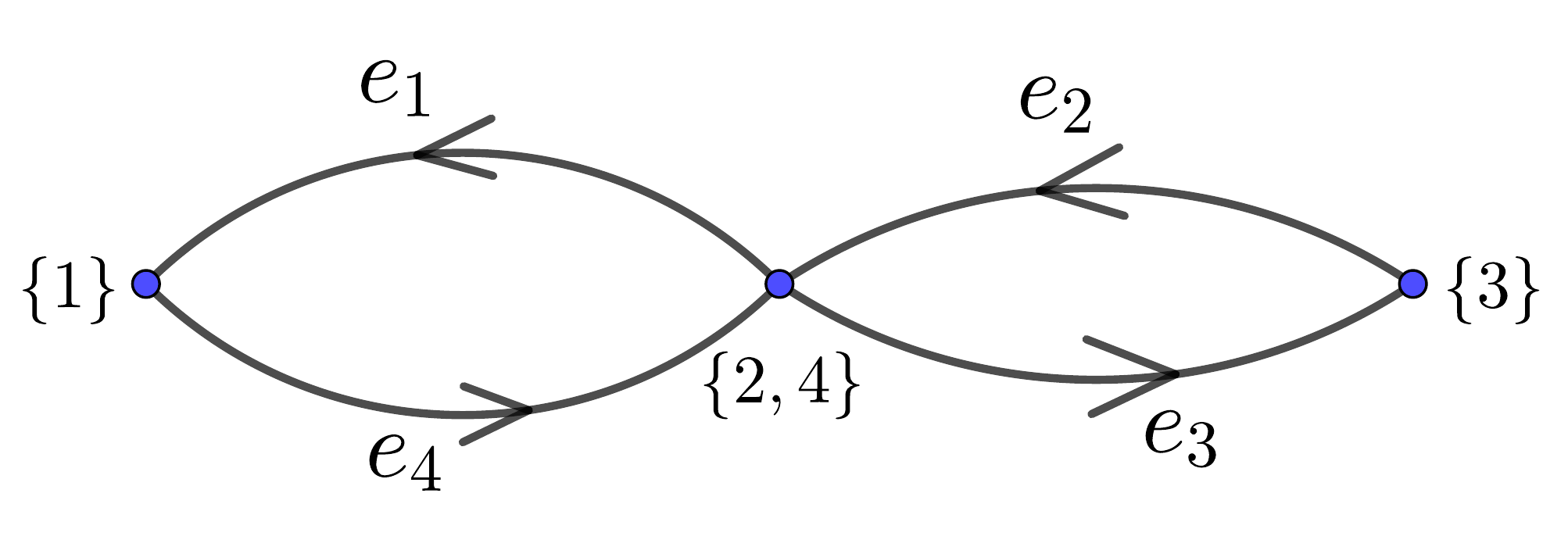}
\includegraphics[width=170pt,height=60pt]{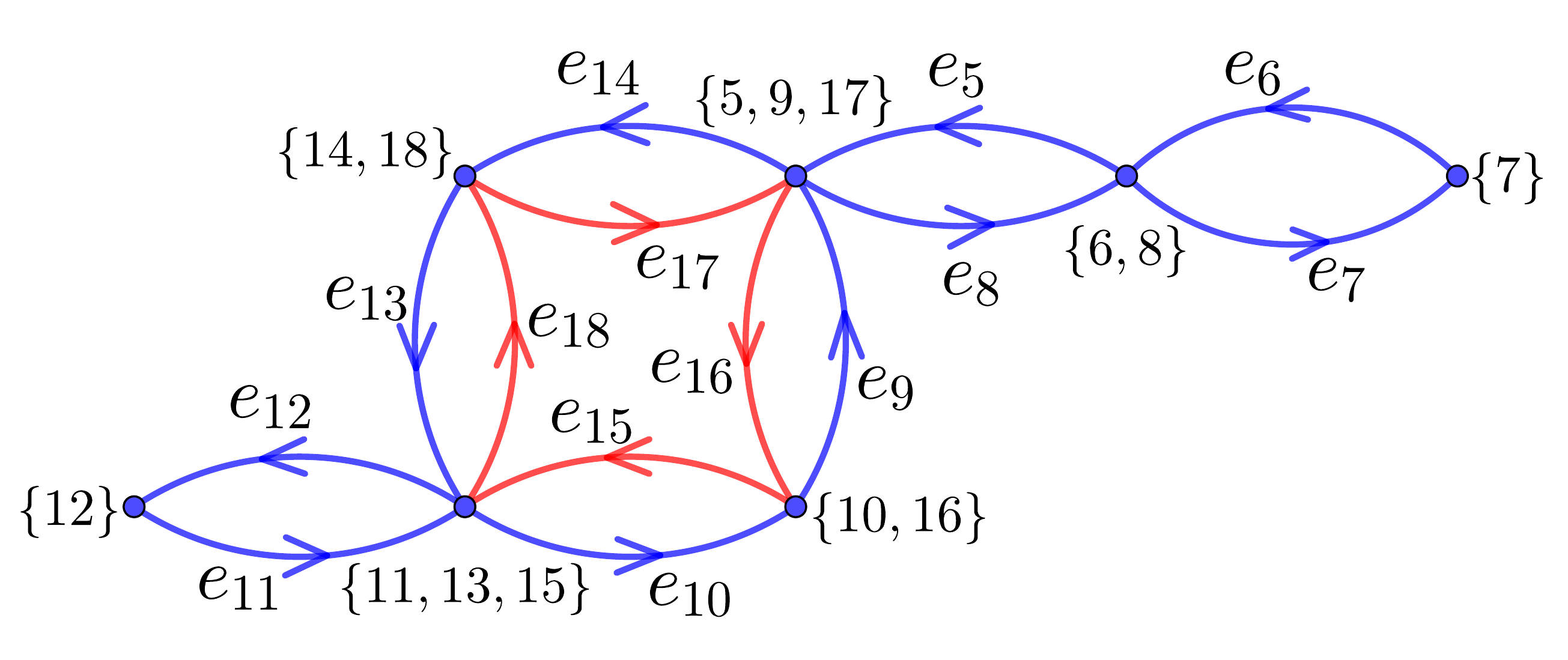}
\end{center}
\caption{\small Above the graph $T_{m_1,m_2,m_3}^{\pi}$ with $m_1=4$, $m_2=10$, $m_3=4$ and 
\begin{center}
$\pi=\{\{1,6,8\},\{2,4,7\},\{3\}, 
\{5,9,17\},\{10,16\},\ab \{11,13,15\},\{12\}\}.$
\end{center} 
Below from left to right the graphs $T_{m_1}^{\pi}$ and $T_{m_2,m_3}^{\pi}$ which are of double tree and double unicircuit type respectively.}
\label{Figure:Double unicyclic and double tree}
\end{figure}

\begin{notation}
We denote by,
$$\C_6=\C_6(x_{1,2},x_{1,2},x_{1,2},x_{2,1},x_{2,1},x_{2,1})$$
$$\C_4=\C_4(x_{1,2},x_{1,2},x_{2,1},x_{2,1})$$
$$\mathring{\C}_4=\C_4(x_{1,1},x_{1,1},x_{1,1},x_{1,1}),$$
these are the sixth and fourth classical cumulants respectively.
\end{notation}

\begin{definition}\label{def:double_tree_types}
Suppose $\pi \in \cP(m)$ and that all three graphs $T_{m_1}^{\pi}$,$T_{m_2}^{\pi}$ and $T_{m_3}^{\pi}$ are double trees.

If $T_{m_1,m_2,m_3}^{\pi}$ is obtained by joining $T_{m_1}^{\pi}$,$T_{m_2}^{\pi}$ and $T_{m_3}^{\pi}$ along the same edge (which must be of multiplicity $6$ in $\overline{E^\pi}$), then we say that $T_{m_1,m_2,m_3}^{\pi}$ is of {\em$2$-$6$-tree type}.

If $T_{m_1,m_2,m_3}^{\pi}$ is obtained by joining $T_{m_1}^{\pi}$,  $T_{m_2}^{\pi}$ and $T_{m_3}^{\pi}$ along two distinct edges (which will each have multiplicity $4$ in $\overline{E^\pi}$), then we say that $T_{m_1,m_2,m_3}^{\pi}$ is of {\em $2$-$4$-$4$-tree type}.
\end{definition}
Some examples of these graphs can be seen in Figure \ref{Figure:3-valid graphs-a}.

\begin{lemma}\label{Lemma:ValidPartitions(1,-2)Case}
Suppose $\pi\in\cP(m)$. Then $C_{\pi}\neq 0$ and $\overset{\rightarrow}q(\pi)=(1,-2)$ if and only if 
$T_{m_1,m_2,m_3}^{\pi}$ is either of  $2$-$6$-tree type or of $2$-$4$-$4$-tree type. In the first case $C_{\pi}=p(N)(\C_6+6\C_4+2)$ and the second case 
$C_{\pi}=p(N)(\C_4+1)^2$, where $p(N)=N^{-m/2+1}N(N-1)\cdots (N-\#(\pi)+1)$. 
\end{lemma}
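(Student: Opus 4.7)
The plan is to first extract the combinatorial structure forced by $\overset{\rightarrow}q(\pi)=(1,-2)$ together with $C_\pi \neq 0$, and then to compute $C_\pi$ explicitly in each of the two resulting cases. Since $q_1(\pi) = 1$ says that $\overline{T^{\pi}_{m_1,m_2,m_3}}$ is a connected tree, every edge is a cutting edge, so Theorem \ref{Theorem:MultiplicityOfCuttingEdges} forces every edge multiplicity to be even; and $q_2(\pi) = -2$ translates to $\sum_{\overline{e}}(\mult(\overline{e})-2) = 4$. The only decompositions of $4$ into positive even parts are $\{4\}$ and $\{2,2\}$, so either one edge has multiplicity $6$ (the rest multiplicity $2$), or two edges have multiplicity $4$ (the rest multiplicity $2$). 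Multiplicity-$2$ edges are non-connecting by Remark \ref{Remark:MultiplicityOfCuttingEdges}, so the connectivity $\overline{\pi}\vee\gamma=1_m$ from Theorem \ref{Theorem:Maybe}(i) must be witnessed by the exceptional edges. In the first case Theorem \ref{Theorem:MultiplicityOfCuttingEdges} restricts the distribution of the six constituent edges of the exceptional edge across the three basic cycles to an even tuple summing to $6$, leaving only $(2,2,2)$ as a connecting option and giving exactly the $2$-$6$-tree type. In the second case each multiplicity-$4$ edge distributes across cycles as $(4,0,0)$ or $(2,2,0)$, and two such edges together connect all three basic cycles only when both are of type $(2,2,0)$ with supports covering $\{1,2,3\}$, giving exactly the $2$-$4$-$4$-tree type. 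The converse direction is immediate from the graph description.

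For the computation, $\C_3(T_{m_1,m_2,m_3},i)$ depends only on $\pi$, so $C_\pi = p(N)\,\C_3(T_{m_1,m_2,m_3},i_\pi)$ for any $i_\pi$ with $\ker(i_\pi)=\pi$; Theorem \ref{Theorem:Maybe}(vi) expands this cumulant as a sum over $\sigma \leq \overline{\pi}$ with $\sigma \vee \gamma = 1_m$ of products of classical cumulants. Because $\C_1 = 0$ rules out singletons, each multiplicity-$2$ block of $\overline{\pi}$ must appear intact in $\sigma$ and contributes the factor $\C_2(x_{1,2},x_{2,1}) = \E(|x_{1,2}|^2) = 1$. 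The remaining combinatorial freedom lies on the exceptional block(s); since $\overline{T^\pi_{m_1,m_2,m_3}}$ is a tree it has no loops, so all entries on these blocks are off-diagonal, which justifies using the off-diagonal cumulants $\C_6$ and $\C_4$ throughout. The two sub-partition inputs I will use repeatedly are $\C_2(x_{1,2},x_{1,2}) = \E(x_{1,2}^2)= 0$ and $\C_2(x_{1,2},x_{2,1}) = 1$.

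In the $2$-$6$-tree type the multiplicity-$6$ block $B$ carries two oppositely oriented edges from each basic cycle, so its six entries consist of three copies of $x_{1,2}$ and three of $x_{2,1}$. Writing $B=\{a_1,a_2,b_1,b_2,c_1,c_2\}$ with the letter indicating the basic cycle and the subscript indicating the orientation, I enumerate the sub-partitions $\sigma|_B$ with no singletons whose blocks, joined with the three cycle pairs $\{a_1,a_2\},\{b_1,b_2\},\{c_1,c_2\}$, give the full set $B$. Three families contribute: the one-block partition $\{B\}$, giving $\C_6$; the $(4,2)$-partitions whose size-$2$ block is a mixed-orientation cross-cycle pair, of which exactly six survive (the three within-cycle pairs violate connectivity and the six same-orientation cross-cycle pairs vanish by $\C_2(x_{1,2},x_{1,2})=0$), each contributing $\C_4$; and the perfect matchings that pair each $x_{1,2}$ with an $x_{2,1}$ from a different basic cycle via a $3$-cycle permutation of the three cycles, of which there are exactly two, each contributing $1$. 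Summing gives $\C_6+6\C_4+2$. In the $2$-$4$-$4$-tree type the two multiplicity-$4$ blocks $B_1,B_2$ decouple since they are disjoint; on each the valid sub-partitions are the single block giving $\C_4$ and the unique opposite-orientation cross-cycle matching giving $1$ (within-cycle matchings fail the local connectivity needed for $\sigma\vee\gamma=1_m$, while same-orientation cross-cycle matchings vanish), so each block contributes $\C_4+1$ and the product is $(\C_4+1)^2$.

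The main obstacle will be the precise combinatorial bookkeeping in the $2$-$6$ case, where of the fifteen $(4,2)$-partitions of $B$ exactly six contribute and of the fifteen perfect matchings exactly two contribute; verifying these counts cleanly requires splitting the candidate size-$2$ blocks into the three types (within-cycle, cross-cycle same-orientation, cross-cycle mixed-orientation) and then separately imposing the connectivity $\sigma\vee\gamma=1_m$ and the non-vanishing $\C_2$ conditions, whose interplay is what produces the specific coefficients $6$ and $2$ in the final answer.
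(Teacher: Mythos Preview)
Your proposal is correct and follows essentially the same approach as the paper: use the tree structure ($q_1=1$) plus Theorem~\ref{Theorem:MultiplicityOfCuttingEdges} to force even multiplicities, read off the two multiplicity profiles from $q_2=-2$, use $\overline{\pi}\vee\gamma=1_m$ to pin down the connecting pattern, and then compute $\C_3(T_{m_1,m_2,m_3},i)$ via Theorem~\ref{Theorem:Maybe}(vi) by enumerating sub-partitions on the exceptional block(s). Your counting of the six surviving $(4,2)$-partitions and two surviving matchings in the $2$-$6$ case, and the product structure in the $2$-$4$-$4$ case, match the paper's enumeration exactly; your phrasing ``decouple since they are disjoint'' is slightly loose (the decoupling really comes from each of the two cycles $[\![m_{i_1}]\!]$ and $[\![m_{i_3}]\!]$ being reachable only through its own multiplicity-$4$ block), but your parenthetical about local connectivity shows you have the right reason.
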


\begin{proof}
Suppose first $C_{\pi}\neq 0$ and $\overset{\rightarrow}q(\pi)=(1,-2)$, then $\overline{T_{m_1,m_2,m_3}^{\pi}}$ is a tree with all edges of multiplicity $2$ except one of multiplicity $6$ or two edges of multiplicity $4$. Lemma \ref{Theorem:MultiplicityOfCuttingEdges} says that any edge of multiplicity $2$, $\overline{e}\in \overline{E^{\pi}}$, consists of two edges $e_i,e_j\in E^{\pi}$ coming from the same basic cycle and with opposite orientations. Moreover Theorem \ref{Theorem:Maybe} says that $\overline{\pi}\vee\gamma_{m_1,m_2,m_3}=1_m$. Thus, if we have an edge of multiplicity $6$, then such an edge must be a $(1,2,3)$-edge. In addition, Theorem \ref{Theorem:MultiplicityOfCuttingEdges} says that the equivalence class of such an edge consists of two edges from each basic cycle with opposite orientations. Thus $T_{m_1,m_2,m_3}^{\pi}$ is of  $2$-$6$-tree type. 

If we have two edges, each of multiplicity $4$, the same argument forces to the two edges of multiplicity $4$ to be connecting edges, each one can connect at most two basic cycles, so one is a $(i,j)$-edge and the other one must be a $(j,k)$-edge where $\{i, j, k\}$ is some permutation of $\{1,2,3\}$. Thus $T_{m_1,m_2,m_3}^{\pi}$ is of  $2$-$4$-$4$ tree type.

Conversely, let $T_{m_1,m_2,m_3}^{\pi}$ a $2$-$6$-tree type. Clearly $\overset{\rightarrow}q(\pi)=(1,-2)$, so it remains to verify that 
$$C_{\pi} =\sum_{\substack{i:V\rightarrow [N] \\ \ker(i)=\pi}}N^{-m/2+1}\C_3(T_{m_1,m_2,m_3},i) \neq 0.$$ Let $i:V\rightarrow[N]$ such that $\ker(i)=\pi$, Equation (\ref{Equation_Number6}) says that,
\[
\C_3(T_{m_1,m_2,m_3},i)
=
\sum_{\substack{\sigma\in\cP(m) \\ \sigma\vee\gamma=
1_m \\ \sigma\leq \overline{\pi}}} 
\C_{\sigma}(x_{i_1,i_{\gamma(1)}},\dots, x_{i_m,i_{\gamma(m)}}).
\]
Let $\sigma\leq \overline{\pi}$, note that if $\sigma$ has a block of size $1$ then $$\C_{\sigma}(x_{i_1},i_{\gamma_{m_1,m_2,m_3}(1)},\dots, x_{i_m,i_{\gamma_{m_1,m_2,m_3}(m)}})=0,$$ so we may assume every block of $\sigma$ has at least size $2$, let $B=\{u,v\}$ be a block of size $2$ of $\overline{\pi}$, then $B$ is a block of $\sigma$ and the contribution of that block is $\C_2(x_{i_u,i_{\gamma(u)}},x_{i_v,i_{\gamma(v)}})=\C_2(x_{1,2},x_{2,1})=1$. Recall that by definition $\overline{\pi}$ has all its blocks of size $2$ except one of multiplicity $6$. 

Assume that  $B=\{u_1,u_2,v_1,v_2,w_1,w_2\}$ is a block of $\overline{\pi}$  of size $6$ with $e_{u_1},e_{u_2}\in E_1$, $e_{v_1},e_{v_2}\in E_2$ and $e_{w_1},e_{w_2}\in E_3$ and $e_{u_1},e_{v_1}$ and $e_{w_1}$ all with the same orientation. Since $\sigma\leq \overline{\pi}$, $B$ is a disjoint union blocks of $\sigma$. The possibilities are the following.
\begin{enumerate}
\item $B$ is also a block of $\sigma$ in which case the contribution is $\C_6$.
\item $\sigma$ has $3$ blocks each one of size $2$, since we also require $\sigma\vee\gamma_{m_1,m_2,m_3}=1$ and $\sigma$ cannot pair edges with the same orientation (as in that case we get a factor of the form $\C_2(x_{1,2},x_{1,2})=0$). Thus the options are either $u_1$ is paired with $v_2$ or it is paired with $w_2$. Once this choice is made the other pairs are forced. Hence there are only two possibilities:
$$\{\{u_1,v_2\},\{u_2,w_1\},\{v_1,w_2\}\} \text{ or }\{\{u_1,w_2\},\{u_2,v_1\},\{v_2,w_1\}\}$$ each with a contribution of $(\C_2(x_{1,2},x_{2,1}))^3=1$.
\item $\sigma$ has one block of size $4$ and one of size $2$. There are 6 ways of choosing the block of size $2$ such that the pair of edges have the opposite orientations: $\{u_1,v_2\}$, $\{u_1,w_2\}$,$\{u_2,v_1\}$, $\{u_2,w_1\}$, $\{v_1,w_2\}$ or $\{v_2,w_1\}$. In each case the block of size $4$ is determined . Each option contributes $\C_4$.
\end{enumerate}
Adding all together up gives $\C_3(T_{m_1,m_2,m_3},i)=\C_6+6\C_4+2$. Thus
$C_{\pi}=p(N)(\C_6+6\C_4+2) \not = 0$. 
 
If $T_{m_1,m_2,m_3}^{\pi}$ is of $2$-$4$-$4$ tree type then clearly $\overset{\rightarrow}q(\pi)=(1,-2)$. To prove $C_{\pi}\neq 0$ we proceed similarly. Let $i:V\rightarrow [N]$ be such that $\ker(i)=\pi$. Any block of size $4$ of $\overline{\pi}$ permits two options for the blocks of $\sigma\leq \overline{\pi}$, either a block of size $4$ contributing $\C_4$ or two blocks of size $2$ contributing $1$. Since there are two blocks of size $4$ then all possible combinations for the blocks of $\sigma$ are
\begin{enumerate}
\item two blocks of size $4$ with a combined contribution of $\C_4^2$,
\item One block of size $4$ and two of size $2$ which can be done in two ways (by dividing one of the two blocks of $\overline{\pi}$ of size $4$), each one with a contribution of $\C_4$,
\item four blocks of size $2$ with a contribution of $1$
\end{enumerate}
Adding all together up gives $\C_3(T_{m_1,m_2,m_3},i)=\C_4^2+2\C_4+1=(\C_4+1)^2$. Thus $C_{\pi}=p(N)(\C_4+1)^2 \not = 0$. This proves the lemma. 
\end{proof}

\begin{definition}\label{def:(0,-1)-graphs}
Let $\pi \in \cP(m)$. 

\begin{enumerate}

\item 
Suppose $\{i, j, k\} = \{1, 2, 3\}$, and one of the graphs, say $T_{m_i}^{\pi}$, is a graph with two loops at the same vertex and removing these loops results in a double tree.  Also, suppose  $T_{m_j,m_k}^{\pi}$ is a \DUs graph where the \cycles of $\overline{T_{m_j,m_k}^{\pi}}$ is a loop (consisting of an edge from $\overline{T_{m_j}^{\pi}}$ and one from $\overline{T_{m_k}^{\pi}}$), and that $T_{m_1,m_2,m_3}^{\pi}$ is obtained by  joining $T_{m_i}^{\pi}$ to $T_{m_j,m_k}^{\pi}$ at the vertex containing the loops, then we say that $T_{m_1,m_2,m_3}^{\pi}$ is of \textit{$2$-$4$ uniloop type}. Apart from this loop of multiplicity 4, $T_{m_1,m_2,m_3}^{\pi}$ is a double tree.

\item 
Suppose one of the graphs, say $T_{m_i}^{\pi}$, is a double tree, and that $T_{m_2,m_3}^{\pi}$ is a \DUs graph, and $T_{m_1,m_2,m_3}^{\pi}$ is obtained by joining $T_{m_1}^{\pi}$ and $T_{m_2,m_3}^{\pi}$ along some edge. In this case we say that $T_{m_1,m_2,m_3}^{\pi}$ is of \textit{\TFUC}.

\end{enumerate}

\end{definition}

Some examples of these graphs can be seen in Figure \ref{Figure:3-valid graphs-b}.

\begin{lemma}\label{Lemma:ValidPartitions(0,-1)Case}
Let $\pi\in\cP(m)$. Then $C_{\pi}\neq 0$ and $\overset{\rightarrow}q(\pi)=(0,-1)$ if and only if $T_{m_1,m_2,m_3}^{\pi}$ is either of 
$2$-$4$-uniloop type or of \TFUC. In the first case 
$C_{\pi}=p(N)(\mathring{\C}_4+1)$ and in the second case $C_{\pi}=p(N)(\C_4+1)$, with $p(N)$ as before.
\end{lemma}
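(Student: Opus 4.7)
The plan is to parallel the proof of the preceding Lemma \ref{Lemma:ValidPartitions(1,-2)Case}, treating the two claimed subtypes simultaneously. The starting point is unpacking $\overset{\rightarrow}q(\pi) = (0, -1)$: by Remark \ref{Theorem:UnicyclicCaracterization} together with Theorem \ref{Theorem:Maybe}(ii), $q_1(\pi) = 0$ forces $\overline{T_{m_1,m_2,m_3}^{\pi}}$ to be connected with a unique circuit (a unicircuit graph), while $q_2(\pi) = -1$ combined with the ``no singletons'' constraint of Theorem \ref{Theorem:Maybe}(iii) forces the block sizes of $\overline{\pi}$ to be either one block of size $4$ and the rest of size $2$, or two blocks of size $3$ and the rest of size $2$.

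The first substantive task is to rule out the two-blocks-of-size-$3$ alternative. Since a multiplicity-$3$ edge has odd total, Theorem \ref{Theorem:MultiplicityOfCuttingEdges} prevents either from being a cutting edge, so both must lie on the unique circuit. A parity/distribution analysis via Theorem \ref{Chapter1_Theorem_Configuration of edges of double unicyclic}, combined with the connectivity requirement $\overline{\pi} \vee \gamma = 1_m$ of Theorem \ref{Theorem:Maybe}(i) and the vanishing of $\C_2(x_{1,2}, x_{1,2})$, $\C_3(x_{1,1}, x_{1,1}, x_{1,1})$ (and analogous cumulant constraints) from the Wigner hypotheses, will force $C_\pi = 0$ in every configuration, mirroring the handling of the $(0, -1/2)$ case at the end of the proof of Theorem \ref{Theorem:UpperBoundOfOrder}. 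I expect this to be the main obstacle: several distribution patterns for the two multiplicity-$3$ edges must be excluded, each requiring its own parity or cumulant-vanishing argument.

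With only the one-multiplicity-$4$-block case remaining, split on the location of the multiplicity-$4$ edge $\overline{e}$. If $\overline{e}$ is cutting, then all circuit edges are multiplicity-$2$; by Corollary \ref{Chapter1_Corollary_Configuration of double edges of double unicyclic} they are $(j,k)$-edges with opposite orientations for some $\{j, k\}$, so $T_{m_j, m_k}^{\pi}$ is a double unicircuit graph. Theorem \ref{Theorem:MultiplicityOfCuttingEdges} forces the distribution of $\overline{e}$ to be all-even, and $\overline{\pi} \vee \gamma = 1_m$ pins it down to $(2, 2, 0)$ up to permutation, with the third basic cycle $T_{m_l}^{\pi}$ being a double tree attached via $\overline{e}$ -- this is exactly the $2$-$4$-unicircuit type. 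If instead $\overline{e}$ lies on the circuit and is not a loop, constant-parity arguments from Theorem \ref{Chapter1_Theorem_Configuration of edges of double unicyclic} combined with the vanishing of $\C_2$ for same-orientation off-diagonal pairs force $C_\pi = 0$; the only remaining possibility is that $\overline{e}$ is a loop that is itself the entire circuit, and connectivity then forces its distribution to be $(2, 1, 1)$, which together with the tree complement recovers the $2$-$4$-uniloop structure.

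The final step is to evaluate $C_\pi$ via \eqref{Equation_Number6} by summing $\C_{\sigma}(x_{i_1,i_{\gamma(1)}}, \dots, x_{i_m,i_{\gamma(m)}})$ over $\sigma \leq \overline{\pi}$ with $\sigma \vee \gamma = 1_m$, exactly as in the proof of Lemma \ref{Lemma:ValidPartitions(1,-2)Case}. All multiplicity-$2$ blocks of $\overline{\pi}$ contribute a factor of $1$ (either $\C_2(x_{1,2}, x_{2,1}) = 1$ or $\C_2(x_{1,1}, x_{1,1}) = 1$). For the multiplicity-$4$ block, the block-of-$4$ option contributes $\mathring{\C}_4$ in the uniloop case (all four entries are diagonal $x_{a,a}$) and $\C_4$ in the unicircuit case (off-diagonal entries with the orientations forced opposite by Theorem \ref{Theorem:MultiplicityOfCuttingEdges}), while the pair-splittings that survive both $\sigma \vee \gamma = 1_m$ and the cumulant-vanishing constraints from the Wigner hypotheses contribute a residual $1$, yielding totals of $\mathring{\C}_4 + 1$ and $\C_4 + 1$ respectively. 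Multiplying by the index-assignment count $p(N) = N^{-m/2 + 1} N(N - 1) \cdots (N - \#(\pi) + 1)$ then gives the claimed formulas.
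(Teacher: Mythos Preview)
Your overall strategy parallels the paper's, but there is a substantive gap in your case analysis of the location of the multiplicity-$4$ edge $\overline{e}$. You claim that when $\overline{e}$ lies on the circuit and is not a loop, parity arguments together with the vanishing of $\C_2$ for same-orientation pairs force $C_\pi = 0$. This is false: that configuration is a legitimate instance of the \TFUC. If the double tree $T_{m_i}^\pi$ is joined to the double unicircuit $T_{m_j,m_k}^\pi$ along one of its \emph{circuit} edges (which Definition~\ref{def:(0,-1)-graphs} permits), the resulting multiplicity-$4$ edge sits on the circuit and carries two edges from $E_i$ in opposite orientation and one each from $E_j$ and $E_k$, also in opposite orientation. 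No same-orientation pairing is forced, so $\C_2$-vanishing does not kill the contribution. The paper handles this configuration in Case~3 of Lemma~\ref{Chapter4_Lemma_Suficient conditions for 24 uniloop or unicircuit}, and the converse half of its proof of Lemma~\ref{Lemma:ValidPartitions(0,-1)Case} verifies $C_\pi = p(N)(\C_4 + 1)$ in this sub-case as well.

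A second, smaller error: in the uniloop case your ``residual $1$'' is miscounted. All four entries are diagonal, so no $\C_2$ vanishes, and of the three pair-splittings of $\{u_1,u_2,v,w\}$ (with $u_1,u_2$ from cycle $i$, $v$ from $j$, $w$ from $k$) exactly \emph{two} satisfy $\sigma\vee\gamma=1_m$, namely $\{u_1,v\},\{u_2,w\}$ and $\{u_1,w\},\{u_2,v\}$. The correct value is $\mathring{\C}_4+2$, which is what the paper's own proof computes and what Corollary~\ref{Corollary:FirstExpressionOfAlpha} uses; the $+1$ in the displayed statement is a typo.

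Finally, for excluding two blocks of size~$3$ the paper's argument is much simpler than the cumulant-vanishing route you sketch: both multiplicity-$3$ edges must lie on the circuit (Theorem~\ref{Theorem:MultiplicityOfCuttingEdges}), the circuit in the elementarization then has length at least~$3$, so some vertex on it is incident to exactly one multiplicity-$3$ edge and otherwise only multiplicity-$2$ edges, hence has odd degree, contradicting Theorem~\ref{Theorem:DegreeOfVertices}. No cumulant analysis is needed.
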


Before proving Lemma \ref{Lemma:ValidPartitions(0,-1)Case} let us prove the following Lemma.

\begin{lemma}\label{Chapter4_Lemma_Suficient conditions for 24 uniloop or unicircuit}
Let $\pi\in \cP(m)$ be such that $\overline{\pi} \vee \gamma =1_m$. If $T_{m_1,m_2,m_3}^{\pi}$ is a graph satisfying all followings conditions:
\begin{enumerate}
\item 
$\overline{T_{m_1,m_2,m_3}^{\pi}}$ is a connected uni\cycles graph with all edges of multiplicity $2$ except for one of multiplicity $4$,

\item 
All edges in $\overline{E^{\pi}}$ of multiplicity $2$, consist of two edges  in $E^{\pi}$  with the opposite orientation,

\end{enumerate}
then $T_{m_1,m_2,m_3}^{\pi}$ is either of \TFULs or \TFUC.
\end{lemma}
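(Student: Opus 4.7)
The plan is to analyze where the unique multiplicity-$4$ edge $\overline{e}_0$ sits in $\overline{T_{m_1,m_2,m_3}^{\pi}}$; since this graph is unicircuit, $\overline{e}_0$ is either a cutting edge or lies on the unique circuit. All other edges then have multiplicity $2$ and by hypothesis consist of two edges in opposite orientation. The constraint $\overline{\pi}\vee\gamma=1_m$ demands that the blocks of $\overline{\pi}$ together with the cycles of $\gamma$ link all three basic cycles, and this will severely restrict the distribution of the four edges inside $\overline{e}_0$ across $E_1,E_2,E_3$.

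First I would handle the case where $\overline{e}_0$ is a cutting edge. Theorem \ref{Theorem:MultiplicityOfCuttingEdges} applied to $\overline{e}_0$ gives that its four constituent edges are evenly distributed with respect to orientation in each basic cycle, so $\overline{e}_0$ is either all from one $E_j$ or $2+2$ from two distinct $E_i,E_j$. Every remaining mult-$2$ cutting edge is non-connecting by the same theorem, and by Corollary \ref{Chapter1_Corollary_Configuration of double edges of double unicyclic} the mult-$2$ circuit edges are either all non-connecting or all $(a,b)$-edges for one fixed pair $\{a,b\}$. In the all-from-one-$E_j$ subcase for $\overline{e}_0$, neither circuit configuration allows $\overline{\pi}\vee\gamma$ to reach all three basic cycles, so that subcase is ruled out. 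In the connecting $(i,j)$-subcase, Corollary \ref{Chapter1_Corollary_Configuration of double edges of double unicyclic} together with the requirement that all three cycles be reached forces the circuit edges to be $(a,b)$-edges with $\{i,j\}\cup\{a,b\}=\{1,2,3\}$; a short verification then identifies $T_{m_c}^{\pi}$ (for $c\notin\{a,b\}$) as a double tree and $T_{m_a,m_b}^{\pi}$ as a double unicircuit joined to $T_{m_c}^{\pi}$ along $\overline{e}_0$, which is precisely the $2$-$4$ unicircuit type.

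Next I would treat the case $\overline{e}_0$ on the circuit, so every cutting edge is mult $2$ and hence non-connecting. If the circuit is a loop of length $1$, then $\overline{e}_0$ alone must make $\overline{\pi}\vee\gamma=1_m$, forcing its four edges to distribute as $(2,1,1)$ across the $E_j$'s; the graph is then easily seen to be of $2$-$4$ uniloop type with the doubly-covered basic cycle playing the role of $T_{m_i}^{\pi}$. If the circuit has length $\geq 2$, Theorem \ref{Chapter1_Theorem_Configuration of edges of double unicyclic} applied on each $E_j$ to the mult-$2$ circuit edges gives two possibilities: all non-connecting, or all $(a,b)$-edges for a single pair $\{a,b\}$. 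The all-non-connecting option forces $\overline{e}_0$ to have even count in every $E_j$, so it links at most two basic cycles, contradicting $\overline{\pi}\vee\gamma=1_m$. The $(a,b)$-option forces $\overline{e}_0$ to have odd counts in $E_a,E_b$ and even count in $E_c$, and the only $4$-edge distribution allowing $\overline{e}_0$ to meet $E_c$ is $(1,1,2)$; this recovers the $2$-$4$ unicircuit type with $\overline{e}_0$ sitting inside the circuit of the double unicircuit $T_{m_a,m_b}^{\pi}$.

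The main obstacle will be the simultaneous bookkeeping of three constraints on $\overline{e}_0$: the parity conclusions from Theorems \ref{Theorem:MultiplicityOfCuttingEdges} and \ref{Chapter1_Theorem_Configuration of edges of double unicyclic}, the per-case restriction from Corollary \ref{Chapter1_Corollary_Configuration of double edges of double unicyclic}, and the connectivity requirement $\overline{\pi}\vee\gamma=1_m$. Once these eliminate the impossible configurations, the surviving ones should match the defining structures of the $2$-$4$ uniloop and $2$-$4$ unicircuit types almost by inspection of the restrictions $T_{m_i}^{\pi}$ and $T_{m_j,m_k}^{\pi}$, though some care is needed because the joining edge $\overline{e}_0$ can be either a tree-edge or a circuit-edge of the ambient double unicircuit.
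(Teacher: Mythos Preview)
Your proposal is essentially the paper's proof with the cases reorganized: the paper splits into (circuit is a loop = $\overline{e}_0$), ($\overline{e}_0$ off the circuit), and ($\overline{e}_0$ on a non-loop circuit), while you split by ($\overline{e}_0$ cutting) versus ($\overline{e}_0$ on circuit, then loop versus length $\geq 2$). You invoke the same ingredients---Theorem \ref{Theorem:MultiplicityOfCuttingEdges}, Theorem \ref{Chapter1_Theorem_Configuration of edges of double unicyclic}, Corollary \ref{Chapter1_Corollary_Configuration of double edges of double unicyclic}, and the parity of $\deg(U)_j$---in the same places.

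One point where your sketch is thinner than the paper: in the $(a,b)$-option with $\overline{e}_0$ on a circuit of length $\geq 2$, after reaching the $(1,1,2)$ distribution you say this ``recovers the $2$-$4$ unicircuit type'' by inspection. The paper is more careful here: it first argues that $\overline{e}_0\cap E_c$ is a cutting edge of $\overline{T_{m_c}^{\pi}}$ (otherwise $\overline{T_{m_c}^{\pi}}$ would contain a second circuit), hence its two $E_c$-edges have opposite orientation; then a vertex-degree count forces the single $E_a$- and $E_b$-edges in $\overline{e}_0$ to have opposite orientation as well. Only then do the restricted graphs $T_{m_c}^{\pi}$ and $T_{m_a,m_b}^{\pi}$ satisfy the precise definitions of double tree and double unicircuit. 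Your final paragraph flags that ``some care is needed,'' and this is exactly where.
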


\begin{proof}

Let $\{i,j,k\}=\{1,2,3\}$. Let us consider all possible cases:

\medskip
{\bfseries Case 1. The \cycles of $\overline{T_{m_1,m_2,m_3}^{\pi}}$ is a loop and this is the edge of multiplicity 4.} Any other edge is cutting and hence non-connecting, that forces the edge of multiplicity $4$ to be a $(1,2,3)$-edge, with two edges from one basic cycle, say $T_{m_i}$,  and one edge from each basic cycle $T_{m_j}$ and $T_{m_k}$.  Thus  $T_{m_1,m_2,m_3}^{\pi}$ is of \TFUL.

\medskip
\textbf{Case 2. The edge of multiplicity $4$ is not an edge of the \cycles of $\overline{T_{m_1,m_2,m_3}^{\pi}}$.} Any edge of multiplicity $2$ not in the \cycles is cutting and hence non-connecting. The edge of multiplicity $4$ is cutting and hence it can connect at most two basic cycles. On the other hand, if one edge in the \cycles is connecting, say a $(j,k)$-edge, then all edges in the \cycles are $(j,k)$-edges according to Corollary \ref{Chapter1_Corollary_Configuration of double edges of double unicyclic}. This means the only way to have $\overline{\pi}\vee\gamma=1_m$ is that the edges in the \cycles are all $(j,k)$-edges and the edge of multiplicity $4$ is either an $(i,j)$-edge or an $(i,k)$-edge. Finally as the edge of multiplicity $4$ is cutting, it consists of two edges with opposite orientations from each of $E_i$ and ($E_j$ or $E_k$) (depending whether this is an $(i,j)$-edge or an $(i,k)$-edge). Thus $T_{m_1,m_2,m_3}^{\pi}$ is of \TFUC.\par

\medskip
\textbf{Case 3. The edge of multiplicity $4$ is in the \cycles of $\overline{T_{m_1,m_2,m_3}^{\pi}}$} 
\textbf{and this \cycles is not a loop.} Any edge not in the \cycles is cutting and hence non-connecting. Since $\overline{\pi} \vee \gamma=1_m$ then there has to be a connecting edge in the \cycle. Assume one of the edges of multiplicity $2$ in the \cycles is connecting, say an $(i,j)$-edge. Theorem \ref{Chapter1_Theorem_Configuration of edges of double unicyclic} says that any other adjacent edge of multiplicity $2$ in the \cycles must be an $(i,j)$-edge. Therefore all edges in the circuit must be $(i,j)$-edges except possibly by the edge of multiplicity $4$. Let $\overline{e}=(U,V)$ be the edge of multiplicity $4$. Theorem \ref{Chapter1_Theorem_Configuration of edges of double unicyclic} says that $\overline{e}$ must have an even number of edges from $E_i$, that is, either $1$ or $3$. Similarly, $\overline{e}$ must have either $1$ or $3$ edges from $E_j$. Since $\overline{\pi} \vee \gamma=1_m$ there must be an edge of $\overline{E^{\pi}}$ which connects $E_k$ to $E_i\cup E_j$. As any edge of multiplicity $2$ is either non-connecting or $(i,j)$-edge, $\overline{e}$ is forced to have at least one edge from $E_k$. The only way of satisfying all these conditions is that $\overline{e}$ consist of exactly one edge from each $E_i$ and $E_j$ and two edges from $E_k$. On the other hand, we claim that that $\overline{e}\cap E_k$ is a cutting edge of $\overline{T_{m_k}^{\pi}}$. Indeed, if it is non-cutting then there is a \cycles completely contained in $\overline{T_{m_k}^{\pi}}$. This is also a \cycles of $\overline{T_{m_1,m_2,m_3}^{\pi}}$ but $\overline{T_{m_1,m_2,m_3}^{\pi}}$ has a unique \cycles whose edges are all $(i,j)$-edges (except by $\overline{e}$) so it cannot be contained in $\overline{T_{m_k}^{\pi}}$. We conclude that $\overline{e}\cap E_k$ must be a cutting edge of $\overline{T_{m_k}^{\pi}}$. This proves that $\overline{T_{m_k}^{\pi}}$ is a double tree and therefore the edges in $\overline{e}\cap E_k$ have opposite orientation. 
Let us recall that the number of incoming and outgoing edges of each vertex must be the same. Note that, apart from $\overline{e}$, $U$ is adjacent to edges of multiplicity $2$ whose edges have opposite orientations. This means that each one contributes one incoming and one outgoing edge of $U$. Moreover, we proved that the edges in $\overline{e}$ coming from $E_k$ are in opposite orientations. Therefore it must be that the edges in $\overline{e}$ from $E_i$ and $E_j$ have opposite orientation. This proves that all edges of $T_{m_i,m_j}^{\pi}$ have multiplicity $2$ and with the opposite orientation. Moreover, the \cycles of $\overline{T_{m_1,m_2,m_3}^{\pi}}$ is also a \cycles of $\overline{T_{m_i,m_j}^{\pi}}$ where any edge consist of one edge from each of $E_i$ and $E_j$. $\overline{T_{m_i,m_j}^{\pi}}$ cannot have another \cycles as that would represent another \cycles of $\overline{T_{m_1,m_2,m_3}^{\pi}}$, i.e. $T_{m_i,m_j}^{\pi}$ is of double uni\cycles type, hence $T_{m_1,m_2,m_3}^{\pi}$ is of \TFUC. \\
Let us recall that at the very beginning of this case we assumed that one of the edges of multiplicity $2$ in the \cycles is non-cutting, so, to finish this case off let us now assume that none of these edges is connecting. This forces to the edge of multiplicity $4$ to connect all basic cycles, that is, this edge must consist of one edge from each $E_i$ and $E_j$ and two edges from $E_k$. Let $\overline{e}=(U,V)$ be this edge of multiplicity $4$. Observe that, apart from $\overline{e}$, $U$ is adjacent to only non-connecting edges and therefore these edges consist of either $0$ or $2$ edges from $E_i$. But $\overline{e}$ has only one edge from $E_i$. Therefore $\deg(U)_i$ is odd which yields a contradiction. This finishes the proof.\par

\end{proof}

Let us now prove Lemma \ref{Lemma:ValidPartitions(0,-1)Case}.

\begin{proof}[Proof of Lemma \ref{Lemma:ValidPartitions(0,-1)Case}]
Suppose $C_{\pi}\neq 0$ and $\overset{\rightarrow}q(\pi)=(0,-1)$. Then $\overline{T_{m_1,m_2,m_3}^{\pi}}$ is a unicircuit graph with all edges of multiplicity $2$ except by either one of multiplicity $4$ or two edges of multiplicity $3$. We claim that the second case is not possible. Indeed, assume there are two edges of multiplicity $3$. Since any edge not in the \cycles is cutting and then has even multiplicity, these two edges must lie in the \cycle. The \cycles has at least length $3$. Therefore there is an edge of multiplicity $2$ in the \cycles which is adjacent to one of the edges with multiplicity $3$. We call $U$ to the common vertex of the edge of multiplicity $2$ and the edge of multiplicity $3$. Observe that $U$ is adjacent to only edges of multiplicity $2$ apart from the edge of multiplicity $3$ which means that its degree is odd which is impossible so it must be all edges have multiplicity $2$ except by one of multiplicity $4$. Theorem \ref{Theorem:Maybe} says that the following conditions hold, 
\begin{enumerate}

\item 
$\overline{\pi}\vee\gamma_{m_1,m_2,m_3}=1_m$

\item 
All edges in $\overline{E^{\pi}}$ of multiplicity $2$, consist of two edges  in $E^{\pi}$  with the opposite orientation.

\end{enumerate}
The latest means that $T_{m_1,m_2,m_3}^{\pi}$ satisfies all conditions of Lemma \ref{Chapter4_Lemma_Suficient conditions for 24 uniloop or unicircuit}. It follows that $T_{m_1,m_2,m_3}^{\pi}$ must be either of \TFULs or $2$-$4$ unicircuit type.

\medskip
For the converse, if either $T_{m_1,m_2,m_3}^{\pi}$ is of \TFUCs or of $2$-$4$ uniloop type then clearly $\overset{\rightarrow}q(\pi)=(0,-1)$. It remains to prove that $C_{\pi}\neq 0$ in each case. Let $i:V\rightarrow [N]$ be such that $\ker(i)=\pi$. Assume first $T_{m_1,m_2,m_3}^{\pi}$ is of \TFUL. Let us recall Equation (\ref{Equation_Number6}):
\begin{equation}\label{Eq1}
\C_3(T_{m_1,m_2,m_3},i)=\sum_{\substack{\sigma\in\cP(m) \\ \sigma\vee\gamma=1_m \\ \sigma\leq \overline{\pi}}} \C_{\sigma}(x_{i_1,i_{\gamma(1)}},\dots, x_{i_m,i_{\gamma(m)}}).
\end{equation}
Let $\sigma\leq \overline{\pi}$, we proceed similarly to Lemma \ref{Lemma:ValidPartitions(1,-2)Case}, we may assume that any block of size $2$ of $\overline{\pi}$ is also a block of $\sigma$ and it has a contribution on $\C_{\sigma}(x_{i(1),i(\gamma(1))},\dots,x_{i(m),i(\gamma(m))})$ of $\C_2(x_{1,2},x_{2,1})=1$. The block of size $4$ of $\overline{\pi}$ is of the form $B=\{u_1,u_2,v,w\}$ with $e_{u_1},e_{u_2}\in E_i$, $e_v\in E_j$, $e_w\in E_k$ and $\{i,j,k\}=\{1,2,3\}$. The orientation of the edges $e_{u_1},e_{u_2},e_v,e_w$ doesn't matter as they are all loops. Since $\sigma \leq \overline{\pi}$ then $\sigma$ restricted to $B$ is either $B,\{\{u_1,v\},\{u_2,w\}\}$ or $\{\{u_1,w\},\{u_2,v\}\}$, in the first case $\C_{\tau}(x_{i(1),i(\gamma(1))},\dots,x_{i(m),i(\gamma(m))})=\mathring{\C}_4$, in the last two cases $\C_{\tau}(x_{i(1),i(\gamma(1))},\dots,x_{i(m),i(\gamma(m))})=1$, adding all together up gives $\C_3(T_{m_1,m_2,m_3},i)=\mathring{\C}_4+2$ and then,
$$C_{\pi}^{(3)}=\sum_{\substack{i:V\rightarrow [N]\\ \ker(i)=\pi}}\C_3(T_{m_1,m_2,m_3},i)=p(N,\pi)(\mathring{\C}_4+2) \neq 0.$$

Similarly, if $T_{m_1,m_2,m_3}^{\pi}$ is of \TFUCs then $T_{m_i}^{\pi}$ is a double tree, $T_{m_j,m_k}^{\pi}$ is a double uni\cycle graph with $\{i,j,k\}=\{1,2,3\}$ and $T_{m_1,m_2,m_3}^{\pi}$ is obtained by joining $T_{m_i}^{\pi}$ and $T_{m_j,m_k}^{\pi}$ along some edge. There are two possible cases, either we join them along one edge in the \cycles of $\overline{T_{m_j,m_k}^{\pi}}$ or along one edge not in the \cycle. In the first case the edge of multiplicity $4$ correspond to a block of size $4$ of $\overline{\pi}$ of the form $B=\{u_1,u_2,v,w\}$ with $e_{u_1},e_{u_2}\in E_i$, $e_v\in E_j$ and $e_w\in E_k$. We assume without loss of generality that $e_{u_1}$ and $e_{v}$ have the same orientation. Let $\sigma \leq \overline{\pi}$ as in Equation (\ref{Eq1}), any block of size two of $\overline{\pi}$ is also a block of $\sigma$ and the corresponding contribution on $\C_{\sigma}(x_{i(1),i(\gamma(1))},\dots,x_{i(m),i(\gamma(m))})$ is $1$. The block of size $4$ of $\overline{\pi}$ can be either a block of $\sigma$ in which case the contribution is $\C_4$ or the union of two blocks of $\sigma$ each of size $2$ in which case the contribution is $1$. Note that the second case is only possible when the blocks of $\sigma$ are $\{u_1,v\}$ and $\{u_2,w\}$, as this is the only case where $\sigma$ pairs edges with opposite orientation and $\sigma \vee \sigma =1_m$. Therefore there are only two possible $\sigma's$, one with contribution $\C_4$ and one with contribution $1$, hence $\C_3(T_{m_1,m_2,m_3},i)=\C_4+1$, thus $C_{\pi}^{(3)}=p(N,\pi)(\C_4+1)$. In the second case the edge of multiplicity $4$ is of the form $B=\{u_1,u_2,v_1,v_2\}$ with $e_{u_1},e_{u_2}\in E_i$ and $e_{v_1},e_{v_2}$ in either $E_j$ or $E_k$. We may assume without loss of generality that $e_{v_1},e_{v_2}\in E_j$ and $e_{u_1},e_{v_1}$ have the same orientation. In this case $\sigma$ restricted to $B$ can be either $B$ itself or $\{\{u_1,v_2\},\{u_2,v_1\}\}$ with contributions $\C_4$ and $1$ respectively. Therefore, we get the same conclusion: $C_{\pi}^{(3)}=p(N,\pi)(\C_4+1) \neq 0$. This finishes the proof.
\end{proof}

Lemmas \ref{Lemma:ValidPartitions(0,-1)Case} and \ref{Lemma:ValidPartitions(1,-2)Case} describe almost all \VG\text{s}, the remaining case is $\pi\in\cP(m)$ such that $C_{\pi}\neq 0$ and $\overset{\rightarrow}q(\pi)=(-1,0)$, this motivates the following notation.

\begin{notation}\label{Chapter4_Definition_double bicircuit}
Let $\pi \in \cP(m)$. If $\vec{q}(\pi)=(-1,0)$ and $C_{\pi}^{(3)} \neq 0$ we say that the graph $T_{m_1,m_2,m_3}^{\pi}$ is a \textit{double bi\cycles graph}.
\end{notation}

\begin{notation}\label{Notation:ValidGraphsEachType}
\text{  }
\begin{enumerate}
\item We denote the set of \TSTTs graphs by $\TSTTSet$, 
\item We denote the set of \TFFTTs graphs by $\TFFTTSet$,
\item We denote the set of \TFULs graphs by $\TFULSet$,
\item We denote the set of \TFUCs type graphs by $\TFUCSet$,
\item We denote the set of \DBs graphs by $\DBSet$.
\end{enumerate}
\end{notation}

Lemmas \ref{Lemma:ValidPartitions(0,-1)Case} and \ref{Lemma:ValidPartitions(1,-2)Case} prove that \VG\text{s} must be one of the cases described in either \ref{def:double_tree_types}, \ref{def:(0,-1)-graphs} or \ref{Chapter4_Definition_double bicircuit}, i.e.
$$\VGSet = \TSTTSet \cup \TFFTTSet \cup \TFULSet \cup \TFUCSet \cup \DBSet.$$ 
Examples  of each of the possible limit graphs can be seen in Figures \ref{Figure:3-valid graphs-a}, \ref{Figure:3-valid graphs-b}, and \ref{Figure:3-valid graphs}.

\begin{figure} 
\begin{center}
\includegraphics[width=0.49\textwidth]{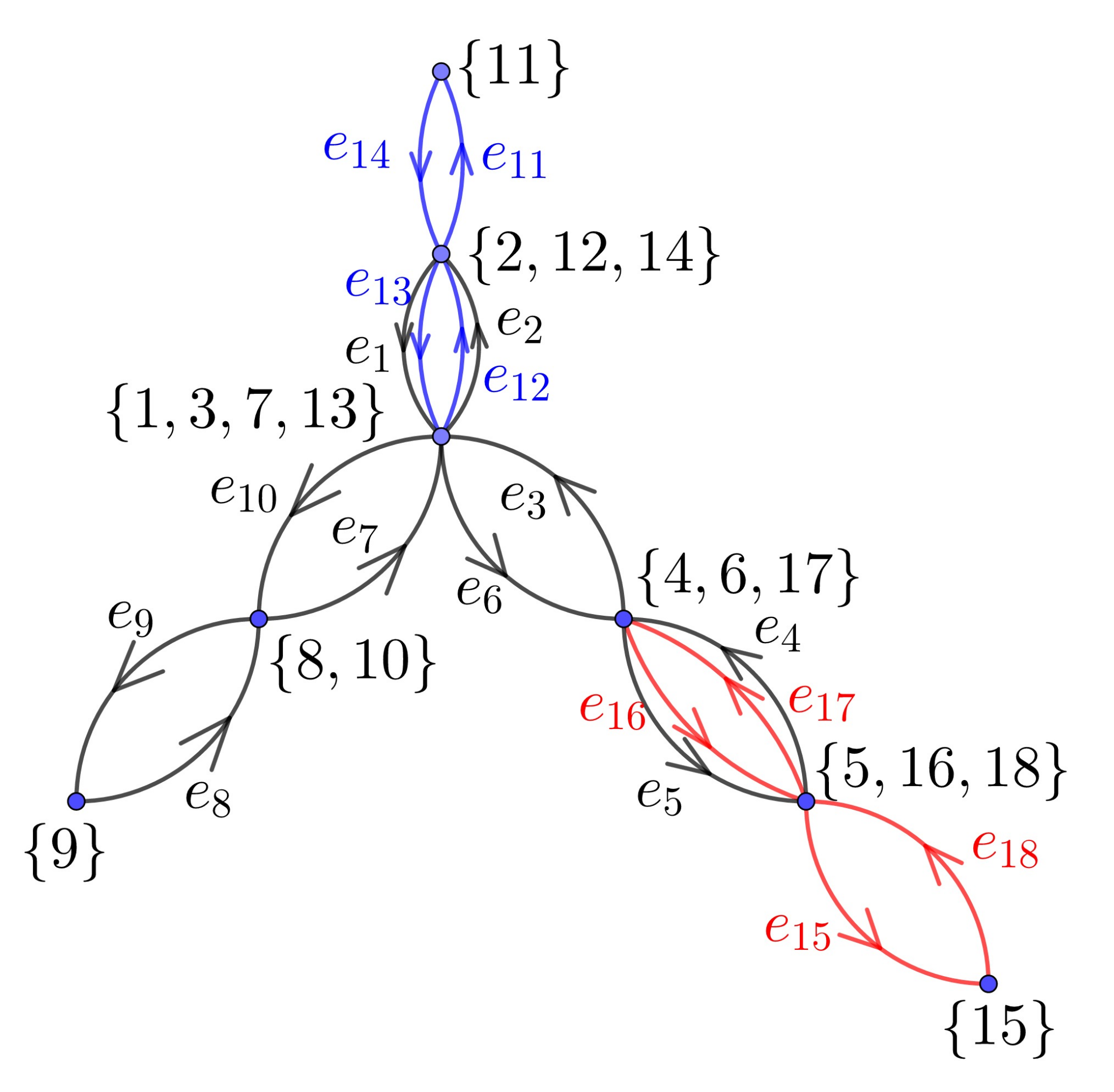} 
\includegraphics[width=0.49\textwidth]{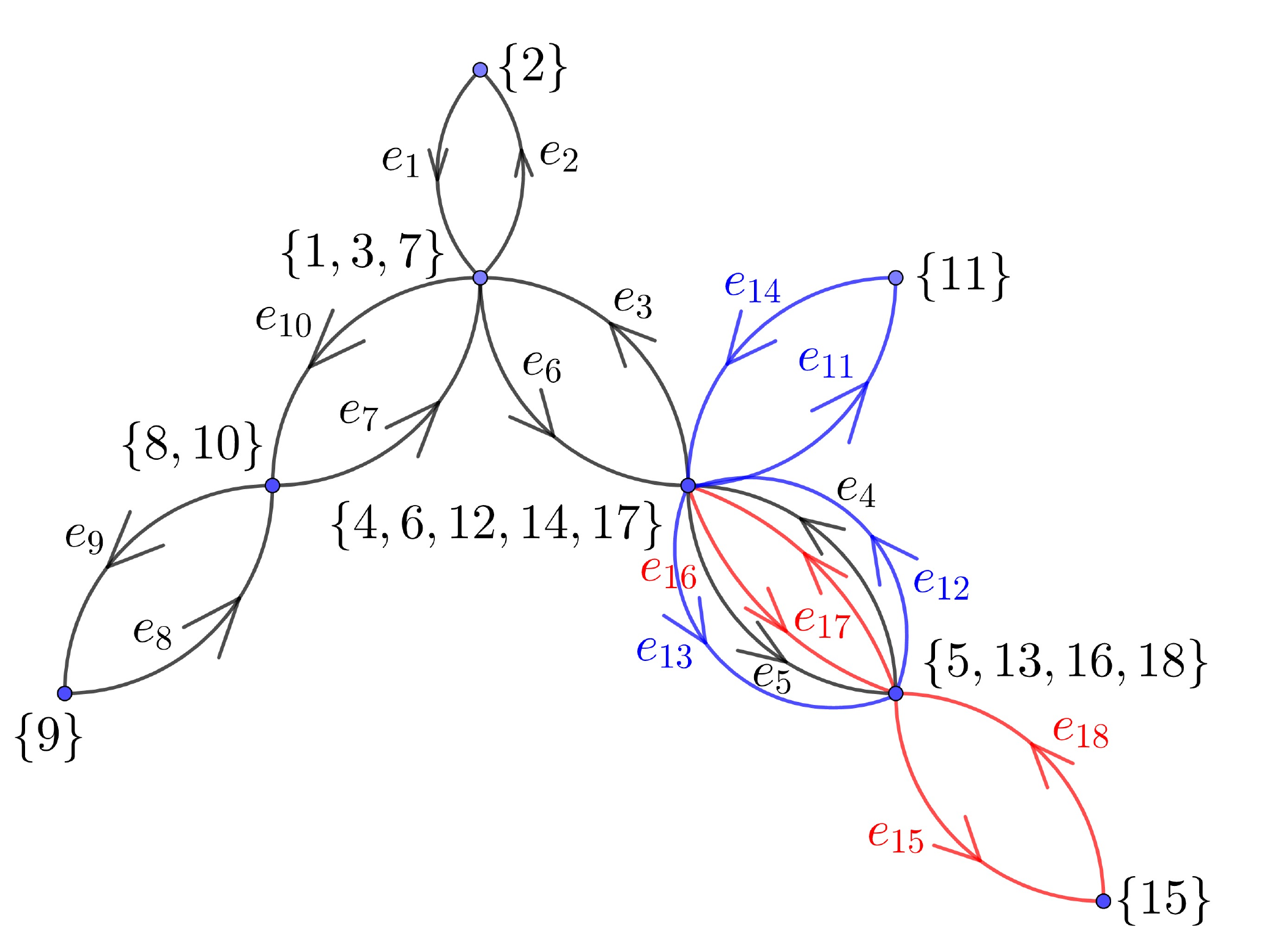}
\end{center}
\caption{\small Left: A 2-4-4 tree type, $T_{10,4,4}^{\pi}$ corresponding to $\pi=\{\{1,3,7,13\},  \{2,12,14\}, \{4,6,17\}, \{5,16,18\}, \ab\{8,\ab 10\}, \ab\{9\},\{11\},\{15\}\}$. Right: A 2-6 tree type, $T_{10,4,4}^{\pi}$ corresponding to $\pi=\{\{1,3,7\}, \{2\}, \{4,6,12,14,17\}, \{5,13, 16,\ab 18\}, \{8,10\}, \{9\}, \{11\}, \{15\}\}$. In this figure and the next two, $T_{m_1}^{\pi},T_{m_2}^{\pi}$ and $T_{m_3}^{\pi}$ are coloured black, blue and red respectively.\label{Figure:3-valid graphs-a}} 
\end{figure}

\begin{figure}
\begin{center}
\includegraphics[width=0.49\textwidth]{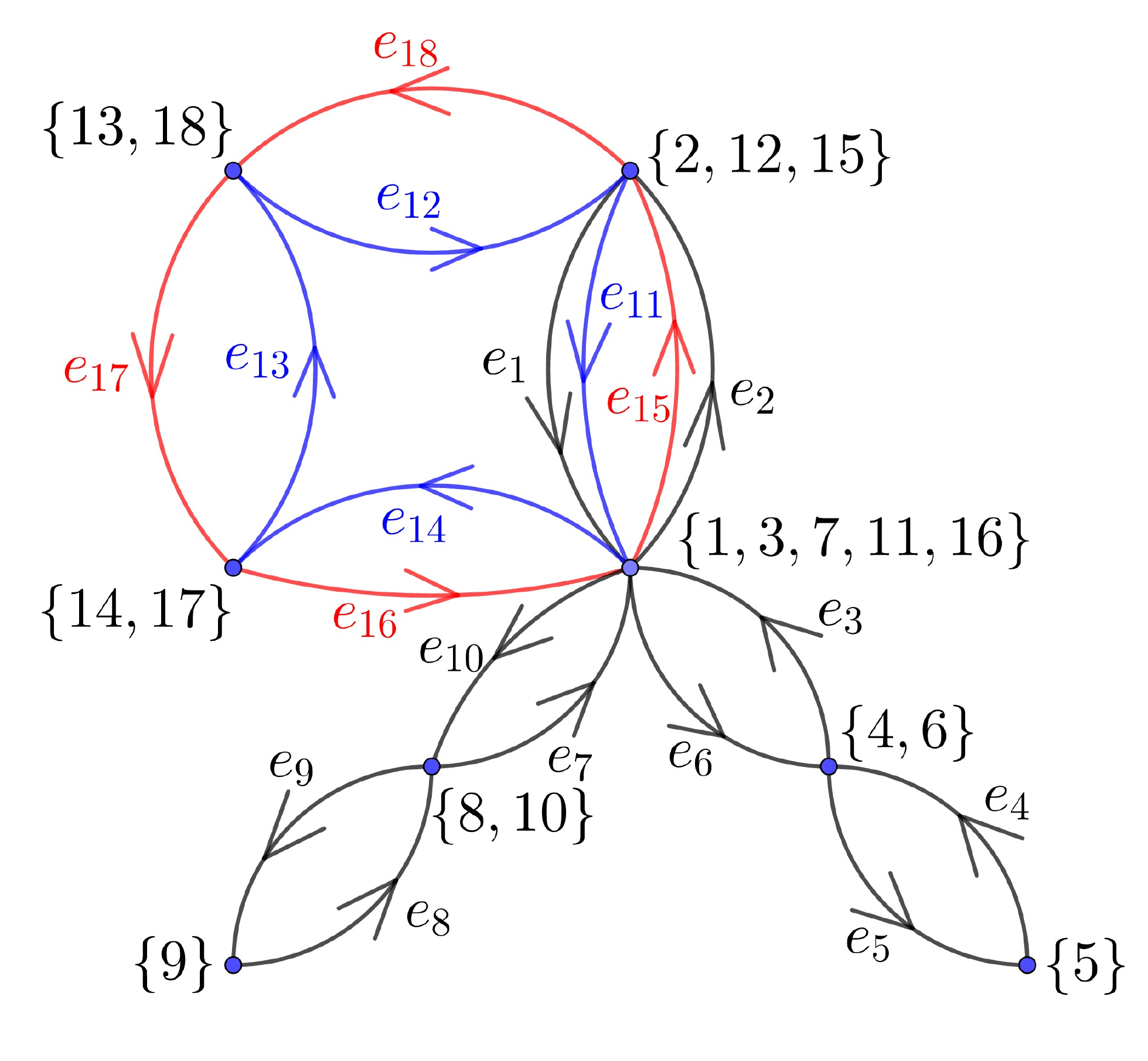}
\includegraphics[width=0.49\textwidth]{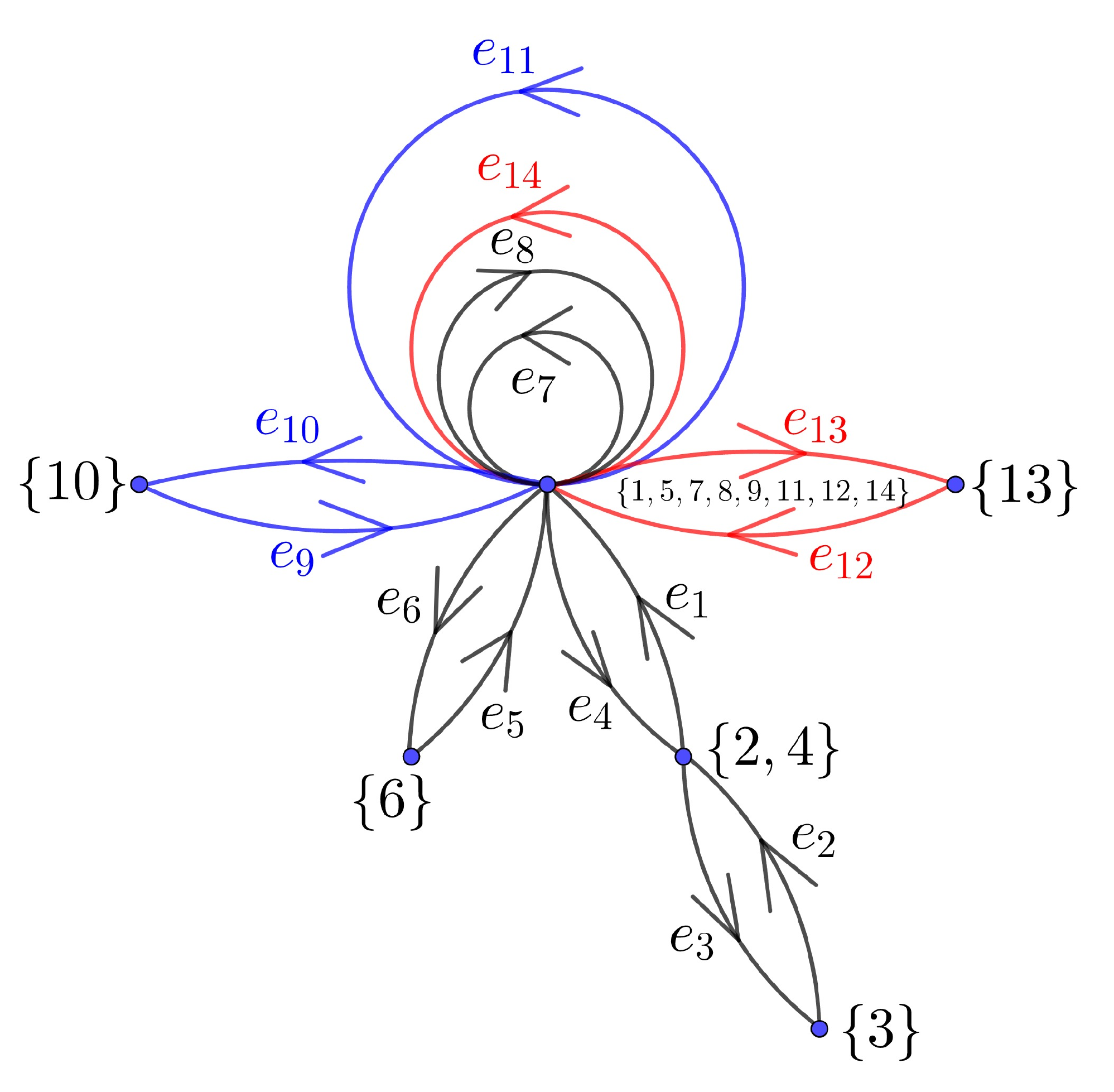}
\end{center}
\caption{\small Left: A 2-4 unicircuit type, $T_{10,4,4}^{\pi}$ corresponding to $\pi=\{\{1,3,7,11,16\},\{2,12,15\},\{4,6\}, \{5\}, \{8,\ab 10\},\{9\},\{13,18\},\{14,17\}\}$. Right: 2-4 uniloop type, $T_{8,3,3}^{\pi}$ corresponding to $\pi=\{\{1,5,7,8,9,11,12,14\},\{2,4\},\{3\}, \ab\{6\},\{10\},\{13\}\}$.\label{Figure:3-valid graphs-b}}
\end{figure}

\setbox1=\hbox to 290pt{\begin{minipage}{290pt}{\small
{
\begin{multline*}
\pi=\{\{1,5,19\},\{2,24\},\{3,23\},\{4,20,22\},\{6,10,17\}, \\ \{7,12,14,16\}\ab,\{8,11\},\{9,18\},\{13\},\{15\},\{21\}\}.
\end{multline*}
}}\end{minipage}}

\setbox2=\hbox to 290pt{\begin{minipage}{290pt}{\small
{\setlength{\textwidth}{190pt}
\begin{multline*}
\pi= \{\{1,10,\ab12\}, \{2,19\}, \{3,18,24\},\{4,23\}, \{5,7,22\},\\ \{6\},
\{8,16,21\},\{9,13,15\}, \{11\}, \{14\},\{17,20\}\}.
\end{multline*}
}}\end{minipage}}

\begin{figure}
\begin{center}
\includegraphics[width=0.49\textwidth]{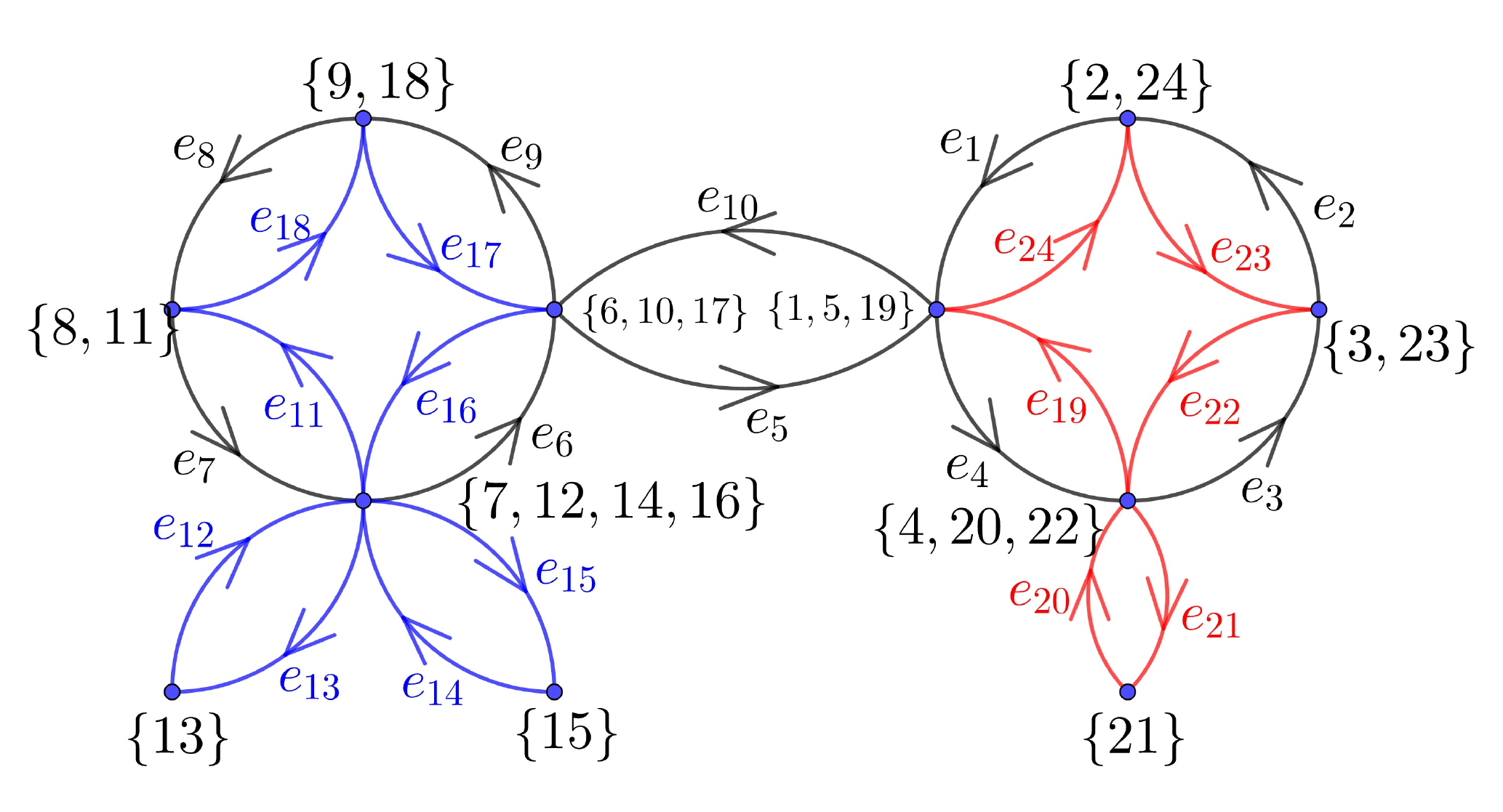}
\includegraphics[width=0.49\textwidth]{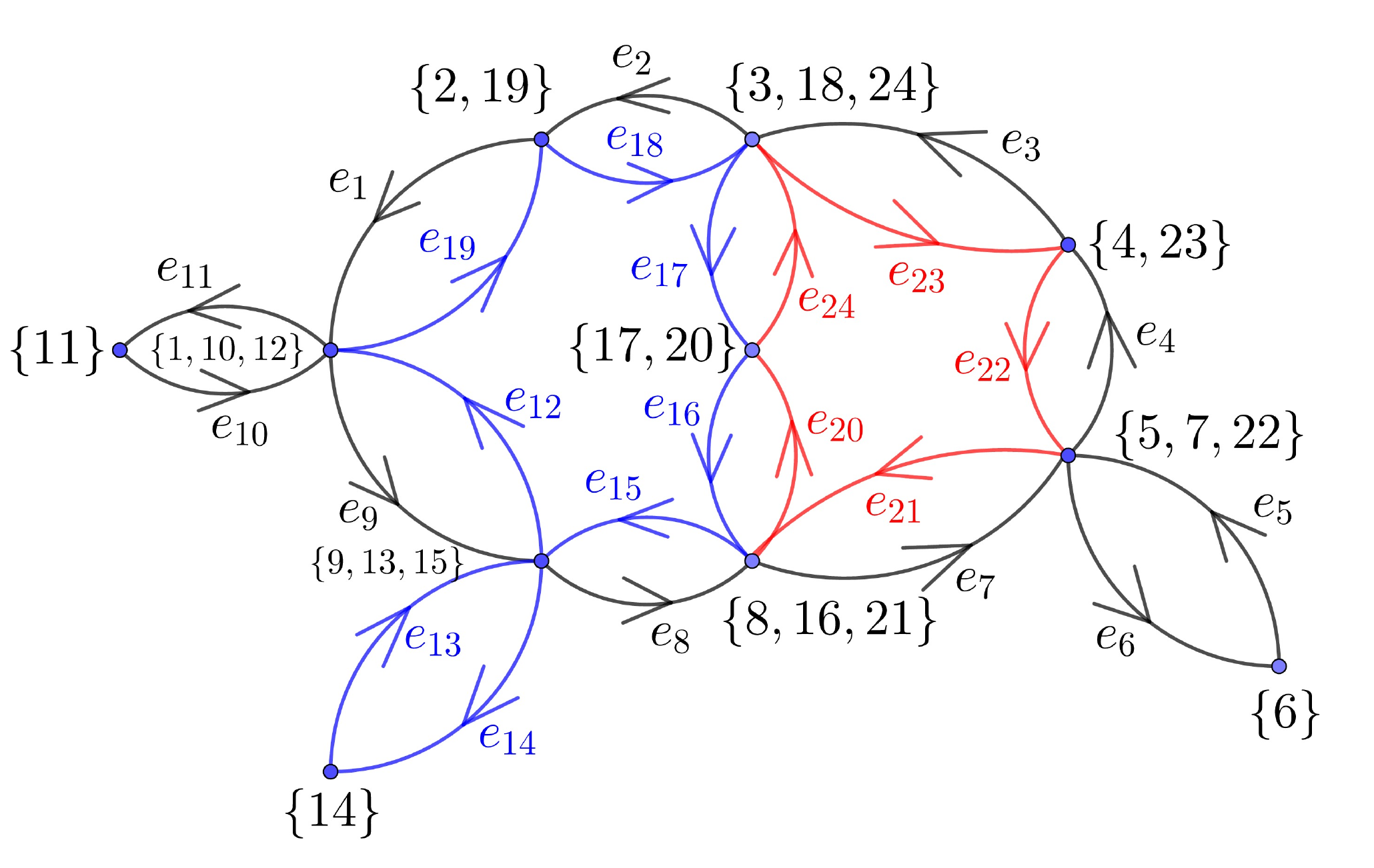}
\end{center}
\caption{\small
Two examples of double bicircuit graphs. Left: A double bicircuit type, $T_{10,8,6}^{\pi}$ corresponding to $\pi=\{\{1,5,19\},\{2,24\},\{3,23\},\{4,20,22\},\{6,10,17\}, \{7,12,14,16\}\ab,\{8,11\},\{9,18\},\{13\},\{15\},\{21\}\}.$
Right: A double bicircuit type, $T_{11,8,5}^{\pi}$ corresponding to $\pi= \{\{1,10,\ab12\}, \{2,19\}, \{3,18,24\},\{4,23\}, \{5,7,22\}, \{6\},
\{8,16,21\},\{9,13,15\}, \{11\}, \{14\},\{17,20\}\}.$
}
\label{Figure:3-valid graphs}
\end{figure}

\begin{lemma}\label{Lemma:ValidPartitions(-1,0)Case}
Let $\pi\in\cP(m)$ such that $T_{m_1,m_2,m_3}^{\pi}$ is of \DB, then $\C_3(T_{m_1,m_2,m_3},i)=1$ for any $i:V\rightarrow[N]$ such that $\ker(i)=\pi$, consequently, $$C_{\pi}=p(N)$$
with $p(N)$ as before.
\end{lemma}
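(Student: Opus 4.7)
The plan is to extract everything we need directly from Theorem \ref{Theorem:Maybe} and Equation (\ref{Equation_Number6}); the computation then reduces to observing that essentially a single $\sigma$ contributes and each of its blocks gives a factor of $1$.

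First I will exploit the hypothesis $\vec{q}(\pi)=(-1,0)$. Since $q_2(\pi)=\#(\overline{\pi})-m/2=0$, Theorem \ref{Theorem:Maybe}(iv) forces every block of $\overline{\pi}$ to have size exactly $2$. Moreover, since $C_{\pi}\neq 0$, Theorem \ref{Theorem:Maybe}(i) gives $\overline{\pi}\vee\gamma=1_m$, and Theorem \ref{Theorem:Maybe}(v) tells us that for each block $\{u,v\}$ of $\overline{\pi}$ the edges $e_u$ and $e_v$ have opposite orientation (the case of loops is covered by Remark \ref{Remark:Loops have same and opposite orientation}).

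Next I apply Equation (\ref{Equation_Number6}): for any $i\colon V\to[N]$ with $\ker(i)=\pi$,
\[
\C_3(T_{m_1,m_2,m_3},i)=\sum_{\substack{\sigma\in\cP(m) \\ \sigma\vee\gamma=1_m \\ \sigma\leq \overline{\pi}}} \C_{\sigma}(x_{i_1,i_{\gamma(1)}},\dots, x_{i_m,i_{\gamma(m)}}).
\]
Because every block of $\overline{\pi}$ has exactly two elements, any $\sigma\leq\overline{\pi}$ is either $\sigma=\overline{\pi}$ itself or has at least one singleton block. A singleton block produces a factor $\C_1(x_{i_u,i_{\gamma(u)}})=0$ in $\C_\sigma$, so those terms vanish. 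Since $\overline{\pi}\vee\gamma=1_m$, the choice $\sigma=\overline{\pi}$ satisfies both constraints of the sum and is therefore the unique surviving term.

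For $\sigma=\overline{\pi}$ each block $\{u,v\}$ contributes a factor $\C_2(x_{i_u,i_{\gamma(u)}},x_{i_v,i_{\gamma(v)}})$. By the orientation observation from Theorem \ref{Theorem:Maybe}(v), this factor is either $\C_2(x_{1,2},x_{2,1})=\E(|x_{1,2}|^2)=1$ (off-diagonal case) or $\C_2(x_{1,1},x_{1,1})=\E(|x_{1,1}|^2)=1$ (when the block corresponds to a loop), using the normalizations in Definition \ref{def:wigner_matrix}. Multiplying over all blocks of $\overline{\pi}$ gives $\C_3(T_{m_1,m_2,m_3},i)=1$, as claimed. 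The formula for $C_\pi$ then follows immediately by summing over $i$ with $\ker(i)=\pi$: there are $N(N-1)\cdots(N-\#(\pi)+1)$ such functions, and after multiplying by $N^{-m/2+1}$ we obtain $C_\pi=p(N)$. The only subtlety to watch for is the loop case in the contribution calculation, but the normalizations $\E(|x_{i,j}|^2)=1$ for all $i,j$ make both possibilities collapse to the same value, so there is no real obstacle.
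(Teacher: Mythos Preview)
Your proof is correct and follows essentially the same approach as the paper's: both use Equation~(\ref{Equation_Number6}), observe that $q_2(\pi)=0$ forces all blocks of $\overline{\pi}$ to have size~$2$, conclude that $\sigma=\overline{\pi}$ is the unique contributing term, and check that each block yields a factor of $1$. Your version is slightly more explicit in invoking Theorem~\ref{Theorem:Maybe}(i) to verify that $\overline{\pi}$ actually satisfies the constraint $\sigma\vee\gamma=1_m$, and in handling the possible loop case via the normalization $\E(|x_{i,i}|^2)=1$, but these are minor elaborations of the same argument.
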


\begin{proof}
Let $i:V\rightarrow[N]$ be such that $\ker(i)=\pi$. Equation (\ref{Equation_Number6}) says,
\begin{equation*}
\C_3(T_{m_1,m_2,m_3},i)=\sum_{\substack{\sigma\in\cP(m) \\ \sigma\vee\gamma=1_m \\ \sigma\leq \overline{\pi}}} \C_{\sigma}(x_{i_1,i_{\gamma(1)}},\dots, x_{i_m,i_{\gamma(m)}}).
\end{equation*}
Let $\sigma \leq \overline{\pi}$, as done before, we may assume that any block of $\sigma$ must have at least size $2$. On the other hand as $q_2(\pi)=0$ then all blocks of $\overline{\pi}$ must be of size $2$ which forces to $\sigma$ be equal to $\overline{\pi}$, moreover any block $\{u,v\}$ of $\sigma$ contributes to $\C_{\sigma}(x_{i_1,i_{\gamma(1)}},\dots, x_{i_m,i_{\gamma(m)}})$ a factor of the form $\C_2(x_{i_u,i_{\gamma(u)}},x_{i_v,i_{\gamma(v)}})=\C_2(x_{1,2},x_{2,1})=1$, therefore $\C_3(T_{m_1,m_2,m_3},i)=1$ as required.
\end{proof}

\begin{corollary}\label{Corollary:FirstExpressionOfAlpha}
\begin{multline*}
\alpha_{m_1,m_2,m_3} = \sum_{\substack{\pi\in \cP(m) \\ T_{m_1,m_2,m_3}^{\pi}\in \TSTTSet}}(\C_6+6\C_4+2)
+ \sum_{\substack{\pi\in \cP(m) \\ T_{m_1,m_2,m_3}^{\pi}\in \TFFTTSet}}(\C_4+1)^2 \\
+ \sum_{\substack{\pi\in \cP(m) \\ T_{m_1,m_2,m_3}^{\pi}\in \TFULSet}}(\mathring{\C}_4+2)
+ \sum_{\substack{\pi\in \cP(m) \\ T_{m_1,m_2,m_3}^{\pi}\in \TFUCSet}}(\C_4+1)
+ \sum_{\substack{\pi\in \cP(m) \\ T_{m_1,m_2,m_3}^{\pi}\in \DBSet}}1 \\
\end{multline*}
\end{corollary}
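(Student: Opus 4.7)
The plan is to assemble the corollary directly from Corollary~\ref{Corollary:Asymptotic_Expression} together with the classification of limit partitions supplied by Lemmas~\ref{Lemma:ValidPartitions(1,-2)Case}, \ref{Lemma:ValidPartitions(0,-1)Case}, and~\ref{Lemma:ValidPartitions(-1,0)Case}. The starting point is the identity
\[
\alpha_{m_1,m_2,m_3}^\psN \;=\; \sum_{\substack{\pi\in\cP(m) \\ q(\pi)=-1,\; C_\pi\neq 0}} C_\pi \;+\; o(1),
\]
so the proof reduces to bookkeeping over the finitely many types of limit graphs.

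First I would argue that the only vectors $\overset{\rightarrow}q(\pi)$ consistent with $q(\pi)=-1$ and $C_\pi\neq 0$ are $(1,-2)$, $(0,-1)$, and $(-1,0)$. Indeed $q_1(\pi)\leq 1$ because $\overline{T^\pi_{m_1,m_2,m_3}}$ is connected by Theorem~\ref{Theorem:Maybe}(ii), and $q_2(\pi)\leq 0$ by Theorem~\ref{Theorem:Maybe}(iv); the half-integer decompositions are excluded by the parity and multiplicity arguments already used in the proof of Theorem~\ref{Theorem:UpperBoundOfOrder}. These three admissible cases are precisely the hypotheses of the three classification lemmas, yielding the disjoint decomposition
\[
\VGSet \;=\; \TSTTSet \cup \TFFTTSet \cup \TFULSet \cup \TFUCSet \cup \DBSet,
\]
together with an explicit formula $C_\pi = p(N)\,c_{\mathrm{type}}$ on each class, where $c_{\mathrm{type}}$ runs through $\C_6+6\C_4+2$, $(\C_4+1)^2$, $\mathring{\C}_4+2$, $\C_4+1$, and $1$ respectively.

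Next I would observe that $p(N)\to 1$ for every limit partition. Since $q(\pi)=\#(\pi)-m/2$, the condition $q(\pi)=-1$ forces $\#(\pi)=m/2-1$ uniformly over all limit partitions, so $p(N) = N^{-m/2+1}\,N(N-1)\cdots(N-m/2+2)$ is the same polynomial for all of them, a polynomial of degree $m/2-1$ in $N$ scaled by $N^{-m/2+1}$ and having leading coefficient $1$. Substituting the class-wise values of $C_\pi$ into the asymptotic expression, regrouping the sum according to the five disjoint classes, and passing to the limit $N\to\infty$ yields the claimed identity.

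The main potential obstacle is not analytic but combinatorial: one must verify that the five classes are genuinely disjoint and jointly exhaust all $\pi$ with $q(\pi)=-1$ and $C_\pi\neq 0$. Disjointness follows from the distinct $\overset{\rightarrow}q$-values and, within the value $(0,-1)$, from whether the multiplicity-$4$ edge is a loop (uniloop) or a non-loop cutting edge (unicircuit). Exhaustiveness is exactly the content of the three classification lemmas taken together, so no further analytic input is required beyond the elementary observation $p(N)\to 1$.
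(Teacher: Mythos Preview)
Your proposal is correct and follows essentially the same approach as the paper: invoke the asymptotic expansion (Equation~(\ref{Equation:Asymptotic_Expression_2}) in the paper, equivalent to your starting identity), use the classification Lemmas~\ref{Lemma:ValidPartitions(1,-2)Case}, \ref{Lemma:ValidPartitions(0,-1)Case}, \ref{Lemma:ValidPartitions(-1,0)Case} to split $\VGSet$ into the five types with their respective constants, note $p(N)\to 1$, and take the limit. One small descriptive slip: within the $(0,-1)$ case the multiplicity-$4$ edge in the unicircuit type is not always a cutting edge (it may lie on the circuit, cf.\ Case~3 of Lemma~\ref{Chapter4_Lemma_Suficient conditions for 24 uniloop or unicircuit}); the correct distinguishing feature is simply loop versus non-loop, but this does not affect your argument since disjointness is already delivered by the classification lemmas you cite.
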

\begin{proof}
Equation (\ref{Equation:Asymptotic_Expression_2}) says 
\begin{eqnarray*}
\alpha_{m_1,m_2,m_3}^{\psN}=\sum_{\substack{\pi\in\cP(m) \\ T_{m_1,m_2,m_3}^{\pi}\in \VGSet}}C_{\pi}+o(1).
\end{eqnarray*}
For each type of valid graph the contribution is of the form $C_{\pi}=p(N)A$ where $A$ is a number only depending on the type of graph. Since $\lim_{N\rightarrow\infty}p(N)=1$, then when applying limit as $N$ goes to infinity to $\alpha_{m_1,m_2,m_3}^{\psN}$ gives the desired expression.
\end{proof}

\section{Counting the \VG\text{s}}
\label{sec:counting-the-graphs}
Corollary \ref{Corollary:FirstExpressionOfAlpha} determines the third order cumulants provided we are able to count the \VG\text{s}, which is rather complicated. However, we can establish a relation between \VG\text{s} and partitioned permutations; this permits us to write the third order moments in terms of the third order cumulants, naturally. Next, we will count the \VG\text{s} using the set of partitioned permutations. From now on we let $r\in\mathbb{N}$, $m_1,\dots,m_r\in\mathbb{N}$, $m=m_1+\cdots+m_r$, $\gamma_{m_1,\dots,m_r}\in S_m$ and the graph $T_{m_1,\dots,m_r}=(V,E)$ and the permutation $\gamma_{m_1,\dots,m_r}$ as defined in Section \ref{Section:GraphTheory}.

\subsection{The graph $G_{\sigma}^{\gamma}$}

\begin{definition}\label{Definition:Gamma_Sigma^Gamma}
Let $\sigma\in \cP_2(m)$ and $\gamma\in S_m$. We define the unoriented graph $G_{\sigma}^{\gamma}$ as the graph $(V,E)$ where,
\begin{enumerate}
\item $V=\{\text{blocks of } \gamma\sigma\}$
\item $E=\{([u]_{\gamma\sigma},[v]_{\gamma\sigma}):\{u,v\}\text{ is a block of } \sigma\}$
\end{enumerate}
where $[u]_{\gamma\sigma}$ denotes the block of $\gamma\sigma$ containing $u$ and we think of $\sigma$ as a permutation and $\gamma\sigma$ as a partition.
\end{definition}

\begin{definition}
Let $\pi\in \cP_2(m)$ and $\gamma\in S_m$. We define the oriented graph $T(G_{\sigma}^{\gamma})$ as the graph $(V,E)$ where,
\begin{enumerate}
\item $V=\{\text{blocks of } \gamma\sigma\}$
\item $E=\{([u]_{\gamma\sigma},[v]_{\gamma\sigma}):\{u,v\}\text{ is a block of } \sigma\}\bigcup \{([v]_{\gamma\sigma},[u]_{\gamma\sigma}):\{u,v\}\text{ is a block of } \sigma\}$
\end{enumerate}
where for the purpose of definition we think of $\sigma$ as a permutation and $\gamma\sigma$ as a partition.
\end{definition}

\begin{remark}\label{Remark:DoublingTheEdges}
The graph $T(G_{\sigma}^{\gamma})$ is obtained by doubling the edges of the graph $G_{\sigma}^{\gamma}$ once in each orientation.
\end{remark}

\begin{theorem}\label{Theorem:DoublingGGivesQuotientGraph}
Let $\sigma\in \cP_2(m)$. Then the graph $T(G_{\sigma}^{\gamma_{m_1,\dots,m_r}})$ is a quotient graph of $T_{m_1,\dots,m_r}$. More precisely, $T(G_{\sigma}^{\gamma_{m_1,\dots,m_r}})=T_{m_1,m_2,\dots, m_r}^{\gamma_{m_1,\dots,m_r}\sigma}$. 
\end{theorem}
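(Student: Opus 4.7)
The plan is to verify the equality of the two graphs by matching vertex sets and then giving a bijection of oriented edges, where the key identity comes from the observation that for a pair $\{u,v\}$ of $\sigma$, applying $\gamma\sigma$ to $v$ yields $\gamma(u)$, so that $\gamma(u)$ and $v$ lie in the same cycle of $\gamma\sigma$.

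First I would note that both graphs have the same vertex set. By Definition \ref{def:quotient_graph}, the vertex set of $T_{m_1,\dots,m_r}^{\gamma_{m_1,\dots,m_r}\sigma}$ is the set of blocks of the partition $\gamma_{m_1,\dots,m_r}\sigma$, which is exactly the vertex set of $G_\sigma^{\gamma_{m_1,\dots,m_r}}$ and hence of $T(G_\sigma^{\gamma_{m_1,\dots,m_r}})$. Write $\gamma = \gamma_{m_1,\dots,m_r}$ for brevity.

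Next I would turn to edges. The quotient $T_{m_1,\dots,m_r}^{\gamma\sigma}$ has, for each $i \in [m]$, a single oriented edge $e_i = ([\gamma(i)]_{\gamma\sigma},[i]_{\gamma\sigma})$. The graph $T(G_\sigma^\gamma)$ has, for each pair block $\{u,v\}$ of $\sigma$, the two oriented edges $([u]_{\gamma\sigma},[v]_{\gamma\sigma})$ and $([v]_{\gamma\sigma},[u]_{\gamma\sigma})$ (see Remark \ref{Remark:DoublingTheEdges}). Since $\sigma \in \cP_2(m)$ has $m/2$ pair blocks, both edge multisets have cardinality $m$. Viewing $\sigma$ as the involution whose cycles are its blocks, the pair containing $i$ is $\{i,\sigma(i)\}$.

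The key identity is that for each $i \in [m]$, the elements $\gamma(i)$ and $\sigma(i)$ lie in the same cycle of $\gamma\sigma$: indeed,
\[
(\gamma\sigma)(\sigma(i)) = \gamma(\sigma(\sigma(i))) = \gamma(i),
\]
so $[\gamma(i)]_{\gamma\sigma} = [\sigma(i)]_{\gamma\sigma}$. Substituting this into the formula for $e_i$ gives
\[
e_i = \bigl([\sigma(i)]_{\gamma\sigma},\, [i]_{\gamma\sigma}\bigr),
\]
which is precisely one of the two oriented edges that the block $\{i,\sigma(i)\}$ of $\sigma$ contributes to $T(G_\sigma^\gamma)$. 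Replacing $i$ by $\sigma(i)$ shows that $e_{\sigma(i)} = ([i]_{\gamma\sigma},[\sigma(i)]_{\gamma\sigma})$, which is the other oriented edge contributed by the same block.

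Thus the map sending the pair block $\{i,\sigma(i)\}$ of $\sigma$ to the pair of quotient edges $\{e_i, e_{\sigma(i)}\}$ is a bijection, and respects orientations. Combined with the equality of vertex sets, this gives $T(G_\sigma^\gamma) = T_{m_1,\dots,m_r}^{\gamma\sigma}$ as oriented graphs, proving the theorem. I expect the only subtle point to be the bookkeeping argument showing that the two oriented edges attached to a pair block of $\sigma$ are exactly $e_i$ and $e_{\sigma(i)}$ with the correct source and target; everything else is a direct unpacking of the definitions.
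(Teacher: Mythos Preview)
Your proof is correct and follows essentially the same approach as the paper: both verify that the vertex sets coincide, use the identity $(\gamma\sigma)(\sigma(i)) = \gamma(i)$ (equivalently $\gamma\sigma(u) = \gamma(v)$ for $\{u,v\}$ a block of $\sigma$) to identify $[\gamma(i)]_{\gamma\sigma} = [\sigma(i)]_{\gamma\sigma}$, and then match the $m$ oriented edges on each side. The only cosmetic difference is that the paper starts from an edge of $T(G_\sigma^\gamma)$ and recognizes it as some $e_v$, while you start from $e_i$ and recognize it as an edge contributed by the block $\{i,\sigma(i)\}$; both conclude by the equality of cardinalities.
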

\begin{proof}
By definition $T(G_{\sigma}^{\gamma_{m_1,\dots,m_r}})$ and $T_{m_1,\dots,m_r}^{\gamma_{m_1,\dots,m_r}\sigma}$ have the same set of vertices. Let $([u]_{\gamma_{m_1,\dots,m_r}\sigma},[v]_{\gamma_{m_1,\dots,m_r}\sigma})$ be an edge of $T(G_{\sigma}^{\gamma_{m_1,\dots,m_r}})$ corresponding to the block $\{u,v\}$ of $\sigma$, and note that $\gamma_{m_1,\dots,m_r}\sigma(u) \ab =\gamma_{m_1,\dots,m_r}(v)$. Thus $[u]_{\gamma_{m_1,\dots,m_r}\sigma}=[\gamma_{m_1,\dots,m_r}(v)]_{\gamma_{m_1,\dots,m_r}\sigma}$. Therefore $$([\gamma_{m_1,\dots,m_r}(v)]_{\gamma_r\sigma},[v]_{\gamma_{m_1,\dots,m_r}\sigma})=([u]_{\gamma_{m_1,\dots,m_r}\sigma},[v]_{\gamma_{m_1,\dots,m_r}\sigma}),$$ and so the edge $([u]_{\gamma_{m_1,\dots,m_r}\sigma},[v]_{\gamma_{m_1,\dots,m_r}\sigma})$ of $T(G_{\sigma}^{\gamma_{m_1,\dots,m_r}})$ is also the edge $e_v=([\gamma_{m_1,\dots,m_r}(v)]_{\gamma_{m_1,\dots,m_r}\sigma},[v]_{\gamma_{m_1,\dots,m_r}\sigma})$ of $T^{\gamma_{m_1,\dots,m_r}\sigma}_{m_1,\dots,m_r}$, that is, any edge of $T(G_{\sigma}^{\gamma_{m_1,\dots,m_r}})$ is also an edge of $T_{m_1,\dots,m_r}^{\gamma_{m_1,\dots,m_r}\sigma}$. Hence they must have the same edge set as both have $m$ edges.
\end{proof}

\begin{lemma}\label{Lemma:PropertiesofTsigmagamma}
Let $\sigma\in NC_2(m_1,\dots,m_r)$, then the following are satisfied,
\begin{enumerate}
\item $\sigma \leq \overline{\gamma_{m_1,\dots,m_r}\sigma}$
\item $\overline{\gamma_{m_1,\dots,m_r}\sigma}\vee \gamma_{m_1,\dots,m_r}=1_m$
\item if $\{u,v\}$ is a block of $\sigma$ then $e_u$ and $e_v$ are edges of $T_{m_1,\dots,m_r}^{\gamma_{m_1,\dots,m_r}\sigma}$ connecting the same pair of vertices and with the opposite orientation.
\item $\overline{T_{m_1,\dots,m_r}^{\gamma_{m_1,\dots,m_r}\sigma}}$ is a connected graph where all edges have even multiplicity and each edge $\overline{e}\in \overline{E^{\gamma_{m_1,\dots,m_r}\sigma}}$, which is an equivalent class, consist of the same number of edges in each orientation.
\end{enumerate}
\end{lemma}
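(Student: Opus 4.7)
Write $\pi := \gamma_{m_1,\dots,m_r}\sigma$ and $\gamma := \gamma_{m_1,\dots,m_r}$ for convenience. The plan is to establish $(iii)$ first, which will yield $(i)$ as an immediate by-product; then deduce $(ii)$ from $(i)$ together with the non-crossing hypothesis; and finally assemble $(iv)$ from the previous three items.

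The starting observation is that any block $\{u,v\}$ of $\sigma$ satisfies $\gamma\sigma(u)=\gamma(v)$, so $u$ and $\gamma(v)$ lie in the same cycle of $\pi$, whence $[u]_\pi=[\gamma(v)]_\pi$, and symmetrically $[v]_\pi=[\gamma(u)]_\pi$. Therefore the oriented edges $e_u=([\gamma(u)]_\pi,[u]_\pi)=([v]_\pi,[\gamma(v)]_\pi)$ and $e_v=([\gamma(v)]_\pi,[v]_\pi)$ share the same unordered endpoint set but carry opposite orientations, which is exactly $(iii)$. In particular $\overline{e_u}=\overline{e_v}$, so $u\sim_{\overline{\pi}} v$, and since this holds for every block of $\sigma$ we conclude $\sigma\leq\overline{\pi}$, proving $(i)$. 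For $(ii)$, the inclusion $NC_2(m_1,\dots,m_r)\subset S_{NC}(m_1,\dots,m_r)$ forces $\sigma\vee\gamma=1_m$ by definition; joining the inequality of $(i)$ with $\gamma$ then gives $1_m=\sigma\vee\gamma\leq\overline{\pi}\vee\gamma\leq 1_m$, and equality is forced.

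For $(iv)$ I would split the claim into connectivity and an edge-count. For connectivity I plan to use that $\overline{T^\pi_{m_1,\dots,m_r}}$ and $T^\pi_{m_1,\dots,m_r}$ share the same underlying simple graph, and that $T^\pi_{m_1,\dots,m_r}$ is connected if and only if $\pi\vee\gamma=1_m$: each edge $e_i$ joins $[i]_\pi$ to $[\gamma(i)]_\pi$, so the connected components of $T^\pi_{m_1,\dots,m_r}$ correspond exactly to the blocks of $\pi\vee\gamma$. The same identification used in $(i)$ shows $u\sim_\pi\gamma(v)\sim_\gamma v$ for every pair $\{u,v\}$ of $\sigma$, whence $\sigma\leq\pi\vee\gamma$; joining with $\gamma$ upgrades this to $\pi\vee\gamma=1_m$. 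For the edge-count, each equivalence class $\overline{e}\in\overline{E^\pi}$ corresponds to a block $B$ of $\overline{\pi}$ with $\mult(\overline{e})=|B|$; by $(i)$ the block $B$ is a disjoint union of pairs of $\sigma$, so $|B|$ is automatically even, and by $(iii)$ each such pair contributes exactly one edge in each orientation, producing equal counts in the two orientations.

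The hard part should be the connectivity step in $(iv)$, because it requires recasting a topological property as the purely combinatorial identity $\pi\vee\gamma=1_m$; once that dictionary is in place, every item of the lemma flows from the single kinematic observation that a $\sigma$-pair $\{u,v\}$ forces the identifications $[u]_\pi=[\gamma(v)]_\pi$ and $[v]_\pi=[\gamma(u)]_\pi$.
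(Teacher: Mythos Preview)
Your proof is correct and follows essentially the same approach as the paper's. The only cosmetic difference is that the paper routes the argument for $(i)$ and $(iii)$ through the auxiliary graph $T(G_\sigma^\gamma)$ and Theorem~\ref{Theorem:DoublingGGivesQuotientGraph}, whereas you inline that computation directly via $\gamma\sigma(u)=\gamma(v)$; for connectivity the paper uses $(ii)$ (i.e.\ $\overline{\pi}\vee\gamma=1_m$) rather than your $\pi\vee\gamma=1_m$, but both conditions are immediate from $\sigma\vee\gamma=1_m$ and both imply connectedness of $T^\pi_{m_1,\dots,m_r}$.
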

\begin{proof}
We prove $(i)$ firstly. Let $\{u,v\}$ be a block of $\sigma$. By definition $([u]_{\gamma_{m_1,\dots,m_r}\sigma},[v]_{\gamma_{m_1,\dots,m_r}\sigma})$ and $([v]_{\gamma_{m_1,\dots,m_r}\sigma},[u]_{\gamma_{m_1,\dots,m_r}\sigma})$ are both edges of $T(G_{\sigma}^{\gamma_{m_1,\dots,m_r}})$ connecting the same pair of vertices. Theorem \ref{Theorem:DoublingGGivesQuotientGraph} proves $T(G_{\sigma}^{\gamma_{m_1,\dots,m_r}})$ and $T_{m_1,\dots,m_r}^{\gamma_{m_1,\dots,m_r}\sigma}$ are the same graph and those edges correspond to $e_u$ and $e_v$ respectively, meaning $u\overset{\overline{\gamma_{m_1,\dots,m_r}\sigma}}\sim v$, thus $\sigma\leq \overline{\gamma_{m_1,\dots,m_r}\sigma}$. Moreover this also proves any block $\{u,v\}$, of $\sigma$, correspond to two edges $e_u,e_v$ of $T_{m_1,\dots,m_r}^{\gamma_{m_1,\dots,m_r}\sigma}$ connecting the same pair of vertices and with the opposite orientations which proves $(iii)$. $(iii)$ implies that any edge of $\overline{T_{m_1,\dots,m_r}^{\gamma_{m_1,\dots,m_r}\sigma}}$ have even multiplicity and it consist of the same number of edges in each orientation, so to prove $(iv)$ it remains to verify that $\overline{T_{m_1,\dots,m_r}^{\gamma_{m_1,\dots,m_r}\sigma}}$ is a connected graph. Note that $1_m=\sigma\vee\gamma_{m_1,\dots,m_r}\leq \overline{\gamma_{m_1,\dots,m_r}\sigma}\vee \gamma_{m_1,\dots,m_r}\leq 1_m$, thus $\overline{\gamma_{m_1,\dots,m_r}\sigma}\vee \gamma_{m_1,\dots,m_r}=1_m$ proving $(ii)$ and that $\overline{T_{m_1,\dots,m_r}^{\gamma_{m_1,\dots,m_r}\sigma}}$ is a connected graph.
\end{proof}

\begin{theorem}\label{Theorem:ExtractingPairingProperties}
Let $\pi\in\cP(m)$ and let $\sigma\in\cP_2(m)$ be such that if $\{u,v\}$ is a block of $\sigma$ then $e_u$ and $e_v$ connect the same pair of vertices of $T_{m_1,\dots,m_r}^{\pi}$ with the opposite orientation. Then, $\gamma_{m_1,\dots,m_r}\sigma\leq \pi$. 
\end{theorem}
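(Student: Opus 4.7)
The plan is to unpack the orientation condition in terms of the source/target data of the edges, then apply it pointwise. First I would recall that the edges of $T_{m_1,\dots,m_r}^{\pi}$ are indexed as
\[
e_i = ([\gamma_{m_1,\dots,m_r}(i)]_\pi,\ [i]_\pi),
\]
where by convention the first coordinate is the source and the second is the target. Consequently, for a block $\{u,v\}$ of $\sigma$, the hypothesis that $e_u$ and $e_v$ connect the same pair of vertices of $T_{m_1,\dots,m_r}^\pi$ with opposite orientation (in the sense of Notation \ref{def:edge_equivalence}(ii), with $s(e_u)=t(e_v)$ and $t(e_u)=s(e_v)$) translates into the two simultaneous equalities
\[
[\gamma_{m_1,\dots,m_r}(u)]_\pi = [v]_\pi \qquad\text{and}\qquad [u]_\pi = [\gamma_{m_1,\dots,m_r}(v)]_\pi.
\]

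Next, to verify $\gamma_{m_1,\dots,m_r}\sigma\leq \pi$, it suffices to show that $x$ and $(\gamma_{m_1,\dots,m_r}\sigma)(x)$ lie in the same block of $\pi$ for every $x\in[m]$; this guarantees that each cycle of the permutation $\gamma_{m_1,\dots,m_r}\sigma$ is contained in a single block of $\pi$. So I would fix $x\in[m]$ and let $y=\sigma(x)$, so that $\{x,y\}$ is a block of $\sigma$. Applying the second displayed equality with $u=x$ and $v=y$ gives
\[
[x]_\pi = [\gamma_{m_1,\dots,m_r}(y)]_\pi = [(\gamma_{m_1,\dots,m_r}\sigma)(x)]_\pi,
\]
which is exactly what is needed.

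The argument is essentially a one-line check once the oriented-edge notation is unravelled, so there is no real obstacle. The only point that deserves care is translating the phrase \emph{``opposite orientation''} correctly into a pair of equalities on sources and targets (rather than a single equation), and then noticing that only one of the two equalities is actually needed to draw the conclusion; the other will be used implicitly elsewhere when the lemma is applied.
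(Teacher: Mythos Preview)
Your proof is correct and follows essentially the same approach as the paper's: both unpack the opposite-orientation hypothesis into the equality $[u]_\pi=[\gamma_{m_1,\dots,m_r}(\sigma(u))]_\pi$ and then observe that this forces every cycle of $\gamma_{m_1,\dots,m_r}\sigma$ to lie in a single block of $\pi$. The only cosmetic difference is that the paper spells out the transitive chain $q\sim_\pi(\gamma\sigma)(q)\sim_\pi(\gamma\sigma)^2(q)\sim_\pi\cdots$ explicitly, whereas you invoke the sufficiency criterion directly.
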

\begin{proof}
Let $\{u,v\}$ be a block of $\sigma$. The edge $e_u$ of $T_{m_1,\dots,m_r}^{\pi}$ goes from $[\gamma_{m_1,\dots,m_r}(u)]_{\pi}$ to $[u]_{\pi}$, by hypothesis $e_v$ is connecting the same pair of vertices as $e_u$ but in opposite orientation. Thus $e_v$ goes from $[u]_{\pi}$ to $[\gamma_{m_1,\dots,m_r}(u)]_{\pi}$. However we know that $e_v$ goes from $[\gamma_{m_1,\dots,m_r}(v)]_{\pi}$ to $[v]_{\pi}$ which means that $[u]_{\pi}=[\gamma_{m_1,\dots,m_r}(v)]_{\pi}$, i.e $u\overset{\pi}\sim \gamma_{m_1,\dots,m_r}(v)$, equivalently $u\overset{\pi}\sim \gamma_{m_1,\dots,m_r}\sigma(u)$. Let $p,q$ be such that $p\overset{\gamma_{m_1,\dots,m_r}\sigma}\sim q$. This means that if we think of $\gamma_{m_1,\dots,m_r}\sigma$ as a permutation then $p$ and $q$ are in the same cycle of $\gamma_{m_1,\dots,m_r}\sigma$, thus $p=(\gamma_{m_1,\dots,m_r}\sigma)^{(t)}(q)$ for some $t\geq 1$. By the proved before we get,
$$q\overset{\pi}\sim \gamma_{m_1,\dots,m_r}\sigma(q)\overset{\pi}\sim (\gamma_{m_1,\dots,m_r}\sigma)^{(2)}(q)\cdots \overset{\pi}\sim (\gamma_{m_1,\dots,m_r}\sigma)^{(t)}(q)=p,$$
proving that $p\overset{\pi}\sim q$.
\end{proof}

\subsection{The \VG\text{s} and the set of non-crossing pairings}

For this subsection we will work with the particular case $r=3$, so we set $m=m_1+m_2+m_3$, $\gamma_{m_1,m_2,m_3}$ and $T_{m_1,m_2,m_3}=(V,E)$ as defined in Section \ref{Section:GraphTheory} unless other is specified.

\begin{lemma}\label{Lemma:ImageofNonCrossingPairings}
Let $\sigma\in NC_2(m_1,m_2,m_3)$, then $T_{m_1,m_2,m_3}^{\gamma_{m_1,m_2,m_3}\sigma}$ is a \VG.
\end{lemma}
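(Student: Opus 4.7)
The plan is to set $\pi \vcentcolon = \gamma_{m_1,m_2,m_3}\sigma$ and verify the two defining properties of a limit partition: $q(\pi) = -1$ and $C_\pi \neq 0$. For the first, I will use the metric condition built into the definition of $NC_2(m_1,m_2,m_3)$: since $\sigma$ is a pairing, $|\sigma|=m/2$, and $|\sigma|+|\sigma^{-1}\gamma| = |\gamma|+4 = m+1$ yields $|\sigma\gamma| = m/2+1$. Since $\sigma$ is an involution and $\gamma\sigma$ is conjugate to $\sigma\gamma$ (by $\sigma$, using $\sigma(\gamma\sigma)\sigma^{-1} = \sigma\gamma$), I get $|\gamma\sigma| = m/2+1$, whence $\#(\pi) = m/2-1$ and $q(\pi) = \#(\pi)-m/2 = -1$.

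To handle $C_\pi \neq 0$ I will first extract the structural information furnished by Lemma \ref{Lemma:PropertiesofTsigmagamma}: $\sigma \leq \overline\pi$, $\overline\pi\vee\gamma = 1_m$, $\overline{T^\pi_{m_1,m_2,m_3}}$ is connected with every edge of even multiplicity balanced in orientation, and every block of $\overline\pi$ is a union of $\sigma$-pairs and so has even size $\geq 2$. Connectedness then forces $q_1(\pi) \leq 1$, while the block-size constraint forces $q_2(\pi) \leq 0$; combined with $q_1+q_2 = -1$ only the three signatures $\overset{\rightarrow}{q}(\pi) \in \{(-1,0),(0,-1),(1,-2)\}$ survive. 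The strategy is to show that in each case $T^\pi_{m_1,m_2,m_3}$ is one of the five limit graph types classified in Section \ref{sec:leading-order}, and then invoke Lemmas \ref{Lemma:ValidPartitions(1,-2)Case}, \ref{Lemma:ValidPartitions(0,-1)Case}, and \ref{Lemma:ValidPartitions(-1,0)Case} to conclude $C_\pi \neq 0$.

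The case $\overset{\rightarrow}{q}(\pi) = (-1,0)$ is the cleanest: $\#(\overline\pi) = m/2 = \#(\sigma)$ together with $\sigma \leq \overline\pi$ forces $\overline\pi = \sigma$, so the only $\tau$ contributing in equation (\ref{Equation_Number6}) is $\sigma$ itself (any proper refinement of a pairing introduces a singleton block, killing the term via $\C_1 = 0$), giving $\C_3(T_{m_1,m_2,m_3},i) = 1$ and hence $T^\pi$ is a double bicircuit. When $\overset{\rightarrow}{q}(\pi) = (0,-1)$ the total excess of $\overline\pi$ over a pure pairing is $2$, and evenness of multiplicities concentrates this excess in a single block of size $4$, so the hypotheses of Lemma \ref{Chapter4_Lemma_Suficient conditions for 24 uniloop or unicircuit} are met and $T^\pi$ is a $2$-$4$ uniloop or $2$-$4$ unicircuit. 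Finally, when $\overset{\rightarrow}{q}(\pi) = (1,-2)$ the graph $\overline{T^\pi}$ is a tree (all edges cutting, hence of even multiplicity by Theorem \ref{Theorem:MultiplicityOfCuttingEdges}) with excess $4$ distributed either as one block of size $6$ or as two blocks of size $4$; since size-$2$ blocks are non-connecting by Remark \ref{Remark:MultiplicityOfCuttingEdges}, the condition $\overline\pi\vee\gamma=1_m$ forces the enlarged blocks to carry the connectivity among the three basic cycles, producing a $2$-$6$-tree or a $2$-$4$-$4$-tree. The main obstacle will be this last reduction, where the soft invariants (multiplicity, connectivity, even block size) must be converted into the explicit combinatorial tree type demanded by Definition \ref{def:double_tree_types}.
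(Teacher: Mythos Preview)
Your proposal is correct and follows essentially the same route as the paper's proof: both compute $q(\pi)=-1$ from the metric condition for $NC_2(m_1,m_2,m_3)$, invoke Lemma~\ref{Lemma:PropertiesofTsigmagamma} for the structural constraints, split into the three possible signatures $\overset{\rightarrow}{q}(\pi)\in\{(1,-2),(0,-1),(-1,0)\}$, and in each case reduce to one of the five limit-graph types via Lemmas~\ref{Lemma:ValidPartitions(1,-2)Case}, \ref{Chapter4_Lemma_Suficient conditions for 24 uniloop or unicircuit}, and a direct computation respectively. Your computation of $q(\pi)=-1$ via conjugacy of $\gamma\sigma$ and $\sigma\gamma$ is a minor cosmetic variant of the paper's one-line count, and for the $(1,-2)$ case the paper simply cites Lemma~\ref{Lemma:ValidPartitions(1,-2)Case} whereas you sketch its forward direction---but the content is the same.
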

\begin{proof}
Since $\sigma\in NC_2(m_1,m_2,m_3)$ then $\#(\sigma)+\#(\gamma_{m_1,m_2,m_3}\sigma^{-1})=m-1$, thus $q(\gamma_{m_1,m_2,m_3}\sigma)=\#(\gamma_{m_1,m_2,m_3}\sigma)-m/2=-1$. Now we go through all possible cases where $q(\gamma_{m_1,m_2,m_3}\sigma)=-1$.

\noindent
\textbf{Case 1. $q_1(\gamma_{m_1,m_2,m_3}\sigma)=1$ and $q_2(\gamma_{m_1,m_2,m_3}\sigma)=-2$.} By Lemma \ref{Lemma:PropertiesofTsigmagamma}, $\overline{\gamma_{m_1,\dots,m_r}\sigma}\vee \gamma_{m_1,\dots,m_r}=1_m$. Thus $\overline{T_{m_1,m_2,m_3}^{\gamma_{m_1,m_2,m_3}\sigma}}$ is a tree, it follows that $T_{m_1,m_2,m_3}^{\gamma_{m_1,m_2,m_3}\sigma}\in \TSTTSet\cup \TFFTTSet$ and the proof follows exactly as in Lemma \ref{Lemma:ValidPartitions(1,-2)Case}.

\noindent
\textbf{Case 2. $q_1(\gamma_{m_1,m_2,m_3}\sigma)=0$ and $q_2(\gamma_{m_1,m_2,m_3}\sigma)=-1$.} Lemma \ref{Lemma:PropertiesofTsigmagamma} says $\overline{\gamma_{m_1,\dots,m_r}\sigma}\vee \gamma_{m_1,\dots,m_r}=1_m$ and $\overline{T_{m_1,m_2,m_3}^{\gamma_{m_1,m_2,m_3}\sigma}}$ is a connected graph with all edges of even multiplicity, so it must be $\overline{T_{m_1,m_2,m_3}^{\gamma_{m_1,m_2,m_3}\sigma}}$ is a graph with a single \cycles and all edges of multiplicity $2$ except by one of multiplicity $4$, moreover the edges of multiplicity $2$ consist of edges in opposite orientation, those are exactly the same properties described in Lemma \ref{Chapter4_Lemma_Suficient conditions for 24 uniloop or unicircuit}, thus $T_{m_1,m_2,m_3}^{\gamma_{m_1,m_2,m_3}\sigma}\in \TFULSet\cup \TFUCSet$.

\noindent
\textbf{Case 3. $q_1(\gamma_{m_1,m_2,m_3}\sigma)=-1$ and $q_2(\gamma_{m_1,m_2,m_3}\sigma)=0$}. By Lemma \ref{Lemma:PropertiesofTsigmagamma} all blocks of $\overline{\gamma_{m_1,m_2,m_3}\sigma}$ are of even multiplicity, therefore the condition $q_2(\gamma_{m_1,m_2,m_3}\sigma)=0$ implies all its blocks are of size $2$. Let $i:V\rightarrow [N]$ be such that $\ker(i)=\gamma_{m_1,m_2,m_3}\sigma$, Equation (\ref{Equation_Number6}) says,
\begin{eqnarray*}
\C_3(T_{m_1,m_2,m_3},i)=
\kern-1em
\sum_{\substack{\tau\in\cP(m) \\ \tau\vee\gamma_{m_1,m_2,m_3}=1_m \\ \tau\leq \overline{\gamma_{m_1,m_2,m_3}\sigma}}}
\kern-1em
\C_{\tau}(x_{i_1,i_{\gamma_{m_1,m_2,m_3}(1)}},\dots, x_{i_m,i_{\gamma_{m_1,m_2,m_3}(m)}}),
\end{eqnarray*}
As seen before if $\tau$ has a block of size $1$ then $$\C_{\tau}(x_{i_1,i_{\gamma_{m_1,m_2,m_3}(1)}},\dots, x_{i_m,i_{\gamma_{m_1,m_2,m_3}(m)}})=0,$$ so, all blocks of $\tau$ must be at least of size $2$ and therefore it must be $\tau=\overline{\gamma_{m_1,m_2,m_3}\sigma}$. Moreover Lemma \ref{Lemma:PropertiesofTsigmagamma} says that for a block $B=\{u,v\}$ of $\overline{\gamma_{m_1,m_2,m_3}\sigma}$ the edges $e_u$ and $e_v$ connect the same pair of vertices in $T_{m_1,m_2,m_3}^{\gamma_{m_1,m_2,m_3}\sigma}$ and with the opposite orientations. This means $u$ and $\gamma_{m_1,m_2,m_3}(v)$ are in the same block of $\gamma_{m_1,m_2,m_3}\sigma$. Similarly $v$ and $\gamma_{m_1,m_2,m_3}(u)$ are in the same block of $\gamma_{m_1,m_2,m_3}\sigma$, since $\gamma_{m_1,m_2,m_3}\sigma = \ker(i)$ then by definition of $\ker(i)$ we have $i_u=i_{\gamma_{m_1,m_2,m_3}(v)}$ and $i_v=i_{\gamma_{m_1,m_2,m_3}(u)}$. Thus,
$$\C_2(x_{i_u,i_{\gamma_{m_1,m_2,m_3}(u)}},x_{i_v,i_{\gamma_{m_1,m_2,m_3}(v)}})=\C_2(x_{1,2},x_{2,1})=1.$$
As that holds for any block of $\overline{\gamma_{m_1,m_2,m_3}\sigma}$ then
$$\C_3(T_{m_1,m_2,m_3},i)=\C_{\overline{\gamma_{m_1,m_2,m_3}\sigma}}(x_{i_1,i_{\gamma_{m_1,m_2,m_3}(1)}},\dots, x_{i_m,i_{\gamma_{m_1,m_2,m_3}(m)}})=1.$$
Therefore,
$$C_{\gamma_{m_1,m_2,m_3}\sigma}
= \kern-1em
\sum_{\substack{ i:V\rightarrow [N] \\ \ker(i)=\gamma_{m_1,m_2,m_3}\sigma}}\kern-1.5em 1
=N^{-m/2+1}N(N-1)\cdots (N-\#(\pi)+1)\neq 0.$$
The last equation proves $T_{m_1,m_2,m_3}^{\gamma_{m_1,m_2,m_3}\sigma}\in \DBSet$.
\end{proof}

Lemma \ref{Lemma:ImageofNonCrossingPairings} says that any quotient graph $T_{m_1,m_2,m_3}^{\gamma_{m_1,m_2,m_3}\sigma}$ is a \VG\text{ } \\provided $\sigma$ is a non-crossing pairing, for the rest of the subsection we explore the converse, is any \VG\text{ }obtained by the quotient of a non-crossing pairing composed with $\gamma_{m_1,m_2,m_3}$? it turns out that the answer is affirmative.

\begin{theorem}\label{Theorem:CaracterizationOfEssentialSigma}
Let $r\in\mathbb{N}$, $m_1,\dots,m_r\in\mathbb{N}$, $m=m_1+\cdots +m_r$ and $\gamma_{m_1,\dots,m_r}\in S_m$ be defined as in Section \ref{Section:GraphTheory}. Let $\sigma\in\cP_2(m)$, $\pi\in\cP(m)$ be such that $q(\pi)=2-r$ and let $T_{m_1,\dots,m_r}^{\pi}$ be defined as in Section \ref{Section:GraphTheory}. Then $\sigma\in NC_2(m_1,\dots,m_r)$ and $\pi=\gamma_{m_1,\cdots,m_r}\sigma$ if and only if the following conditions are satisfied:
\begin{enumerate}
\item $\sigma\vee\gamma_{m_1,\dots,m_r}=1_m$
\item if $\{u,v\}$ is a block of $\sigma$ then $e_u$ and $e_v$ connect the same pair of vertices and have the opposite orientation in $T_{m_1,\dots,m_r}^{\pi}$.
\end{enumerate}
\end{theorem}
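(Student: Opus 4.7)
The forward direction is essentially immediate: if $\sigma\in NC_2(m_1,\ldots,m_r)$ then (i) is part of the very definition of $NC_2(m_1,\ldots,m_r)$, and taking $\pi=\gamma_{m_1,\ldots,m_r}\sigma$ together with Lemma \ref{Lemma:PropertiesofTsigmagamma}(iii) yields (ii). I would dispose of this in a single sentence and concentrate on the converse.

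For the converse, write $\gamma=\gamma_{m_1,\ldots,m_r}$ and assume (i) and (ii). The plan is to establish $\pi=\gamma\sigma$ by sandwiching cycle counts. Feeding (ii) into Theorem \ref{Theorem:ExtractingPairingProperties} yields $\gamma\sigma\leq\pi$ as partitions, which is the refinement statement $\#(\gamma\sigma)\geq\#(\pi)$. The hypothesis $q(\pi)=2-r$ translates, via $q(\pi)=\#(\pi)-m/2$, to $\#(\pi)=m/2+2-r$. The matching upper bound will come from the multi-annulus version of Biane's inequality (the higher-genus Euler bound underlying the definition of $S_{NC}(m_1,\ldots,m_r)$ and used in \S2 via \cite{MN}):
\[
|\sigma|+|\sigma^{-1}\gamma|\;\geq\;|\gamma|+2\bigl(\#(\sigma\vee\gamma)-1\bigr).
\]
Applying this with $\sigma$ a pairing (so that $\sigma^{-1}=\sigma$ and $|\sigma|=m/2$), with $|\gamma|=m-r$, and using (i) to set $\#(\sigma\vee\gamma)=1$, a short rearrangement gives $\#(\gamma\sigma)\leq m/2+2-r=\#(\pi)$. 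Combined with the lower bound one obtains $\#(\gamma\sigma)=\#(\pi)$; and since $\gamma\sigma\leq\pi$ with equal numbers of blocks the two partitions coincide, so $\pi=\gamma\sigma$.

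It then remains to place $\sigma$ in $NC_2(m_1,\ldots,m_r)$. But the equality $\#(\gamma\sigma)=m/2+2-r$ just established is exactly the saturation of the multi-annulus inequality above, i.e.\ the metric condition $|\sigma|+|\sigma^{-1}\gamma|=|\gamma|+2(r-1)$ defining $S_{NC}(m_1,\ldots,m_r)$; together with (i) and with $\sigma$ already a pairing this gives $\sigma\in NC_2(m_1,\ldots,m_r)$, completing the argument. The main obstacle I anticipate is the clean invocation of the multi-annulus Biane inequality with the correct constants; once that is in place everything else is unfolding definitions and comparing cardinalities.
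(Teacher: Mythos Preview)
Your approach is essentially identical to the paper's own proof: forward direction via the definition of $NC_2(m_1,\ldots,m_r)$ and Lemma~\ref{Lemma:PropertiesofTsigmagamma}(iii); converse via Theorem~\ref{Theorem:ExtractingPairingProperties} to get $\gamma\sigma\leq\pi$, then a sandwich on $\#(\gamma\sigma)$ using the genus inequality, forcing equality everywhere. The paper invokes that inequality in the $\#$-form $\#(\sigma)+\#(\gamma\sigma)+\#(\gamma)\leq m+2\#(\sigma\vee\gamma)$ from \cite[2.10]{MN}.

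However, your displayed inequality is misstated. As written, $|\sigma|+|\sigma^{-1}\gamma|\geq|\gamma|+2(\#(\sigma\vee\gamma)-1)$ with $\#(\sigma\vee\gamma)=1$ collapses to the bare triangle inequality $|\sigma|+|\sigma^{-1}\gamma|\geq|\gamma|$, which only yields $\#(\gamma\sigma)\leq m/2+r$, not the bound you need. The correct length-form (equivalent to the $\#$-version above) is
\[
|\sigma|+|\sigma^{-1}\gamma|\;\geq\;2|\sigma\vee\gamma|-|\gamma|\;=\;|\gamma|+2\bigl(\#(\gamma)-\#(\sigma\vee\gamma)\bigr),
\]
which with $\#(\gamma)=r$ and $\#(\sigma\vee\gamma)=1$ gives $|\sigma|+|\sigma^{-1}\gamma|\geq|\gamma|+2(r-1)$ and hence $\#(\gamma\sigma)\leq m/2+2-r$, exactly the bound you then use. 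Since your subsequent rearrangement lands on the right number, this is a transcription slip in the constant rather than a gap in the strategy; correct the displayed inequality and your proof coincides with the paper's.
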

\begin{proof}
Suppose $\sigma\in NC_2(m_1,\dots,m_r)$ and $\pi=\gamma_{m_1,\dots,m_r}\sigma$. By definition of non-crossing pairing $\sigma\vee\gamma_{m_1,\dots,m_r}=1_m$ so it remains proving the second condition. Let $\{u,v\}$ be a block of $\sigma$. By Lemma \ref{Lemma:PropertiesofTsigmagamma}, $e_u$ and $e_v$ are edges of $T_{m_1,\dots,m_3}^{\gamma_{m_1,\dots,m_3}\sigma}$ connecting the same pair of vertices and with the opposite orientation. However by hypothesis $\gamma_{m_1,\dots,m_3}\sigma=\pi$ so this proves $(ii)$. For the converse, by Theorem \ref{Theorem:ExtractingPairingProperties}, $\gamma_{m_1,\dots,m_r}\sigma\leq \pi$, thus $\#(\gamma_{m_1,\dots,m_r}\sigma)\geq \#(\pi)$. Therefore,
\begin{eqnarray*}
\#(\gamma_{m_1,\dots,m_r}\sigma)+\#(\sigma) &\geq & \#(\pi)+\#(\sigma)=\#(\pi)+m/2 \\
&=& \#(\pi)-m/2+m=m+2-r 
\end{eqnarray*}
On the other hand it is well known, \cite[2.10]{MN}, that,
$$\#(\sigma)+\#(\gamma_{m_1,\dots,m_r}\sigma)+\#(\gamma_{m_1,\dots,m_r})\leq m+2\#(\gamma_{m_1,\dots,m_r}\vee\sigma)=m+2.$$
Thus, $\#(\sigma)+\#(\gamma_{m_1,\dots,m_r}\sigma)\leq m+2-r$, so it must be 
$$\#(\gamma_{m_1,\dots,m_r}\sigma)+\#(\sigma) = \#(\pi)+\#(\sigma)=m+2-r,$$
i.e $\#(\gamma_{m_1,\dots,m_r}\sigma)= \#(\pi)$ and $\gamma_{m_1,\dots,m_r}\sigma= \pi$. Moreover, the equation $\#(\gamma_{m_1,\dots,m_r}\sigma)+\#(\sigma) =m+2-r$ proves $\sigma\in NC_2(m_1,\dots,m_r)$.
\end{proof}

\begin{remark}
Setting $r=3$ in Theorem \ref{Theorem:CaracterizationOfEssentialSigma} says that a \VG, $T_{m_1,m_2,m_3}^{\pi}$ can be written as $T_{m_1,m_2,m_3}^{\gamma_{m_1,m_2,m_3}\sigma}$ for some $\sigma\in NC_2(m_1,m_2,m_3)$ if and only if such a pairing $\sigma$ satisfies conditions $(i)$ and $(ii)$, this provides a powerful tool to test whether such a pairing exist or not just by looking at the properties of the quotient graph $T_{m_1,m_2,m_3}^{\pi}$, as we will verify, it turns out that all \VG\text{s }have a pairing associated to them satisfying those conditions, moreover, for $\sigma_1,\sigma_2\in NC_2(m_1,m_2,m_3)$ with $\sigma_1\neq\sigma_2$ the elements $\gamma_{m_1,m_2,m_3}\sigma_1$ and $\gamma_{m_1,m_2,m_3}\sigma_2$ are distinct when we think of them as permutations in $S_m$, however when we think of them as partitions they might be the same, this means there are possibly more than one pairing inducing the same \VG. Below we will identify the pairings inducing the same \VG.
\end{remark}

\begin{lemma}\label{Theorem:EssentialPairingofTpi26TreeType}
Let $\pi\in\cP(m)$ and let $T_{m_1,m_2,m_3}^{\pi}\in \TSTTSet$, then there exist exactly two non-crossing pairings $\sigma_1,\sigma_2\in NC_2(m_1,m_2,m_3)$ such that $\gamma_{m_1,m_2,m_3}\sigma_i=\pi$ for $i=1,2$.
\end{lemma}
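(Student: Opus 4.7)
The plan is to invoke Theorem \ref{Theorem:CaracterizationOfEssentialSigma} with $r=3$ to turn the problem into a count of pairings satisfying two graphical conditions. A $2$-$6$ tree type $\pi$ has $\overline{T_{m_1,m_2,m_3}^{\pi}}$ a tree (so $q_1(\pi)=1$) with one block of $\overline{\pi}$ of size $6$ and all other blocks of size $2$ (so $q_2(\pi)=-2$); hence $q(\pi)=-1$. By that theorem, counting the $\sigma\in NC_2(m_1,m_2,m_3)$ with $\gamma_{m_1,m_2,m_3}\sigma=\pi$ is equivalent to counting the $\sigma\in\cP_2(m)$ satisfying (i) $\sigma\vee\gamma_{m_1,m_2,m_3}=1_m$ and (ii) for each block $\{u,v\}$ of $\sigma$ the edges $e_u$ and $e_v$ connect the same pair of vertices of $T_{m_1,m_2,m_3}^{\pi}$ in opposite orientations.

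Condition (ii) says precisely that $\sigma\leq\overline{\pi}$. By Definition \ref{def:double_tree_types} combined with Remark \ref{Remark:MultiplicityOfCuttingEdges}, $\overline{\pi}$ has exactly one block $B^{*}$ of size $6$, corresponding to the unique edge of multiplicity $6$ (which is a $(1,2,3)$-edge containing two edges from each basic cycle in opposite orientations), while every remaining block has size $2$ and is entirely contained in a single cycle $[\![m_j]\!]$ of $\gamma_{m_1,m_2,m_3}$ (since multiplicity-$2$ cutting edges are non-connecting). On each size-$2$ block $\sigma$ is forced to pair its two elements; the entire freedom therefore lies in partitioning $B^{*}$ into three $\sigma$-pairs. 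Label its elements $u_1,u_2\in[\![m_1]\!]$, $v_1,v_2\in[\![m_2]\!]$, $w_1,w_2\in[\![m_3]\!]$ with $e_{u_1},e_{v_1},e_{w_1}$ sharing one orientation and $e_{u_2},e_{v_2},e_{w_2}$ the opposite.

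Condition (ii) forces each $\sigma$-pair inside $B^{*}$ to contain one element of each orientation. The main step is to show that condition (i) additionally forbids the three intra-cycle pairs $\{u_1,u_2\}$, $\{v_1,v_2\}$ and $\{w_1,w_2\}$: if for instance $\sigma$ contained $\{u_1,u_2\}$, then the remaining four elements of $B^{*}$ would have to form two opposite-orientation pairs inside $[\![m_2]\!]\cup[\![m_3]\!]$, so the cycle $[\![m_1]\!]$ would remain isolated in $\sigma\vee\gamma_{m_1,m_2,m_3}$ (since every other block of $\sigma$ also sits inside a single cycle of $\gamma_{m_1,m_2,m_3}$), contradicting (i).

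Hence each of the three pairs inside $B^{*}$ joins two distinct cycles; the $2+2+2$ distribution of elements among the three cycles of $\gamma_{m_1,m_2,m_3}$ then forces the three pairs to cover the unordered cycle-pairs $\{1,2\}$, $\{1,3\}$, $\{2,3\}$ each exactly once. Combined with the opposite-orientation constraint, the choice of $\sigma|_{B^{*}}$ reduces to a derangement of $\{1,2,3\}$ (the permutation sending the orientation-$1$ element in cycle $i$ to the cycle of its partner), of which there are exactly two, yielding the pairings
\[
\{u_1,v_2\},\{v_1,w_2\},\{w_1,u_2\}\qquad\text{and}\qquad\{u_1,w_2\},\{w_1,v_2\},\{v_1,u_2\}.
\]
Both satisfy (i) and (ii), so Theorem \ref{Theorem:CaracterizationOfEssentialSigma} produces two non-crossing pairings $\sigma_1,\sigma_2\in NC_2(m_1,m_2,m_3)$ with $\gamma_{m_1,m_2,m_3}\sigma_i=\pi$, and the enumeration is exhaustive. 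The only genuinely delicate step is the disconnectivity argument above.
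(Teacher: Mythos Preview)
Your proof is correct and follows essentially the same route as the paper: reduce via Theorem \ref{Theorem:CaracterizationOfEssentialSigma}, observe that the size-$2$ blocks of $\overline{\pi}$ force $\sigma$ there, and then enumerate the admissible pairings of the unique size-$6$ block under the opposite-orientation and connectivity constraints. The only difference is presentational: the paper lists all six opposite-orientation pairings of $B^{*}$ in a table and checks $\sigma\vee\gamma$ for each, whereas you first eliminate any pairing containing an intra-cycle pair by the disconnection argument and then package the remaining count as the two derangements of $\{1,2,3\}$ --- a slightly cleaner way to arrive at the same two pairings.
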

\begin{proof}
By Theorem \ref{Theorem:CaracterizationOfEssentialSigma} it is enough to prove that there exist exactly two pairings $\sigma_1,\sigma_2\in\cP_2(m)$ satisfying the conditions,
\begin{enumerate}
\item $\sigma_{i}\vee\gamma_{m_1,m_2,m_3}=1_m$
\item if $\{u,v\}$ is a block of $\sigma_{i}$ then $e_u$ and $e_v$ connect the same pair of vertices and have the opposite orientation in $T_{m_1,m_2,m_3}^{\pi}$.
\end{enumerate}
Firstly, note that if there exist a pairing $\sigma\in\cP_2(m)$ satisfying $(ii)$ then by definition of $\overline{\pi}$, $\sigma\leq \overline{\pi}$. Let $\overline{e}=\{e_u,e_v\}\in \overline{E^{\pi}}$ be an edge of multiplicity $2$ of $\overline{T_{m_1,m_2,m_3}^{\pi}}$, by the noted before $\{u,v\}$ must be a block of any $\sigma\in\cP_2(m)$ satisfying $(ii)$, i.e. the blocks of $\overline{\pi}$ of size two determines uniquely the blocks of any pairing $\sigma\in\cP_2(m)$ satisfying $(ii)$. Now we consider the block of size $6$ of $\overline{\pi}$, which correspond to the edge of multiplicity $6$ of $\overline{T_{m_1,m_2,m_3}^{\pi}}$. Let $\overline{e}=\{e_{i_1},e_{i_2},e_{i_3},e_{i_4},e_{i_5},e_{i_6}\}$ be this edge of multiplicity $6$, with $e_{i_1},e_{i_2}\in E_1$, $e_{i_3},e_{i_4}\in E_2$ and $e_{i_5},e_{i_6}\in E_3$ and with $e_{i_1},e_{i_3}$ and $e_{i_5}$ having the same orientation.

Any pairing $\sigma\in\cP_2(m)$ satisfying $(ii)$ must pair only elements with opposite orientations, so the possible pairings $\sigma_i$ restricted to $\{i_1,\dots,\ab i_6\}$ are given in the table.

\begin{center}
\begin{tabular}{| c | c |}\hline
$\sigma$ & Blocks of $\sigma$ \\ \hline
$\sigma_1$ & $\{i_1,i_4\},\{i_2,i_5\},\{i_3,i_6\}$  \\
$\sigma_2$ & $\{i_1,i_6\},\{i_2,i_3\},\{i_4,i_5\}$ \\
$\sigma_3$ & $\{i_1,i_4\},\{i_2,i_3\},\{i_5,i_6\}$ \\
$\sigma_4$ & $\{i_1,i_6\},\{i_2,i_5\},\{i_3,i_4\}$ \\
$\sigma_5$ & $\{i_1,i_2\},\{i_3,i_6\},\{i_4,i_5\}$ \\
$\sigma_6$ & $\{i_1,i_2\},\{i_3,i_4\},\{i_5,i_6\}$ \\ \hline
\end{tabular}
\end{center}

By the point made earlier,  $\sigma_i$ and $\overline{\pi}$ are exactly the same pairings restricted to the blocks of size two of $\overline{\pi}$ for $i=1,\dots,6$, that completely determines  all possible pairings $\sigma_1,\dots,\sigma_6$ satisfying condition $(ii)$. We now look at condition $(i)$, let us recall that $\FirstCycleNotation,\SecondCycleNotation$ and $\ThirdCycleNotation$ denotes the cycles of $\gamma_{m_1,m_2,m_3}$. 

Note that any edge of multiplicity $2$ of $\overline{T_{m_1,m_2,m_3}^{\pi}}$, which is an equivalent class, consist of two edges from the same basic cycle, and therefore the corresponding block of $\sigma$ is a non-through string, i.e contains elements from the same cycle of $\gamma_{m_1,m_2,m_3}$. This means that 
$\gamma_{m_1,m_2,m_3}\vee\sigma_i=1_m$ is completely determined by $\sigma_i$ restricted to $\{i_1,\dots,i_6\}$. This can be tested as follows, if a block of $\sigma_i$ contains an element from $\CycleithNotation$ and $\CyclejthNotation$ for some $i,j\in\{1,2,3\}$ then $\CycleithNotation\cup \CyclejthNotation$ must be a block of $\gamma_{m_1,m_2,m_3}\vee\sigma_i$, the table below shows $\gamma_{m_1,m_2,m_3}\vee\sigma_i$ for each $\sigma_i$,
\begin{center}
\begin{tabular}{| c | c | c |}\hline
$\sigma_i$ & Blocks of $\sigma_i$ & $\gamma_{m_1,m_2,m_3}\vee\sigma_i$ \\ \hline
$\sigma_1$ & $\{i_1,i_4\},\{i_2,i_5\},\{i_3,i_6\}$ & $1_m$ \\
$\sigma_2$ & $\{i_1,i_6\},\{i_2,i_3\},\{i_4,i_5\}$ & $1_m$ \\
$\sigma_3$ & $\{i_1,i_4\},\{i_2,i_3\},\{i_5,i_6\}$ & $\FirstCycleNotation\cup \SecondCycleNotation,\ThirdCycleNotation$ \\
$\sigma_4$ & $\{i_1,i_6\},\{i_2,i_5\},\{i_3,i_4\}$ & $\FirstCycleNotation\cup \ThirdCycleNotation,\SecondCycleNotation$ \\
$\sigma_5$ & $\{i_1,i_2\},\{i_3,i_6\},\{i_4,i_5\}$ & $\SecondCycleNotation\cup \ThirdCycleNotation,\FirstCycleNotation$ \\
$\sigma_6$ & $\{i_1,i_2\},\{i_3,i_4\},\{i_5,i_6\}$ & $\gamma_{m_1,m_2,m_3}=\FirstCycleNotation,\SecondCycleNotation,\ThirdCycleNotation$ \\ \hline
\end{tabular}
\end{center}
As shown in the table above only $\sigma_1$ and $\sigma_2$ satisfy the condition $(i)$, so those are the only pairings satisfying both conditions.
\end{proof}

\begin{lemma}\label{Theorem:EssentialPairingofTpi244TreeType}
Let $\pi\in\cP(m)$ and let $T_{m_1,m_2,m_3}^{\pi}\in \TFFTTSet$. Then there exist a unique non-crossing pairing $\sigma\in NC_2(m_1,m_2,m_3)$ such that $\gamma_{m_1,m_2,m_3}\sigma=\pi$.
\end{lemma}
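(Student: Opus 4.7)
The plan is to mimic the strategy of Lemma \ref{Theorem:EssentialPairingofTpi26TreeType}, invoking Theorem \ref{Theorem:CaracterizationOfEssentialSigma} to translate the statement into a counting problem: since $q(\pi) = -1$ when $T_{m_1,m_2,m_3}^\pi$ is a limit graph, finding $\sigma \in NC_2(m_1,m_2,m_3)$ with $\gamma_{m_1,m_2,m_3}\sigma = \pi$ is equivalent to finding $\sigma \in \cP_2(m)$ satisfying (i) $\sigma \vee \gamma_{m_1,m_2,m_3} = 1_m$ and (ii) for every block $\{u,v\}$ of $\sigma$ the edges $e_u$ and $e_v$ connect the same pair of vertices of $T_{m_1,m_2,m_3}^\pi$ with opposite orientations.

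First I would observe that condition (ii) immediately forces $\sigma \leq \overline{\pi}$. By the definition of 2-4-4-tree type (Definition \ref{def:double_tree_types}), $\overline{T_{m_1,m_2,m_3}^\pi}$ is a tree in which all edges have multiplicity $2$ except two edges of multiplicity $4$; one of these is an $(i,j)$-edge and the other a $(j,k)$-edge, where $\{i,j,k\}$ is a permutation of $\{1,2,3\}$. Every block of $\overline{\pi}$ of size $2$ arises from a multiplicity-$2$ edge, and by Theorem \ref{Theorem:MultiplicityOfCuttingEdges} such an edge consists of two edges of a single basic cycle in opposite orientations; hence that block is forced to be a block of $\sigma$ and is a non-through string of $\gamma_{m_1,m_2,m_3}$.

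Next I would handle the two size-$4$ blocks. Write the $(i,j)$-block as $\{u_1,u_2,v_1,v_2\}$ with $e_{u_1},e_{u_2}\in E_i$ and $e_{v_1},e_{v_2}\in E_j$, in opposite orientations within each cycle (as forced by Theorem \ref{Theorem:MultiplicityOfCuttingEdges}); assume WLOG that $e_{u_1}$ and $e_{v_1}$ share an orientation. Then the only two pair splittings of this block compatible with (ii) are
\begin{itemize}
\item[(A)] $\{u_1,v_2\},\{u_2,v_1\}$, yielding two through-strings joining $[\![m_i]\!]$ and $[\![m_j]\!]$, or
\item[(B)] $\{u_1,u_2\},\{v_1,v_2\}$, yielding two non-through strings.
\end{itemize}
The same dichotomy applies to the $(j,k)$-block. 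This leaves at most $2 \times 2 = 4$ candidate pairings $\sigma$.

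Finally I would eliminate three of them using condition (i). Since every block of $\sigma$ coming from a size-$2$ block of $\overline{\pi}$ is a non-through string, the only contributions to $\sigma \vee \gamma_{m_1,m_2,m_3}$ connecting distinct cycles of $\gamma_{m_1,m_2,m_3}$ come from the two size-$4$ blocks. Choosing (B) for either size-$4$ block fails to link the corresponding pair of cycles, so the resulting $\sigma \vee \gamma_{m_1,m_2,m_3}$ is strictly coarser than $\gamma_{m_1,m_2,m_3}$ but strictly finer than $1_m$ in each case except choice (A)$\times$(A), which links $[\![m_i]\!]$ with $[\![m_j]\!]$ and $[\![m_j]\!]$ with $[\![m_k]\!]$, giving $\sigma \vee \gamma_{m_1,m_2,m_3} = 1_m$. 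Thus exactly one $\sigma \in \cP_2(m)$ meets both (i) and (ii), and by Theorem \ref{Theorem:CaracterizationOfEssentialSigma} this $\sigma$ automatically lies in $NC_2(m_1,m_2,m_3)$ and satisfies $\gamma_{m_1,m_2,m_3}\sigma = \pi$, proving uniqueness and existence.

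I do not anticipate a serious obstacle: the argument is strictly easier than the 2-6-tree case of Lemma \ref{Theorem:EssentialPairingofTpi26TreeType}, the only mild care point being the orientation bookkeeping on each size-$4$ block, which is dictated by Theorem \ref{Theorem:MultiplicityOfCuttingEdges}.
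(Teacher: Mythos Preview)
Your proposal is correct and follows essentially the same approach as the paper: both reduce via Theorem \ref{Theorem:CaracterizationOfEssentialSigma} to counting pairings $\sigma\in\cP_2(m)$ satisfying conditions (i) and (ii), observe that the multiplicity-$2$ edges force all but the two size-$4$ blocks, and then check the $2\times 2=4$ orientation-compatible splittings of those blocks to find that exactly one satisfies $\sigma\vee\gamma_{m_1,m_2,m_3}=1_m$. The paper presents the four cases in a table with a specific WLOG choice of $(1,2)$- and $(1,3)$-edges, whereas you organize the same enumeration as options (A)/(B) on each block in general $(i,j),(j,k)$ notation, but the content is identical.
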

\begin{proof}
We proceed as before, it is enough to prove that there exist a unique pairing $\sigma\in\cP_2(m)$ satisfying,
\begin{enumerate}
\item $\sigma\vee\gamma_{m_1,m_2,m_3}=1_m$
\item if $\{u,v\}$ is a block of $\sigma$ then $e_u$ and $e_v$ connect the same pair of vertices and have the opposite orientation in $T_{m_1,m_2,m_3}^{\pi}$.
\end{enumerate}
As pointed in Theorem \ref{Theorem:EssentialPairingofTpi26TreeType} the edges of multiplicity $2$ of $\overline{T_{m_1,m_2,m_3}^{\pi}}$ determines uniquely the blocks of any $\sigma$ satisfying $(ii)$. Let $\overline{e}=\{e_{i_1},e_{i_2},e_{i_3},e_{i_4}\}$ and $\overline{e^\prime}=\{e_{j_1},e_{j_2},e_{j_3},e_{j_4}\}$ be the edges of multiplicity $4$ of $\overline{T_{m_1,m_2,m_3}^{\pi}}$. Without loss of generality assume $\overline{e}$ is a $(1,2)$-edge and $\overline{e^\prime}$ is a $(1,3)$-edge such that $e_{i_1},e_{i_2},e_{j_1},e_{j_2}\in E_1$, $e_{i_3},e_{i_4}\in E_2$, $e_{j_3},e_{j_4}\in E_3$ and with $e_{i_1}$ and $e_{i_3}$ having the same orientation and $e_{j_1}$ and $e_{j_3}$ having the same orientation. Any $\sigma$ satisfying condition $(ii)$ must pair only elements in the same equivalence class and with opposite orientations, so the possible $\sigma_i$ restricted to $\{i_1,\dots,i_4,j_1,\dots,j_4\}$ are given in next table.
\begin{center}
\begin{tabular}{| c | c | c |}\hline
$\sigma_i$ & Blocks of $\sigma_i$ & $\gamma_{m_1,m_2,m_3}\vee\sigma_i$ \\ \hline
$\sigma_1$ & $\{i_1,i_2\},\{i_3,i_4\},\{j_1,j_2\},\{j_3,j_4\}$ & $\gamma_{m_1,m_2,m_3}$ \\
$\sigma_2$ & $\{i_1,i_2\},\{i_3,i_4\},\{j_1,j_4\},\{j_2,j_3\}$ & $\FirstCycleNotation\cup \ThirdCycleNotation,\SecondCycleNotation$ \\
$\sigma_3$ & $\{i_1,i_4\},\{i_2,i_3\},\{j_1,j_2\},\{j_3,j_4\}$ & $\FirstCycleNotation\cup \SecondCycleNotation,\ThirdCycleNotation$ \\
$\sigma_4$ & $\{i_1,i_4\},\{i_2,i_3\},\{j_1,j_4\},\{j_2,j_3\}$ & $1_m$ \\ \hline
\end{tabular}
\end{center}
Last column shows that only $\sigma_4$ satisfies condition $(i)$ and so that is the only pairing.
\end{proof}

\begin{lemma}\label{Theorem:EssentialPairingofTpiDUL}
Let $\pi\in\cP(m)$ and let $T_{m_1,m_2,m_3}^{\pi}\in \TFULSet$. Then there exist exactly two non-crossing pairings $\sigma_1,\sigma_2\in NC_2(m_1,m_2,m_3)$ such that $\gamma_{m_1,m_2,m_3}\sigma_i=\pi$ for $i=1,2$.
\end{lemma}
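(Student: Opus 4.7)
The plan is to mirror exactly the approach of Lemmas \ref{Theorem:EssentialPairingofTpi26TreeType} and \ref{Theorem:EssentialPairingofTpi244TreeType}. Since $T_{m_1,m_2,m_3}^\pi \in \TFULSet$ forces $q(\pi) = -1 = 2-3$, Theorem \ref{Theorem:CaracterizationOfEssentialSigma} reduces the task to counting pairings $\sigma \in \cP_2(m)$ satisfying (i) $\sigma \vee \gamma_{m_1,m_2,m_3} = 1_m$, and (ii) for every block $\{u,v\}$ of $\sigma$, the edges $e_u$ and $e_v$ of $T_{m_1,m_2,m_3}^\pi$ connect the same pair of vertices with opposite orientation. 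Once both conditions are verified, the theorem automatically places $\sigma$ in $NC_2(m_1,m_2,m_3)$ with $\gamma_{m_1,m_2,m_3}\sigma = \pi$.

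The first step is to argue, just as in the two earlier lemmas, that any pairing satisfying (ii) must contain each size-$2$ block of $\overline{\pi}$ as a block of $\sigma$. These blocks correspond to the multiplicity-$2$ (cutting) edges of $\overline{T_{m_1,m_2,m_3}^\pi}$; by Theorem \ref{Theorem:MultiplicityOfCuttingEdges} each such edge consists of two edges from the same basic cycle in opposite orientations, so every one of these forced blocks is a non-through string and contributes nothing to joining different cycles of $\gamma_{m_1,m_2,m_3}$. Hence all the freedom in building $\sigma$ is concentrated on the unique size-$4$ block of $\overline{\pi}$. By the definition of $2$-$4$ uniloop type, this block has the form $B = \{u_1,u_2,v,w\}$ with $e_{u_1},e_{u_2} \in E_i$ (the two loops of $T_{m_i}^\pi$ at the gluing vertex $U$), $e_v \in E_j$ and $e_w \in E_k$ (the two halves of the loop-circuit of $T_{m_j,m_k}^\pi$), and all four edges are loops at $U$. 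Since loops trivially satisfy both orientations (Remark \ref{Remark:Loops have same and opposite orientation}), condition (ii) is met by each of the three pair-partitions of $B$:
\[
\{\{u_1,u_2\},\{v,w\}\},\quad \{\{u_1,v\},\{u_2,w\}\},\quad \{\{u_1,w\},\{u_2,v\}\}.
\]

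To finish, I would test condition (i) on these three candidates. Because $u_1, u_2 \in [\![m_i]\!]$, $v \in [\![m_j]\!]$, $w \in [\![m_k]\!]$, and the forced size-$2$ blocks are all non-through strings, the first candidate leaves $[\![m_i]\!]$ as its own block of $\sigma \vee \gamma_{m_1,m_2,m_3}$ and therefore fails (i); each of the remaining two candidates produces two through strings that together connect all three cycles, giving $\sigma \vee \gamma_{m_1,m_2,m_3} = 1_m$. So exactly two pairings survive, and Theorem \ref{Theorem:CaracterizationOfEssentialSigma} promotes them to the desired $\sigma_1, \sigma_2 \in NC_2(m_1,m_2,m_3)$ with $\gamma_{m_1,m_2,m_3}\sigma_i = \pi$. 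The only real obstacle, as in the previous two lemmas, is the combinatorial bookkeeping of which pair-partitions of $B$ yield through strings; once the reduction to conditions (i) and (ii) is in hand, this amounts to a short table-check in the spirit of the proofs of Lemmas \ref{Theorem:EssentialPairingofTpi26TreeType} and \ref{Theorem:EssentialPairingofTpi244TreeType}.
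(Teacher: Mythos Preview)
Your proposal is correct and follows essentially the same approach as the paper's own proof: reduce via Theorem \ref{Theorem:CaracterizationOfEssentialSigma} to conditions (i) and (ii), note that the multiplicity-$2$ edges force all blocks of $\sigma$ except on the size-$4$ loop block, enumerate the three possible pair-partitions of that block (all of which satisfy (ii) because loops admit both orientations), and check via a short table that exactly two of them satisfy $\sigma\vee\gamma_{m_1,m_2,m_3}=1_m$. Your identification of which candidate fails (the one pairing $u_1$ with $u_2$, leaving $[\![m_i]\!]$ disconnected) is correct and matches the paper's conclusion.
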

\begin{proof}
It suffices to prove that there exist exactly two pairings $\sigma_1,\sigma_2\in\cP_2(m)$ satisfying,
\begin{enumerate}
\item $\sigma_i\vee\gamma_{m_1,m_2,m_3}=1_m$
\item if $\{u,v\}$ is a block of $\sigma_i$ then $e_u$ and $e_v$ connect the same pair of vertices and have the opposite orientation in $T_{m_1,m_2,m_3}^{\pi}$.
\end{enumerate}
We know that edges of multiplicity $2$ of $\overline{T_{m_1,m_2,m_3}^{\pi}}$ determines uniquely the blocks of any $\sigma$ satisfying $(ii)$. Let $\overline{e}=\{e_{i_1},e_{i_2},e_{i_3},e_{i_4}\}$ be the edge of multiplicity $4$ of $\overline{T_{m_1,m_2,m_3}^{\pi}}$. Without loss of generality assume $e_{i_1},e_{i_2}\in E_1$, $e_{i_3}\in E_2$, $e_{i_4}\in E_3$. As this edge of multiplicity $4$ is a loop then any pairing of $\{i_1.i_2,i_3,i_4\}$ satisfies $(ii)$, so the possible pairings $\sigma_i$ restricted to $\{i_1,\dots,i_4\}$ are given in next table,
\begin{center}
\begin{tabular}{| c | c | c |}\hline
$\sigma_i$ & Blocks of $\sigma_i$ & $\gamma_{m_1,m_2,m_3}\vee\sigma_i$ \\ \hline
$\sigma_1$ & $\{i_1,i_2\},\{i_3,i_4\}$ & $\FirstCycleNotation\cup \SecondCycleNotation,\ThirdCycleNotation$ \\
$\sigma_2$ & $\{i_1,i_3\},\{i_2,i_4\}$ & $1_m$ \\
$\sigma_3$ & $\{i_1,i_4\},\{i_2,i_3\}$ & $1_m$ \\ \hline
\end{tabular}
\end{center}
Last column shows that only $\sigma_2$ and $\sigma_3$ satisfies condition $(i)$ and so those are the only pairings satisfying both conditions.
\end{proof}

\begin{lemma}\label{Theorem:EssentialPairingofTpiDU}
Let $\pi\in\cP(m)$ and let $T_{m_1,m_2,m_3}^{\pi}\in \TFUCSet$, then there exist a unique non-crossing pairings $\sigma\in NC_2(m_1,m_2,m_3)$ such that $\gamma_{m_1,m_2,m_3}\sigma=\pi$.
\end{lemma}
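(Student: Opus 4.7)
The plan is to mirror the strategy of Lemmas \ref{Theorem:EssentialPairingofTpi26TreeType}, \ref{Theorem:EssentialPairingofTpi244TreeType} and \ref{Theorem:EssentialPairingofTpiDUL}. By Theorem \ref{Theorem:CaracterizationOfEssentialSigma} (specialised to $r=3$), it suffices to show that there is exactly one $\sigma \in \cP_2(m)$ satisfying (i) $\sigma \vee \gamma_{m_1,m_2,m_3} = 1_m$, and (ii) for each block $\{u,v\}$ of $\sigma$ the edges $e_u$ and $e_v$ connect the same pair of vertices of $T_{m_1,m_2,m_3}^\pi$ with opposite orientations.

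First I would observe that condition (ii) forces $\sigma \leq \ols\pi$, and that each edge of $\ols{T_{m_1,m_2,m_3}^\pi}$ of multiplicity $2$ determines a unique block of $\sigma$: its two edges must be paired with one another, since by Definition \ref{def:(0,-1)-graphs}(ii) they already lie in the same equivalence class with opposite orientations. Thus all of the freedom is concentrated on the unique edge $\ols e$ of multiplicity $4$. Using Lemma \ref{Chapter4_Lemma_Suficient conditions for 24 uniloop or unicircuit}, there are two structural subcases: either $\ols e$ sits outside the circuit of $\ols{T_{m_1,m_2,m_3}^\pi}$ (Case A), in which case $\ols e$ consists of two oppositely oriented edges from $E_i$ (the double-tree side) together with two oppositely oriented edges from $E_j$ or $E_k$ (the double-unicircuit side); or $\ols e$ sits on the circuit (Case B), in which case, by Corollary \ref{Chapter1_Corollary_Configuration of double edges of double unicyclic} applied to $\ols{T_{m_i,m_j}^\pi}$, $\ols e$ consists of two oppositely oriented edges from $E_k$ (the double-tree side) together with one edge from each of $E_i$ and $E_j$ in opposite orientations.

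In each subcase I would enumerate the pairings of $\ols e$ compatible with (ii): the orientation constraints admit exactly two candidates, namely a \emph{parallel} pairing (in which the two double-tree elements of $\ols e$ are paired with each other and the remaining two elements are paired among themselves) and a \emph{cross} pairing (in which each double-tree element is paired with an element from the double-unicircuit side). Condition (i) is then decided by the following connectivity observation: every multiplicity-$2$ pairing forced by $\ols\pi$, aside from the circuit pairings of the double unicircuit, is non-connecting; while those circuit pairings merge only the two cycles of $\gamma_{m_1,m_2,m_3}$ that participate in the double unicircuit, leaving the third cycle (the one corresponding to the double tree) isolated. Hence this third cycle can only be merged via $\ols e$, so the parallel candidate yields $\sigma \vee \gamma_{m_1,m_2,m_3} \neq 1_m$ while the cross candidate yields $\sigma \vee \gamma_{m_1,m_2,m_3} = 1_m$. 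This singles out a unique $\sigma$, and Theorem \ref{Theorem:CaracterizationOfEssentialSigma} then guarantees $\sigma \in NC_2(m_1,m_2,m_3)$ with $\gamma_{m_1,m_2,m_3}\sigma = \pi$.

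The main obstacle will be the careful bookkeeping of orientations in Case B: one must invoke the double-tree structure of $\ols{T_{m_k}^\pi}$ (to see that the two edges of $\ols e$ coming from $E_k$ are in opposite orientations) together with the circuit structure of $\ols{T_{m_i,m_j}^\pi}$ (to see that the $E_i$ and $E_j$ edges of $\ols e$ are in opposite orientations) in order to be sure that exactly two admissible pairings of $\ols e$ exist, no more and no fewer. Once this is established, the verification of condition (i) is the same connectivity argument as in Case A, and uniqueness follows in both subcases simultaneously.
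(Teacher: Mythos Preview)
Your proposal is correct and follows essentially the same approach as the paper: reduce via Theorem~\ref{Theorem:CaracterizationOfEssentialSigma} to finding pairings satisfying (i) and (ii), note that multiplicity-$2$ edges force their blocks, split into the two subcases according to whether the multiplicity-$4$ edge lies on the circuit, and in each subcase enumerate the two orientation-compatible pairings of that edge and check that only the ``cross'' pairing achieves $\sigma\vee\gamma_{m_1,m_2,m_3}=1_m$. The only cosmetic difference is that the paper records the two candidates in a small table in each subcase rather than naming them parallel/cross, and your index labelling switches between Cases A and B while the paper keeps a fixed convention.
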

\begin{proof}
It suffices to prove that there exist a unique pairings $\sigma\in\cP_2(m)$ satisfying,
\begin{enumerate}
\item $\sigma\vee\gamma_{m_1,m_2,m_3}=1_m$
\item if $\{u,v\}$ is a block of $\sigma$ then $e_u$ and $e_v$ connect the same pair of vertices and have the opposite orientation in $T_{m_1,m_2,m_3}^{\pi}$.
\end{enumerate}
The edges of multiplicity $2$ of $\overline{T_{m_1,m_2,m_3}^{\pi}}$ determines uniquely the blocks of any $\sigma$ satisfying $(ii)$. It remains considering the edge of multiplicity $4$. In this case we have two possibilities.

\medskip\noindent
\textbf{Case 1. The edge of multiplicity $4$ is not in the \cycle{} of $\overline{T_{m_1,m_2,m_3}^{\pi}}$.} In this case any edge in the \cycles is connecting. Suppose without loss of generality any edge in the \cycles is a $(2,3)$-edge. This means the edge of multiplicity $4$ is either a $(1,2)$-edge or a $(1,3)$-edge. Assume it is a $(1,2)$-edge. Let $\overline{e}=\{e_{i_1},e_{i_2},e_{i_3},e_{i_4}\}$ be the edge of multiplicity $4$ with $e_{i_1},e_{i_2}\in E_1$, $e_{i_3},e_{i_4}\in E_2$ and $e_{i_1}$ and $e_{i_3}$ having the same orientation. Any $\sigma$ satisfying $(ii)$ must pair edges in opposite orientations, so the pairings $\sigma_i$ restricted to $\{i_1,\dots,i_4\}$ are given in next table.
\begin{center}
\begin{tabular}{| c | c | c |}\hline
$\sigma_i$ & Blocks of $\sigma_i$ & $\gamma_{m_1,m_2,m_3}\vee\sigma_i$ \\ \hline
$\sigma_1$ & $\{i_1,i_2\},\{i_3,i_4\}$ & $\FirstCycleNotation,\SecondCycleNotation\cup \ThirdCycleNotation$ \\
$\sigma_2$ & $\{i_1,i_4\},\{i_2,i_3\}$ & $1_m$ \\ \hline
\end{tabular}
\end{center}
Last column shows that only $\sigma_1$ satisfies condition $(i)$ and so that is the only pairing.\\
\textbf{Case 1. The edge of multiplicity $4$ is in the \cycle. of $\overline{T_{m_1,m_2,m_3}^{\pi}}$.} In this case the edge of multiplicity $4$ consist of two edges from one basic cycle, say $E_1$, and one edge from each $E_2$ and $E_3$, let $\overline{e}=\{e_{i_1},e_{i_2},e_{i_3},e_{i_4}\}$ be the edge of multiplicity $4$ with $e_{i_1},e_{i_2}\in E_1$, $e_{i_3}\in E_2$, $e_{i_4}\in E_3$ and such that $e_{i_1}$ and $e_{i_3}$ have the same orientation. Any $\sigma$ satisfying $(ii)$ must pair edges in opposite orientations, so the possible pairings $\sigma_i$ restricted to $\{i_1,\dots,i_4\}$ are given in next table,
\begin{center}
\begin{tabular}{| c | c | c |}\hline
$\sigma_i$ & Blocks of $\sigma_i$ & $\gamma_{m_1,m_2,m_3}\vee\sigma_i$ \\ \hline
$\sigma_1$ & $\{i_1,i_2\},\{i_3,i_4\}$ & $\FirstCycleNotation,\SecondCycleNotation\cup \ThirdCycleNotation$ \\
$\sigma_2$ & $\{i_1,i_4\},\{i_2,i_3\}$ & $1_m$ \\ \hline
\end{tabular}
\end{center}
Last column shows that only $\sigma_1$ satisfies condition $(i)$ and so that is the only pairing satisfying both conditions, in any case we get that there exist a unique pairing satisfying $(i)$ and $(ii)$.
\end{proof}

\begin{lemma}\label{Theorem:EssentialPairingofTpiDB}
Let $\pi\in\cP(m)$ and let $T_{m_1,m_2,m_3}^{\pi}\in \DBSet$, then there exist a unique non-crossing pairings $\sigma\in NC_2(m_1,m_2,m_3)$ such that $\gamma_{m_1,m_2,m_3}\sigma=\pi$.
\end{lemma}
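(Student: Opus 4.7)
The plan is to follow the same template used in Lemmas \ref{Theorem:EssentialPairingofTpi26TreeType}--\ref{Theorem:EssentialPairingofTpiDU}. By Theorem \ref{Theorem:CaracterizationOfEssentialSigma}, finding a non-crossing pairing $\sigma \in NC_2(m_1,m_2,m_3)$ with $\gamma_{m_1,m_2,m_3}\sigma=\pi$ is equivalent to finding a pairing $\sigma \in \cP_2(m)$ satisfying (i) $\sigma \vee \gamma_{m_1,m_2,m_3}=1_m$ and (ii) for every block $\{u,v\}$ of $\sigma$, the edges $e_u$ and $e_v$ connect the same pair of vertices of $T_{m_1,m_2,m_3}^{\pi}$ with opposite orientation. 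So I reduce to showing there is a unique $\sigma \in \cP_2(m)$ meeting (i) and (ii).

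The key simplification relative to the earlier lemmas is that in the double bicircuit case $\vec q(\pi)=(-1,0)$; in particular $q_2(\pi) = \#(\overline\pi) - m/2 = 0$, so by Theorem \ref{Theorem:Maybe}(iv) every block of $\overline\pi$ has size exactly $2$. Equivalently, every edge of $\overline{T_{m_1,m_2,m_3}^{\pi}}$ has multiplicity $2$, and there are no higher-multiplicity edges that force a case analysis of possible internal pairings.

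Next I would establish uniqueness. Condition (ii) forces $\overline{e_u}=\overline{e_v}$ for every block $\{u,v\}$ of $\sigma$, i.e.\ $\sigma \leq \overline\pi$. Since $\overline\pi$ is itself a pairing and $\sigma$ is a pairing, the relation $\sigma \leq \overline\pi$ is only possible when $\sigma = \overline\pi$. Hence $\sigma=\overline\pi$ is the only candidate.

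Finally I verify that this candidate actually satisfies (i) and (ii). Theorem \ref{Theorem:Maybe}(v) says that for each size-$2$ block $\{u,v\}$ of $\overline\pi$ the edges $e_u$ and $e_v$ connect the same pair of vertices of $T_{m_1,m_2,m_3}^{\pi}$ with opposite orientations; this gives (ii). Theorem \ref{Theorem:Maybe}(i) (which is available because $T^\pi_{m_1,m_2,m_3} \in \DBSet$ implies $C_{\pi}\neq 0$) gives $\overline\pi \vee \gamma_{m_1,m_2,m_3} = 1_m$, which is (i). So $\sigma = \overline\pi$ works and is unique; this concludes the proof. There is no genuine obstacle here beyond assembling the pieces from Theorems \ref{Theorem:Maybe} and \ref{Theorem:CaracterizationOfEssentialSigma}: the absence of higher-multiplicity edges makes the double bicircuit case strictly easier than the preceding four lemmas.
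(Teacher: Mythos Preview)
Your proof is correct and follows essentially the same approach as the paper: reduce via Theorem~\ref{Theorem:CaracterizationOfEssentialSigma} to finding pairings satisfying (i) and (ii), observe that $q_2(\pi)=0$ forces $\overline\pi$ to be a pairing so that $\sigma=\overline\pi$ is the unique candidate, and then verify (i) and (ii) using Theorem~\ref{Theorem:Maybe}. The only cosmetic difference is that the paper cites the uniqueness argument by reference to Lemma~\ref{Theorem:EssentialPairingofTpi26TreeType}, whereas you spell out directly that $\sigma\leq\overline\pi$ with both being pairings forces equality.
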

\begin{proof}
It suffices to prove that there exist a unique pairings $\sigma\in\cP_2(m)$ satisfying,
\begin{enumerate}
\item $\sigma\vee\gamma_{m_1,m_2,m_3}=1_m$
\item if $\{u,v\}$ is a block of $\sigma$ then $e_u$ and $e_v$ connect the same pair of vertices and have the opposite orientation in $T_{m_1,m_2,m_3}^{\pi}$.
\end{enumerate}
Since $T_{m_1,m_2,m_3}^{\pi}\in \DBSet$ then any edge has multiplicity $2$, so we let $\sigma=\overline{\pi}$, note that any edge of multiplicity $2$ of $\overline{T_{m_1,m_2,m_3}^{\pi}}$ consist of $2$ edges in opposite orientation (Theorem \ref{Theorem:Maybe}), so $\sigma$ satisfies $(ii)$. As we proved in Theorem \ref{Theorem:EssentialPairingofTpi26TreeType} the edges of multiplicity $2$ of $\overline{T_{m_1,m_2,m_3}^{\pi}}$ determines uniquely the blocks of any $\sigma$ satisfying $(ii)$, so in this case such a $\sigma$ satisfying $(ii)$ is unique and is given by $\sigma=\overline{\pi}$. It remains proving $\sigma=\overline{\pi}$ satisfies $(i)$, however, this is an immediate consequence of Theorem \ref{Theorem:Maybe} as $C_{\pi}\neq 0$.
\end{proof}

\begin{corollary}\label{Corollary:PreImageofNonCrossingPairings}
Let $\pi\in\cP(m)$ be such that $T_{m_1,m_2,m_3}^{\pi}\in \VGSet$ then there exist $\sigma\in NC_2(m_1,m_2,m_3)$ such that $\pi=\gamma_{m_1,m_2,m_3}\sigma$, consequently $T_{m_1,m_2,m_3}^{\pi}=T_{m_1,m_2,m_3}^{\gamma_{m_1,m_2,m_3}\sigma}$. Furthermore,
\begin{enumerate}
\item If $T_{m_1,m_2,m_3}^{\pi}\in \TSTTSet\cup \TFULSet$ there exist exactly two non-crossing pairings $\sigma_1,\sigma_2\in NC_2(m_1,m_2,m_3)$ such that $$T_{m_1,m_2,m_3}^{\pi}=T_{m_1,m_2,m_3}^{\gamma_{m_1,m_2,m_3}\sigma_1}=T_{m_1,m_2,m_3}^{\gamma_{m_1,m_2,m_3}\sigma_2}.$$
\item If $T_{m_1,m_2,m_3}^{\pi}\in \TFFTTSet\cup \TFUCSet\cup \DBSet$ there exist a unique non-crossing pairings $\sigma\in NC_2(m_1,m_2,m_3)$ such that $$T_{m_1,m_2,m_3}^{\pi}=T_{m_1,m_2,m_3}^{\gamma_{m_1,m_2,m_3}\sigma}.$$
\end{enumerate}
\end{corollary}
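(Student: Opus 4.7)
The plan is to observe that this corollary is essentially a packaging statement: all the real work has already been done in Lemmas \ref{Theorem:EssentialPairingofTpi26TreeType} through \ref{Theorem:EssentialPairingofTpiDB}, and the corollary just collects them via the decomposition of $\VGSet$ established in Lemmas \ref{Lemma:ValidPartitions(1,-2)Case}, \ref{Lemma:ValidPartitions(0,-1)Case}, and \ref{Lemma:ValidPartitions(-1,0)Case}. So first I would begin by recalling that
$$\VGSet = \TSTTSet \cup \TFFTTSet \cup \TFULSet \cup \TFUCSet \cup \DBSet,$$
and that any $\pi$ with $T_{m_1,m_2,m_3}^{\pi} \in \VGSet$ falls in exactly one of these five classes, since they are distinguished by $\vec{q}(\pi)$ together with the combinatorial type of $\overline{T_{m_1,m_2,m_3}^{\pi}}$ (tree, unicircuit, or bicircuit) and the multiplicity/position of its high-multiplicity edges.

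Next I would produce the desired $\sigma$ by a case distinction. For $T_{m_1,m_2,m_3}^{\pi}\in \TSTTSet$, apply Lemma \ref{Theorem:EssentialPairingofTpi26TreeType} to obtain two pairings; for $T_{m_1,m_2,m_3}^{\pi}\in \TFULSet$, apply Lemma \ref{Theorem:EssentialPairingofTpiDUL} to obtain two pairings; and for the three remaining cases $\TFFTTSet$, $\TFUCSet$, $\DBSet$, invoke Lemmas \ref{Theorem:EssentialPairingofTpi244TreeType}, \ref{Theorem:EssentialPairingofTpiDU}, \ref{Theorem:EssentialPairingofTpiDB} respectively to obtain a unique pairing in each case. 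In every one of the five cases, existence of $\sigma \in NC_2(m_1,m_2,m_3)$ with $\gamma_{m_1,m_2,m_3}\sigma = \pi$ is provided, which gives the first assertion and, as an immediate consequence, the identity $T_{m_1,m_2,m_3}^{\pi} = T_{m_1,m_2,m_3}^{\gamma_{m_1,m_2,m_3}\sigma}$ via the definition of the quotient graph.

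Finally, I would partition the count statements accordingly: the pairs $(\TSTTSet, \TFULSet)$ contribute two pairings each, yielding assertion $(i)$, while $(\TFFTTSet, \TFUCSet, \DBSet)$ contribute one pairing each, yielding assertion $(ii)$. Because the five lemmas have already exhaustively enumerated the admissible choices on each high-multiplicity edge of $\overline{T_{m_1,m_2,m_3}^{\pi}}$ using the criterion of Theorem \ref{Theorem:CaracterizationOfEssentialSigma}, there is no genuine obstacle; the only care needed is to confirm that the five subtypes of $\VGSet$ are mutually disjoint, which is clear from their definitions. The proof is therefore a short case check with no new content beyond the preceding lemmas.
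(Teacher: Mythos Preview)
Your proposal is correct and matches the paper's approach exactly: the paper states this corollary without proof, precisely because it is an immediate consequence of the five preceding lemmas together with the decomposition of $\VGSet$ displayed after Notation \ref{Notation:ValidGraphsEachType}. Your case-by-case collection of Lemmas \ref{Theorem:EssentialPairingofTpi26TreeType}--\ref{Theorem:EssentialPairingofTpiDB} is all that is needed.
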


Corollary \ref{Corollary:PreImageofNonCrossingPairings} together with Lemma \ref{Lemma:ImageofNonCrossingPairings} determines the relation between non-crossing pairing and \VG\text{s}. This is given as follows.

\begin{lemma}\label{Lemma:CountingAllVG}
Let $\pi\in\cP(m)$ then $T_{m_1,m_2,m_3}^{\pi}\in \VGSet$ if and only if there exist $\sigma\in NC_2(m_1,m_2,m_3)$ such that $\pi=\gamma_{m_1,m_2,m_3}\sigma$, furthermore, 
\begin{eqnarray*}
|NC_2(m_1,m_2,m_3)|=|\VGSet|+|\TFULSet|+|\TSTTSet|
\end{eqnarray*}
\end{lemma}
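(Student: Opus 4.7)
The plan is to combine Lemma~\ref{Lemma:ImageofNonCrossingPairings} with Corollary~\ref{Corollary:PreImageofNonCrossingPairings}. The first assertion (the ``if and only if'') follows almost immediately: Lemma~\ref{Lemma:ImageofNonCrossingPairings} tells us that if $\sigma\in NC_2(m_1,m_2,m_3)$ then $T_{m_1,m_2,m_3}^{\gamma_{m_1,m_2,m_3}\sigma}\in\VGSet$, while Corollary~\ref{Corollary:PreImageofNonCrossingPairings} tells us that every valid graph $T_{m_1,m_2,m_3}^{\pi}\in\VGSet$ arises in the form $\pi=\gamma_{m_1,m_2,m_3}\sigma$ for some $\sigma\in NC_2(m_1,m_2,m_3)$. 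So the first part is really just a restatement of results already proved.

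For the counting formula, I would view the association $\sigma\mapsto \gamma_{m_1,m_2,m_3}\sigma$ as a map
\[
\Phi\colon NC_2(m_1,m_2,m_3)\longrightarrow \VGSet,\qquad \Phi(\sigma)=\gamma_{m_1,m_2,m_3}\sigma,
\]
where we view the image as a partition (equivalently, as the quotient graph $T_{m_1,m_2,m_3}^{\gamma_{m_1,m_2,m_3}\sigma}$). By Corollary~\ref{Corollary:PreImageofNonCrossingPairings}, $\Phi$ is surjective, with fibres of cardinality exactly $2$ above every $T_{m_1,m_2,m_3}^{\pi}\in\TSTTSet\cup\TFULSet$ and cardinality exactly $1$ above every $T_{m_1,m_2,m_3}^{\pi}\in\TFFTTSet\cup\TFUCSet\cup\DBSet$. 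Summing the fibre sizes gives
\[
|NC_2(m_1,m_2,m_3)|=2\bigl(|\TSTTSet|+|\TFULSet|\bigr)+\bigl(|\TFFTTSet|+|\TFUCSet|+|\DBSet|\bigr).
\]
Since $\VGSet=\TSTTSet\cup\TFFTTSet\cup\TFULSet\cup\TFUCSet\cup\DBSet$ is a disjoint union (the five types are mutually exclusive by the definitions given in Section~\ref{sec:leading-order}), we have $|\VGSet|=|\TSTTSet|+|\TFFTTSet|+|\TFULSet|+|\TFUCSet|+|\DBSet|$, and rewriting the above as
\[
|NC_2(m_1,m_2,m_3)|=\bigl(|\TSTTSet|+|\TFULSet|\bigr)+|\VGSet|
\]
yields the claim.

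There is essentially no new work: the whole argument is a bookkeeping exercise that assembles the fibre-counting statements from Lemmas~\ref{Theorem:EssentialPairingofTpi26TreeType}, \ref{Theorem:EssentialPairingofTpi244TreeType}, \ref{Theorem:EssentialPairingofTpiDUL}, \ref{Theorem:EssentialPairingofTpiDU} and \ref{Theorem:EssentialPairingofTpiDB} (packaged into Corollary~\ref{Corollary:PreImageofNonCrossingPairings}) with the surjectivity from Lemma~\ref{Lemma:ImageofNonCrossingPairings}. The only thing that might need an explicit comment is that the five types indeed partition $\VGSet$; this is immediate from their definitions (the values of $\vec{q}(\pi)$ and the number/multiplicity of circuits in $\overline{T^\pi_{m_1,m_2,m_3}}$ distinguish them pairwise). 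No step looks like a genuine obstacle.
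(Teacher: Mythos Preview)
Your proposal is correct and matches the paper's own proof essentially line for line: the paper too defines the map $\sigma\mapsto T_{m_1,m_2,m_3}^{\gamma_{m_1,m_2,m_3}\sigma}$, invokes Lemma~\ref{Lemma:ImageofNonCrossingPairings} and Corollary~\ref{Corollary:PreImageofNonCrossingPairings} for surjectivity and the fibre count, and reads off the identity from the fact that exactly the $\TSTTSet$ and $\TFULSet$ graphs have two preimages while the rest have one.
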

\begin{proof}
Corollary \ref{Corollary:PreImageofNonCrossingPairings} and Lemma \ref{Lemma:ImageofNonCrossingPairings} proves that the mapping, $$T:NC_2(m_1,m_2,m_3)\rightarrow \VGSet,$$ given by,
$$T(\sigma)=T_{m_1,m_2,m_3}^{\gamma_{m_1,m_2,m_3}\sigma},$$
is surjective which proves the first part of Lemma. For the second part note that Corollary \ref{Corollary:PreImageofNonCrossingPairings} proves that only if $T_{m_1,m_2,m_3}^{\pi}\in \TFULSet\cup \TSTTSet$ there exist exactly two non-crossing pairings being mapped under $T$ to the same quotient graph $T_{m_1,m_2,m_3}^{\pi}$ which proves the second part.
\end{proof}

\begin{corollary}\label{Corollary:CountingDB}
$$|\DBSet|=|NC_2(m_1,m_2,m_3)|-|\TFUCSet|-|\TFFTTSet|-2|\TFULSet|-2|\TSTTSet|$$
\end{corollary}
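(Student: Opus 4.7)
The plan is to derive the corollary by a direct bookkeeping argument from Lemma \ref{Lemma:CountingAllVG} together with the explicit decomposition of $\VGSet$ into its five disjoint types. First, I would recall (from Lemmas \ref{Lemma:ValidPartitions(1,-2)Case}, \ref{Lemma:ValidPartitions(0,-1)Case}, and \ref{Lemma:ValidPartitions(-1,0)Case}) that every limit graph falls into exactly one of the five classes, so that the union
\[
\VGSet = \TSTTSet \cup \TFFTTSet \cup \TFULSet \cup \TFUCSet \cup \DBSet
\]
is a disjoint union, yielding
\[
|\VGSet| = |\TSTTSet| + |\TFFTTSet| + |\TFULSet| + |\TFUCSet| + |\DBSet|.
\]

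Next I would insert this identity into the counting formula in Lemma \ref{Lemma:CountingAllVG}, namely
\[
|NC_2(m_1,m_2,m_3)| = |\VGSet| + |\TFULSet| + |\TSTTSet|,
\]
to obtain
\[
|NC_2(m_1,m_2,m_3)| = 2|\TSTTSet| + |\TFFTTSet| + 2|\TFULSet| + |\TFUCSet| + |\DBSet|.
\]
Solving for $|\DBSet|$ gives the stated equality.

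There is really no obstacle here: the content of the corollary is purely arithmetic, since all the topological work has already been done in Corollary \ref{Corollary:PreImageofNonCrossingPairings} (which tells us that the map $T\colon NC_2(m_1,m_2,m_3)\to\VGSet$ defined by $\sigma\mapsto T^{\gamma_{m_1,m_2,m_3}\sigma}_{m_1,m_2,m_3}$ is surjective, is two-to-one on the preimage of $\TSTTSet\cup\TFULSet$, and is one-to-one elsewhere) and in Lemma \ref{Lemma:ImageofNonCrossingPairings} (which guarantees that the image indeed lies in $\VGSet$). The only thing to note explicitly, if one wants to be pedantic, is that the five families listed are pairwise disjoint; this is immediate from the invariants $\vec q(\pi)$ and from the structural descriptions (double-tree versus unicircuit versus bicircuit) that distinguish them.
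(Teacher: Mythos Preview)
Your argument is correct and is exactly the intended one: the paper states this as an immediate corollary of Lemma \ref{Lemma:CountingAllVG}, and the only additional input is the disjoint decomposition $\VGSet = \TSTTSet \cup \TFFTTSet \cup \TFULSet \cup \TFUCSet \cup \DBSet$ established just before Lemma \ref{Lemma:ValidPartitions(-1,0)Case}. Substituting that decomposition into the identity $|NC_2(m_1,m_2,m_3)|=|\VGSet|+|\TFULSet|+|\TSTTSet|$ and solving for $|\DBSet|$ is precisely what you do.
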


Corollary \ref{Corollary:CountingDB} reduces the problem of counting all \VG\text{s} to only counting $\TSTTSet,\TFFTTSet,\TFULSet$ and $\TFUCSet$, so, for the rest of the paper we will work in counting each type of these.

\subsection{Counting double trees and double unicircuit graphs} Our motivation for counting double trees and \DUs graphs is that most of the limit graphs can be expressed in terms of these; in fact double trees and \DUs graphs appear when computing the second order moments as seen in \cite{MMPS}, where they provide a way of counting  certain graphs using the set of non-crossing pairings in an $(m_1, m_2)$-annulus. In this subsection we provide our alternative proof of that result.

\begin{lemma}\label{Corollary:CountingDoubleTrees}
Let $m\in\mathbb{N}$, $\pi\in\cP(m)$ and $\gamma_m=(1,\dots,m)\in S_m$. Let $T_m=(V,E)$ be the graph consisting of a single basic cycle. Let $\pi\in \cP(m)$. $T_m^{\pi}$ is a double tree if and only if $\pi=\gamma_m \sigma$ for some $\sigma \in NC_2(m)$. Moreover if $\sigma_1 \neq \sigma_2$ then $T_m^{\gamma_m\sigma_1}$ and $T_m^{\gamma_m\sigma_2}$ are distinct quotient graphs, consequently,
$$|\{\pi\in \cP(m): T_m^{\pi}\text{ is a double tree}\}|=|NC_2(m)|.$$
\end{lemma}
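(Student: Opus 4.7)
The plan is to exhibit a bijection $\sigma\mapsto\gamma_m\sigma$ (viewed as a partition) between $NC_2(m)$ and the set of $\pi\in\cP(m)$ with $T_m^{\pi}$ a double tree; the cardinality equality then follows. This is the $r=1$ specialization of the correspondence between non-crossing pairings on a multi-annulus and limit graphs developed in Section \ref{sec:counting-the-graphs}, and the argument is structurally identical but simpler because with $r=1$ the ``connectivity'' condition $\sigma\vee\gamma_m=1_m$ is automatic and there is no circuit to worry about.

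For the direction $\sigma\in NC_2(m)\Rightarrow T_m^{\gamma_m\sigma}$ is a double tree, I would invoke Lemma \ref{Lemma:PropertiesofTsigmagamma} (with $r=1$), which gives that $\overline{T_m^{\gamma_m\sigma}}$ is connected, that $\sigma\leq\overline{\gamma_m\sigma}$, that every equivalence class of edges has even multiplicity, and that for each block $\{u,v\}$ of $\sigma$ the edges $e_u,e_v$ are in opposite orientation. A short count then pins everything down: the non-crossing condition $\#(\sigma)+\#(\gamma_m\sigma)=m+1$ gives $\#(\gamma_m\sigma)=m/2+1$, while $\sigma\leq\overline{\gamma_m\sigma}$ forces $\#(\overline{\gamma_m\sigma})\leq m/2$ and connectedness forces $\#(\overline{\gamma_m\sigma})\geq m/2$. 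Hence $\overline{\gamma_m\sigma}=\sigma$, each edge class has size exactly $2$ (they are even, sum to $m$, and there are $m/2$ of them), and $\overline{T_m^{\gamma_m\sigma}}$ is a tree with every edge of multiplicity $2$ consisting of two edges in opposite orientation---precisely a double tree.

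For the converse together with uniqueness, given a double tree $T_m^{\pi}$ I would set $\sigma:=\overline{\pi}$. All edges of $\overline{T_m^{\pi}}$ have multiplicity $2$, so $\sigma\in\cP_2(m)$; the definition of a double tree supplies the opposite-orientation condition $(ii)$ of Theorem \ref{Theorem:CaracterizationOfEssentialSigma}, and connectedness of $\overline{T_m^{\pi}}$ gives $\sigma\vee\gamma_m=1_m$, which is condition $(i)$. Since $q(\pi)=\#(\pi)-m/2=1$ for a double tree (the tree has $\#(\pi)$ vertices and $\#(\pi)-1=m/2$ edges, counted with multiplicity giving all of $m$), Theorem \ref{Theorem:CaracterizationOfEssentialSigma} with $r=1$ yields $\sigma\in NC_2(m)$ and $\gamma_m\sigma=\pi$. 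Uniqueness is then immediate, just as in Lemmas \ref{Theorem:EssentialPairingofTpi244TreeType} and \ref{Theorem:EssentialPairingofTpiDB}: any other $\sigma'\in NC_2(m)$ with $\gamma_m\sigma'=\pi$ must satisfy $(ii)$, so each pair in $\sigma'$ sits inside a block of $\overline{\pi}$; since those blocks already have size two we must have $\sigma'=\overline{\pi}=\sigma$. I expect no real obstacle---the main technical content is loaded into Lemma \ref{Lemma:PropertiesofTsigmagamma} and Theorem \ref{Theorem:CaracterizationOfEssentialSigma}; the only subtle step is the two-sided bound in the forward direction that forces the elementarization to be a tree with all equivalence classes of size exactly two.
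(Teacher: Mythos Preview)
Your argument is correct and is essentially the paper's own proof, just routed through slightly higher-level packaging from the same section. The paper argues the converse (double tree $\Rightarrow \pi=\gamma_m\sigma$) by setting $\sigma=\overline{\pi}$ and appealing directly to Theorem~\ref{Theorem:ExtractingPairingProperties} plus the count $m+1=\#(\pi)+\#(\sigma)\leq\#(\gamma_m\sigma)+\#(\sigma)\leq m+1$, whereas you invoke the packaged Theorem~\ref{Theorem:CaracterizationOfEssentialSigma}; for the forward direction the paper shows $G_\sigma^{\gamma_m}$ is a tree and doubles via Theorem~\ref{Theorem:DoublingGGivesQuotientGraph}, whereas you use Lemma~\ref{Lemma:PropertiesofTsigmagamma} plus the two-sided bound on $\#(\overline{\gamma_m\sigma})$. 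Both routes rest on the same identity $\sigma=\overline{\pi}$ and the same non-crossing count, so there is no substantive difference.
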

\begin{proof}
Suppose $T_m^{\pi}=(V,E)$ is a double tree, we let $\sigma=\overline{\pi} \in \cP_2(m)$. Theorem \ref{Theorem:ExtractingPairingProperties} says, $\gamma_m\sigma \leq \pi$, therefore,
$$m+1 = |V|+|E| = \#(\pi)+\#(\sigma) \leq \#(\gamma_m\sigma)+\#(\sigma) \leq m+1$$
which forces $\pi=\gamma_m\sigma$ and $\sigma\in NC_2(m)$. Conversely let $\sigma\in \NC_2(m)$ and $\pi=\gamma_m\sigma$. Note that $G_{\sigma}^{\gamma_m}$ has $\#(\gamma_m\sigma)$ vertices and $\#(\sigma)$ edges and it is connected because $T_m^{\pi}$ is connected. Thus,
$$1 \geq \#(\gamma_m\sigma)-\#(\sigma) =\#(\gamma_m\sigma)+\#(\sigma)-m =1,$$
therefore all above must be equality which means $G_{\sigma}^{\gamma_m}$ is a tree. On the other hand Theorem \ref{Theorem:DoublingGGivesQuotientGraph} says $T(G_{\sigma}^{\gamma_m})=T_m^{\pi}$, therefore $T_m^{\pi}$ is a double tree, moreover if $\{u,v\}$ is a block of $\sigma$ then $e_u$ and $e_v$ are joining the same pair or vertices of $T_m^{\pi}$, i.e. $\sigma=\overline{\pi}$. The latest observation means that for $\sigma_1\neq \sigma_2$ the partitions $\overline{\gamma_m\sigma_1}$ and $\overline{\gamma_m\sigma_2}$ are distinct and therefore the quotient graphs $T_m^{\gamma\sigma_1}$ and $T_m^{\gamma\sigma_2}$ must be different.
\end{proof}

To count the double unicyclic graphs we will use the set of non-crossing pairings on the $(m_1,m_2)$-annulus.

\begin{lemma}\label{Chapter3_Lemma_Counting double unicircuit whose length is k}
Let $m_1,m_2\in\mathbb{N}$, $m=m_1+m_2$, $\pi\in\cP(m)$ and $\gamma_{m_1,m_2}=(1,\dots,m_1)(m_1+1,\dots,m)\in S_m$. Let $T_{m_1,m_2}$ be defined as in Section \ref{Section:GraphTheory}. Let $k\in \mathbb{N}$ with $k\neq 2$. $T^{\pi}_{m_1,m_2}$ is a double unicircuit graph where the unique \cycles of $\overline{T_{m_1,m_2}^{\pi}}$ has length $k$ if and only if there exist $\sigma\in NC_2^{(k)}(m_1,m_2)$ such that $\pi=\gamma_{m_1,m_2}\sigma$. Moreover for $\sigma_1 \neq \sigma_2$ the quotient graphs $T_{m_1,m_2}^{\gamma_{m_1,m_2}\sigma_1}$ and $T_{m_1,m_2}^{\gamma_{m_1,m_2}\sigma_2}$ are distinct.
\end{lemma}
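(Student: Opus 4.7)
The plan is to establish the bijection $\sigma \mapsto \gamma_{m_1,m_2}\sigma$ between $NC_2^{(k)}(m_1,m_2)$ and the set of $\pi$ with $T_{m_1,m_2}^{\pi}$ a double unicircuit graph of circuit length $k$, using Theorem \ref{Theorem:CaracterizationOfEssentialSigma} (specialized to $r=2$) together with Lemma \ref{Lemma:PropertiesofTsigmagamma} as the central tools, in direct parallel to the argument for double trees given in Lemma \ref{Corollary:CountingDoubleTrees}.

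For the forward direction I would set $\sigma := \overline{\pi}$, which lies in $\cP_2(m)$ since by hypothesis every edge of $\overline{T_{m_1,m_2}^{\pi}}$ has multiplicity $2$. One computes $q(\pi) = q_1(\pi) + q_2(\pi) = 0 + 0 = 2-2$, so Theorem \ref{Theorem:CaracterizationOfEssentialSigma} applies once its two hypotheses are checked. Condition $(ii)$ of that theorem is precisely the opposite-orientation defining property of a double unicircuit; condition $(i)$ $\sigma \vee \gamma_{m_1,m_2} = 1_m$ follows because by definition the circuit edges of $\overline{T_{m_1,m_2}^{\pi}}$ are $(1,2)$-edges, producing at least one through string, which combined with the connectedness of $\overline{T_{m_1,m_2}^{\pi}}$ forces the join to be $1_m$. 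The theorem then delivers $\sigma \in NC_2(m_1,m_2)$ and $\pi = \gamma_{m_1,m_2}\sigma$. To pin down the number of through strings, note that through strings correspond bijectively to connecting edges of $\overline{T_{m_1,m_2}^{\pi}}$: by Theorem \ref{Theorem:MultiplicityOfCuttingEdges} every cutting edge of multiplicity $2$ is non-connecting, so every connecting edge lies on the circuit, while by part $(c)$ of the double unicircuit definition every circuit edge is connecting. Hence there are exactly $k$ through strings.

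For the converse, starting from $\sigma \in NC_2^{(k)}(m_1,m_2)$ and $\pi := \gamma_{m_1,m_2}\sigma$, Lemma \ref{Lemma:PropertiesofTsigmagamma} already supplies $\sigma \le \overline{\pi}$, $\overline{\pi} \vee \gamma_{m_1,m_2} = 1_m$, the opposite-orientation property on paired edges, and the connectedness of $\overline{T_{m_1,m_2}^{\pi}}$ with all multiplicities even. The remaining task is to show $\sigma = \overline{\pi}$ (so every multiplicity equals $2$), that $\overline{T_{m_1,m_2}^{\pi}}$ is unicircuit, and that the circuit has length $k$ and consists of $(1,2)$-edges. The non-crossing annular condition gives $\#(\sigma) + \#(\gamma_{m_1,m_2}\sigma) = m$, whence $\#(\pi) = m/2$. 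By Theorem \ref{Theorem:DoublingGGivesQuotientGraph}, $T_{m_1,m_2}^{\pi} = T(G_\sigma^{\gamma_{m_1,m_2}})$, and $G_\sigma^{\gamma_{m_1,m_2}}$ is connected (its connectedness is equivalent to $\sigma \vee \gamma_{m_1,m_2} = 1_m$) with $m/2$ vertices and $m/2$ edges, so carries a unique circuit. I would then identify this circuit with the $k$ through strings of $\sigma$: the non-through strings of $\sigma$ split into two non-crossing pairings, one on each of $[\![m_1]\!]$ and $[\![m_2]\!]$ (minus the through string endpoints), and arguing just as in Lemma \ref{Corollary:CountingDoubleTrees} each such side contributes a forest of trees attached to the sub-graph formed by the through strings; hence every circular dependency of $G_\sigma^{\gamma_{m_1,m_2}}$ must run through through-string edges, forcing the unique circuit to be exactly the $k$-cycle on them. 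When $k \neq 2$ no two of these edges share both endpoints, so no parallel edges get collapsed when passing to the elementarization: the circuit survives in $\overline{T_{m_1,m_2}^{\pi}}$ (a loop of multiplicity $2$ when $k=1$, a simple $k$-cycle when $k\ge 3$), every edge has multiplicity $2$, and $\sigma = \overline{\pi}$. The final injectivity claim $\sigma_1 \neq \sigma_2 \Rightarrow T_{m_1,m_2}^{\gamma_{m_1,m_2}\sigma_1} \neq T_{m_1,m_2}^{\gamma_{m_1,m_2}\sigma_2}$ is then immediate: both directions recover $\sigma$ from $\pi$ via $\sigma = \overline{\pi}$.

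The main obstacle is the counting step in the converse, namely showing that the unique circuit of $G_\sigma^{\gamma_{m_1,m_2}}$ sits exactly on the through strings and has length $k$. This requires a genuinely planar/combinatorial analysis of the non-crossing annular structure, showing that deleting the through strings disconnects $G_\sigma^{\gamma_{m_1,m_2}}$ into two forests indexed by the two cycles of $\gamma_{m_1,m_2}$. The exclusion $k = 2$ enters precisely here: two parallel through strings yield a $2$-cycle in $G_\sigma^{\gamma_{m_1,m_2}}$, and its two edges collapse to a single edge of multiplicity $4$ in $\overline{T_{m_1,m_2}^{\pi}}$, turning the unicircuit into a tree and breaking $\sigma = \overline{\pi}$.
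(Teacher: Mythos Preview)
Your overall strategy matches the paper's: the forward direction via $\sigma := \overline{\pi}$ and Theorem \ref{Theorem:CaracterizationOfEssentialSigma}, the converse via $G_\sigma^{\gamma_{m_1,m_2}}$ being a connected unicircuit graph, and injectivity via $\sigma = \overline{\pi}$ are all exactly what the paper does.

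The one substantive divergence is in the step you flag as the main obstacle, namely showing that the unique circuit of $G_\sigma^{\gamma_{m_1,m_2}}$ consists precisely of the $k$ through strings. You propose a planar argument (non-through strings on each circle form non-crossing pairings, hence forests, so the circuit must live on the through strings). This can be made to work but, as you note, needs care. The paper avoids this entirely by reusing graph lemmas already in hand: first, any edge of $G_\sigma^{\gamma_{m_1,m_2}}$ not on the circuit is a cutting edge, so after doubling it becomes a cutting edge of multiplicity $2$ in $\overline{T_{m_1,m_2}^{\pi}}$, and by Theorem \ref{Theorem:MultiplicityOfCuttingEdges} (Remark \ref{Remark:MultiplicityOfCuttingEdges}) it is non-connecting, i.e.\ the block is a non-through string. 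This already places all through strings on the circuit. Second, if some circuit edge were a \emph{non}-through string, then some circuit vertex would be incident (within the circuit) to one through and one non-through edge; after doubling, that vertex would have odd $j$-degree, contradicting Theorem \ref{Theorem:DegreeOfVertices}. Hence every circuit edge is a through string, the circuit has length exactly $k$, and since $k\neq 2$ no two edges of $G_\sigma^{\gamma_{m_1,m_2}}$ are parallel, giving $\sigma=\overline{\pi}$ and all multiplicities equal to $2$. So the obstacle you worried about dissolves once you route the argument through Theorems \ref{Theorem:MultiplicityOfCuttingEdges} and \ref{Theorem:DegreeOfVertices} rather than through planarity.
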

\begin{proof}
Suppose $T^{\pi}_{m_1,m_2}$ is a double unicircuit graph where the unique \cycles of $\overline{T_{m_1,m_2}^{\pi}}$ has length $k$. $\overline{T_{m_1,m_2}^{\pi}}$ is a connected graph with $\#(\pi)$ vertices and $m/2$ edges, therefore $q(\pi)=\#(\pi)-m/2=0$. Let $\sigma\in \cP(m)$ be the partition defined by $u\overset{\sigma}\sim v$ if $e_u$ and $e_v$ connect the same pair of vertices of $T_{m_1,m_2}^{\pi}$ then $\sigma$ is a pairing satisfying $\sigma\vee\gamma_{m_1,m_2}=1_m$ and if $\{u,v\}$ is a block of $\sigma$ then $e_u$ and $e_v$ have the opposite orientation, Theorem \ref{Theorem:CaracterizationOfEssentialSigma} says that $\sigma\in NC_2(m_1,m_2)$ and $\pi=\gamma_{m_1,m_2}\sigma$. Moreover, by definition of $\sigma$ the through strings of $\sigma$ correspond to the edges in the \cycles of $\overline{T_{m_1,m_2}^{\pi}}$, so $\sigma\in NC_2^{(k)}(m_1,m_2)$. \\
Conversely, let $\sigma\in NC_2^{(k)}(m_1,m_2)$ and $\pi=\gamma_{m_1,m_2}\sigma$. Let $(V,E)$ be the graph $G_{\sigma}^{\gamma_{m_1,m_2}}$, Lemma \ref{Lemma:PropertiesofTsigmagamma} says $\overline{T^{\gamma_{m_1,m_2}\sigma}_{m_1,m_2}}$ is a connected graph and so is $G_{\sigma}^{\gamma_{m_1,m_2}}$. Note,
\begin{eqnarray*}
|V|-|E| &=& \#(\gamma_{m_1,m_2}\sigma)-\#(\sigma)= \#(\gamma_{m_1,m_2}\sigma)+\#(\sigma)-m=0,
\end{eqnarray*}
thus,  $G_{\sigma}^{\gamma_{m_1,m_2}}$ is a uni\cycles graph. Let $B=\{u,v\}$ be a block of $\sigma$ not in the \cycles of $G_{\sigma}^{\gamma_{m_1,m_2}}$, then $B$ is a cutting edge of $G_{\sigma}^{\gamma_{m_1,m_2}}$ and so $\overline{e_u}=\{e_u,e_v\}$ is a cutting edge of $\overline{T_{m_1,m_2}^{\gamma_{m_1,m_2}\sigma}}$, that forces to $e_u$ and $e_v$ being from the same basic cycle, and so $B$ is a non-through string of $\sigma$, that means that all through strings $B_1,\dots,B_k$ of $\sigma$ must correspond to edges in the \cycles of $G_{\sigma}^{\gamma_{m_1,m_2}}$. Suppose one of the edges in the \cycles is not a through string, then there is a vertex of $G_{\sigma}^{\gamma_{m_1,m_2}}$, $V$, in its unique \cycles whose two adjacent edges in within the \cycles correspond to one non-through string and one through string. When doubling the edges of $G_{\sigma}^{\gamma_{m_1,m_2}}$ we obtain that $V$ will be adjacent to an odd number of edges from $E_1$ (the trough string produces $1$ adjacent edge from $E_1$ and any non-trough string produces either $0$ or $2$ adjacent edges from $E_1$), this is not possible as $\deg(V)_1$ must be even, so all edges in the \cycles of $G_{\sigma}^{\gamma_{m_1,m_2}}$ must correspond to through strings of $\sigma$, i.e $G_{\sigma}^{\gamma_{m_1,m_2}}$ is a graph with a single \cycles of length $k$. Remember that doubling the edges of $G_{\sigma}^{\gamma_{m_1,m_2}}$ produces the graph $T^{\gamma_{m_1,m_2}\sigma}_{m_1,m_2}$ and since the there are no edges of $G_{\sigma}^{\gamma_{m_1,m_2}}$ connecting the same pair of vertices (that would mean having a \cycles of length $2$ and we assumed $k\neq 2$), then $T^{\gamma_{m_1,m_2}\sigma}_{m_1,m_2}$ is a graph such that $\overline{T^{\gamma_{m_1,m_2}\sigma}_{m_1,m_2}}$ has a unique \cycles and all its edges have multiplicity $2$ and consist of two edges in opposite orientation. Moreover, it must be that $\sigma=\overline{\pi}$ because $\sigma \leq \overline{\pi}$ (Theorem \ref{Lemma:PropertiesofTsigmagamma}) and both have only blocks of size $2$. To verify $T_{m_1,m_2}^{\pi}$ is a double uni\cycles graph it remains to prove that all edges in the \cycles of $\overline{T_{m_1,m_2}^{\pi}}$ correspond to non-through strings, this follows because all edges in the \cycles of $\overline{T_{m_1,m_2}^{\pi}}$ correspond to edges in the \cycles of $G_{\sigma}^{\gamma_{m_1,m_2}}$ and $\sigma=\overline{\pi}$. Finally the condition $\sigma=\overline{\pi}$ proves that if $\sigma_1\neq \sigma_2$ then $T_{m_1,m_2}^{\gamma_{m_1,m_2}\sigma_1}$ and $T_{m_1,m_2}^{\gamma_{m_1,m_2}\sigma_2}$ are distinct.
\end{proof}

The following is a consequence of Lemma \ref{Chapter3_Lemma_Counting double unicircuit whose length is k}.

\begin{corollary}\label{Corollary:CountingDU}
Let $k\in\mathbb{N}$ with $k\neq 2$. The following are satisfied.
\begin{enumerate}
\item 
\begin{multline*}
|NC_2^{(k)}(m_1,m_2)|= |\{\pi\in\cP(m): T_{m_1,m_2}^{\pi}\text{ is a double unicircuit graph} \\
\text{ with } \overline{T_{m_1,m_2}^{\pi}}\text{ having a \cycles of length }k\}|
\end{multline*}
\item 
\begin{multline*}
|NC_2(m_1,m_2)\setminus NC_2^{(2)}(m_1,m_2)| =
|NC_2(m_1,m_2)|-|NC_2^{(2)}(m_1,m_2)| \\=|\{\pi\in\cP(m): T_{m_1,m_2}^{\pi}\text{ is a double unicircuit graph}\}|
\end{multline*}
\end{enumerate}
\end{corollary}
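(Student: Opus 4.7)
The plan is to derive the corollary as an almost immediate bookkeeping consequence of Lemma \ref{Chapter3_Lemma_Counting double unicircuit whose length is k}, which already does the substantive work.

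For $(1)$, I would observe that the lemma supplies both halves of a bijection: the ``if and only if'' characterization of $\pi$ as $\gamma_{m_1,m_2}\sigma$ with $\sigma \in NC_2^{(k)}(m_1,m_2)$ gives surjectivity of the map $\sigma \mapsto \gamma_{m_1,m_2}\sigma$ (viewed as the partition whose blocks are the cycles of this permutation) onto the right-hand set, and the ``Moreover'' clause asserting that distinct $\sigma_1 \neq \sigma_2$ produce distinct quotient graphs gives injectivity (equal partitions would force equal quotient graphs). Hence the two sides of $(1)$ are in bijection.

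For $(2)$, the plan is to partition the right-hand set by the length $k$ of the unique circuit of $\overline{T_{m_1,m_2}^{\pi}}$ and apply $(1)$ term by term. First I would note that $k = 2$ cannot occur: the elementarization $\overline{T_{m_1,m_2}^{\pi}}$ has at most one edge between any two distinct vertices, so every circuit has length $1$ (a loop) or length at least $3$. Thus the right-hand set of $(2)$ decomposes as a disjoint union indexed by $k \in \mathbb{N}\setminus\{2\}$, and by $(1)$ each piece has cardinality $|NC_2^{(k)}(m_1,m_2)|$. Matching this with the decomposition $NC_2(m_1,m_2) = \bigsqcup_{k \geq 1} NC_2^{(k)}(m_1,m_2)$, which holds because every $\sigma \in NC_2(m_1,m_2)$ has at least one through string (a consequence of $\sigma \vee \gamma_{m_1,m_2} = 1_m$), and subtracting the absent $k = 2$ contribution from both sides yields $(2)$.

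I do not expect any genuine obstacle; the only point requiring a moment of care is ruling out circuit length $2$ via the simplicity of the elementarization, after which the rest is indexing.
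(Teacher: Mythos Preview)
Your proposal is correct and follows exactly the route the paper intends: the paper states the corollary as an immediate consequence of Lemma~\ref{Chapter3_Lemma_Counting double unicircuit whose length is k} and gives no further proof, so your write-up simply makes explicit the bijection and the exclusion of $k=2$ via simplicity of the elementarization that the paper leaves to the reader.
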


\subsection{Counting $2$-$6$ and $2$-$4$-$4$-tree types} Counting $2$-$6$ and $2$-$4$-$4$ tree types is an immediate consequence of counting double trees. For this section we set $m_1,m_2,m_3\in\mathbb{N}$ and $m=m_1+m_2+m_3$. Let us introduce the following set of partitioned permutations.

\begin{notation}
For a partitioned permutation $$(\cV,\pi)\in \PSG \cup \PSSG,$$ 
we can write $\pi$ as $\pi=\pi_1\times \pi_2\times \pi_3$ with $\pi_i\in NC(m_i)$.
\begin{enumerate}
\item We denote by $\PS$ to the set of partitioned permutations $(\cV,\pi)\in \PSG$ such that $\pi_i\in NC_2(m_i)$ $\forall i=1,2,3$.
\item We denote by $\PSS$ to the set of partitioned permutations $(\cV,\pi)\in \PSSG$ such that $\pi_i\in NC_2(m_i)$ $\forall i=1,2,3$.
\end{enumerate}
\end{notation}

\begin{remark}\label{Remark:Counting PS and PSS}
The cardinality of $\PS$ can be computed easily. Each element $(\cV,\pi)\in \PS$ is such that $\pi=\pi_1\times \pi_2\times \pi_3$ with $\pi_i\in NC_2(m_i)$ and each block of $\cV$ is a cycle of $\pi$ except by one block which is the union of three cycles of $\pi$, thus the number of permutations $\pi$, can be counted by $$|NC_2(m_1)||NC_2(m_2)||NC_2(m_3)|,$$
while the number of partitions $\cV$, can be counted by choosing a cycle from each $\pi_i$, which can be done in $\frac{m_1}{2}\frac{m_2}{2}\frac{m_3}{2}$ ways. Therefore,
$$|\PS|=\frac{m_1}{2}\frac{m_2}{2}\frac{m_3}{2}|NC_2(m_1)||NC_2(m_2)||NC_2(m_3)|.$$
In a similar manner we can compute the cardinality of $\mathcal{PS}_{NC}^{(2,1,1)}(m_1,m_2, \ab m_3)$, namely,
\begin{multline*}
|\PSS| \\
=\frac{m_1}{2}\frac{m_2}{2}\frac{m_3}{2}(\frac{m}{2}-3)|NC_2(m_1)||NC_2(m_2)|
|NC_2(m_3)|.
\end{multline*}
\end{remark}

\begin{lemma}\label{Lemma:Counting2-6TT}
$$|\TSTTSet|=4|\PS|$$
\end{lemma}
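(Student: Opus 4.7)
The plan is to establish a $4$-to-$1$ correspondence between $\TSTTSet$ and $\PS$ via a direct construction. By Definition \ref{def:double_tree_types}, a $2$-$6$ tree type $T^\pi_{m_1,m_2,m_3}$ is specified by (a) the three double trees $T^{\pi|_{V_i}}_{m_i}$, (b) a selected edge in each double tree (the $i$-th piece of the merged multiplicity-$6$ edge), and (c) an identification of the three selected edges into a single unordered multiplicity-$6$ edge. I would first show that these three pieces of data uniquely determine $\pi$ and conversely, then count each piece.

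Lemma \ref{Corollary:CountingDoubleTrees} gives a bijection between double trees on $T_{m_i}$ and pairings $\sigma_i\in NC_2(m_i)$ via $\pi|_{V_i}=\gamma_{m_i}\sigma_i$, under which the $m_i/2$ pairs of $\sigma_i$ correspond exactly to the $m_i/2$ edges of the double tree. Thus data (a)+(b) is equivalent to a triple $(\sigma_1,\sigma_2,\sigma_3)\in\prod_i NC_2(m_i)$ together with one chosen pair $B_i$ from each $\sigma_i$, which is precisely the data of an element $(\cV,\tau)\in\PS$: take $\tau=\sigma_1\times\sigma_2\times\sigma_3$ and let $\cV$ have one block equal to $B_1\cup B_2\cup B_3$, with the remaining blocks being the other cycles of $\tau$. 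This matches $|\PS|=\prod_i(m_i/2)\prod_i|NC_2(m_i)|$ as in Remark \ref{Remark:Counting PS and PSS}.

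For (c): each chosen pair $B_i$ determines an edge of the double tree with two distinct endpoint blocks $a_i,b_i$ (distinct because double trees have no loops). Identifying the three edges into a single multiplicity-$6$ edge with unordered endpoints $\{U,V\}$ amounts to a bijection $\{a_i,b_i\}\to\{U,V\}$ for each $i$ modulo the swap $U\leftrightarrow V$, giving $2^3/2=4$ choices. Each such choice produces a distinct $\pi\in\TSTTSet$: the elementarization is a tree (three trees glued along a single edge), and the orientation requirement of Definition \ref{def:double_tree_types} is automatic, since in each $B_i$ the two directed edges of $E_i$ already point in opposite directions, so the merged edge carries two edges from each $E_i$ in opposite orientations. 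Multiplying these counts yields $|\TSTTSet|=4|\PS|$.

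The main obstacle will be verifying this bijection rigorously — in particular that the four identifications always yield distinct $\pi$'s and that the inverse map (extracting $(\sigma_1,\sigma_2,\sigma_3)$, the chosen pairs $B_i$, and the identification from a given $\pi\in\TSTTSet$) is well-defined. The inverse is built by restricting $\pi$ to each $V_i$, invoking Lemma \ref{Corollary:CountingDoubleTrees} to read off $\sigma_i$, and reading off the chosen edge and identification from the multiplicity-$6$ edge. Lemma \ref{Theorem:EssentialPairingofTpi26TreeType} provides a useful consistency check via non-crossing pairings in $NC_2(m_1,m_2,m_3)$.
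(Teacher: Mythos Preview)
Your proposal is correct and follows essentially the same approach as the paper's proof: count the double trees via Lemma~\ref{Corollary:CountingDoubleTrees}, choose one edge from each, and then count the $2^3/2=4$ ways to identify the three chosen edges into a single multiplicity-$6$ edge, matching the result with Remark~\ref{Remark:Counting PS and PSS}. Your version is in fact more careful than the paper's about the $4$-to-$1$ nature of the correspondence and the inverse map; the paper simply asserts that the four orientations give distinct partitions~$\pi$.
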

\begin{proof}
For $\pi\in\cP(m)$ the quotient graph $T_{m_1,m_2,m_3}^{\pi}$ is a $2$-$6$ tree type if all three graphs $T_{m_1}^{\pi}$,$T_{m_2}^{\pi}$ and $T_{m_3}^{\pi}$ are double trees, Lemma \ref{Corollary:CountingDoubleTrees} says that each double tree can be chosen in $|NC_2(m_1)|$,$|NC_2(m_2)|$ and $|NC_2(m_3)|$ distinct ways respectively. For each choice of double trees we choose an edge from each one, which can be done in $\frac{m_1}{2}\frac{m_2}{2}\frac{m_3}{2}$ ways. Finally we can join the graph along those edges in $4$ different ways depending on the orientation, as each orientation correspond to a different partition $\pi$, then,
$$|\TSTTSet|=4\frac{m_1}{2}\frac{m_2}{2}\frac{m_3}{2} |NC_2(m_1)||NC_2(m_2)||NC_2(m_3)|.$$
As said in Remark \ref{Remark:Counting PS and PSS} last expression is precisely fourth times the cardinality of $\PS$.
\end{proof}

\begin{lemma}\label{Lemma:Counting2-4-4TT}
$$|\TFFTTSet|=4|\PSS|$$
\end{lemma}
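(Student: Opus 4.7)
The plan is to mirror the counting argument of Lemma \ref{Lemma:Counting2-6TT}, but adjusted for the fact that a $2$-$4$-$4$ tree type is obtained by two separate edge-identifications rather than a single one. By Lemma \ref{Corollary:CountingDoubleTrees}, the three double trees $T_{m_i}^{\pi}$ are independently enumerated by $|NC_2(m_i)|$, contributing a factor of $|NC_2(m_1)||NC_2(m_2)||NC_2(m_3)|$. By the proof of Lemma \ref{Lemma:ValidPartitions(1,-2)Case}, the two multiplicity-$4$ edges of $\overline{T^\pi_{m_1,m_2,m_3}}$ must be an $(i,j)$-edge and a $(j,k)$-edge with $\{i,j,k\}=\{1,2,3\}$, so they share a common ``center'' circle $T_{m_j}$; there are $3$ ways to select which of the three circles plays this role.

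Next I would count the edge-identification data. Fixing the center $j$, I choose one multiplicity-$2$ edge from $T_{m_i}^{\pi}$ (there are $\tfrac{m_i}{2}$ options) to merge into the $(i,j)$-edge, and one from $T_{m_k}^{\pi}$ (there are $\tfrac{m_k}{2}$ options) to merge into the $(j,k)$-edge. From $T_{m_j}^{\pi}$ I must choose two \emph{distinct} multiplicity-$2$ edges in an ordered way, one to participate in each merging, yielding $\tfrac{m_j}{2}\bigl(\tfrac{m_j}{2}-1\bigr)$ options. Finally, by Theorem \ref{Theorem:MultiplicityOfCuttingEdges}, each multiplicity-$4$ cutting edge consists of two edges from each participating basic cycle in opposite orientations; at each of the two mergings there are $2$ ways to align the orientations, for a total orientation factor of $2\cdot 2=4$. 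Summing over the three choices of center,
\[
|\TFFTTSet|
= 4\, |NC_2(m_1)||NC_2(m_2)||NC_2(m_3)|\,\frac{m_1}{2}\frac{m_2}{2}\frac{m_3}{2}\sum_{j=1}^3\Bigl(\frac{m_j}{2}-1\Bigr),
\]
and since $\sum_{j=1}^3\bigl(\tfrac{m_j}{2}-1\bigr)=\tfrac{m}{2}-3$, comparison with the formula for $|\PSS|$ in Remark \ref{Remark:Counting PS and PSS} yields $|\TFFTTSet|=4|\PSS|$.

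The main obstacle is verifying that this enumeration is both injective and surjective onto $\TFFTTSet$. For surjectivity, given a $2$-$4$-$4$ tree type $T^\pi_{m_1,m_2,m_3}$, the two multiplicity-$4$ edges determine the center circle uniquely, and contracting them restores three disjoint double trees along with a marked edge (or pair of marked edges on the center) — this lets us recover the data above. For injectivity, I would argue that different choices of double trees, marked edges, or orientations produce distinguishable quotient graphs, because the partition $\overline{\pi}$ records exactly which original edges are identified and the orientation alignment fixes which endpoints of the center edge are glued to which endpoints of the outer edge. Once this bookkeeping is handled — analogously to (but slightly more delicate than) the single-merge case in Lemma \ref{Lemma:Counting2-6TT} — the counting argument above closes the proof.
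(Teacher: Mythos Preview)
Your proposal is correct and follows essentially the same route as the paper: count the three double trees via $|NC_2(m_i)|$, pick the ``center'' circle contributing two distinct edges, pick one edge from each outer circle, and multiply by the orientation factor $2\cdot 2$. The only cosmetic difference is that the paper selects the two center edges as an \emph{unordered} pair (factor $\tfrac12\tfrac{m_j}{2}(\tfrac{m_j}{2}-1)$) and then separately counts the $2$ ways to match them with the outer circles, whereas you select them as an \emph{ordered} pair (factor $\tfrac{m_j}{2}(\tfrac{m_j}{2}-1)$) with the matching built in; the two bookkeepings are trivially equivalent.
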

\begin{proof}
We proceed as in Lemma \ref{Lemma:Counting2-6TT}. The double trees can be chosen in $|NC_2(m_1)||NC_2(m_2)||NC_2(m_3)|$ ways. For a choice of the double trees we choose two edges from one of them and one edge from each of the other two. Suppose we chose two edges from $\overline{T_{m_1}^{\pi}}$. Those can be chosen in $\frac{1}{2}\frac{m_1}{2}(\frac{m_1}{2}-1)$ ways. The edges of the other two double trees can be chosen in $\frac{m_2}{2}\frac{m_3}{2}$ ways. Once we selected the edges we have to make the union along these edges, we can pair them in $2$ different ways and for each pair there are two possible orientations of the edges, giving a total of $4$ ways. So the total number of ways is given by,
$$8\frac{1}{2}\frac{m_1}{2}(\frac{m_1}{2}-1)\frac{m_2}{2}\frac{m_3}{2}=4\frac{m_1}{2}(\frac{m_1}{2}-1)\frac{m_2}{2}\frac{m_3}{2}$$
Similarly when choosing two edges from $\overline{T_{m_2}^{\pi}}$ and $\overline{T_{m_3}^{\pi}}$ we get $4\frac{m_2}{2}(\frac{m_2}{2}-1)\frac{m_1}{2}\frac{m_3}{2}$ and $4\frac{m_3}{2}(\frac{m_3}{2}-1)\frac{m_1}{2}\frac{m_2}{2}$ ways respectively, so the total number of $2$-$4$-$4$ tree types is given by,
\begin{multline*}
4\left[ \frac{m_1}{2}(\frac{m_1}{2}-1)\frac{m_2}{2}\frac{m_3}{2}+\frac{m_2}{2}(\frac{m_2}{2}-1)\frac{m_1}{2}\frac{m_3}{2}+\frac{m_3}{2}(\frac{m_3}{2}-1)\frac{m_1}{2}\frac{m_2}{2}\right] \\
|NC_2(m_1)||NC_2(m_2)||NC_2(m_3)| \\
=4\frac{m_1}{2}\frac{m_2}{2}\frac{m_3}{2}(\frac{m}{2}-3)|NC_2(m_1)||NC_2(m_2)||NC_2(m_3)|.
\end{multline*}
This last expression is fourth times the cardinality of $\PSS$.
\end{proof}

\subsection{Counting $2$-$4$ uniloop and $2$-$4$ unicircuit types}$2$-$4$-uniloop and \TFUC s are made of \DUs and double tree graphs, this allow us to count them in a simple manner. For this section we set $m_1,m_2,m_3\in\mathbb{N}$ and $m=m_1+m_2+m_3$ unless other is specified. Let us introduce the following sets of partitioned permutations.

\begin{notation}
For a partitioned permutation $$(\cV,\pi)\in \PSG,$$ we write $\pi=\pi_1\times\pi_2\times\pi_3$ with $\pi_i\in NC(m_i)$ and each block of $\cV$ is a cycle of $\pi$ except by one block which is the union of three cycles of $\pi$ one from each permutation $\pi_i$. We denote by $\PSSS$ to the set of partitioned permutations $(\cV,\pi)\in \PSG$ satisfying the following conditions,
\begin{enumerate}
\item $\pi_{i_1}$ and $\pi_{i_2}$ have all cycles of size $2$ except by one cycle of size $1$ while $\pi_{i_3}\in NC_2(m_{i_3})$ with $(i_1,i_2,i_3)$ a permutation of $(1,2,3)$.
\item The block of $\cV$ which is the union of three cycles of $\pi$ consists of the cycles of size $1$ from $\pi_{i_1}$ and $\pi_{i_2}$ and any cycle from $\pi_{i_3}$.
\end{enumerate}
An example can be seen in Figure \ref{Figure:PSG211}
\end{notation}

\begin{figure}
\begin{center}
\includegraphics[width=200pt,height=200pt]{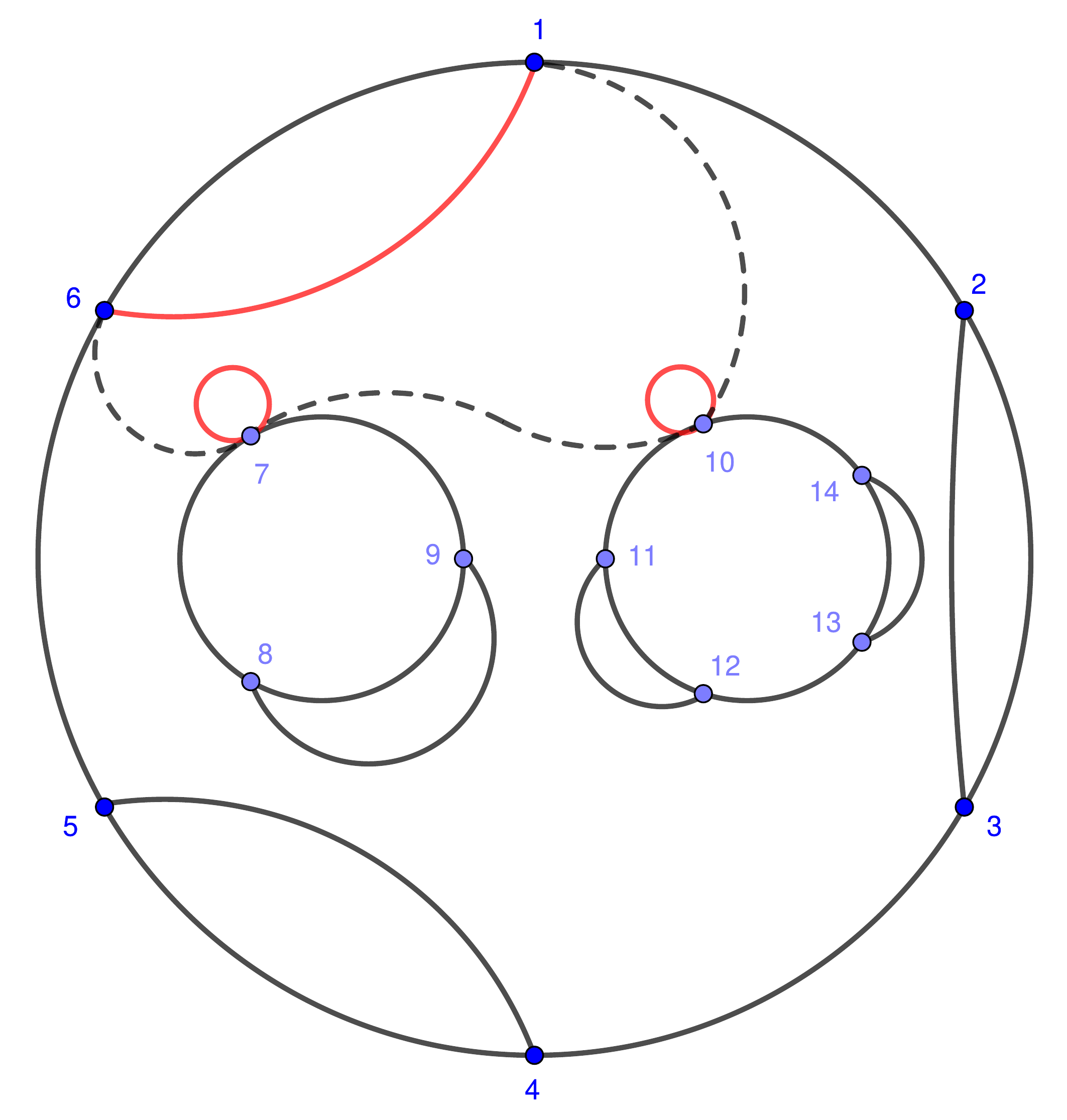} 
\end{center}
\caption{\small\label{Figure:PSG211} A partitioned permutation $(\cV,\pi)$  in the set $\PSSS$ corresponding to \begin{center}
$\pi=(1,6)(2,3)(4,5)(7)(8,9)(10)(11,12)(13,14)$
\end{center} \begin{center}
$\cV=\{\{1,6,7,10\},\{2,3\},\{4,5\},\{8,9\},\{11,12\},\{13,14\}\}.$
\end{center}  Each block of $\cV$ is a cycle of $\pi$ except by one block which is the union of the two cycles of size $1$ of $\pi$ and one cycle of size $2$.}
\end{figure}

\begin{notation}
For a partitioned permutation $$(\cV,\pi)\in \PSSSG,$$ we write $\pi=\pi_1\times\pi_2$ with $\pi_1\in S_{NC}(m_{i_1},m_{i_2})$ and $\pi_2\in NC(m_{i_3})$ for some permutation $(i_1,i_2,i_3)$ of $(1,2,3)$. Each block of $\cV$ is a cycle of $\pi$ except by one block which is the union of two cycles of $\pi$ one from each permutation $\pi_i$.
\begin{enumerate}
\item We denote by $\PSSSSS$ to the set of partitioned permutations $(\cV,\pi)\in \PSSSG$ such that $\pi_1\in NC_2(m_{i_1},m_{i_2})$ and $\pi_3\in NC_2(m_{i_3})$,
\item We denote by $\PSSSSSS$ to the set of partitioned permutations $(\cV,\pi)\in \PSSSSS$ such that $\pi_1$ has exactly two through strings, i.e $\pi_1\in NC_2^{(2)}(m_{i_1},m_{i_2})$.
\item We denote by $\PSSSS$ to the set of partitioned permutations $(\cV,\pi)\in \PSSSSS$ such that $\pi_1$ has exactly one through string, (i.e $\pi_1\in NC_2^{(1)}(m_{i_1},m_{i_2})$) and the block of $\cV$ which is the union of two cycles of $\pi$ consist of the unique through string of $\pi_1$ and any block of $\pi_2$.
\end{enumerate}
Some examples can be seen in Figures \ref{Figure:Pairing partitioned permutations-a}, \ref{Figure:Pairing partitioned permutations-b}, and  \ref{Figure:Pairing partitioned permutations-c}.
\end{notation}

\begin{figure}
\begin{center}
\includegraphics[width=160pt,height=160pt]{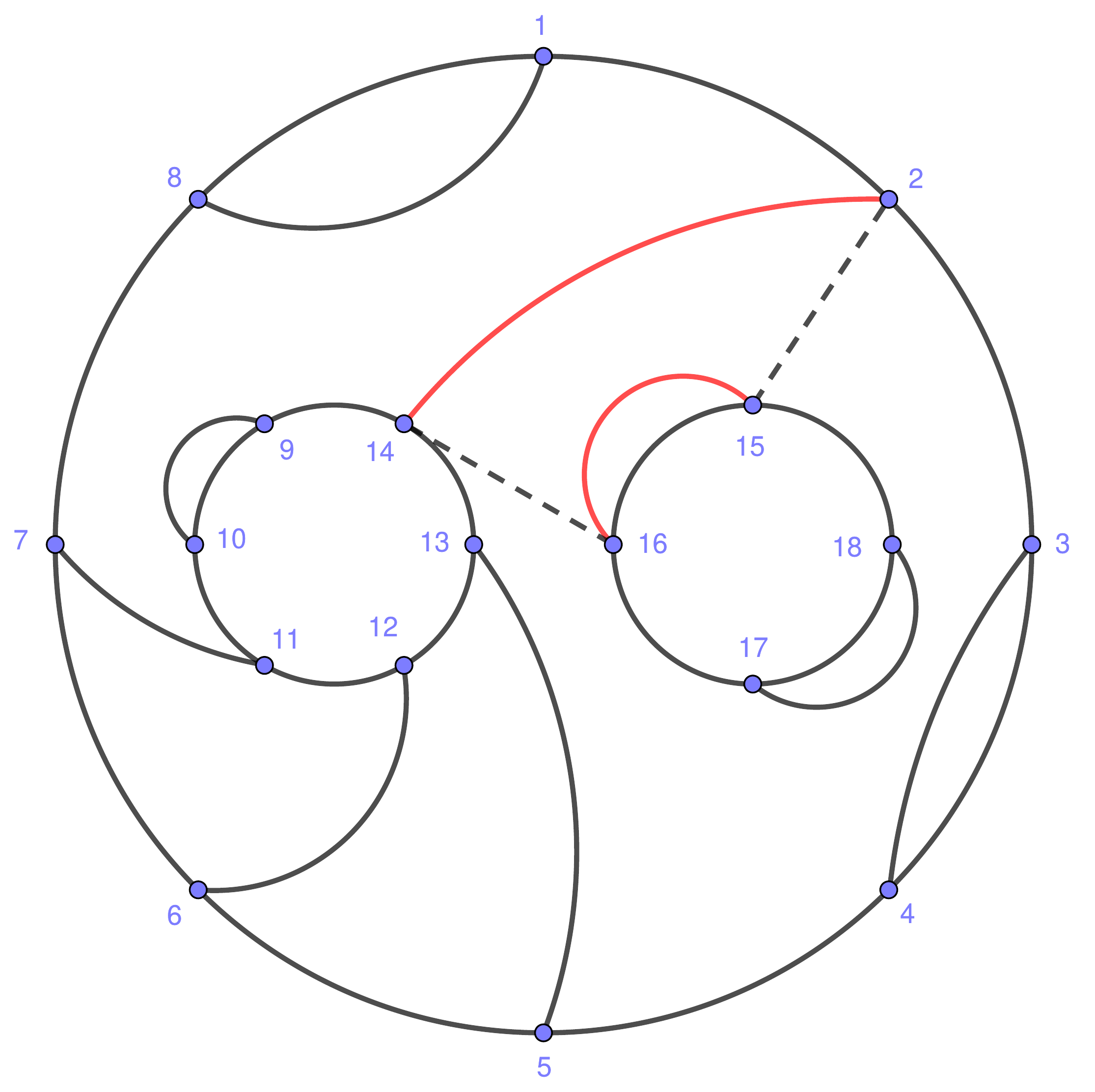}
\end{center}
\caption{\small\label{Figure:Pairing partitioned permutations-a}
$(\cV,\pi)$ in  $\cPS_{NC}^{(1,1)}(9,5,4)$ where  $\pi= 
(1,8)\ab(2,14)(3,4)(5,13)
(6,12)(7, 11)(9,10)(15,16)(17,18)$ and
$\cV=\{\{1,8\},\{2,14,15,16\},\{3,4\},\ab\{5,13\},\{6,12\},\{7,11\},\{9,10\},\{17,18\}\}$. 
}
\end{figure}

\begin{definition}
Let $n\in \mathbb{N}$. We define the set of \textit{block-pairings} on $[n]$, which we denote by $NC_2^{block}(n)$, as the set of pairs $(B,\sigma)$ where $\sigma \in NC_2(n)$ and $B$ is a block of $\sigma$.
\end{definition}

\begin{remark}
The cardinality of $NC_2^{block}(n)$ is given by $\frac{n}{2}|NC_2(n)|$ as for each non-crossing pairing there are $\frac{n}{2}$ blocks to be chosen.
\end{remark}

\begin{definition}
Let $n\in\mathbb{N}$ and $T_n$ be the graph consisting of a unique basic cycle defined as in Section \ref{Section:GraphTheory}. For a partition $\pi\in \cP(n)$, we say that the graph $T_{n}^{\pi}$ is a \textit{double uniloop graph} if $T_{n}^{\pi}$ consists of a graph with two loops over the same vertex and such that removing those loops results in a double tree. We denote the set of double uniloop graphs by $\mathcal{DUL}(n)$.
\end{definition}

For a block-pairing, $(B,\sigma) \in NC_2^{block}(n)$, we may think of the block $B=\{u,v\}$ as a transposition of $S_n$ whose cycle decomposition is $(u,v)$. Under this interpretation let us define the function $\Psi : NC_2^{block}(n) \rightarrow \mathcal{Q}(T_n)$ given by,
$$\Psi(B,\sigma) = T_{n}^{\gamma_n\sigma B},$$
where $\mathcal{Q}(T_n)$ denotes the set of quotient graphs of $T_{n}$ and $\gamma_n=(1,\dots,n)\ab\in S_n$.

\begin{lemma}\label{Lemma:Counting2LoopType}
Let $n\in\mathbb{N}$ and $\Psi : \NC_2^{block}(n) \rightarrow \mathcal{Q}(T_n)$ be defined as above. The image, $Im(\Psi)$, equals to the set of double uniloop graphs; $\mathcal{DUL}(n)$. Moreover, the function,
$$\Psi : \NC_2^{block}(n) \rightarrow \mathcal{DUL}(n)$$
is injective. Therefore,
\begin{eqnarray*}
|\{\pi\in\cP(n): T_n^{\pi} \text{ is a double uniloop graph}\}| & = & |NC_2^{block}(n)| \nonumber \\
& = & \frac{n}{2}|\NC_2(n)|.
\end{eqnarray*}
\end{lemma}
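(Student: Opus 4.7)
The plan is to show that $\Psi$ is a bijection from $\NC_2^{block}(n)$ onto $\mathcal{DUL}(n)$; since $|\NC_2^{block}(n)|=\frac{n}{2}|\NC_2(n)|$, the counting statement will follow at once. The argument splits into two halves: first that $\Psi$ lands in $\mathcal{DUL}(n)$, and second that an explicit canonical construction $\pi\mapsto(B,\sigma)=(\text{loop block},\ols{\pi})$ inverts $\Psi$.

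For the image containment, given $(B,\sigma)\in\NC_2^{block}(n)$ with $B=\{u,v\}$, Lemma \ref{Corollary:CountingDoubleTrees} gives that $T_n^{\gamma_n\sigma}$ is a double tree. The key observation is that, on the level of set partitions, passing from $\gamma_n\sigma$ to $\gamma_n\sigma B$ merges the two blocks $U=[u]_{\gamma_n\sigma}$ and $V=[v]_{\gamma_n\sigma}$ into a single block $W=U\cup V$, leaving all other blocks intact. Indeed, $\sigma B$ is $\sigma$ with the transposition $(u,v)$ deleted, so $\#(\sigma B)=\frac{n}{2}+1$; Biane's inequality together with $\#(\sigma)+\#(\gamma_n\sigma)=n+1$ forces $\#(\gamma_n\sigma B)=\frac{n}{2}$, and the merged blocks must be the ones containing $u$ and $v$. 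In the double tree $T_n^{\gamma_n\sigma}$, the edges $e_u$ and $e_v$ form the double edge between $U$ and $V$ (Lemma \ref{Lemma:PropertiesofTsigmagamma} at $r=1$); after collapsing $U,V$ into $W$, these two edges become loops at $W$, while every other edge retains its doubled structure. Removing the loops contracts an edge of the underlying tree, which is again a tree, so the result is a double tree. Hence $T_n^{\gamma_n\sigma B}\in\mathcal{DUL}(n)$.

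For the reverse direction, given $G=T_n^\pi\in\mathcal{DUL}(n)$, I would define $\sigma:=\ols{\pi}$ and let $B$ be the (unique) equivalence class consisting of the two loops. That $\ols{\pi}$ is a pairing is immediate: the $\frac{n}{2}-1$ non-loop double edges each form a class of size two, while the two loops at the distinguished vertex are mutually equivalent by Remark \ref{Remark:Loops have same and opposite orientation}, giving a final class of size two. To verify $\gamma_n\sigma B=\pi$ as set partitions, I would case-split: if $i\in B$, then $e_i$ is a loop, so $\gamma_n(i)\sim_\pi i$ and $\gamma_n\sigma B(i)=\gamma_n(i)\sim_\pi i$; if $i\notin B$ and $j=\sigma(i)$, then $\{e_i,e_j\}$ is a non-loop double edge and, since loop-removal leaves a double tree, $e_i$ and $e_j$ have opposite orientations, so $[\gamma_n(j)]_\pi=[i]_\pi$ and thus $\gamma_n\sigma B(i)=\gamma_n(j)\sim_\pi i$. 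Hence $\gamma_n\sigma B\leq\pi$, and since $\#(\sigma B)=\frac{n}{2}+1$ gives $\#(\gamma_n\sigma B)\leq\frac{n}{2}=\#(\pi)$ via Biane, equality $\gamma_n\sigma B=\pi$ follows. A parallel cycle-count shows $\#(\sigma)+\#(\gamma_n\sigma)=n+1$, placing $\sigma$ in $\NC_2(n)$.

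Injectivity then comes for free: the construction $\pi\mapsto(B,\ols\pi)$ is a left inverse of $\Psi$, since $B$ and $\sigma$ are canonically recovered from the graph. The main subtlety, and the likely obstacle, is precisely the orientation check in the second case above: one needs that every non-loop double edge in a double uniloop graph consists of two edges in opposite orientations, for otherwise the argument would mistakenly give $[\gamma_n(j)]_\pi=[\gamma_n(i)]_\pi$, forcing $e_i$ to be a loop — a contradiction. This structural input is exactly what allows $\gamma_n\sigma B$ to be pinned down from the set partition $\pi$ alone, and is the fulcrum of the whole argument.
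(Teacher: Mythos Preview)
Your proposal is correct and follows essentially the same approach as the paper: both directions hinge on Lemma~\ref{Corollary:CountingDoubleTrees} (double trees $\leftrightarrow NC_2(n)$), the identification $\sigma=\ols\pi$ with $B$ the loop block, and cycle counting via Biane's inequality. The only differences are organizational: you frame the argument as constructing an explicit two-sided inverse and work directly with $\gamma_n\sigma B$ in the reverse direction, whereas the paper first applies Theorem~\ref{Theorem:ExtractingPairingProperties} to $\gamma_n\sigma$, shows $u'\not\sim_{\gamma_n\sigma}v'$, and then composes with $B$; these are minor rearrangements of the same idea.
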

\begin{proof}
Let $(B,\sigma)\in \NC_2^{set}(n)$ and let $\pi^\prime=\gamma_n\sigma$ with $\gamma_n=(1,\dots,n)\in S_n$. Since $\sigma\in \NC_2(n)$ Lemma \ref{Corollary:CountingDoubleTrees} says that $T_n^{\pi^\prime}$ is a double tree and $\overline{\pi^\prime}=\sigma$. In other words, any block, $\{u,v\}$ of $\sigma$, corresponds to the edges $e_u$ and $e_v$ of $T_m^{\pi^\prime}$ connecting the same pair of vertices and with opposite orientation. We let $B=\{u^\prime,v^\prime\}$. Observe that $\pi^\prime$ has two cycles: $A$ and $B$ (which we regard as blocks and vertices of $T_m^{\pi^\prime}$), such that $u^\prime,\gamma_n(v^\prime) \in A$ and $v^\prime,\gamma_n(u^\prime)\in B$. Therefore, $\pi=\pi^\prime B$ have exactly the same cycles of $\pi^\prime$ except by one which is obtained by the union of $A$ and $B$. This means that the quotient graph $T_{n}^{\pi}$ is obtained by identifying the vertices $A$ and $B$ of $T_n^{\pi^\prime}$ which results in $T_{n}^{\pi}$ being a double uniloop graph. This proves $Im(\Psi) \subset \mathcal{DUL}$. 

Conversely, let $T_{n}^{\pi}$ be a graph of double uniloop type. Then all edges of $\overline{T_{n}^{\pi}}$ have multiplicity $2$. We let $\sigma$ be the pairing obtained by $u\overset{\sigma}\sim v$ if $e_u$ and $e_v$ connect the same pair of vertices. Let $\tau = \sigma (u^\prime, v^\prime)$ where $e_{u^\prime}$ and $e_{v^\prime}$ correspond to the loops of $T_{n}^{\pi}$. Then $\#(\tau) = n/2 + 1$. By Theorem \ref{Theorem:ExtractingPairingProperties}, we have that, as partitions, $\gamma_n \sigma \leq \pi$; so $\#(\gamma_n \sigma) \geq \#(\pi) = n/2$. By \cite[2.10]{MN}, we have that $\#(\tau) + \#(\gamma_n \tau) + \#(\gamma_n) \leq n + 2$; so $\#(\gamma_n \tau) \leq n/2$. Thus 
\[
\#(\gamma_n \tau) \leq n/2 \leq \#(\gamma_n \sigma).
\] 
If $u^\prime \sim_{\gamma_n \sigma} v^\prime$, then $\#(\gamma_n \tau) = \#(\gamma_n \sigma (u^\prime, v^\prime)) = \#(\gamma_n \sigma) + 1$, which is impossible. Thus $u^\prime \not\sim_{\gamma_n \sigma} v^\prime$ and hence $\gamma_n \sigma < \pi$. This implies that $\#(\gamma_n \sigma ) \geq n/2 + 1$. Since $\sigma$ is a pairing we must have $\#(\gamma_n \sigma ) = n/2 + 1$, and thus $\sigma\in \NC_2(n)$, i.e. $(\{u^\prime,v^\prime\},\sigma)\in \NC_2^{block}(n)$. Moreover, note that $\#(\pi)= n/2 $ and $\#(\gamma_n\sigma) = n/2 + 1$, since $\gamma_n \sigma \leq \pi$ then $\pi$ is obtained by joining two blocks of $\gamma_n \sigma$, these blocks must correspond to $[u^\prime]_{\gamma_n \sigma}$ and $[v^\prime]_{\gamma_n \sigma}$ as we proved that $u^\prime \not\sim_{\gamma_n \sigma} v^\prime$. On the other hand, the cycles of the permutation $\gamma_n \sigma (u^\prime,v^\prime)$ are the same as the cycles of $\gamma_n \sigma$ except by one which is the union of two cycles of $\gamma_n \sigma$: $[u^\prime]_{\gamma_n \sigma}$ and $[v^\prime]_{\gamma_n \sigma}$. This proves that, as partitions, $\gamma_n \sigma (u^\prime,v^\prime) = \pi$ which proves that $\Psi(\{u^\prime,v^\prime\},\sigma) = T_n^\pi$, thus $Im(\Psi)=\mathcal{DUL}(n)$.

To verify injectivity, let us remember that if $(B,\sigma)\in \NC_2^{block}(n)$ and $\pi=\gamma_n \sigma B$ then $T_n^{\pi}$ pair the edges $e_u$ and $e_v$ whenever $\{u,v\}$ is a block of $\sigma$, i.e. $\overline{\pi}=\sigma$. Moreover, the block $B=\{u^\prime,v^\prime\}$ correspond to the loops $e_{u^\prime}$ and $e_{v^\prime}$ of $T_n^{\pi}$. Let $(B_1,\sigma_1)$ and $(B_2,\sigma_2)$ be such that, as partitions, $\pi_1 = \gamma_n \sigma_1 B_1$ and $\pi_2 = \gamma_n \sigma_2 B_2$ are the same. By the pointed before, $\sigma_1=\overline{\pi_1}=\overline{\pi_2} =\sigma_2$. Finally observe that since $T_n^{\pi_1}$ and $T_n^{\pi_2}$ are the same then, their corresponding loops are the same, which means that the blocks $B_1$ and $B_2$ must be the same. 
\end{proof}

\begin{figure}[t]
\begin{minipage}[b]{\textwidth}\centering
\includegraphics[width=160pt,height=160pt]{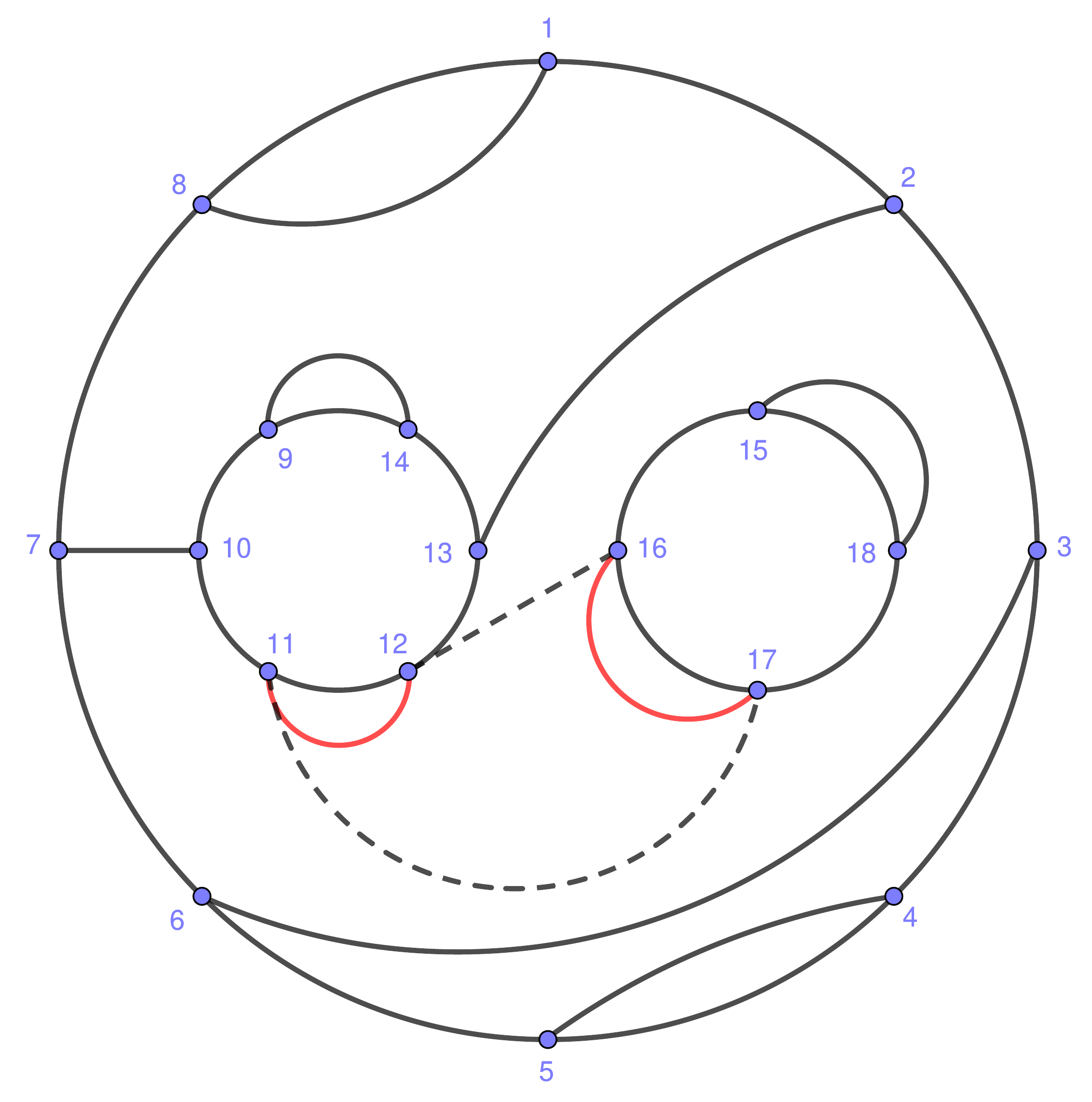} 
\caption
{\protect\raggedright\small%
\label{Figure:Pairing partitioned permutations-b}%
$(\cV,\pi)\in \cPS_\NC^{(1,1)}(9,5,4)$ with $\pi=(1,8)(2,13)\ab(3,6)(4,5)(7,10)(9,14)(11,12)(15,18)(16,17)$ and $\cV=\{\{1,8\},\{2,13\},\{3,6\},\{4,5\},\{7,10\},\{9,14\},\{11,12,16,17\},\ab\{15,18\}\}.$ 
The permutation $\pi$ can be written as $\pi_1\times\pi_2$ with $\pi_1=(1,8)(2,13)(3,6)(4,5)(7,10)(9,14)(11,12)$ and $\pi_2 = (15,18)(16,17)$, the permutation $\pi_1$ has only the two through strings $(2,13)$ and $(7,10)$.}
\end{minipage}\end{figure}

\begin{figure}[t]
\begin{minipage}[b]{\textwidth}\centering
\includegraphics[width=160pt,height=160pt]{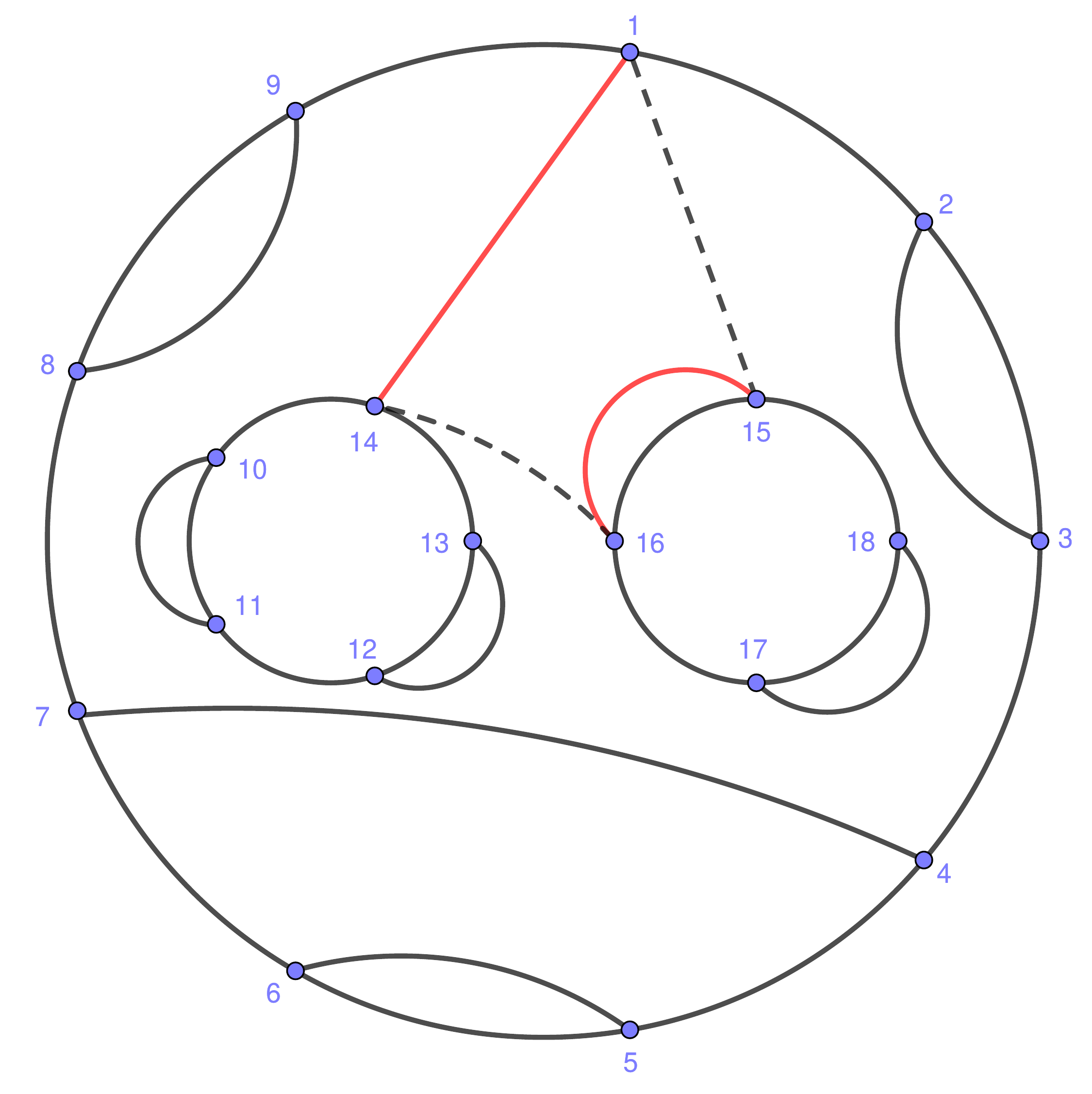}
\caption{\protect\raggedright\small%
\label{Figure:Pairing partitioned permutations-c}%
$(\cV,\pi)\in \cPS_\NC^{(1,1)(t)}(9, 5, 4)$ given by $\pi=(1,14)\ab(2,3)(4,7)(5,6)(8,9)(10,11)(12,13)(15,16)(17,18)$ and $\cV=\{\{1,14,15,16\},\{2,3\},\{4,7\},\{5,6\},\{8,9\},\{10,11\},\{12,13\},\ab\{17,18\}\}$. We have $\pi=\pi_1\times\pi_2\in NC_2(9,5)\times NC_2(4)$ with
$\pi_1$ equal to $\{(1,14)(2,3)(4,7)(5,6)(8,9)(10,11)(12,13)\}$ and $\pi_2=\ab(15,16)(17,18)$. Note that $\pi_1$ has the unique through string, $(1,14)$. The block $\{1,14,15,16\}$ of $\cV$ is obtained by joining the cycle $(1,14)$ of $\pi_1$ to the cycle $(15,16)$ of $\pi_2$.}
\end{minipage}
\end{figure}

\begin{lemma}\label{Lemma:Counting2-4-UL}
$$|\TFULSet|=|\PSSS|=|\PSSSS|$$
\end{lemma}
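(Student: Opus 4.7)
The plan is to establish the two equalities by exhibiting explicit bijections $\PSSS\to\PSSSS$ and $\PSSSS\to\TFULSet$. Both preserve the natural splitting of $\{1,2,3\}$ into a distinguished ``special'' index $i_3$ (carrying the pure pair-partition component) and a complementary pair $\{i_1,i_2\}$ on which the uni-singleton structures, or the single through string, sit.

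For $\PSSS\to\PSSSS$: given $(\cV,\pi)\in\PSSS$ with $\pi=\pi_{i_1}\times\pi_{i_2}\times\pi_{i_3}$ and singletons $\{a\}$ of $\pi_{i_1}$ and $\{b\}$ of $\pi_{i_2}$, I would form $\pi_1'$ on $[\![ m_{i_1} ]\!]\cup[\![ m_{i_2} ]\!]$ as the union of the $2$-cycles of $\pi_{i_1}$, the $2$-cycles of $\pi_{i_2}$, and the new through string $\{a,b\}$; set $\pi_2'=\pi_{i_3}$; and replace the joining block $\{a\}\cup\{b\}\cup C$ of $\cV$ by $\{a,b\}\cup C$ to obtain $\cV'$. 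The inverse splits the unique through string of $\pi_1'$ back into two singletons. The essential verification is that $\pi_1'\in NC_2^{(1)}(m_{i_1},m_{i_2})$: cutting the $2$-annulus along the through string $\{a,b\}$ produces a disk whose boundary carries the remaining $m_{i_1}-1+m_{i_2}-1$ points in two disjoint arcs, one per circle, and annular non-crossingness of a pairing with a single through string factors as the non-crossingness of two disk pair-partitions on $[\![ m_{i_1} ]\!]\setminus\{a\}$ and $[\![ m_{i_2} ]\!]\setminus\{b\}$ separately -- which is precisely the uni-singleton non-crossing condition imposed on $\pi_{i_1}$ and $\pi_{i_2}$.

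For $\PSSSS\to\TFULSet$: given $(\cV,\pi)\in\PSSSS$ with $\pi=\pi_1\times\pi_2$, let $C$ be the cycle of $\pi_2$ appearing in the joining block of $\cV$. Then $(C,\pi_2)\in NC_2^{block}(m_{i_3})$, and by Lemma~\ref{Lemma:Counting2LoopType} it corresponds to a unique double uniloop graph on $T_{m_{i_3}}$. Simultaneously, $\pi_1\in NC_2^{(1)}(m_{i_1},m_{i_2})$ corresponds by Lemma~\ref{Chapter3_Lemma_Counting double unicircuit whose length is k} with $k=1$ to a unique \DUs graph on $T_{m_{i_1},m_{i_2}}$ whose circuit is a loop consisting of one edge from each basic cycle. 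Identifying the vertex of the double uniloop carrying its two loops with the vertex of the double unicircuit carrying the circuit-loop produces, by Definition~\ref{def:(0,-1)-graphs}, a $2$-$4$ uniloop graph in $\TFULSet$. The inverse splits any $2$-$4$ uniloop graph at its unique multiplicity-$4$ loop vertex, recovering the double uniloop on $T_{m_i}$ and the double unicircuit on $T_{m_j,m_k}$, which in turn yield $(C,\pi_2)$ and $\pi_1$ via the inverses of the cited lemmas.

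The main obstacle is the annular non-crossing verification in the first bijection. I would handle this either by the cutting-along-the-through-string picture sketched above, or more algebraically by verifying the metric identity $\#(\pi_1')+\#(\pi_1'^{-1}\gamma_{m_{i_1},m_{i_2}})=m_{i_1}+m_{i_2}$ directly from the cycle data of $\pi_{i_1}$ and $\pi_{i_2}$ and their singletons. Once this is secured, composing the two bijections yields $|\TFULSet|=|\PSSSS|=|\PSSS|$, completing the proof.
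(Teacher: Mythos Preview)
Your bijective strategy is sound and both bijections work as you describe, but it is a genuinely different route from the paper's proof. The paper argues purely enumeratively: it first observes that all three sets are empty unless exactly one of $m_1,m_2,m_3$ is even, then (taking $m_1$ even and $m_2,m_3$ odd) counts each set directly and shows all three equal
\[
\frac{m_1m_2m_3}{2}\,|NC_2(m_1)|\,|NC_2(m_2-1)|\,|NC_2(m_3-1)|,
\]
invoking Lemma~\ref{Lemma:Counting2LoopType} for the double uniloop factor and the formula $|NC_2^{(1)}(m_2,m_3)|=\binom{m_2}{(m_2-1)/2}\binom{m_3}{(m_3-1)/2}$ for the single-through-string count. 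Your approach, by contrast, never writes down a closed formula: it packages the same ingredients (Lemmas~\ref{Lemma:Counting2LoopType} and~\ref{Chapter3_Lemma_Counting double unicircuit whose length is k} with $k=1$) into explicit bijections and trades the numerical check for the structural verification that joining the two singletons yields an element of $NC_2^{(1)}(m_{i_1},m_{i_2})$. The paper's method is shorter and avoids that annular non-crossing verification entirely; your method is more explanatory and makes transparent \emph{why} the through string of $\pi_1$ corresponds to the pair of singletons and why the marked block $C$ of $\pi_2$ corresponds to the loop vertex of the double uniloop. Either is acceptable, and the ``main obstacle'' you flag is genuine but routine---your cut-along-the-through-string picture (or the metric-condition check) does the job.
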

\begin{proof}
A quotient graph $T_{m_1,m_2,m_3}^{\pi}$ can be of $2$-$4$ uniloop type only when one of $m_1,m_2$ or $m_3$ is even and the other two are odd, similarly $\PSSS$ and $\PSSSS$ are non-empty only in that case, so we may assume $m_1$ is even and $m_2,m_3$ are both odd. The graph $T_{m_1}^{\pi}$ is of double uniloop type and can be chosen in $\frac{m_1}{2}|NC_2(m_1)|$ ways as seen in Lemma \ref{Lemma:Counting2LoopType}. The loop of $T_{m_2}^{\pi}$ can be chosen in $m_2$ ways. Then we quotient the rest to get a double tree. This is basically doing the quotient of a basic cycle of length $m_2-1$ which produces $|NC_2(m_2-1)|$ distinct double trees, so $T_{m_2}^{\pi}$ is chosen in $m_2|NC_2(m_2-1)|$ ways. Similarly $T_{m_3}^{\pi}$ is chosen in $m_3|NC_2(m_3-1)|$ ways, and since the union of the graphs $T_{m_1}^{\pi}$, $T_{m_2}^{\pi}$ and $T_{m_2}^{\pi}$ is already determined by the loop then the number of ways of choosing the graph $T_{m_1,m_2,m_3}^{\pi}$ is,
$$\frac{m_1m_2m_3}{2}|NC_2(m_1)||NC_2(m_2-1)||NC_2(m_3-1)|$$
which is the cardinality of $\PSSS$. The cardinality of $\PSSSS$ can be computed easily. We choose a non-crossing pairing on the $(m_2,m_3)$-annulus with a single through string, By \cite[Lemma 13]{MSAW} this can be done in $\binom{m_2}{m_2-1/2}\binom{m_3}{m_3-1/2}$ ways. The last expression can be rewritten as,
$$m_2m_3|NC_2(m_2-1)||NC_2(m_3-1)|,$$
by using the well know property $|NC_2(n)|=Cat_{n/2} \vcentcolon = \frac{1}{n/2+1}\binom{n}{n/2}$. On the other hand, the non-crossing pairing on $[m_1]$ points can be chosen in $|NC_2(m_1)|$ ways, and we select a block of that pairing which can be chosen in $\frac{m_1}{2}$ ways. Therefore the cardinality of $\PSSSS$ is,
$$\frac{m_1m_2m_3}{2}|NC_2(m_1)||NC_2(m_2-1)||NC_2(m_3-1)|,$$
as required.
\end{proof}

\begin{lemma}\label{Lemma:Counting2-4-U}
\begin{multline*}
|\TFUCSet|= 2|\PSSSSS|-2|\PSSSSSS| \\
-2|\PSSSS|
\end{multline*}
\end{lemma}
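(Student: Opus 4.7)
The plan is to mirror the strategy of Lemmas \ref{Lemma:Counting2-6TT} and \ref{Lemma:Counting2-4-4TT}: I would construct a map $\Phi$ from pairs $((\cV,\pi),\epsilon)\in \PSSSSS \times \{+,-\}$ to quotient graphs, show it lands in $\TSTTSet \cup \TFFTTSet \cup \TFULSet \cup \TFUCSet$, and exhibit a bijection from the restricted domain $(\PSSSSS\setminus\PSSSSSS\setminus\PSSSS)\times\{+,-\}$ onto $\TFUCSet$. Concretely, for $(\cV,\pi)\in \PSSSSS$ with $\pi=\pi_1\times\pi_2$, $\pi_1\in NC_2(m_{i_1},m_{i_2})$, $\pi_2\in NC_2(m_{i_3})$, and distinguished $\cV$-block $\{u_1,v_1,u_2,v_2\}$ joining cycle $\{u_1,v_1\}$ of $\pi_1$ with cycle $\{u_2,v_2\}$ of $\pi_2$, the pairings produce a graph $T_{m_{i_1},m_{i_2}}^{\gamma\pi_1}$ and a double tree $T_{m_{i_3}}^{\gamma\pi_2}$ whose chosen edges $\bar e_1,\bar e_2$ are to be glued; the orientation $\epsilon$ specifies which of the two matchings of endpoints is used, and $\Phi((\cV,\pi),\epsilon)$ is the resulting quotient graph.

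I would then case-split on where $(\cV,\pi)$ sits inside $\PSSSSS$. When $(\cV,\pi)\in\PSSSSSS$, so $\pi_1\in NC_2^{(2)}$, the two through strings of $\pi_1$ yield parallel edges in $G_{\pi_1}^{\gamma}$ that elementarize to a single multiplicity-four edge of $\overline{T_{m_{i_1},m_{i_2}}^{\gamma\pi_1}}$, making that graph a tree rather than a unicircuit; the glued object is therefore in $\TSTTSet$ (if the chosen cycle of $\pi_1$ is a through string, merging the multiplicity-four edge into one of multiplicity six) or in $\TFFTTSet$ (if it is a non-through string, producing two multiplicity-four edges), but never in $\TFUCSet$. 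When $(\cV,\pi)\in\PSSSS$, so $\pi_1\in NC_2^{(1)}$ and the chosen cycle is the unique through string, the edge $\bar e_1$ is the length-$1$ circuit of $\overline{T_{m_{i_1},m_{i_2}}^{\gamma\pi_1}}$, i.e.\ a loop at a single vertex; both endpoints of $\bar e_2$ must then be identified with that same vertex regardless of $\epsilon$, so the two orientations coincide and the image is a $2$-$4$ uniloop graph in $\TFULSet$, again not in $\TFUCSet$. In every remaining case $\bar e_1$ is a non-loop edge of a genuine double unicircuit graph, and the two orientations yield two distinct quotient graphs matching the definition of $\TFUCSet$ in Definition \ref{def:(0,-1)-graphs}(2).

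To finish, I would establish the bijection on the remaining set. Surjectivity follows from the definition of $\TFUCSet$ together with Lemma \ref{Corollary:CountingDoubleTrees} and Corollary \ref{Corollary:CountingDU}, which realize the double tree and double unicircuit pieces as $\gamma$-multiplications by suitable non-crossing pairings. Injectivity rests on Lemma \ref{Chapter4_Lemma_Suficient conditions for 24 uniloop or unicircuit}: a $\TFUCSet$ graph has a unique multiplicity-four edge which singles out the double-tree index, hence its decomposition into double tree and double unicircuit pieces, from which $\pi_1,\pi_2$, the glued cycles (read off from the multiplicity-four edge) and the orientation $\epsilon$ (read off from the specific endpoint identifications) are all canonically recovered. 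Combining and using $\PSSSSSS\cap\PSSSS=\emptyset$ gives $|\TFUCSet|=2\,|\PSSSSS\setminus\PSSSSSS\setminus\PSSSS|=2|\PSSSSS|-2|\PSSSSSS|-2|\PSSSS|$. The main obstacle will be the geometric verification surrounding the $\PSSSS$ case: proving that the two orientations collapse \emph{precisely} when the gluing is at the through-string loop, while for non-through-string cycles of $\pi_1\in NC_2^{(1)}$ and for any cycle in $NC_2^{(k)}$ with $k\geq 3$ the chosen edge has two genuinely distinct endpoints, so that the two orientations produce distinct $\TFUCSet$ graphs.
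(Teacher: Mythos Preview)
Your approach is essentially the same as the paper's, just framed in the opposite direction. The paper starts from a graph in $\TFUCSet$, decomposes it into a double tree $T_{m_{i_1}}^\pi$ and a double unicircuit $T_{m_{i_2},m_{i_3}}^\pi$, and counts the choices for each piece (via Lemma~\ref{Corollary:CountingDoubleTrees} and Corollary~\ref{Corollary:CountingDU}), the choice of glued edge in each, and the orientation of the gluing; it then reinterprets the resulting product as a count of partitioned permutations in $\PSDUC\setminus(\PSDUCTwoThroughStrings\cup\PSDUL)$. You instead start from the partitioned-permutation side, build the graph by gluing, and argue the map is a bijection onto $\TFUCSet$ on the complement of the two excluded subsets. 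Both arguments pivot on exactly the same two exclusions and the same factor of~2 from orientation, and the case analysis you sketch (loop versus non-loop circuit) matches the paper's Case~1/Case~2 split.

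One small point: your analysis of where $\PSSSSSS$ and $\PSSSS$ land (in $\TSTTSet\cup\TFFTTSet$ and $\TFULSet$ respectively) is more than the argument needs---you only require that they do \emph{not} produce $\TFUCSet$ graphs. The paper sidesteps this entirely by counting from the graph side, so those cases simply never arise in its enumeration. Your identified obstacle (that the orientation collapses precisely at the through-string loop) is exactly the content of the paper's Case~2, where the loop edge is explicitly excluded from the $\frac{m_{i_2}+m_{i_3}}{2}$ edge choices.
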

\begin{proof}
We will count all graphs $T_{m_1,m_2,m_3}^{\pi}\in \TFUCSet$. If $T_{m_1,m_2,m_3}^{\pi}\in \TFUCSet$ then one of the graphs, $T_{m_{i_1}}^{\pi}$, is a double tree, $T_{m_{i_2},m_{i_3}}^{\pi}$ is a double unicircuit graph with $(i_1,i_2,i_3)$ a permutation of $(1,2,3)$ and $T_{m_1,m_2,m_3}^{\pi}$ results of joining $T_{m_{i_1}}^{\pi}$ and $T_{m_{i_2},m_{i_3}}^{\pi}$ along some edge.

We count firstly the case where the unique \cycles of $\overline{T_{m_{i_2},m_{i_3}}^{\pi}}$ is not a loop. Lemma \ref{Corollary:CountingDoubleTrees} says that $T_{m_{i_1}}^{\pi}$ can be chosen in $|\NC_2(m_{i_1})|$ ways. Similarly, Corollary \ref{Corollary:CountingDU} says that $T_{m_{i_2},m_{i_3}}^{\pi}$ can be chosen in 
\[
|\NC_2(m_{i_2},m_{i_3})|\ab -|\NC_2^{(1)}(m_{i_2},m_{i_3})|-|\NC_2^{(2)}(m_{i_2},m_{i_3})|
\] 
ways. Then we choose an edge from each graph $T_{m_{i_1}}^{\pi}$ and $T_{m_{i_2},m_{i_3}}^{\pi}$. In the first graph there are $\frac{m_{i_1}}{2}$ choices, and in the second there are $\frac{m_{i_2}+m_{i_3}}{2}$ choices. Once selected the edges we make the union of the graphs $T_{m_{i_1}}^{\pi}$ and $T_{m_{i_2},m_{i_3}}^{\pi}$ along these edges which can be done in $2$ ways depending the orientation of the edges. Therefore, the total number of graphs is given by,
\begin{multline*}
2\frac{m_{i_1}(m_{i_2}+m_{i_3})}{4}|\NC_2(m_{i_1})| \times\\
\left[|\NC_2(m_{i_2},m_{i_3})|-|\NC_2^{(1)}(m_{i_2},m_{i_3})|-|\NC_2^{(2)}(m_{i_2},m_{i_3})|\right].
\end{multline*}
The last expression is twice the number of partitioned permutations $(\cV,\pi_1\times\pi_2)\in \PSDUC$ with $\pi_2\in \NC_2(m_{i_1})$ and, $$\pi_1\in \NC_2(m_{i_2},m_{i_3})\setminus (\NC_2^{(1)}(m_{i_2},m_{i_3})\cup \NC_2^{(2)}(m_2,m_3)),$$ and we choose a cycle from each $\pi_1$ and $\pi_2$ and join them together to make a block of $\cV$.

Similarly we count the case where the unique \cycles of $\overline{T_{m_{i_2},m_{i_3}}^{\pi}}$ is a loop. $T_{m_{i_1}}^{\pi}$ can be chosen in $|\NC_2(m_{i_1})|$ ways, and $T_{m_{i_2},m_{i_3}}^{\pi}$ can be chosen in $|\NC_2^{(1)}(m_{i_2},m_{i_3})|$ ways. Then we choose an edge from each graph $T_{m_{i_1}}^{\pi}$ and $T_{m_{i_2},m_{i_3}}^{\pi}$. In the first graph there are $\frac{m_{i_1}}{2}$ choices, and in the second there are $\frac{m_{i_2}+m_{i_3}-2}{2}$ choices because the \cycles cannot be chosen as it is a loop. Once selected the edges we have two possible orientations for the union along the edges. Thus the total number of graphs is given by,
$$2\frac{m_{i_1}(m_{i_2}+m_{i_3}-2)}{4}|\NC_2(m_{i_1})||\NC_2^{(1)}(m_{i_2},m_{i_3})|.$$
The last expression is twice the number of partitioned permutations $(\cV,\pi_1\times\pi_2)\in \PSDUC$ whit $\pi_2\in \NC_2(m_{i_1})$, $\pi_1\in \NC_2^{(1)}(m_{i_2},\ab m_{i_3})$, and we choose a cycle from each $\pi_1$ and $\pi_2$ and join them together to make a block of $\cV$ with the restriction that the selected cycle of $\pi_1$ cannot be the unique through string of $\pi_1$.
Adding both cases up says that the number of $2$-$4$ unicircuit graphs, $T_{m_1,m_2,m_3}^{\pi}$, equals twice the number of partitioned permutations $(\cV,\pi_1\times\pi_2)\in \PSDUC$ with $\pi_1\in \NC_2(m_{i_2},m_{i_3})$, $\pi_2\in \NC_2(m_{i_1})$ and such that $\pi_1$ doesn't have two through strings and if $\pi_1$ has a single through string then this through string is never joined to another cycle of $\pi_2$ to make a block of $\cV$, i.e. $(\cV,\pi_1\times\pi_2)$ belongs to,
$$\PSDUC \setminus (\PSDUCTwoThroughStrings \cup\PSDUL).$$
Thus the number of all possible graphs $T_{m_1,m_2,m_3}^{\pi}\in \TFUCSet$ is twice the cardinality of the set,
$$\PSDUC \setminus (\PSDUCTwoThroughStrings \cup\PSDUL),$$
as desired.
\end{proof}

\section{The third order moments and cumulants}
\label{sec:free-cumulants}
We are ready to provide a proof to the main theorem, let us recall Corollary \ref{Corollary:FirstExpressionOfAlpha},
\begin{multline*}
\alpha_{m_1,m_2,m_3} = \sum_{\substack{\pi\in \cP(m) \\ T_{m_1,m_2,m_3}^{\pi}\in \TSTTSet}}(\C_6+6\C_4+2)
+ \sum_{\substack{\pi\in \cP(m) \\ T_{m_1,m_2,m_3}^{\pi}\in \TFFTTSet}}(\C_4+1)^2 \\
+ \sum_{\substack{\pi\in \cP(m) \\ T_{m_1,m_2,m_3}^{\pi}\in \TFULSet}}(\mathring{\C_4}+2)
+ \sum_{\substack{\pi\in \cP(m) \\ T_{m_1,m_2,m_3}^{\pi}\in \TFUCSet}}(\C_4+1)
+ \sum_{\substack{\pi\in \cP(m) \\ T_{m_1,m_2,m_3}^{\pi}\in \DBSet}}1. \\
\end{multline*}
On the other hand, Corollary \ref{Corollary:CountingDB} says,
$$|\DBSet|=|NC_2(m_1,m_2,m_3)|-|\TFUCSet|-|\TFFTTSet|-2|\TFULSet|-2|\TSTTSet|.$$
Combining these two expressions we get the following simpler expression,
\begin{multline}\label{Equation:SecondExpressionOfAlpha}
\alpha_{m_1,m_2,m_3} = \sum_{NC_2(m_1,m_2,m_3)}1 +\sum_{\substack{\pi\in \cP(m) \\ T_{m_1,m_2,m_3}^{\pi}\in \TSTTSet}}(\C_6+6\C_4) \\
+ \sum_{\substack{\pi\in \cP(m) \\ T_{m_1,m_2,m_3}^{\pi}\in \TFFTTSet}}(\C_4^2+2\C_4)
+ \sum_{\substack{\pi\in \cP(m) \\ T_{m_1,m_2,m_3}^{\pi}\in \TFULSet}}\mathring{\C}_4
+ \sum_{\substack{\pi\in \cP(m) \\ T_{m_1,m_2,m_3}^{\pi}\in \TFUCSet}}\C_4
\end{multline}

\begin{theorem}\label{Corollary:ThirdExpressionOfAlpha}
Let $m_1,m_2,m_3\in\mathbb{N}$. Then,
\begin{multline*}
\alpha_{m_1,m_2,m_3} = |NC_2(m_1,m_2,m_3)|+4\C_6|\PS| \\
+4\C_4^2|\PSS|+2\C_4|\PSSSSS| \\
+(\mathring{\C}_4-2\C_4)|\PSSS|
\end{multline*}
\end{theorem}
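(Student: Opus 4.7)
The plan is to substitute the counts provided by Lemmas \ref{Lemma:Counting2-6TT}, \ref{Lemma:Counting2-4-4TT}, \ref{Lemma:Counting2-4-UL}, and \ref{Lemma:Counting2-4-U} into Equation (\ref{Equation:SecondExpressionOfAlpha}) and rearrange the terms. Expanding the cumulant factors, this substitution gives
\begin{multline*}
\alpha_{m_1,m_2,m_3} = |NC_2(m_1,m_2,m_3)| + 4\C_6|\PS| + 24\C_4|\PS| + 4\C_4^2|\PSS| \\
+ 8\C_4|\PSS| + \mathring{\C}_4|\PSSS| + 2\C_4|\PSSSSS| - 2\C_4|\PSSSSSS| - 2\C_4|\PSSSS|.
\end{multline*}
Using the equality $|\PSSS|=|\PSSSS|$ from Lemma \ref{Lemma:Counting2-4-UL}, the term $-2\C_4|\PSSSS|$ may be rewritten as $-2\C_4|\PSSS|$ and folded into the coefficient of $|\PSSS|$, so that the right-hand side matches the claimed formula up to the residual $\C_4(24|\PS|+8|\PSS|-2|\PSSSSSS|)$.

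Thus the theorem reduces to the purely combinatorial identity
\[
12|\PS|+4|\PSS|=|\PSSSSSS|,
\]
which is the main remaining obstacle. Both sides vanish unless $m_1,m_2,m_3$ are all even: for $|\PS|$ and $|\PSS|$ this is immediate; each summand in $|\PSSSSSS|$ requires $m_{i_3}$ even together with $|NC_2^{(2)}(m_{i_1},m_{i_2})|$ non-zero, which in turn forces $m_{i_1},m_{i_2}$ both even. In the all-even case, Remark \ref{Remark:Counting PS and PSS} yields
\[
12|\PS|+4|\PSS|=\frac{m_1m_2m_3}{8}\bigl(12+4(\tfrac{m}{2}-3)\bigr)\prod_i|NC_2(m_i)|=\frac{m_1m_2m_3\,m}{4}\prod_i|NC_2(m_i)|.
\]

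For the right-hand side my plan is to use the closed form $|NC_2^{(2)}(m,n)|=\tfrac{mn}{2}|NC_2(m)||NC_2(n)|$, which follows from \cite[Lemma 13]{MSAW} combined with the elementary identity $\binom{m}{m/2-1}=(m/2)|NC_2(m)|$. Decomposing $|\PSSSSSS|$ as a sum over the three choices of which pair of basic cycles is joined by the two through strings of $\pi_1$, the summand indexed by the pair $\{i,j\}\subset\{1,2,3\}$ with $\ell$ the remaining index is
\[
\frac{m_i+m_j}{2}\cdot\frac{m_\ell}{2}\cdot\frac{m_im_j}{2}|NC_2(m_i)||NC_2(m_j)||NC_2(m_\ell)|=\frac{(m_i+m_j)m_im_jm_\ell}{8}\prod_k|NC_2(m_k)|.
\]
Summing over the three pairs produces $\frac{m_1m_2m_3}{8}\prod_k|NC_2(m_k)|\cdot\sum_{\{i,j\}}(m_i+m_j)$, and since $\sum_{\{i,j\}}(m_i+m_j)=2m$ the right-hand side also equals $\frac{m_1m_2m_3\,m}{4}\prod_k|NC_2(m_k)|$. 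The two expressions match, establishing the identity and completing the proof.
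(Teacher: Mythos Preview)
Your proof is correct and follows essentially the same route as the paper's own proof: substitute the counting lemmas into Equation~(\ref{Equation:SecondExpressionOfAlpha}), isolate the residual $2\C_4\bigl(12|\PS|+4|\PSS|-|\PSSSSSS|\bigr)$, and verify it vanishes via Remark~\ref{Remark:Counting PS and PSS} together with the closed form for $|NC_2^{(2)}(m_i,m_j)|$ from \cite{MSAW}. The only cosmetic difference is that the paper carries $(\mathring{\C}_4-2\C_4)|\PSSSS|$ through the computation and swaps in $|\PSSS|$ at the very end, whereas you invoke $|\PSSS|=|\PSSSS|$ up front; the substance is the same.
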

\begin{proof}
Lemmas \ref{Lemma:Counting2-6TT}, \ref{Lemma:Counting2-4-4TT}, \ref{Lemma:Counting2-4-U} and \ref{Lemma:Counting2-4-UL} let us count the sets $\TSTTSet, \ab \TFFTTSet, \ab \TFULSet$ and $\TFUCSet$; combining this with Equation (\ref{Equation:SecondExpressionOfAlpha}) gives,
\begin{multline}\label{Equation_Number5}
\alpha_{m_1,m_2,m_3} = |NC_2(m_1,m_2,m_3)|+4\C_6|\PS| \\
+4\C_4^2|\PSS|+2\C_4|\PSSSSS| \\
+(\mathring{\C}_4-2\C_4)|\PSSSS|+2\C_4 R,
\end{multline}
where $R$ is given by,
\begin{multline*}
12|\PS|+4|\PSS|\\
\mbox{} -
|\PSSSSSS|.
\end{multline*}
We will prove that $R=0$. We know that,
\begin{multline*}
12|\PS| \\
=
12\frac{m_1}{2}\frac{m_2}{2}\frac{m_3}{2}|NC_2(m_1)||NC_2(m_2)||NC_2(m_3)|,
\end{multline*}
and,
\begin{multline*}
4|\PSS|= \\
4\frac{m_1}{2}\frac{m_2}{2}\frac{m_3}{2}(\frac{m}{2}-3)|NC_2(m_1)||NC_2(m_2)||NC_2(m_3)|,
\end{multline*}
whit $m=m_1+m_2+m_3$. Thus,
\begin{multline}\label{Equation_number7}
12|\PS|+4|\PSS|= \\
\frac{mm_1m_2m_3}{4}|NC_2(m_1)||NC_2(m_2)||NC_2(m_3)|
\end{multline}
Now we compute $|\PSSSSSS|$. Each element $(\cV,\pi_1\times\pi_2)\in \PSSSSSS$ is such that we have $\pi_1\in NC_2^{(2)}(m_{i_1},m_{i_1})$ and 
$\pi_2  \in \ab  NC_2(m_{i_3})$ 
for some permutation $(i_1,i_2,i_3)$ of $(1,2,3)$. Assume $\pi_1\in NC_2^{(2)}(m_1,m_2)$ and $\pi_2\in NC_2(m_3)$, the partition $\cV$ is such that each block of $\cV$ is a cycle of $\pi$ except by one block which is the union of two cycles of $\pi$ one from each $\pi_1$ and $\pi_2$. Those cycles can be chosen in $\frac{m_1+m_2}{2}\frac{m_3}{2}$ ways. Thus the number of those partitioned permutation is,
$$\frac{(m_1+m_2)m_3}{4}|NC_2(m_3)||NC_2^{(2)}(m_1,m_2)|.$$
The set $NC_2^{(2)}(m_1,m_2)$ can be counted easily. On each circle we choose two points corresponding to the through strings. By \cite[Lemma 13]{MSAW} that can be done in $\binom{m_1}{\frac{m_1}{2}-1}\binom{m_2}{\frac{m_2}{2}-1}$ ways. Then we join the points to make the two through strings, which can be done in two ways. Thus,
\begin{eqnarray*}
|NC_2^{(2)}(m_1,m_2)|&=& 2\binom{m_1}{\frac{m_1}{2}-1}\binom{m_2}{\frac{m_2}{2}-1} \\
&=&2\frac{m_1}{2}\frac{m_2}{2}\frac{1}{m_1/2}\binom{m_1}{m_1/2}\frac{1}{m_2/2}\binom{m_2}{m_2/2} \\ 
&=&\frac{m_1m_2}{2}|NC_2(m_1)||NC_2(m_2)|,
\end{eqnarray*}
so the total number of elements $(\cV,\pi_1\times\pi_2)\in \PSSSSSS$ with $\pi_1\in NC_2^{(2)}(m_1,m_2)$ and $\pi_2\in NC_2(m_3)$ is given by,
$$\frac{m_1m_2m_3(m_1+m_2)}{8}|NC_2(m_1)||NC_2(m_2)||NC_2(m_3)|.$$
In the same way we count the other two cases corresponding to $\pi_1\in NC_2^{(2)}(m_1,m_3)$ and $\pi_2\in NC_2(m_2)$ and $\pi_1\in NC_2^{(2)}(m_2,m_3)$ and $\pi_2\in NC_2(m_1)$. Adding all together up gives,
\begin{eqnarray*}
&&|\PSSSSSS| \\
&=& (2m_1+2m_2+2m_3)\frac{m_1m_2m_3}{8}|NC_2(m_1)|\, |NC_2(m_2)|\, |NC_2(m_3)| \\
&=&\frac{mm_1m_2m_3}{4} |NC_2(m_1)|\, |NC_2(m_2)|\, |NC_2(m_3)|.
\end{eqnarray*}
So Equation (\ref{Equation_number7}) implies $R=0$. This turns Equation (\ref{Equation_Number5}) into,
\begin{multline*}
\alpha_{m_1,m_2,m_3} = |NC_2(m_1,m_2,m_3)|+4\C_6|\PS| \\
+4\C_4^2|\PSS|+2\C_4|\PSSSSS| \\
+(\mathring{\C}_4-2\C_4)|\PSSSS|.
\end{multline*}
We conclude the proof by replacing $|\PSSSS|$ by \\
$|\PSSS|$, as we proved in Lemma \ref{Lemma:Counting2-4-UL} they are equal.
\end{proof}

\noindent
\textbf{Proof of main theorem.} As we have seen the third order fluctuation moments $\alpha_{m_1, m_2, m_3}$ are somewhat involved. However the higher order cumulants are very simple.

\begin{theorem}[Main Theorem]\label{Chapter4_Theorem_Third order cumulants}
The third order cumulants, $(\K_{p,q,r})_{p,q,r}$, of a Wigner Ensemble, $X$, are given by,
$$\K_{p,q,r}= \left\{ \begin{array}{lcc}
             4\C_6 &   if  & p=q=r=2 \\
             \mathring{\C}_4-2\C_4 &   if  & \{p,q,r\}=\{2,1,1\} \\
             0 &  otherwise 
             \end{array}
   \right.$$
\end{theorem}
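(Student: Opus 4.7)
The plan is to combine the moment-cumulant relation \eqref{mc3} with the explicit formula for $\alpha_{m_1,m_2,m_3}$ in Theorem \ref{Corollary:ThirdExpressionOfAlpha}, using the already-established values of the first- and second-order cumulants ($\K_2=1$, $\K_n=0$ for $n\neq 2$; $\K_{2,2}=2\C_4$, $\K_{p,q}=0$ otherwise). Since \eqref{mc3} expresses $\alpha_{m_1,m_2,m_3}$ as a sum in which the particular third-order cumulant $\K_{m_1,m_2,m_3}$ appears exactly once with coefficient $1$ (namely in the term $(\cV,\pi)=(1_m,\gamma)$), the third-order cumulant sequence is uniquely determined by the moment sequence. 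Consequently it is enough to verify that substituting the claimed values $\K_{2,2,2}=4\C_6$, $\K_{2,1,1}=\mathring{\C}_4-2\C_4$, and $\K_{p,q,r}=0$ otherwise into the right-hand side of \eqref{mc3} reproduces exactly the formula for $\alpha_{m_1,m_2,m_3}$ in Theorem \ref{Corollary:ThirdExpressionOfAlpha}.

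The verification splits the sum along the decomposition $\cPS_\NC(m_1,m_2,m_3)=S_\NC(m_1,m_2,m_3)\cup\PSG\cup\PSSG\cup\PSSSG$ from Lemma \ref{lemma:factorization_lemma}. For $(\cV,\pi)\in S_\NC(m_1,m_2,m_3)$ one has $\cV=0_\pi$, so $\K_{(\cV,\pi)}=\prod_{c\in\pi}\K_{|c|}$; survival forces every cycle of $\pi$ to be a pair and produces the term $|NC_2(m_1,m_2,m_3)|$. For $(\cV,\pi)\in\PSSSG$ exactly one second-order cumulant $\K_{|B_1|,|B_2|}$ appears; the constraint $\K_{p,q}=2\C_4\,\delta_{(p,q),(2,2)}$ forces the joined cycles and every remaining cycle to have size $2$, leaving precisely the configurations in $\PSSSSS$ and contributing $2\C_4\,|\PSSSSS|$. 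Similarly, $(\cV,\pi)\in\PSSG$ involves two second-order cumulants, forcing $(\cV,\pi)\in\PSS$ and yielding $4\C_4^2\,|\PSS|$.

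The heart of the argument is the case $(\cV,\pi)\in\PSG$, where $\K_{(\cV,\pi)}=\K_{|B_1|,|B_2|,|B_3|}\prod_{\text{other }c\in\pi}\K_{|c|}$ with $B_j$ the unique cycle of $\pi_j$ lying inside the nontrivial block of $\cV$. The product forces every remaining cycle of $\pi$ to have length $2$, so under the claimed cumulant values only two patterns survive: $|B_1|=|B_2|=|B_3|=2$ places $(\cV,\pi)$ in $\PS$ and contributes $4\C_6\,|\PS|$; $\{|B_1|,|B_2|,|B_3|\}=\{2,1,1\}$ places $(\cV,\pi)$ in $\PSSS$ and contributes $(\mathring{\C}_4-2\C_4)\,|\PSSS|$. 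Summing the four contributions reproduces Theorem \ref{Corollary:ThirdExpressionOfAlpha} exactly, which by the uniqueness noted above proves the claimed values for $\K_{m_1,m_2,m_3}$. The main subtlety is precisely this matching in $\PSG$: one must check carefully that the $\{2,1,1\}$ pattern in $\PSG$ coincides with the set $\PSSS$ as defined (the two unit cycles necessarily come from two different $\pi_j$'s, and the $2$-cycle from the third), and that the exclusion of all other size patterns is consistent with $\K_{p,q,r}=0$ for all remaining triples $(p,q,r)$—both facts being forced by the uniqueness of the moment-cumulant inversion together with Theorem \ref{Corollary:ThirdExpressionOfAlpha}.
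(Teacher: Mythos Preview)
Your proposal is correct and follows essentially the same route as the paper: define the candidate cumulant values, plug them into the third-order moment--cumulant relation \eqref{mc3}, and verify that the resulting sum reproduces the expression in Theorem~\ref{Corollary:ThirdExpressionOfAlpha}, then appeal to the uniqueness of the cumulant sequence. The paper compresses your four-case verification into the single line ``Observe that $\sum_{(\cV,\pi)}\K'_{(\cV,\pi)}=\dots$'', whereas you spell out explicitly how each piece of the decomposition of $\cPS_\NC(m_1,m_2,m_3)$ from Lemma~\ref{lemma:factorization_lemma} contributes one of the five terms; both arguments are the same in substance.
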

\begin{proof}
Let us recall that, up to order two, the free cumulants of $X$ are given by; $\K_2=1, \K_{2,2}=2\C_4$ and $0$ otherwise. Let $(\K^\prime_n)_n,(\K^\prime_{p,q})_{p,q}$ and $(\K^\prime_{p,q,r})_{p,q,r}$ be the sequences defined by $\K_n^\prime=\K_n$ for all $n$, $\K^\prime_{p,q}=\K_{p,q}$ for all $p,q$ and $\K^\prime_{2,2,2}=4\C_6, \K^\prime_{2,1,1}=\mathring{\C}_4-2\C_4$ and $0$ otherwise. By definition $\K^\prime_{n}$ and $\K^\prime_{p,q}$ coincide with the free cumulants of first and second order $\K_n$ and $\K_{p,q}$. Therefore these sequences satisfy the moment-cumulant relations:
\begin{equation}\label{Eq8}
\alpha_{m}=\sum_{(\cV,\pi)\in \mathcal{PS}_{NC}(m)} \K^\prime_{(\cV,\pi)}
\end{equation}
\begin{equation}\label{Eq9}
\alpha_{m_1,m_2}=\sum_{(\cV,\pi)\in \mathcal{PS}_{NC}(m_1,m_2)} \K^\prime_{(\cV,\pi)}
\end{equation}
for all $m,m_1,m_2$. For a non-crossing partitioned permutation $$(\cV,\pi)\in \mathcal{PS}_{NC}(m_1,m_2,m_3),$$ let $\K^\prime_{(\cV,\pi)}$ be the multiplicative extension of $(\K^\prime_n)_n,(\K^\prime_{p,q})_{p,q}$ and $(\K^\prime_{p,q,r})_{p,q,r}$, i.e;
$$\K_{(\cV,\pi)}=\prod_{\substack{B\text{ block of }\cV \\ V_1,\dots,V_i \text{ cycles of } \pi \text{ with }V_i\subset B}}\K^\prime_{|V_1|,\dots,|V_i|}.$$
Observe that,
\begin{multline*}
\sum_{(\cV,\pi)\in \mathcal{PS}_{NC}(m_1,m_2,m_3)}\K^\prime_{(\cV,\pi)} = |\NC_2(m_1,m_2,m_3)| \\
+4\C_6|\PS| +4\C_4^2|\PSS| \\
+2\C_4|\PSDUC| +(\mathring{\C}_4-2\C_4)|\PSSS|.
\end{multline*}
According to Theorem \ref{Corollary:ThirdExpressionOfAlpha} last expression equals $\alpha_{m_1,m_2,m_3}$, so the sequences $(\K^\prime_n)_n,(\K^\prime_{p,q})_{p,q}$ and $(\K^\prime_{p,q,r})_{p,q,r}$ satisfy the moment-cumulant relation of order three, namely,
\begin{equation}\label{Eq10}
\alpha_{m_1,m_2,m_3}=\sum_{(\cV,\pi)\in \mathcal{PS}_{NC}(m_1,m_2,m_3)} \K^\prime_{(\cV,\pi)}.
\end{equation}
However the free cumulants $(\K_n)_n,(\K_{p,q})_{p,q}$ and $(\K_{p,q,r})_{p,q,r}$ are the unique sequences satisfying Equations (\ref{Eq8}), (\ref{Eq9}) and (\ref{Eq10}), so it must be $\K^\prime_{p,q,r}=\K_{p,q,r}$ as desired.
\end{proof}

\section*{Acknowledgements}
We would like to thank Roland Speicher for his comments and fruitful discussions while preparing this paper.

\thebottomline
\end{document}